\newcommand{\say}[1]{``#1"}
\tikzset{
	on each segment/.style={
		decorate,
		decoration={
			show path construction,
			moveto code={},
			lineto code={
				\path [#1]
				(\tikzinputsegmentfirst) -- (\tikzinputsegmentlast);
			},
			curveto code={
				\path [#1] (\tikzinputsegmentfirst)
				.. controls
				(\tikzinputsegmentsupporta) and (\tikzinputsegmentsupportb)
				..
				(\tikzinputsegmentlast);
			},
			closepath code={
				\path [#1]
				(\tikzinputsegmentfirst) -- (\tikzinputsegmentlast);
			},
		},
	},
	mid arrow/.style={postaction={decorate,decoration={
				markings,
				mark=at position .6 with {\arrow[#1]{stealth}}
	}}},
	rmid arrow/.style={postaction={decorate,decoration={
				markings,
				mark=at position .4 with {\arrowreversed[#1]{stealth}}
	}}},
}
\def\grd@save@target#1{%
  \def\grd@target{#1}}
\def\grd@save@start#1{%
  \def\grd@start{#1}}
\tikzset{
  grid with coordinates/.style={
    to path={%
      \pgfextra{%
        \edef\grd@@target{(\tikztotarget)}%
        \tikz@scan@one@point\grd@save@target\grd@@target\relax
        \edef\grd@@start{(\tikztostart)}%
        \tikz@scan@one@point\grd@save@start\grd@@start\relax
        \draw[minor help lines] (\tikztostart) grid (\tikztotarget);
        \draw[major help lines] (\tikztostart) grid (\tikztotarget);
        \grd@start
        \pgfmathsetmacro{\grd@xa}{\the\pgf@x/1cm}
        \pgfmathsetmacro{\grd@ya}{\the\pgf@y/1cm}
        \grd@target
        \pgfmathsetmacro{\grd@xb}{\the\pgf@x/1cm}
        \pgfmathsetmacro{\grd@yb}{\the\pgf@y/1cm}
        \pgfmathsetmacro{\grd@xc}{\grd@xa + \pgfkeysvalueof{/tikz/grid with coordinates/major step}}
        \pgfmathsetmacro{\grd@yc}{\grd@ya + \pgfkeysvalueof{/tikz/grid with coordinates/major step}}
        \foreach \x in {\grd@xa,\grd@xc,...,\grd@xb}
        \node[anchor=north] at (\x,\grd@ya) {\pgfmathprintnumber{\x}};
        \foreach \y in {\grd@ya,\grd@yc,...,\grd@yb}
        \node[anchor=east] at (\grd@xa,\y) {\pgfmathprintnumber{\y}};
      }
    }
  },
  minor help lines/.style={
    help lines,
    step=\pgfkeysvalueof{/tikz/grid with coordinates/minor step}
  },
  major help lines/.style={
    help lines,
    line width=\pgfkeysvalueof{/tikz/grid with coordinates/major line width},
    step=\pgfkeysvalueof{/tikz/grid with coordinates/major step}
  },
  grid with coordinates/.cd,
  minor step/.initial=.2,
  major step/.initial=1,
  major line width/.initial=0.25mm,
}
\def\l@subsection{\@tocline{2}{0pt}{2.5pc}{5pc}{}}
\DeclareMathOperator{\diag}{diag}
\newcommand{\N}{\mathbb{N}}
\newcommand{\C}{\mathbb{C}}
\newcommand{\R}{\mathbb{R}}
\newcommand{\Z}{\mathbb{Z}}
\newtheorem{theorem}{Theorem}[section]
\newtheorem{prop}[theorem]{Proposition}
\newtheorem{lemma}[theorem]{Lemma}
\newtheorem{corollary}[theorem]{Corollary}
\declaretheoremstyle[
spaceabove=\topsep, spacebelow=\topsep,
headfont=\normalfont\bfseries,
notefont=\bfseries, notebraces={}{},
bodyfont=\normalfont\itshape,
postheadspace=0.5em,
name={\ignorespaces},
numbered=no,
headpunct=.]{mystyle}
\declaretheorem[style=mystyle]{named}
\theoremstyle{definition}
\theoremstyle{remark}
\newtheorem{remark}{Remark} [section]
\numberwithin{equation}{section} 
\begin{document}
\title{Global Phase Portrait and Large Degree Asymptotics for the Kissing Polynomials }

\author[A.~Barhoumi]{Ahmad Barhoumi}

\address[AB]{University of Michigan, Ann Arbor, MI, United States}

\email{barhoumi@umich.edu}

\author[A.~F.~Celsus]{Andrew F. Celsus}

\address[AC]{University of Cambridge, Cambridge, UK.}

\email{a.f.celsus@maths.cam.ac.uk }

\author[A.~Dea\~no]{Alfredo Dea\~no}

\address[AD]{University of Kent, Canterbury, UK, and Universidad Carlos III de Madrid, Spain} 

\email{alfredo.deanho@uc3m.es}

\date{}

\begin{abstract}
	We study a family of monic orthogonal polynomials which are orthogonal with respect to the varying, complex valued weight function, $\exp(nsz)$, over the interval $[-1,1]$, where $s\in\C$ is arbitrary. This family of polynomials originally appeared in the literature when the parameter was purely imaginary, that is $s\in i \R$, due to its connection with complex Gaussian quadrature rules for highly oscillatory integrals. The asymptotics for these polynomials as $n\to\infty$ have been recently studied for $s\in i\R$, and our main goal is to extend these results to all $s$ in the complex plane.
	
	We first use the technique of continuation in parameter space, developed in the context of the theory of integrable systems, to extend previous results on the so-called modified external field from the imaginary axis to the complex plane minus a set of critical curves, called breaking curves. We then apply the powerful method of nonlinear steepest descent for oscillatory Riemann-Hilbert problems developed by Deift and Zhou in the 1990s to obtain asymptotics of the recurrence coefficients of these polynomials when the parameter $s$ is away from the breaking curves. We then provide the analysis of the recurrence coefficients when the parameter $s$ approaches a breaking curve, by considering double scaling limits as $s$ approaches these points. We shall see a qualitative difference in the behavior of the recurrence coefficients, depending on whether or not we are approaching the points $s=\pm 2$ or some other points on the breaking curve. 	
\end{abstract}

\keywords{Orthogonal polynomials in the complex plane; Riemann-Hilbert problem; Continuation in parameter space; asymptotic analysis}

\vspace*{-2.5cm}

\maketitle

\tableofcontents

\section{Introduction}
	The main goal of this paper is to determine the asymptotic behavior of the recurrence coefficients of polynomials satisfying the following non-Hermitian, degree dependent, orthogonality conditions:
	\begin{equation}\label{eq: ortho p defn}
		\int_{-1}^1 p_n(z;s) z^k e^{-nf(z;s)}\, dz = 0, \qquad k= 0, 1, \dots, n-1, 
	\end{equation}
    	where $p_n$ is a monic polynomial of degree $n$ in the variable $z$, $f(z;s)=sz$, and $s\in\C$ is arbitrary. Polynomial sequences satisfying non-Hermitian orthogonality conditions similar to \eqref{eq: ortho p defn} first appeared in the literature in the context of approximation theory (c.f. \cite{aptekarev2002sharp,aptekarev1992asymptotics,gonchar1989equilibrium,nikishin1991rational}). In the present day, complex orthogonal polynomials with respect to exponential weights have been studied in \cite{bertola2015asymptotics,bertola2016asymptotics} (with quartic potential) and \cite{bleher2013cubic,bleher2015cubic_ds} (with cubic potential). They have found uses in various areas of mathematics including random matrix theory and theoretical physics \cite{alvarez2013determination,alvarez2014partition,alvarez2015fine, bertola2011boutroux}, rational solutions of Painlev\'e equations \cite{balogh2016hankel,bertola2015zeros,bertola2018painleve,bertola2016asymptotics}, and, of particular interest in the present work, numerical analysis \cite{asheim2012gaussian,deano2015kissing,deano2017computing}. 
	
	Indeed, motivation for the present work is concerned with the numerical treatment of highly oscillatory integrals of the form
	\begin{equation*}
		I_\omega[f]:= \int_{-1}^1 f(z) e^{i\omega z}\, dz,\qquad \omega>0,
	\end{equation*}
	where for sake of exposition, we take $f$ to be an entire function. Historically, the numerical treatment of such integrals falls into two regimes, as explained in the monograph \cite{deano2017computing}. The first regime occurs when $\omega$ is relatively small, and the weight function is not highly oscillatory. In this regime, traditional methods of numerical analysis based on Taylor's Theorem, such as Gaussian quadrature, are adequate and provide a suitable means of evaluating such integrals. However, methods such as Gaussian quadrature require exceedingly many quadrature points as the parameter $\omega$ grows large, and as such, the second regime concerns the treatment of $I_\omega[f]$ when the parameter $\omega$ is large. Here, numerical methods based on the asymptotic analysis of such integrals take over, and methods such as numerical steepest descent are preferred. In order to address this apparent schism between the two regimes, the authors of \cite{asheim2012gaussian} proposed a new quadrature rule based on monic polynomials which satisfy
	\begin{equation}\label{eq: original kps}
		\int_{-1}^1 p_n^\omega(z) z^k e^{i\omega z} \, dz= 0, \qquad k = 0, 1, \dots, n-1.
	\end{equation}
	Note in \eqref{eq: original kps}, the weight function no longer depends on the degree of the polynomial $n$. Letting $\{z_i\}_{i=1}^{2n}$ be the $2n$ complex zeros of $p_{2n}^\omega$, the quadrature rule proposed in \cite{asheim2012gaussian} is to approximate the integral via
	\begin{equation}\label{eq: CGQ}
		\int_{-1}^1 f(z)e^{i\omega z}\, dz \approx \sum_{j=1}^{2n} w_j f(z_j),
	\end{equation}
	where the weights $w_j$ are the standard weights used for Gaussian quadrature. Note that as $\omega\to 0$, the rule \eqref{eq: CGQ} reduces elegantly to the classical method of Gauss-Legendre quadrature. Moreover, \cite[Theorem~4.1]{asheim2012gaussian} shows us that
	\begin{equation}
		\int_{-1}^1 f(z)e^{i\omega z}\, dz - \sum_{j=1}^{2n} w_j f(z_j) = \mathcal{O}\left(\frac{1}{\omega^{2n+1}}\right), \qquad \omega\to\infty, 
	\end{equation}
	showing that the proposed quadrature method attains high asymptotic order as $\omega$ grows, especially when compared to other methods, such as Filon rules, used to handle the numerical treatment of highly oscillatory integrals. For more information on the numerical analysis of oscillatory integrals, the reader is referred to \cite{deano2017computing}, and in particular Chapter 6 for the relations to non-Hermitian orthogonality.  
	
	Despite the theoretical successes of numerical methods based on non-Hermitian orthogonal polynomials listed above, many questions about the polynomials themselves remain open. For instance, as the weight function in \eqref{eq: original kps} is now complex valued, questions such as existence of the polynomials and the location of their zeros can no longer be taken for granted. However, provided the polynomials exist for the corresponding values of $n$ and $\omega$, all of the classical algebraic results on orthogonal polynomials will continue to apply. This is due to the fact that the bilinear form
	\begin{equation}
		\langle f, g \rangle := \int_{-1}^1 f(z) g(z) e^{i\omega z}\, dz
	\end{equation}
	still satisfies the relation $\langle z f,g\rangle =\langle f, z g\rangle$. Indeed, there will still be a Gaussian quadrature rule and the polynomials will still satisfy the famous three term recurrence relation
	\begin{equation}\label{eq: ttrr fake}
		z p_n^\omega(z) = p_{n+1}^\omega(z) + \alpha_n(\omega) p_n^\omega(z) +\beta_{n}(\omega) p_{n-1}^\omega(z).
	\end{equation}
	We restate that the weight function for the polynomials $p_n^\omega$ does not depend on $n$, which is why relations such as \eqref{eq: ttrr fake} continue to hold in the complex setting. 
	
	From a different perspective, we observe that the weight of orthogonality in \eqref{eq: ortho p defn} can be seen as a deformation of the Legendre weight by the exponential of a polynomial potential. Such deformations, in this case with the parameter $s$, have been considered in the context of integrable systems. Following the general theory presented in \cite{BEH}, the Hankel determinant of the corresponding family of orthogonal polynomials (or equivalently, the partition function) is closely related to isomonodromic (i.e. monodromy preserving) deformations of a certain system of ODEs; more precisely, we consider the vector $\mathbf{p}_n(z;s)=[p_n(z,s), p_{n-1}(z;s)]^T$, that satisfies both a linear system of ODEs in the variable $z$, as well as an auxiliary linear system of ODEs in the parameter $s$; then, compatibility between these two systems of ODEs characterizes the isomodromic deformations of the differential system in $z$, see \cite{JMU} and also \cite[Chapter 4]{FIKN2007}. In this case, both linear systems can be obtained by standard techniques from the Riemann--Hilbert problem for the OPs, that we present below, and they can be checked to coincide with the linear system corresponding to the Painlev\'e V equation, as given by Jimbo and Miwa in \cite{JM}, with suitable changes of variable to locate the Fuchsian singularities at $z=0,1,\infty$. We refer to reader to \cite{deano2015kissing} for details of this calculation in the case of purely imaginary $s$. As a consequence, the results of this paper also provide information about solutions (special function solutions, in fact) of Painlev\'e V. For the sake of brevity we do not include the details of this connection here.
	
	The results of \cite{asheim2012gaussian} kick-started the study of the polynomials in \eqref{eq: original kps}, and the authors of \cite{deano2015kissing} dubbed such polynomials the Kissing Polynomials on account of the behavior of their zero trajectories in the complex plane. In particular, the work \cite{deano2015kissing} provides the existence of the even degree Kissing polynomials, along with the asymptotic behavior of the polynomials as $\omega\to\infty$ with $n$ fixed. On the other hand, the asymptotic analysis of the Kissing polynomials for fixed $\omega$ as $n\to\infty$ can be handled via the Riemann-Hilbert techniques discussed in \cite{arnojacobi} or the appendix of \cite{deano2014large}, where it was shown that the zeros of the Kissing polynomials accumulate on the interval $[-1,1]$ as $n\to\infty$ with $\omega>0$ fixed. 
	
	One can also let both $n$ and $\omega$ tend to infinity together, by letting $\omega$ depend on $n$. In order to get a nontrivial limit as the parameters tend to infinity, one sets $\omega=\omega(n)=t n$, where $t\in\R_+$. This leads to the varying-weight Kissing polynomials which satisfy the following orthogonality conditions
	\begin{equation}\label{eq: varying weight kissing polynomials}
		\int_{-1}^1 p_n^t(z)z^k  e^{i n t z}\, dz = 0, \qquad k = 0, 1, \dots, n-1. 
	\end{equation}
	Thus, studying the behavior of the Kissing polynomials in \eqref{eq: original kps} as both $n$ and $\omega$ go to infinity at the rate $t$ is equivalent to studying the behavior of the polynomials in \eqref{eq: varying weight kissing polynomials} as $n\to\infty$. 
	
	The varying-weight Kissing polynomials were first studied in \cite{deano2014large}, where it was shown that for $t<t_c$, the zeros of $p_n^t$ accumulate on a single analytic arc connecting $-1$ and $1$, which we denote here to be $\gamma_{m}(t)$. Here, $t_c$ is the unique positive solution to the equation
	\begin{equation}\label{eq: tc def}
		2 \log\left(\frac{2+\sqrt{t^2+4}}{t}\right) - \sqrt{t^2+4}=0,
	\end{equation}
	numerically given by $t_c\approx 1.32549$. In \cite{deano2014large}, strong asymptotic formulas for $p_n^t$ in the complex plane and asymptotic formulas for the recurrence coefficients were given as $n\to\infty$ with $t< t_c$. Moreover, the curve $\gamma_m(t)$ can be defined as the trajectory of the quadratic differential
	\begin{equation}\label{eq: QD genus 0}
		\varpi^{(0)}_t := -\frac{(2+itz)^2}{z^2-1}\,dz^2
	\end{equation}
	which connects $-1$ and $1$, as shown in \cite[Section~3.2]{deano2014large}.	These results all followed in a standard way from the nonlinear steepest descent analysis of the Riemann-Hilbert problem for these polynomials, to be discussed in Section~\ref{sec: steepest descent}. To cast these results in a manner amenable to our analysis, we restate one of the main results of \cite{deano2014large} below. To establish notation, we define $\gamma_{c,0}:=(-\infty, -1]$. 
	\newline
	\begin{named}[Restatement of Results in \cite{deano2014large}]\label{thm: restatement of alfredo h function}
		Let $t<t_c$. There exists an analytic arc, $\gamma_{m}(t)$, that is the trajectory of the quadratic differential
		\begin{equation*}
			\varpi^{(0)}_t := -\frac{(2+itz)^2}{z^2-1}\,dz^2
		\end{equation*}
		which connects $-1$ and $1$. Furthermore, there exists a function $h(z,t)$ such that
		\begin{subequations}
			\label{eq: subcritical h function}
			\begin{alignat}{2}
			&h(z;t) \text{ is analytic for } z\in\C\setminus\left(\gamma_{c,0}\cup\gamma_m(t)\right), \qquad && \\
			&h_+(z;t) - h_-(z;t) = 4\pi i, \qquad && z\in \gamma_{c,0}, \\
			&h_+(z;s) + h_-(z;s) = 0, \qquad && z\in \gamma_{m}(t),\\
			&h(z;s) = itz + 2\log 2  + 2\log z + \mathcal{O}\left(\frac{1}{z}\right), \qquad && z \to \infty\\ 
			&\Re h(z;s) =\mathcal{O}\left((z\mp 1)^{1/2}\right), \qquad && z\to \pm 1.
			\end{alignat}
		\end{subequations}
		Moreover, 
		\begin{equation}
			\Re h(z;t)=0, \qquad z\in \gamma_{m},
		\end{equation}
		and $\Re h(z)>0$ for $z$ in close proximity on either side of $\gamma_m$. 
	\end{named}
	\begin{remark}
		The function $h$ above is called $\phi$ in the notation of \cite{deano2014large}. Properties of this $h$ function listed above can be found in Section~3 of \cite{deano2014large}. The existence and description of the contour $\gamma_m(t)$ is provided in \cite[Section~3.2]{deano2014large}.
	\end{remark}
	
	The analysis of the varying-weight kissing polynomials for $t>t_c$ was undertaken in \cite{celsus2020supercritical}. Again using the Riemann-Hilbert approach for these polynomials, the authors were able to show that there exist analytic arcs $\gamma_{m,0}(t)$ and $\gamma_{m,1}(t)$ such that the zeros of the varying-weight Kissing polynomials accumulate on $\gamma_{m,0}\cup\gamma_{m,1}$ as $n\to\infty$. We restate some of the main results of \cite{celsus2020supercritical} below.
	\newline
	\begin{named}[Restatement of Results in \cite{celsus2020supercritical}]\label{thm: restatement of celsus/guilherme h function}
		Let $t>t_c$. There exist two analytic arcs, $\gamma_{m,0}(t)$ and $\gamma_{m,1}(t)$, that are trajectories of the quadratic differential
		\begin{equation}\label{eq: QD genus 1}
		\varpi^{(1)}_t := -Q(z,t)\,dz^2,
		\end{equation}
		where
		\begin{equation}
			Q(z,t):= -\frac{t^2 (z-\lambda_0)(z-\lambda_1)}{z^2-1}.
		\end{equation}
		Above, $\lambda_0, \lambda_1\in\C$ uniquely satisfy
		\begin{equation}\label{eq: lambda conditions}
			\lambda_0 + \lambda_1 = \frac{4i}{t}, \qquad \lambda_0=-\overline{\lambda_1}, \qquad \Re \oint_C Q^{1/2}(z;t)\, dz = 0, 
		\end{equation}
		where $C$ is any closed loop on the Riemann surface associated to the algebraic equation $y^2 = Q(z;t)$. The trajectory $\gamma_{m,0}$ connects $-1$ to $\lambda_0$ and the trajectory $\gamma_{m,1}$ connects $\lambda_1$ to $1$. Furthermore, there exists a function $h(z,t)$ such that
		\begin{subequations}
			\label{eq: supercritical h function}
			\begin{alignat}{2}
			&h(z;t) \text{ is analytic for } z\in\C\setminus\left(\gamma_{c,0}\cup\gamma_{m,0}(t)\cup\gamma_{c,1}(t)\cup\gamma_{m,1}(t)\right), \qquad && \\
			&h_+(z;t) - h_-(z;t) = 4\pi i, \qquad && z\in \gamma_{c,0}, \\
			&h_+(z;s) + h_-(z;s) = 4\pi i \omega_0, \qquad && z\in \gamma_{m,0}(t),\\
			&h_+(z;t) - h_-(z;t) = 4\pi i\eta_1, \qquad && z\in \gamma_{c,1}(t), \\
			&h_+(z;s) + h_-(z;s) = 0, \qquad && z\in \gamma_{m,1}(t),\\
			&h(z;s) = itz - \ell  + 2\log z + \mathcal{O}\left(\frac{1}{z}\right), \qquad && z \to \infty\\ 
			&\Re h(z;s) =\mathcal{O}\left((z-\lambda_0)^{3/2}\right), \qquad && z\to \lambda_0,\\
			&\Re h(z;s) =\mathcal{O}\left((z- \lambda_1)^{3/2}\right), \qquad && z\to \lambda_1\\
			&\Re h(z;s) =\mathcal{O}\left((z\mp 1)^{1/2}\right), \qquad && z\to \pm 1.
			\end{alignat}
		\end{subequations}
		Above, $\gamma_{c,1}$ is an analytic arc connecting $\lambda_0$ and $\lambda_1$, and $\ell, \omega_0, \eta_1\in\R$. Moreover, 
		\begin{subequations}
			\begin{alignat}{2}
			&\Re h(z;t)=0, \qquad &&z\in \gamma_{m,0}(t)\cup\gamma_{m,1}(t),\\
			&\Re h(z;t) < 0, \qquad &&z\in \gamma_{c,1}(t),
			\end{alignat}
		\end{subequations}
		and $\Re h(z)>0$ for $z$ in close proximity on either side of $\gamma_{m,0}$ and $\gamma_{m,1}$. 
	\end{named}
	\begin{remark}
		The $h$ function described above is given by $h(z;s)=-2\phi(z)+i\kappa$ in the notation of \cite{celsus2020supercritical}, where $\kappa\in\R$ is a real constant of integration. Moreover, the quadratic differential listed above differs from that of \cite{celsus2020supercritical} by a factor of $4$. For more details, we refer the reader to Sections 4 and 5 of \cite{celsus2020supercritical}. Moreover, we note that if we let $\lambda_0 = \frac{2i}{t}$ in \eqref{eq: lambda conditions}, the quadratic differential $\varpi_t^{(1)}$ defined in \eqref{eq: QD genus 1} coincides with the quadratic differential $\varpi_t^{(0)}$ defined in \eqref{eq: QD genus 0}.
	\end{remark}
	We also point out that a further continuation of the work in \cite{celsus2020supercritical,deano2014large} was carried out in \cite{ahmadjacobikissing}, where varying weight Kissing polynomials with a Jacobi type weight were considered. 
	
	Another natural generalization of the works \cite{celsus2020supercritical,deano2014large} is to allow $t$ to take on complex values. That is, instead of considering the polynomials defined in \eqref{eq: varying weight kissing polynomials} with $t \in \R_+\setminus\{t_c\}$, we consider monic polynomials
	\begin{equation*}
		\int_{-1}^1 p_n(z;s) z^k e^{-nf(z;s)}\, dz = 0, \qquad k= 0, 1, \dots, n-1, 
	\end{equation*}
	where $f(z;s)=sz$ and $s\in\C$ is arbitrary, as introduced in \eqref{eq: ortho p defn}. As stated at the beginning of this introduction, these polynomials will be investigated throughout this work, and we particularly concern ourselves with the asymptotics of the recurrence coefficients of these polynomials as $n\to\infty$.  
		
\section{Statement of Main Results}
	In this section, we discuss the necessary background on non-Hermitian orthogonality and state our main findings. 
	
	We first note that as everything in the integrand of \eqref{eq: ortho p defn} is analytic, Cauchy's Theorem gives us complete freedom to choose a contour connecting $-1$ and $1$ to integrate over. However, in light of the asymptotic behavior of the zeros of the polynomials as $n\to\infty$, it is expected that there exists a \say{correct} contour over which to take the integration in \eqref{eq: ortho p defn}. This contour should be the one on which the zeros of $p_n$ accumulate as $n\to\infty$. The study of this intuitive notion of the \say{correct} curve was started by Nuttall, who conjectured that in the case where the weight function does not depend on the degree $n$, the correct curve should be one of minimal capacity (see also \cite{nuttall1977orthogonal}). Nuttall's conjectures were then established rigorously by Stahl in  \cite{stahl1985extremal,stahl1986orthogonal}, where the correct curve was shown to satisfy a certain max-min variational problem. After Stahl's contributions, such curves became known in the literature as S-curves (where the S stands for \say{symmetric}) or curves which possess the S-property. 
	
	The attempt to adapt Stahl's work to account for orthogonality with respect to varying weights, as is considered in the present work, was first undertaken by Gonchar and Rakhmanov. In \cite{gonchar1989equilibrium}, Gonchar and Rakhmanov obtained the asymptotic zero distribution of a particular class of non-Hermitian orthogonal polynomials with varying weights, but took the existence of a curve with the S-property for granted. The question of the existence of S-curves was considered by Rakhmanov in \cite{rakhmanov2012orthogonal}, where he outlined a general max-min formulation for obtaining S-contours. In both the context of varying and non-varying weights, the probability measure which minimizes a certain energy functional on the S-curve (known as the equilibrium measure) governs the weak limit of the empirical counting measure for the zeros of the orthogonal polynomials. Indeed, the main technical differences between the subcritical case for the Kissing polynomials in \cite{deano2014large} and the supercritical case of \cite{celsus2020supercritical} is that for $t<t_c$, the equilibrium measure is supported on one analytic arc, whereas for $t>t_c$, the measure is supported on two arcs, as depicted in Figure~\ref{fig: sub and super zeros}. We shall see that this distinction between the one and two cut regimes will also play a fundamental role in the present analysis, as hinted at by Figure~\ref{fig: move across breaking curve}. This potential theoretic approach, known now as the Gonchar-Rakhmanov-Stahl (GRS) program, has been carried out in various scenarios, and we refer the reader to many excellent works on the subject \cite{aptekarev2015pade, deano2014large,kuijlaars2015s, martinez2011critical,martinez2016orthogonal,martinez2016critical,martinez2019critical,yattselev2018symmetric}. 
		\begin{figure}[h!]
		\centering
		\begin{subfigure}{.45\textwidth}
			\centering
			\includegraphics[width=0.65\linewidth]{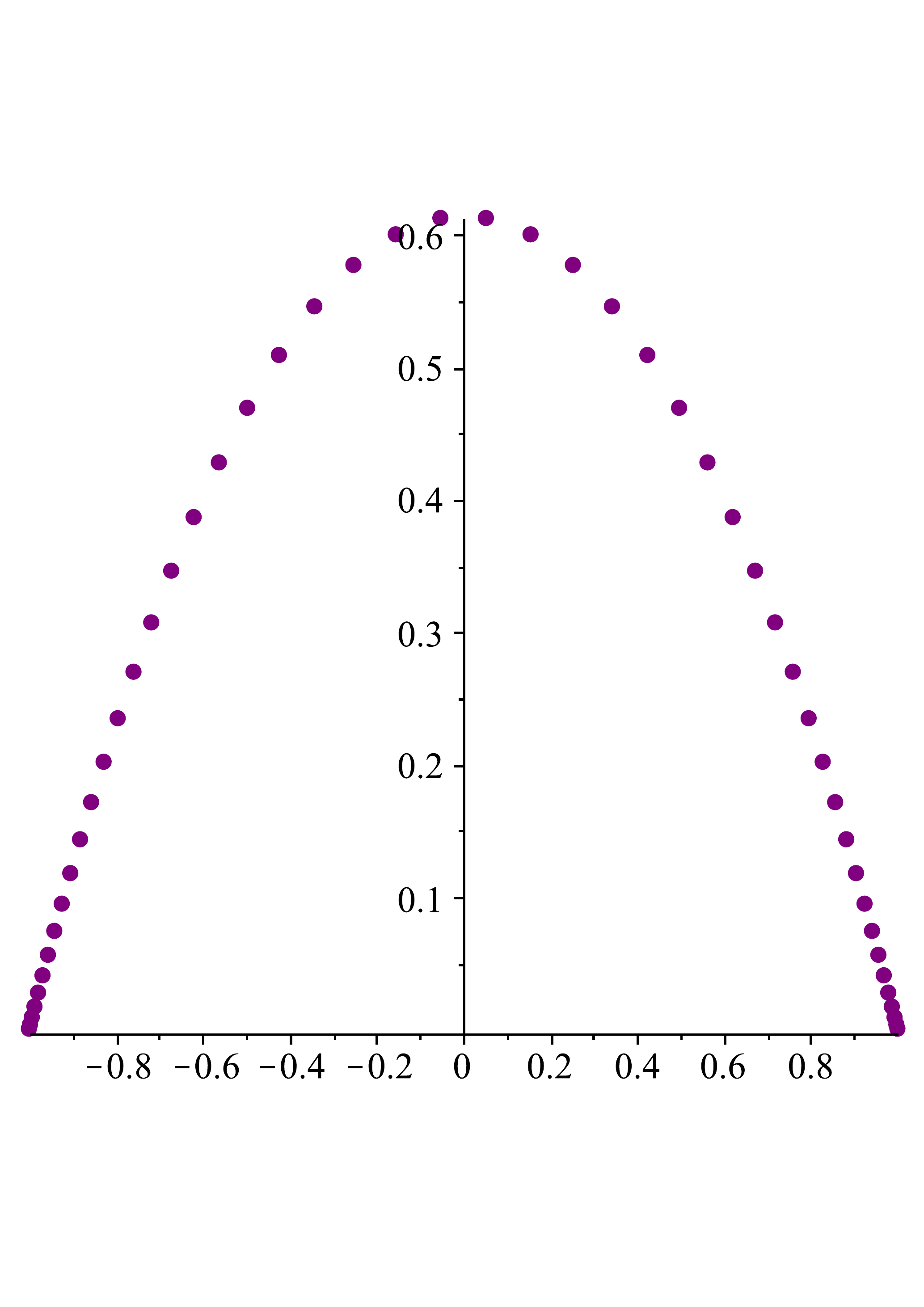}
			\caption{Zeros of $p_{50}^{-i}(z)$.}
			\label{fig:s=-i}
		\end{subfigure}%
		\qquad
		\begin{subfigure}{.45\textwidth}
			\centering
			\includegraphics[width=0.65\linewidth]{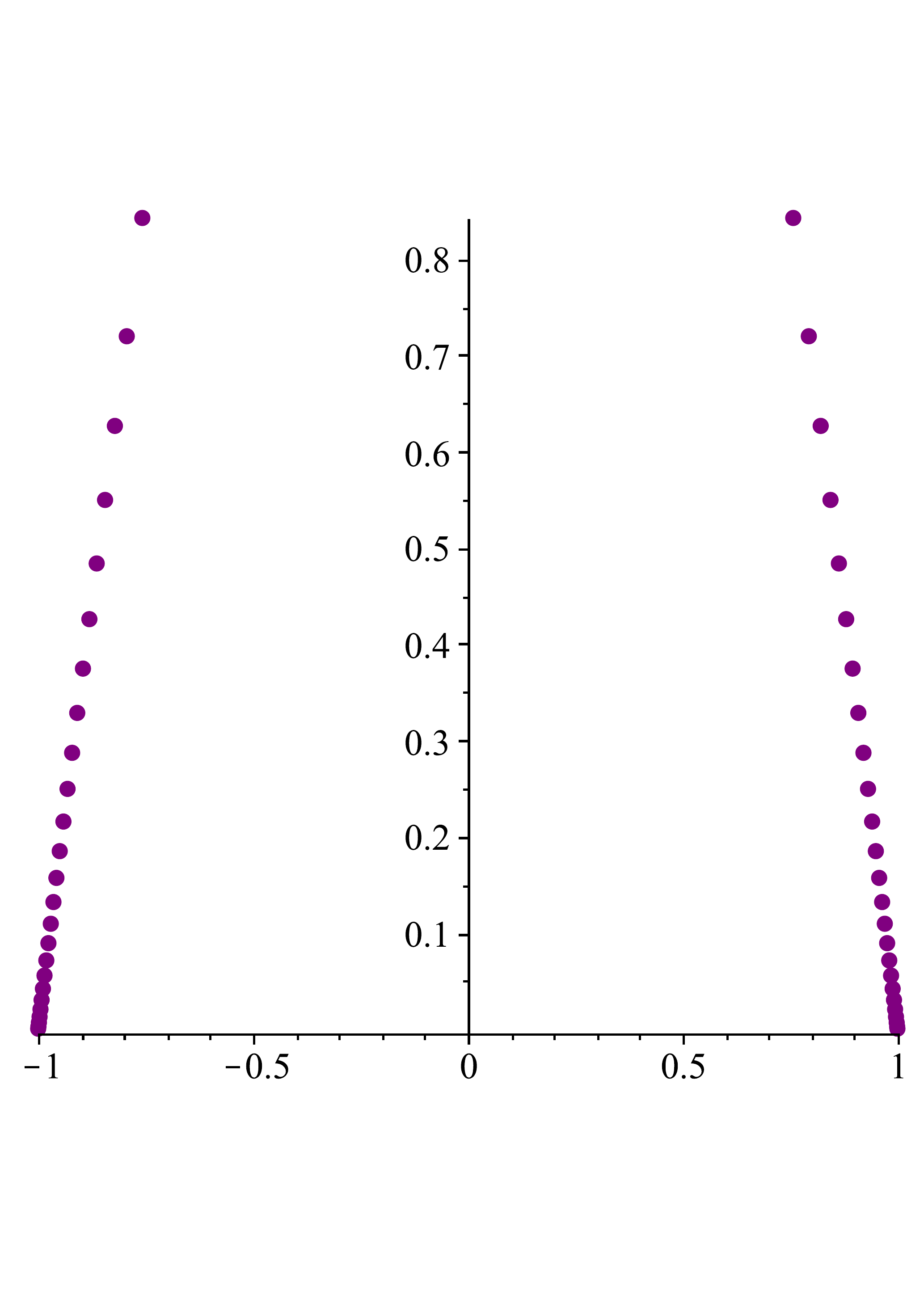}
			\caption{Zeros of $p_{50}^{-2i}(z)$.}
			\label{fig:s=-2i}
		\end{subfigure}
		\caption{Zeros of $p_{50}^{-t}$ defined in \eqref{eq: varying weight kissing polynomials} for $t=i<t_c$ and $t=2i>t_c$, where $t_c$ is the unique positive solution to \eqref{eq: tc def}.}
		\label{fig: sub and super zeros}
	\end{figure}
	\begin{figure}[h!]
		\centering
		\begin{subfigure}{.245\textwidth}
			\centering
			\includegraphics[width=\linewidth]{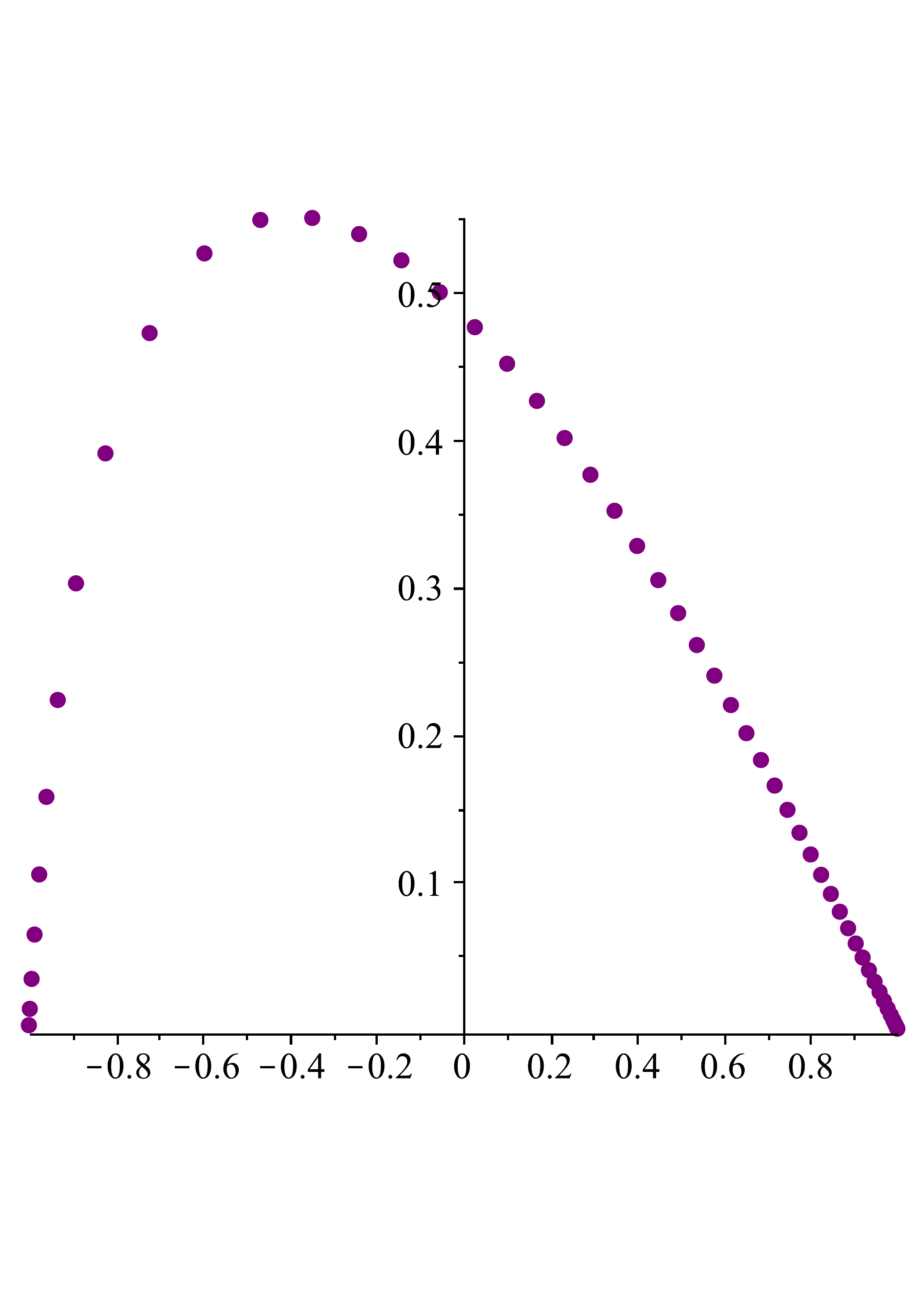}
			\caption{$s=-1-0.85i$.}
			\label{fig:s=085}
		\end{subfigure}%
		\begin{subfigure}{.245\textwidth}
			\centering
			\includegraphics[width=\linewidth]{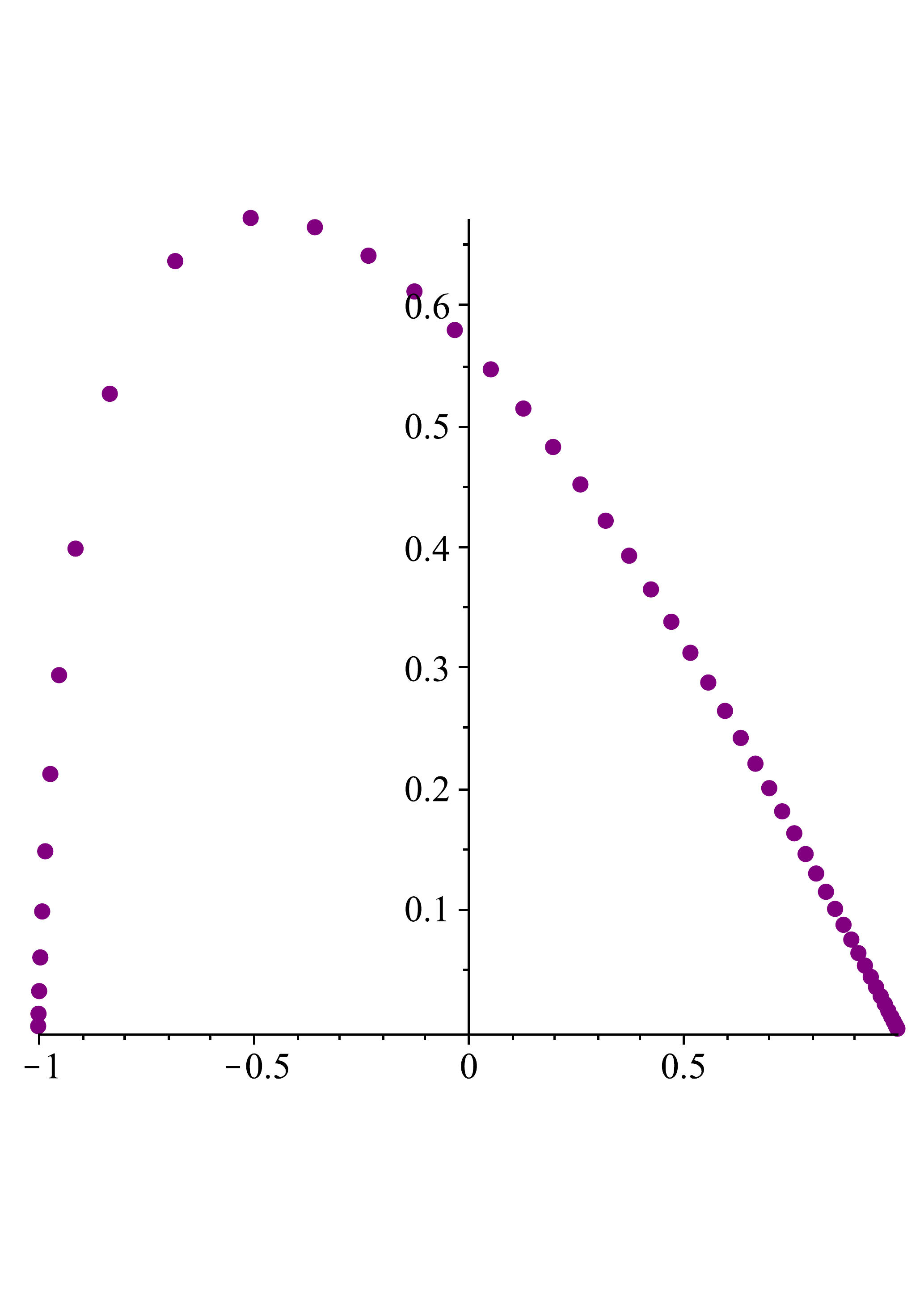}
			\caption{$s=-1-0.95i$.}
			\label{fig:s=095}
		\end{subfigure}
		\begin{subfigure}{.245\textwidth}
			\centering
			\includegraphics[width=\linewidth]{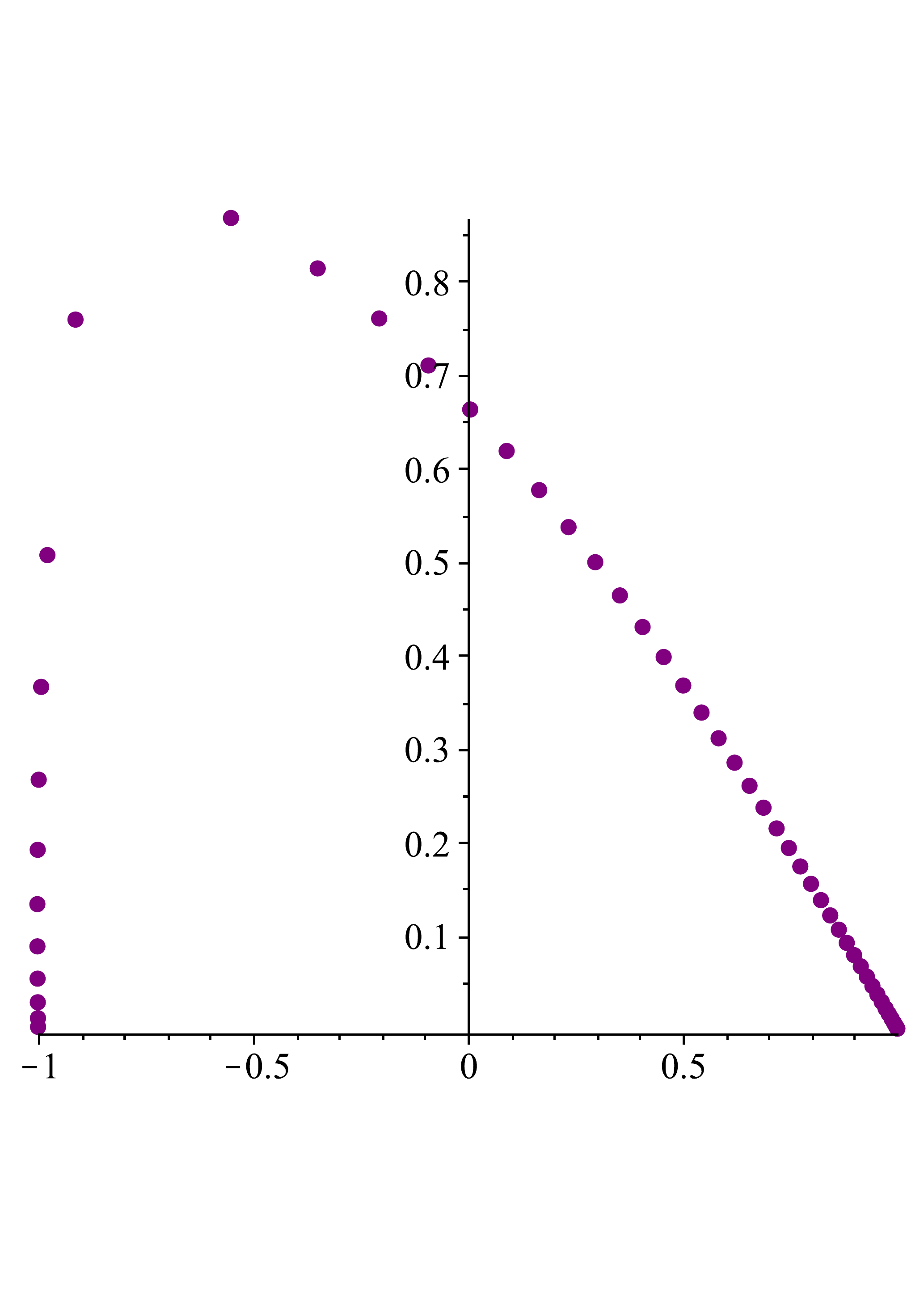}
			\caption{$s=-1-1.05i$.}
			\label{fig:s=105}
		\end{subfigure}%
		\begin{subfigure}{.245\textwidth}
			\centering
			\includegraphics[width=\linewidth]{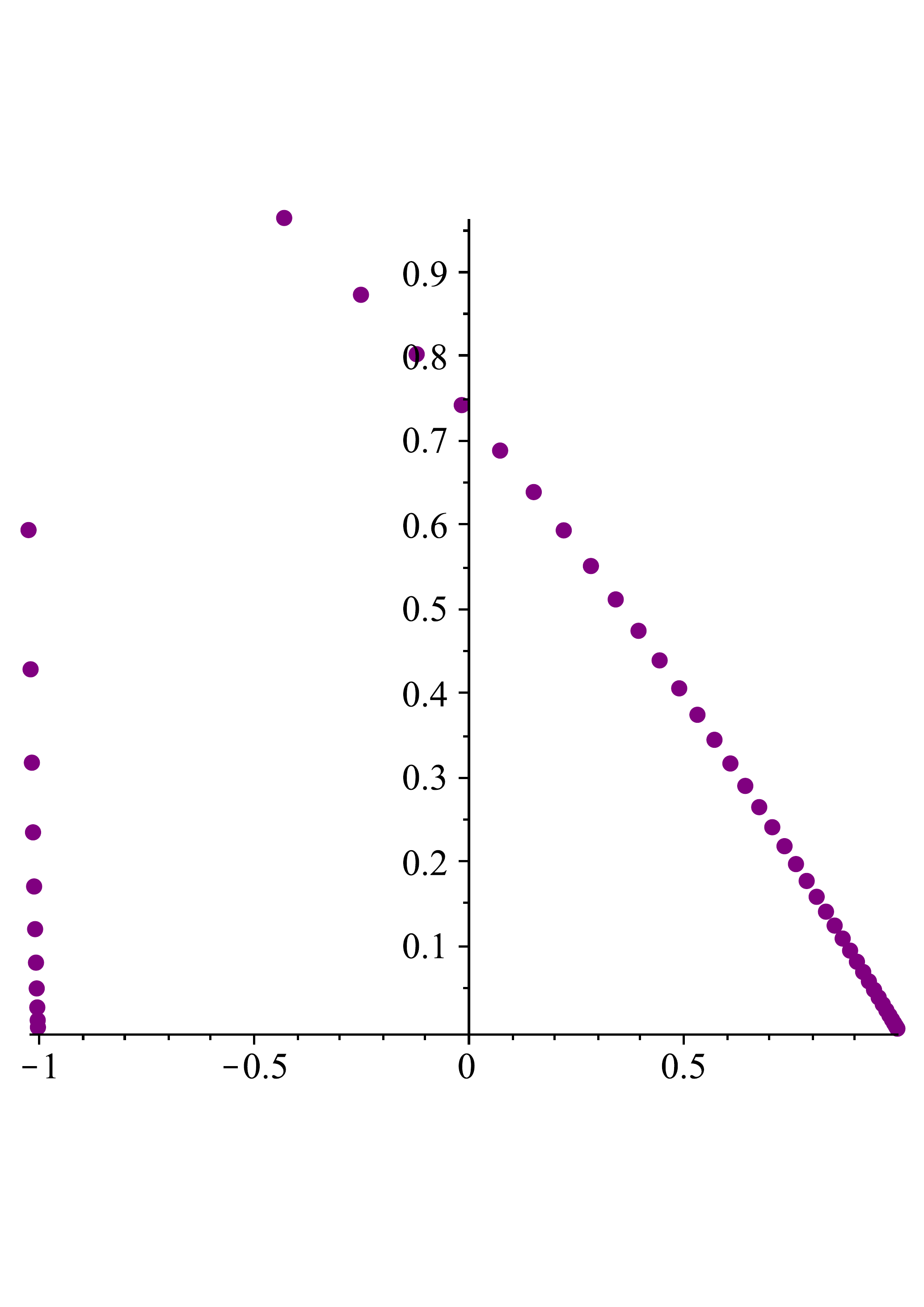}
			\caption{$s=-1-1.15i$.}
			\label{fig:s=115}
		\end{subfigure}
		\caption{Zeros of $p_{50}(z;s)$ defined in \eqref{eq: ortho p defn} as $s$ moves from $s=-1-0.85i\in\mathfrak{G}_0$ to $s=-1-1.15i\in\mathfrak{G}_1^-$.}
		\label{fig: move across breaking curve}
	\end{figure}

	Despite many successful applications of potential theory to the analysis of non-Hermitian orthogonal polynomials via the GRS program, we adopt an alternate viewpoint based on deformation techniques born from advances in the theory of random matrices and integrable systems. We will make heavy use of the technique known as \textit{continuation in parameter space}, first developed in the context of integrable systems (c.f. \cite{kamvissis2003semiclassical,tovbis2010nonlinear,tovbis2004semiclassical}), but which has only recently been applied in the field of orthogonal polynomials \cite{bertola2011boutroux,bertola2015asymptotics,bertola2016asymptotics}. In contrast to the GRS program, where one constructs a so-called  $g$-function as a solution to a certain variational problem, now one constructs a scalar function which solves a certain Riemann-Hilbert problem, which we call the \textit{$h$-function} or \textit{modified external field}. 

	We quickly note that as the weight function we consider, $\exp(-n f(z;s))$, depends on the parameter $s$, the scalar Riemann-Hilbert problem also depends on the parameter $s$. Importantly, the number of arcs over which this Riemann-Hilbert problem is posed, or equivalently the genus of the underlying Riemann surface, is also to be determined. Indeed, we will see that $h$-functions corresponding to Riemann surfaces of different genus lead to asymptotic expansions which possess markedly different behavior as $n\to\infty$. This difference is analogous to the difference in asymptotic behavior of the polynomials (and their recurrence coefficients) in the one cut and two cut cases, as described above for the GRS program. However, once one proves that for a specified genus and corresponding $s\in\C$ the scalar problem has a solution, one may continue with the process of steepest descent as will be outlined in Section~\ref{sec: steepest descent} below. 
	
	We will see that the $h$-functions constructed in \eqref{eq: subcritical h function} and \eqref{eq: supercritical h function} are the desired $h$-functions corresponding to genus 0 and 1 regimes, respectively, when $s\in i\R_-$. 
	
	In order to establish the global phase portrait for all $s\in\C$, we deform these solutions off of the imaginary axis using the technique of continuation in parameter space discussed above. During this deformation process, we will encounter curves in the parameter space which separate regions of different genera. These curves in parameter space are called \textit{breaking curves} and we denote the set of breaking curves, along with their endpoints, as $\mathfrak{B}$. For our purposes, breaking curves can only originate and terminate at what are called \textit{critical breaking points}, and we will see that the only critical breaking points we encounter in the present work are $s=\pm 2$. The description of the breaking curves in the parameter space forms our first main result.
	\begin{theorem}\label{thm: global phase portrait}
		There are two critical breaking points at $s=\pm 2$ and $\mathfrak{B}=\mathfrak{b}_{-\infty}\cup\mathfrak{b}_{\infty}\cup \mathfrak{b}_{+}\cup\mathfrak{b}_{-}\cup\{\pm 2\}$. Here, $\mathfrak{b}_{-\infty} = \left(-\infty, -2\right)$ and $\mathfrak{b}_{\infty} = (2, \infty)$. The breaking curve $\mathfrak{b}_{+}$ connects $-2$ and $2$ while remaining in the upper half plane, and the breaking curve $\mathfrak{b}_{-}$ is obtained by reflecting $\mathfrak{b}_{+}$ about the real axis. 
	\end{theorem}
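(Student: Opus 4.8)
The plan is to reduce the description of $\mathfrak{B}$ to the study of the zero set of a single explicit function on the $s$-plane, namely the value of $\Re h$ at the finite saddle point of the genus-zero modified external field. Recall from the Restatement of \cite{deano2014large} that, near the relevant part of the imaginary axis, the genus-zero $h$-function can be written in closed form as
\[
h(z;s)=2\log\!\bigl(z+\sqrt{z^2-1}\bigr)-s\sqrt{z^2-1},
\]
normalized by $h(1;s)=0$ with $\sqrt{z^2-1}\sim z$ at $\infty$, so that $\bigl(h'(z;s)\bigr)^2=\dfrac{(2-sz)^2}{z^2-1}$ and the quadratic differential $\varpi^{(0)}_s=-\bigl(h'\bigr)^2\,dz^2$ has a double zero at $z_0(s):=2/s$, simple poles at $z=\pm1$ and a pole of order four at $z=\infty$. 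First I would carry out the continuation in parameter space of this $h$-function off the imaginary axis; the content of that step is that, away from a set of curves, the explicit $h$ above keeps satisfying all the defining properties of the Restatement — in particular the trajectory $\gamma_m(s)$ joining $-1$ and $1$ exists, is simple, and $\Re h>0$ on both its sides — so that $\mathfrak{G}_0$ is exactly the open set on which these conditions hold. Breaking occurs when the saddle $z_0(s)$ of $\Re h$ reaches the band level, that is, when the critical value $\mathcal{R}(s):=\Re\,h\bigl(z_0(s);s\bigr)$ vanishes; hence $\mathfrak{B}=\partial\mathfrak{G}_0=\{\,s:\mathcal{R}(s)=0\,\}$, with the genus-one $h$-function of the Restatement of \cite{celsus2020supercritical} taking over beyond it (the collision condition $\lambda_0(s)=\lambda_1(s)$ there is the same condition as $\mathcal{R}(s)=0$).

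Substituting $z_0(s)=2/s$ yields the closed form
\[
\mathcal{R}(s)=\Re\!\left[\,2\log\frac{2+\sqrt{4-s^2}}{s}-\sqrt{4-s^2}\,\right],
\]
together with the elementary identity $\dfrac{d}{ds}h\bigl(z_0(s);s\bigr)=\partial_s h(z;s)\big|_{z=z_0}=-\sqrt{z_0^2-1}=-\dfrac{\sqrt{4-s^2}}{s}$ (using $h'(z_0;s)=0$), which vanishes only at $s=\pm2$. From these I would read off $\mathfrak{B}\cap\R$ directly. For $0<|s|<2$ one has $\sqrt{4-s^2}\in(0,2)$ and, writing $u=\sqrt{4-s^2}$, $\mathcal{R}=\log\frac{2+u}{2-u}-u$ has derivative $\frac{u^2}{4-u^2}>0$ and vanishes at $u=0$, so $\mathcal{R}>0$ on $(-2,2)\setminus\{0\}$ and $\mathcal{R}(s)\to+\infty$ as $s\to0$; thus $(-2,2)\subset\mathfrak{G}_0$. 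For real $|s|>2$ one has $\sqrt{4-s^2}=\pm i\sqrt{s^2-4}$ and $\bigl|\tfrac{2\pm i\sqrt{s^2-4}}{s}\bigr|=\tfrac{\sqrt{s^2}}{|s|}=1$, so both terms in the bracket are purely imaginary and $\mathcal{R}\equiv0$; these two rays are $\mathfrak{b}_{-\infty}=(-\infty,-2)$ and $\mathfrak{b}_{\infty}=(2,\infty)$, and a local computation of the transverse derivative of $\mathcal{R}$ shows $\mathcal{R}<0$ on both sides of each, so that each ray separates the two components $\mathfrak{G}_1^{\pm}$ of the genus-one region.

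Off the real axis I would use the symmetries. Conjugating the orthogonality relation gives $\overline{h(\bar z;\bar s)}=h(z;s)$, hence $\mathcal{R}(\bar s)=\mathcal{R}(s)$, so $\mathfrak{B}$ is symmetric about $\R$; likewise $p_n(z;-s)=(-1)^n p_n(-z;s)$ gives $\mathcal{R}(-s)=\mathcal{R}(s)$, so $\mathfrak{B}$ is symmetric about $0$ as well. On the imaginary axis $\mathcal{R}(it)=2\log\frac{2+\sqrt{4+t^2}}{t}-\sqrt{4+t^2}$ is positive for $0<t<t_c$, zero at $t=t_c$ by \eqref{eq: tc def}, and negative for $t>t_c$, so $it_c\in\mathfrak{B}$. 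Since $\mathcal{R}$ is harmonic off the two real rays and $\nabla\mathcal{R}$ vanishes only at $s=\pm2$, its zero set off the rays is a union of simple arcs without loops (maximum principle), each of whose ends lies at $\pm2$; near $s=\pm2$ the local behaviour $h\bigl(z_0(s);s\bigr)\sim c\,(s\mp2)^{3/2}$ shows that exactly three branches of $\{\mathcal{R}=0\}$ emanate, at angles $120^\circ$ apart, one of them being the real ray. A short count of the remaining arc-ends at $s=\pm2$, together with the fact that these arcs cannot cross $\R$ (where $\mathcal{R}>0$ on $(-2,2)$ and $\mathcal{R}=0$ only on the rays), forces exactly one arc from $-2$ to $2$ through the upper half plane, passing through $it_c$, and, by reflection, its mirror image in the lower half plane; these are $\mathfrak{b}_+$ and $\mathfrak{b}_-$. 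Finally one checks that no further breaking takes place inside the genus-one regions, since the genus-one $h$-function of \cite{celsus2020supercritical} continues throughout $\{\mathcal{R}<0\}$ and a potential of degree one admits no $h$-function of genus exceeding one. Assembling the pieces gives $\mathfrak{B}=\mathfrak{b}_{-\infty}\cup\mathfrak{b}_{\infty}\cup\mathfrak{b}_+\cup\mathfrak{b}_-\cup\{\pm2\}$.

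The main obstacle is the very first step: proving that $\mathfrak{B}$ is exactly $\{\mathcal{R}=0\}$. One must show that throughout $\{\mathcal{R}>0\}$ the explicit $h$ above genuinely satisfies every inequality of the Restatement of \cite{deano2014large} — not merely the sign condition at the saddle — which requires controlling the full trajectory structure of $\varpi^{(0)}_s$ and the global sign of $\Re h$ as $s$ varies; and, symmetrically, that the genus-one $h$-function stays valid throughout $\{\mathcal{R}<0\}$. This is the heart of the continuation-in-parameter-space argument. A secondary difficulty is the global topology of $\{\mathcal{R}=0\}$ off the real axis — excluding spurious components or arcs escaping to infinity rather than ending at $\pm2$ — which is plausible from the explicit formula but needs the harmonicity, gradient, and sign arguments sketched above. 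By comparison, the explicit evaluations and the local analysis near $s=\pm2$ are routine.
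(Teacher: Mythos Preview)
Your proposal is correct and follows essentially the same route as the paper: reduce $\mathfrak{B}$ to the zero set of $\mathcal{R}(s)=\Re h(2/s;s)$ for the explicit genus-zero $h$, identify the critical breaking points $s=\pm2$ via the vanishing of $\frac{d}{ds}h(2/s;s)=-\sqrt{4/s^2-1}$, check $\mathcal{R}\equiv0$ on the real rays $|s|>2$ and $\mathcal{R}>0$ on $(-2,2)$, use harmonicity off the real axis to exclude closed loops, use the $(s\mp2)^{3/2}$ local behaviour to count three branches at each critical point, and invoke the conjugation symmetry to pair $\mathfrak{b}_+$ with $\mathfrak{b}_-$. The paper supplies exactly the ingredients you flag as the ``main obstacle'' (your first paragraph) via its Lemma on breaking points and the continuation-in-parameter-space construction of the genus-$0$ and genus-$1$ $h$-functions, and it rules out arcs escaping to infinity by the expansion $h(2/s;s)=-is+i\pi+\mathcal{O}(1/s)$ as $s\to\infty$, which you should add to your sketch in place of the vaguer appeal to ``global topology.''
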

	As seen in Figure~\ref{fig: breaking curve}, the set $\mathfrak{B}$ divides the parameter space into three connected components: $\mathfrak{G}_0$ and $\mathfrak{G}_1^\pm$. 
	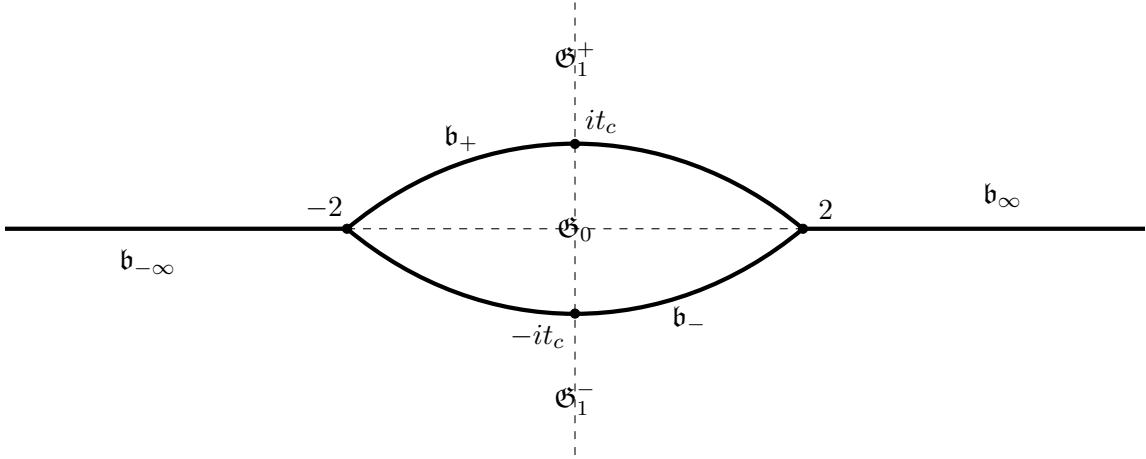
\begin{figure}[h]
		\centering
		\begin{tikzpicture}[scale=1.5]
		\draw [ultra thick] (-2,0) to (-5,0);
		\draw [ultra thick] (2,0) to (5, 0);
		\node [above] at (2.2, 0) {$2$};
		\node [above] at (-2.2, 0) {$-2$};
		\node [right] at (0, 0.95) {$it_c$};
		\node [left] at (0, -0.95) {$-it_c$};
		\node at (0,0) {$\mathfrak{G}_0$};
		\node at (0,1.5) {$\mathfrak{G}_1^+$};
		\node at (0,-1.5) {$\mathfrak{G}_1^-$};
		\node at (-3.75, -0.3) {$\mathfrak{b}_{-\infty}$};
		\node at (3.75,0.3) {$\mathfrak{b}_{\infty}$};
		\node at (-1,0.8) {$\mathfrak{b}_{+}$};
		\node at (1,-0.8) {$\mathfrak{b}_{-}$};
		
		\draw [ultra thick] (-2,0) to [bend right=40] (2, 0);
		\draw [ultra thick] (-2,0) to [bend left=40] (2, 0);
		
		\draw [dashed] (-2,0) to (2,0);
		\draw [dashed] (0,-2) to (0,2);
		
		\draw [fill] (2,0) circle [radius=0.04];
		\draw [fill] (-2,0) circle [radius=0.04];
		\draw [fill] (0,.75) circle [radius=0.04];
		\draw [fill] (0,-.75) circle [radius=0.04];
		\end{tikzpicture}
		\caption{Definitions of the regions $\mathfrak{G}_0$ and $\mathfrak{G}_1^\pm$ in the $s$-plane. The set $\mathfrak{B}$ is drawn in bold. The regular breaking points $\pm i t_c$ are indicated on the breaking curves $\mathfrak{b}^\pm$, where we recall that $t_c$ was defined in \eqref{eq: tc def}.}
		\label{fig: breaking curve}
	\end{figure}
	We will see that the region $\mathfrak{G}_0$ corresponds to the genus $0$ region, whereas the regions $\mathfrak{G}_1^\pm$ correspond to genus $1$ regions. 
	
	Having determined the description of the set $\mathfrak{B}$, we will be able to deduce asymptotic formulas for the recurrence coefficients for the orthogonal polynomials defined in \eqref{eq: ortho p defn} for all $s\in\C\setminus\mathfrak{B}$ via deformation techniques. We quickly digress to discuss notation before stating these results. We first introduce monic polynomials, $p_n^N(z;s)$ which satisfy the following orthogonality conditions.
	\begin{equation}\label{eq: ortho p defn N n} 
	\int_{-1}^1 p_n^N(z;s) z^k e^{-Nf(z;s)}\, dz = 0, \qquad k= 0, 1, \dots, n-1,
	\end{equation}
	where $N$ is a fixed integer. Note that for each $N\in\N$, we have a family of polynomials $\{p_n^N(z;s)\}_{n=0}^\infty$. The polynomials that we consider in \eqref{eq: ortho p defn} are given by $p_n(z;s) = p_n^n(z;s)$; that is, we consider the polynomials along the diagonal where $N=n$.  Now, provided the polynomials exist for the appropriate values for $n, N$, and $s$, they satisfy the following three term recurrence relation
	\begin{equation}\label{eq: ttrr}
		z p_n^N(z;s) = p_{n+1}^N(z;s) + \alpha_n^N(s) p_n^N(z;s) +\beta_{n}^N(s) p_{n-1}^N(z;s).
	\end{equation}
	In the present work, we concern ourselves with the situation $N=n$, and for sake of notation we set $\alpha_n := \alpha_n^n$ and $\beta_n := \beta_n^n$. It should be stressed that the polynomials $p_{n-1}$, $p_n$ and $p_{n+1}$ do \textit{not} satisfy the recurrence relation \eqref{eq: ttrr}. We now state our second result, on the asymptotics of the recurrence coefficients in the region $\mathfrak{G}_0$. 
	\begin{theorem}\label{thm: recurrence coeffs genus 0}
		Let $s\in \mathfrak{G}_0$. Then the recurrence coefficients $\alpha_n$ and $\beta_n$ exist for large enough $n$, and they satisfy, as $n\to\infty$,
		\begin{equation}
				\alpha_n(s) = \frac{2s}{(s^2-4)^2}\frac{1}{n^2} + \mathcal{O}\left(\frac{1}{n^3}\right), \qquad \beta_n(s) = \frac{1}{4} + \frac{s^2+4}{4(s^2-4)^2}\frac{1}{n^2}  + \mathcal{O}\left(\frac{1}{n^4}\right).
			\end{equation}
	\end{theorem}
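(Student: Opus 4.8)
The plan is to obtain the theorem from the Deift--Zhou nonlinear steepest descent analysis of the Riemann--Hilbert problem for the orthogonal polynomials along the diagonal $N=n$. Recall that $p_n(z;s)$ is the $(1,1)$ entry of the solution $Y=Y^{(n)}$ of the $2\times 2$ problem with jump $\left(\begin{smallmatrix}1 & e^{-nsz}\\ 0 & 1\end{smallmatrix}\right)$ across the arc $\gamma_m(s)$ and with $Y(z)=\bigl(I+Y_1 z^{-1}+Y_2 z^{-2}+\mathcal{O}(z^{-3})\bigr)z^{n\sigma_3}$ as $z\to\infty$, and that the recurrence coefficients are read off from this data through the standard formulas
\[
\beta_n(s)=(Y_1)_{12}(Y_1)_{21},\qquad \alpha_n(s)=(Y_1)_{11}+\frac{(Y_2)_{12}}{(Y_1)_{12}}.
\]
The key input is the genus-zero $h$-function, whose existence throughout $\mathfrak{G}_0$ is established above by continuation in parameter space (it coincides, for $s\in i\R_-$, with the function of the \emph{Restatement of Results in \cite{deano2014large}}; in particular $h(z;s)=-sz+2\log 2+2\log z+\mathcal{O}(z^{-1})$ at infinity, $\Re h=0$ on $\gamma_m(s)$ with $\Re h>0$ on either side, and $\Re h$ vanishes like $(z\mp1)^{1/2}$ at $\pm1$). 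Existence of $\alpha_n,\beta_n$ for large $n$ then comes for free from the solvability of the final small-norm problem.

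Next I would run the standard chain of invertible transformations $Y\mapsto T\mapsto S\mapsto R$: conjugating by $e^{\frac{n}{2}h\sigma_3}$ (up to constant factors) normalizes $Y$ at infinity and renders the jump across $\gamma_m(s)$ purely oscillatory; opening a lens around $\gamma_m(s)$ leaves jumps that are $I+\mathcal{O}(e^{-cn})$ off a neighbourhood of the endpoints; and matching against a global parametrix $N$ --- the \say{bare} Szeg\H{o}-type matrix attached to the endpoints $\pm1$, which is exactly the Legendre one and already gives the leading values $\beta_n\to\tfrac14$, $\alpha_n\to0$ --- together with Airy parametrices in small disks $|z\mp1|<\delta$ produces
\[
R(z)=I+\frac{R_1(z)}{n}+\frac{R_2(z)}{n^2}+\mathcal{O}\bigl(n^{-3}\bigr),\qquad n\to\infty .
\]
The jump of $R$ lives on the lens boundaries (where it is $I+\mathcal{O}(e^{-cn})$) and on the circles $|z\mp1|=\delta$, where the Airy mismatch has a full $1/n$-expansion with coefficients rational in $z$ and poles only at $\pm1$; hence $R_1$ and $R_2$ are computed in closed form by residues, $R_1$ with a simple pole at each endpoint and $R_2$ with a pole of order at most two.

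Finally I would unravel the transformations at $z=\infty$. Writing $N(z)=I+N_1 z^{-1}+N_2 z^{-2}+\cdots$ and expanding $R_1,R_2$ near infinity, the constant ($\ell$-type) factors coming from $h$ cancel in the combinations above and $Y_1,Y_2$ become explicit expressions in $N_1,N_2$ and the leading Laurent coefficients of $R_1,R_2$ --- the latter carrying all of the $n$-dependence and, since the $R$-jump is singular only at $z=\pm1$, producing the powers of $s^2-4$ in the denominators. Substituting into the formulas for $\alpha_n,\beta_n$ and simplifying yields the stated expansions; the leading $\tfrac14$ and $0$ come from $N$ alone, the $1/n^2$ corrections from $R_1,R_2$. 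The vanishing of the $1/n$ term of both coefficients, and of the $1/n^3$ term of $\beta_n$, is forced by these residue formulas together with the symmetric structure of the Airy parametrix --- the analogue of the well-known fact that recurrence coefficients of one-cut unitary ensembles expand in powers of $1/n^2$ --- which is what sharpens the errors to $\mathcal{O}(n^{-3})$ for $\alpha_n$ and $\mathcal{O}(n^{-4})$ for $\beta_n$.

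The main obstacle is exactly this last bookkeeping step: pushing the expansion of $R$ to two nontrivial orders, performing the residue computations at $\pm1$, and tracking the cancellations carefully enough to land on the clean rational functions $\tfrac{2s}{(s^2-4)^2}$ and $\tfrac{s^2+4}{4(s^2-4)^2}$ --- in particular verifying that the would-be $\mathcal{O}(1/n)$ contributions, and the $\mathcal{O}(1/n^3)$ one in $\beta_n$, cancel identically. By contrast, the more delicate analytic ingredients --- solvability of the $h$-function problem with the correct geometry of $\gamma_m(s)$ and sign of $\Re h$ throughout $\mathfrak{G}_0$ --- are supplied by the preceding sections and are used here as black boxes.
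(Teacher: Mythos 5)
Your overall strategy is the same as the paper's: run the Deift--Zhou chain $Y\mapsto T\mapsto S\mapsto R$ with the genus-zero $h$-function used as a black box, extract $T^{(1)},T^{(2)}$ from the global parametrix $M$ together with the $1/n$ and $1/n^2$ coefficients $R_1,R_2$ of the small-norm solution, solve for these by residues at $\pm 1$, and substitute into \eqref{eq: recurrence coeffs in terms of T}. The paper performs precisely this calculation in Section~\ref{sec: asymptotics I}.

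However, there is one genuine error in your proposal that would lead to wrong constants. You place \emph{Airy} parametrices in the disks $|z\mp1|<\delta$. But the endpoints $\pm 1$ are \emph{hard edges} for this problem: the $h$-function satisfies $\Re h(z;s)=\mathcal{O}((z\mp1)^{1/2})$ there, not $\mathcal{O}((z\mp1)^{3/2})$, so the correct local parametrices are built from modified Bessel and Hankel functions as in \eqref{eq: Bessel parametrix}--\eqref{eq: p-1 Bessel}, not Airy functions. This is not a cosmetic detail: the explicit residue matrices $A^{(1)},B^{(1)},A^{(2)},B^{(2)}$ in \eqref{eq: A1 and B1} and the two displays below it, and hence the final rational functions $\tfrac{2s}{(s^2-4)^2}$ and $\tfrac{s^2+4}{4(s^2-4)^2}$, come directly from the coefficients $B_k$ in the large-$\zeta$ expansion \eqref{eq: Bessel parametrix expansion} of the Bessel parametrix; the corresponding Airy coefficients differ, and would not reproduce the stated answer. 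Airy parametrices only enter in this paper at soft edges ($\lambda_0,\lambda_1$ in the genus-one case) and in the Painlev\'e~XXXIV analysis near $s=\pm2$. Relatedly, your attribution of the vanishing of the $1/n$ term to ``the symmetric structure of the Airy parametrix'' is off: in the paper's computation $R_1^{(1)}$ and $R_1^{(2)}$ in \eqref{eq: Rij eqn genus 0} are nonzero, and the $1/n$ contributions cancel only after combining them with $M^{(1)},M^{(2)}$ inside the rational expressions \eqref{eq: recurrence coeffs in terms of T}; the cancellation is an algebraic identity in the Bessel-derived data, not a symmetry of Airy. Fixing the local parametrix to Bessel and redoing the residue bookkeeping would bring your argument into agreement with the paper.
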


	As mentioned above, for $s\in \mathfrak{G}_1^\pm$, the underlying Riemann surface has genus $1$. Indeed, the Riemann surface corresponds to the algebraic equation $\xi^2 = Q(z;s)$, where $Q$ is a rational function, and we take the branch cuts for the Riemann surface on two arcs - one connecting $1$ to $\lambda_0(s)$, labeled $\gamma_{m ,0}$, and the other connecting $-1$ to $\lambda_1(s)$,  labeled $\gamma_{m ,1}$, where $\lambda_0$ and $\lambda_1$ will be determined. Moreover, for $s\in\mathfrak{G}_1^\pm$, the asymptotics of the recurrence coefficients will depend on theta functions on our Riemann surface. These theta functions will be used to construct functions $\mathcal{M}_1(z, k)$ and $\mathcal{M}_2(z, k)$, along with a constant $d$, whose precise descriptions we provide in Section~\ref{sub: Genus 1 Global Parametrix}. The functions $\mathcal{M}_{1, n}(z, d)  \equiv \mathcal{M}_{1}(z, d)$ and  $\mathcal{M}_{2, n}(z, d)\equiv \mathcal{M}_{2}(z, d)$ are holomorphic in $\C \setminus (\gamma_{m, 0} \cup \gamma_{m, 1} \cup \gamma_{c, 1})$, where $\gamma_{c, 1}$ is a to be determined curve connecting $\lambda_0(s)$ to $\lambda_1(s)$, and have at most one simple zero there. Furthermore, for $N=n$ and given $\epsilon>0$, we will need to consider asymptotic results on a subsequence $\N(s, \epsilon)$, whose precise definition we defer to Section \ref{sub: Genus 1 Global Parametrix}. However, to make use of this subsequence, we need to know that the cardinality of the set $\N(s, \epsilon)$ is infinite, which we prove in Lemma \ref{seq-prop}.
	These functions $\mathcal{M}_{1, n}(z, d)$ and $\mathcal{M}_{2, n}(z, d)$ arise in the asymptotics of the recurrence coefficients for $s\in\mathfrak{G}_1^\pm$, which we state below. 
	\begin{theorem}\label{thm: recurrence coeffs genus 1}
		Let $s\in \mathfrak{G}_1^\pm$ and $n \in \N(s, \epsilon)$. Then the recurrence coefficients $\alpha_n$ and $\beta_n$ exist for large enough $n$, and they satisfy, as $n\to\infty$, 
		\begin{equation}
			\alpha_n(s) = \frac{\lambda_1^2(s)-\lambda_0^2(s)}{4+2\lambda_0(s)-2\lambda_1(s)} + \frac{d}{dz}\left[\log\mathcal{M}_{2, n}(1/z,d)-\log\mathcal{M}_{2, n}(1/z,-d)\right]\Big|_{z=0}+\mathcal{O}_{\epsilon}\left(\frac{1}{n}\right)
		\end{equation}
		and
		\begin{equation}
			\beta_n(s) = \frac{(2+\lambda_0(s)-\lambda_1(s))^2}{16}\frac{\mathcal{M}_{1, n}(\infty, -d)\mathcal{M}_{2, n}(\infty, d)}{\mathcal{M}_{1, n}(\infty, d)\mathcal{M}_{2, n}(\infty, -d)}  + \mathcal{O}_{\epsilon}\left(\frac{1}{n}\right),
		\end{equation}
	\end{theorem}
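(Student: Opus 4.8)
The plan is to run the Deift--Zhou nonlinear steepest descent analysis, outlined in Section~\ref{sec: steepest descent}, on the Fokas--Its--Kitaev Riemann--Hilbert problem for $p_n^n(z;s)$, using the genus $1$ modified external field $h(z;s)$ --- the $h$ function of the shape recorded in the restatement of \cite{celsus2020supercritical}, now available on all of $\mathfrak{G}_1^\pm$ by the continuation argument behind Theorem~\ref{thm: global phase portrait}. By the symmetry $p_n(z;-s) = (-1)^n p_n(-z;s)$, which gives $\alpha_n(-s) = -\alpha_n(s)$ and $\beta_n(-s) = \beta_n(s)$ and maps $\mathfrak{G}_1^+$ onto $\mathfrak{G}_1^-$, it suffices to treat one of the two regions. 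Recall that the $2\times2$ matrix $Y$ normalized by $Y(z)z^{-n\sigma_3}\to I$ as $z\to\infty$ encodes the recurrence coefficients: writing $Y(z) = \left(I + Y_1 z^{-1} + Y_2 z^{-2} + \mathcal{O}(z^{-3})\right)z^{n\sigma_3}$ one has $\beta_n = (Y_1)_{12}(Y_1)_{21}$ and $\alpha_n = (Y_2)_{12}/(Y_1)_{12} - (Y_1)_{22}$, so it is enough to produce asymptotics for $Y_1$ and $Y_2$; the existence of $\alpha_n,\beta_n$ for large $n$ comes for free once the analysis shows $Y$ exists along $\N(s,\epsilon)$.

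First I would normalize at infinity: the transformation $Y\mapsto T$ built from $e^{\pm n h(z;s)\sigma_3/2}$ and the constant $e^{n\ell\sigma_3/2}$ (the latter from the $-\ell + 2\log z$ term in the expansion of $h$ at $\infty$) makes $T\to I$, and, by the jump and sign relations for $h$ in the restatement of \cite{celsus2020supercritical} --- $h_+ + h_- \equiv 0$ or $4\pi i\omega_0$ on $\gamma_{m,0},\gamma_{m,1}$ with $\Re h = 0$ there and $\Re h > 0$ on either side, while $\Re h < 0$ on $\gamma_{c,1}$ --- turns the jump on $\gamma_{m,0}\cup\gamma_{m,1}$ purely oscillatory and leaves $I + \mathcal{O}(e^{-cn})$ jumps on $\gamma_{c,0}\cup\gamma_{c,1}$. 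Opening lenses around $\gamma_{m,0}$ and $\gamma_{m,1}$ via the usual $2\times2$ factorization then yields $S$, whose jump is $I + \mathcal{O}(e^{-cn})$ off $\gamma_{m,0}\cup\gamma_{m,1}$ and is the constant off-diagonal matrix (twisted by the periods $\omega_0,\eta_1$) on these arcs.

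Second I would build the parametrices. The global parametrix $P^{(\infty)}$ solves the model problem with those constant jumps on $\gamma_{m,0}\cup\gamma_{m,1}$ and the appropriate multipliers on $\gamma_{c,1}$; it is constructed on the genus $1$ Riemann surface $\xi^2 = Q(z;s)$ via Riemann theta functions, and its entries are exactly the functions $\mathcal{M}_{1, n}(z,\pm d)$, $\mathcal{M}_{2, n}(z,\pm d)$ and the constant $d$ of Section~\ref{sub: Genus 1 Global Parametrix}; the dependence on $n$ enters through the shift of the theta argument by $n$ times a fixed period vector, which is why the recurrence coefficients oscillate rather than converge. At the hard edges $z=\pm1$, where $\Re h = \mathcal{O}((z\mp1)^{1/2})$, I use Bessel parametrices, and at the soft edges $z=\lambda_0,\lambda_1$, where $\Re h = \mathcal{O}((z-\lambda_j)^{3/2})$, Airy parametrices. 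Setting $R = S\,(P^{(\infty)})^{-1}$ away from the matching disks and $R = S\,P^{-1}$ inside them, $R$ solves a Riemann--Hilbert problem with jump $I + \mathcal{O}(e^{-cn})$ off the disks and $I + \mathcal{O}_\epsilon(1/n)$ on their boundaries, the latter bound being uniform exactly when the theta argument stays $\epsilon$-away from the theta divisor so that $\mathcal{M}_{1, n}(\cdot,\pm d)$, $\mathcal{M}_{2, n}(\cdot,\pm d)$ and their reciprocals remain bounded on the matching circles; this is the role of restricting to $n\in\N(s,\epsilon)$, which is infinite by Lemma~\ref{seq-prop}. Small-norm theory then gives $R = I + R_1 z^{-1} + \mathcal{O}(z^{-2})$ with $R_1 = \mathcal{O}_\epsilon(1/n)$.

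Finally I would unwind. Tracing the chain back to $z=\infty$ writes $Y_1$ and $Y_2$ in terms of the Laurent coefficients at $\infty$ of $P^{(\infty)}$ and of $R$. The expansion of $P^{(\infty)}$ supplies the algebraic leading terms --- $\frac{\lambda_1^2(s)-\lambda_0^2(s)}{4+2\lambda_0(s)-2\lambda_1(s)}$ in $\alpha_n$ and $\frac{(2+\lambda_0(s)-\lambda_1(s))^2}{16}$ in $\beta_n$, coming from the scalar outer factor that normalizes $P^{(\infty)}$ at infinity (the same computation that pins down $\lambda_0,\lambda_1$) --- together with the $n$-dependent theta corrections $\frac{d}{dz}\left[\log\mathcal{M}_{2, n}(1/z,d)-\log\mathcal{M}_{2, n}(1/z,-d)\right]\Big|_{z=0}$ and $\frac{\mathcal{M}_{1, n}(\infty,-d)\mathcal{M}_{2, n}(\infty,d)}{\mathcal{M}_{1, n}(\infty,d)\mathcal{M}_{2, n}(\infty,-d)}$, while $R_1$ contributes the $\mathcal{O}_\epsilon(1/n)$ remainder; collecting these gives the stated formulas. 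The main obstacle is precisely this last uniformity --- establishing $R = I + \mathcal{O}_\epsilon(1/n)$ --- which forces one to control the genus $1$ global parametrix along the whole subsequence, in particular to keep the (at most one) zero of each $\mathcal{M}_{j, n}$ away from $\infty$ and from the matching circles, and to check that the Bessel and Airy parametrices agree with $P^{(\infty)}$ on the disk boundaries up to $I + \mathcal{O}_\epsilon(1/n)$; the arithmetic of the period vector relative to the theta divisor, packaged in the definition of $\N(s,\epsilon)$ and in Lemma~\ref{seq-prop}, is what makes this possible, since along a careless sub-subsequence the model problem degenerates.
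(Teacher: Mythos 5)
Your proposal follows essentially the same approach as the paper: run the Deift--Zhou steepest descent with the genus $1$ $h$-function, build the global parametrix via theta functions, use the restriction to $\N(s,\epsilon)$ (and Lemma~\ref{seq-prop}) to keep the model problem nondegenerate and get $R = I + \mathcal{O}_\epsilon(1/n)$, and unwind to read off $\alpha_n,\beta_n$ from the $z\to\infty$ expansion. The paper's proof simply carries out the explicit Laurent expansion of $M(z)=e^{n\tilde{g}(\infty)\sigma_3}\mathcal{L}^{-1}(\infty)\mathcal{L}(z)e^{-n\tilde{g}(z)\sigma_3}$ (expanding $\tilde{g}$, $\phi$, and $\mathcal{L}$ at infinity) to arrive at the stated formulas, a computation you outline but do not perform; the structure and rationale you give match the paper's.
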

	Above, the notation $f(n)=\mathcal{O}_\epsilon(1/n)$ indicates that there exists a constant which depends only on $\epsilon$, $M=M(\epsilon)$, such that $\left|f(n)\right|\leq M/n$ for large enough $n$. We recall that the parameter $\epsilon$ is used to define the set of valid indices, $\N(s,\epsilon)$, along which we take limits. Having determined the asymptotics of the recurrence coefficients of the polynomials in \eqref{eq: ortho p defn} when $s\in\C\setminus\mathfrak{B}$, our final two results recover these asymptotics when $s\in\mathfrak{B}$.
	
	As seen in Theorem~\ref{thm: global phase portrait}, the breaking curves $\mathfrak{b}_{-\infty}$ and $\mathfrak{b}_{\infty}$ are the intervals $(-\infty,-2)$ and $(2, \infty)$, respectively. The theory of orthogonal polynomials with respect to real weights, varying or otherwise, has been written about extensively in the literature, most notably from the viewpoint of potential theory. In particular, the results of Deift, Kriecherbauer, and McLaughlin in \cite{deift1998new} can be applied in conjunction with the GRS program to show that the empirical zero counting measure of the polynomials in \eqref{eq: ortho p defn} converge to a continuous measure supported on the interval $[-1,1]$ as $n\to\infty$, when $s\in\R$ and $|s|<2$. The results of \cite{deift1998new} can also be used to show that the corresponding limit measure is supported on $[-1,a)$ for some $a<1$ when $s>2$. Similarly, one also has that this measure is supported on $(b,1]$ for some $b>-1$ when $s\in\R$ is such that $s<-2$. The difference in the support of the limiting measure when $|s|>2$ and $|s|<2$ is also of interest in random matrix theory, and occurs when the soft edge meets the hard edge (see the work of Claeys and Kuijlaars \cite{claeys2008universality}). The asymptotics of orthogonal polynomials corresponding to the case $s\in\mathfrak{b}_{\infty}\cup\mathfrak{b}_{-\infty}$ follow from \cite[Theorem 2]{aptekarev2002sharp}. From the viewpoint of the present work, the transitions at $s=\pm 2$ can be seen to come from the fact these are critical breaking points. 
	
	As the case where $s\in\R\cap\mathfrak{B}$ has been extensively studied, we next consider the asymptotic behavior of the recurrence coefficients as we approach a regular breaking point which is not on the real line. More precisely, we let $s_*$ be a regular breaking point in $\mathfrak{b}_+\cup\mathfrak{b}_-$ and we let $s$ approach $s_*$ as
	\begin{equation}\label{def: ds regular breaking point}
		s= s_* + \frac{L_1}{n},
	\end{equation}
	where $L_1\in\C$ is such that $s=s(n)\in\mathfrak{G}_0$ for large enough $n$. The scaling limit \eqref{def: ds regular breaking point} is referred to as the double scaling limit, as it describes the behavior of the polynomials as both $n\to\infty$ and $s\to s_*$. This formulation then leads to the following description of the recurrence coefficients in the aforementioned double scaling limit.  
	
	\begin{theorem}\label{thm: rec coeff double scaling regular}
		Let $s^*\in\mathfrak{b}_+\cup\mathfrak{b}_-$ and let $s\to s_*$ as described in \eqref{def: ds regular breaking point}. Then the recurrence coefficients exist for large enough $n$, and they satisfy
		\begin{subequations}
			\begin{equation}
			\alpha_n(s)=\frac{\delta_n\left(2-\sqrt{4-s^2_*}\right)\sqrt{4-s_*^2}}{\sqrt{\pi}s_*^3 n^{1/2}}\frac{1}{n^{1/2}}
			-
			\frac{2\delta_n^2\left(2-\sqrt{4-s_*^2}\right)^2}{\pi s_*^5}\frac{1}{n}+\mathcal{O}\left(\frac{1}{n^{3/2}}\right)
			\end{equation}
			and
			\begin{equation}
			\beta_n(s) = \frac{1}{4} + \frac{\sqrt{4-s_*^2}}{2\sqrt{\pi}s_*^2}\frac{1}{n^{1/2}}-\frac{\delta_n^2}{2\pi s_*^2}\frac{1}{n} + \mathcal{O}\left(\frac{1}{n^{3/2}}\right),
			\end{equation}
		\end{subequations}
		as $n\to\infty$, where 
		\begin{equation}
			\delta_n =\delta_n(L_1)= e^{-in\kappa}\exp\left(L_1\left(\frac{4}{s_*^2}-1\right)^{1/2}\right), \qquad \kappa\in\R,
		\end{equation}
		and $\kappa$ is a constant given in \eqref{kappa}.
	\end{theorem}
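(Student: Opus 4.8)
The plan is to run the Deift--Zhou nonlinear steepest descent analysis for the Riemann--Hilbert problem associated to $p_n(z;s)$ in the double scaling regime $s=s_*+L_1/n$, and to track how the local parametrix at the coalescing endpoint changes the final asymptotic formula for $\alpha_n,\beta_n$. Since $s_*\in\mathfrak{b}_+\cup\mathfrak{b}_-$ is a \emph{regular} breaking point (not one of the critical points $s=\pm2$), the genus-$0$ $h$-function of the Restatement of Results in \cite{deano2014large} degenerates in a square-root fashion at $s_*$: a zero of $Q(z,s)$ collides with the branch point $\gamma_m(s)$ endpoint, so $\Re h$ ceases to be sign-definite on one side of $\gamma_m$. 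First I would make the standard sequence of transformations $Y\mapsto T\mapsto S$ using the genus-$0$ $g$/$h$-function, valid for $s\in\mathfrak{G}_0$; the opening of lenses is legitimate as long as $n\,\mathrm{dist}(s,\mathfrak{b}_+)$ is bounded below, but in the present scaling that product is $\sim|L_1|$, so exactly one of the two endpoints of $\gamma_m$ requires a nonstandard local parametrix.

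Next I would identify the correct local parametrix at the merging point. The germ of $h$ near the relevant endpoint behaves, after rescaling by $n$, like a linear-plus-constant function whose coefficient is governed by $(4/s_*^2-1)^{1/2}$ (this is where $\delta_n$ will come from), while the other endpoint $\mp1$ keeps the usual hard-edge (Bessel) parametrix and the interior keeps the genus-$0$ outer parametrix built from the Szeg\H{o} function and the $1/4$ leading term. Because the confluence here is of a simple zero of the potential derivative with a soft-type turning point — not a hard edge colliding with a soft edge as at $s=\pm2$ — the model problem is the one solvable in terms of the \emph{complementary error function}; concretely the $2\times2$ model is the standard $\mathrm{erfc}$-parametrix (equivalently, parabolic-cylinder functions of the appropriate order), with the large-parameter argument proportional to $L_1(4/s_*^2-1)^{1/2}$ and the oscillatory phase $e^{-in\kappa}$ supplied by the constant term of $h$ at $s_*$, with $\kappa$ as in \eqref{kappa}. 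Matching this $\mathrm{erfc}$-parametrix to the outer parametrix on the boundary of a shrinking disk produces a correction to the jump of $R:=S\,(\text{parametrices})^{-1}$ of size $\mathcal{O}(n^{-1/2})$, with the $n^{-1/2}$ coefficient an explicit rational function of $s_*$ times the $\mathrm{erfc}$-value, i.e. times $\delta_n$.

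Then I would solve the small-norm RH problem for $R$: $R=I+R_1/n^{1/2}+R_2/n+\mathcal{O}(n^{-3/2})$, obtained by iterating the Cauchy operator on the jump contour (dominated by the circle around the merging endpoint). Feeding this expansion into the standard formulas expressing $\alpha_n$ and $\beta_n$ in terms of the $1/z$ and $1/z^2$ coefficients of $R(z)$ at $z=\infty$ — together with the known genus-$0$ background values $\beta_n\to 1/4$, $\alpha_n\to 0$ from Theorem \ref{thm: recurrence coeffs genus 0} — gives the stated two-term expansions; the $\delta_n$ and $\delta_n^2$ structure is exactly what one expects from the linear and quadratic contributions of $R_1$ and $R_1^2+R_2$ in the residue formulas, and the rational prefactors $(2-\sqrt{4-s_*^2})\sqrt{4-s_*^2}/(\sqrt\pi s_*^3)$, $\sqrt{4-s_*^2}/(2\sqrt\pi s_*^2)$, etc., come out of evaluating the outer parametrix and the $\mathrm{erfc}$-asymptotics at the endpoint. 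The main obstacle I anticipate is \emph{bookkeeping the constant $\kappa$ and the exact rational prefactors}: one must pin down the normalization of the $h$-function at $s_*$ (the value of $h$ at the endpoint as $s\to s_*$, which feeds the phase $e^{-in\kappa}$), correctly orient the lens so that $\Re h>0$ on the relevant side for $s\in\mathfrak{G}_0$, and carefully compose the local change of variables $z\mapsto \zeta(z)$ with $\zeta^2\sim n\,h$ so that the $\mathrm{erfc}$-argument lands precisely as $L_1(4/s_*^2-1)^{1/2}$; any sign or factor-of-$2$ error there propagates into every coefficient. A secondary technical point is verifying uniformity of the error term $\mathcal{O}(n^{-3/2})$ in $L_1$ on compact sets, which follows once the $\mathrm{erfc}$-parametrix is shown to be uniformly bounded and bounded away from singularities on the matching circle.
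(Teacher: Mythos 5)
Your proposal correctly identifies the crucial analytical ingredient — the complementary error function model parametrix — and correctly anticipates the $n^{-1/2}$ scale of the expansion, the role of $\delta_n$, and the fact that $\alpha_n,\beta_n$ emerge from the $1/z$ and $1/z^2$ coefficients of $R$ at infinity. The overall pipeline (genus-$0$ $h$-function, $Y\mapsto T\mapsto S$, local parametrices, small-norm $R$) is the same as the paper's.

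However, there is a genuine conceptual gap in the placement of the new local parametrix, and in the description of the degeneration at a regular breaking point. You write that ``a zero of $Q(z,s)$ collides with the branch point $\gamma_m(s)$ endpoint'' and that ``exactly one of the two endpoints of $\gamma_m$ requires a nonstandard local parametrix,'' with the other endpoint ``$\mp 1$'' retaining Bessel. This is the mechanism at the \emph{critical} breaking points $s=\pm 2$, not at a regular one. At $s_*\in\mathfrak{b}_+\cup\mathfrak{b}_-$, the saddle $z_0=2/s_*$ of $h'$ does \emph{not} collide with either branch point $\pm 1$; what happens is that $\Re h(2/s_*;s_*)=0$, i.e.\ the saddle value crosses the zero level set, so the trajectory structure of $\varpi_s$ bifurcates (Figure~\ref{fig: critical graphs}) and the jump on the upper lens lip $\gamma_{m,0}^+$ stops decaying near $2/s$. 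Both endpoints $\pm 1$ retain their square-root (Bessel) behavior and the Bessel parametrices are kept at \emph{both}, contributing at order $n^{-1}$. The $\mathrm{erfc}$-parametrix is placed in a disc $D_c$ around the \emph{interior} point $2/s$ on the lens lip, and this is exactly why the leading correction enters at order $n^{-1/2}$ while the Bessel discs only enter at order $n^{-1}$. To place $D_c$ correctly one must route the upper lip $\gamma_{m,0}^+$ through the saddle $2/s$, which the paper justifies via the trajectory structure of the quadratic differential $\varpi_s$ and Teichm\"uller's Lemma. If you instead replaced a Bessel parametrix at $\pm 1$ by an $\mathrm{erfc}$ model, the local germ of $h$ there is $\mathcal{O}\left((z\mp 1)^{1/2}\right)$ rather than quadratic, so the matching condition would fail. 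Notably, the quantity $L_1(4/s_*^2-1)^{1/2}$ you quote for the $\mathrm{erfc}$ argument is the correct germ of $h$ at the saddle $2/s_*$ (it is $\partial_s h(2/s_*;s_*)\cdot L_1$), so your formulas point to the right place even though your prose locates the parametrix at the wrong point; once $D_c$ is centered at $2/s$ and the jump expansion $\Delta(z)\sim\sum \Delta_{k/2}(z)n^{-k/2}$ is set up accordingly, the rest of your outline goes through as in the paper.
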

	Note above that
	\begin{equation}
		\left|\delta_n\right| =  \exp\left(\Re \left[L_1\left(\frac{4}{s_*^2}-1\right)^{1/2}\right]\right),
	\end{equation}
	as $\kappa\in\R$ and that the recurrence coefficients decay at a rate of $n^{1/2}$. In particular, the modulus of $\delta_n$ does not depend on $n$. 
	
	Now, we are just left with investigating the behavior of the recurrence coefficients for $s$ near the critical breaking points $s=\pm 2$. For brevity, we focus just on the case $s=2$, although we note that the case $s=-2$ can be handled similarly via reflection, as $\exp(-nf(z;-s))=\exp(-nf(-z;s))$. To state our results, we consider the Painlev\'e II equation \cite[Chapter~32]{olver2016nist}:
	\begin{equation}\label{eq: PII}
		q''(x) = x q(x) + 2 q^3(x) -\alpha, \qquad \alpha\in\C.
	\end{equation}
	Next, let $q=q(w)$ be the generalized Hastings-McLeod solution to Painlev\'e II with the parameter $\alpha=1/2$, which is characterized by the following asymptotic behavior
	\begin{align}
		q(x) = \begin{cases}
		\displaystyle \sqrt{-\frac{x}{2}} + \mathcal{O}\left(\frac{1}{x}\right), \qquad & x\to -\infty \\[2mm]
		\displaystyle \frac{1}{2x} + \mathcal{O}\left(\frac{1}{x^4}\right) & x\to\infty.
		\end{cases}
	\end{align}
	
	In order to study the asymptotics of the recurrence coefficients as $s\to 2$, we take $s$ in a double scaling regime near this critical point as
	\begin{equation}\label{def: ds crit breaking point}
		s = 2 + \frac{L_2}{n^{2/3}},
	\end{equation}
	where we impose that $L_2<0$. This leads us to our final main finding. 
	\begin{theorem}\label{thm: rec coeff double scaling critical}
		Let $s\to 2$ as described in \eqref{def: ds crit breaking point}. Then the recurrence coefficients exist for large enough $n$, and they satisfy
			\begin{equation}
			\alpha_n(s)= -\frac{q^2(-L_2)+q'(-L_2)}{n^{2/3}} + \mathcal{O}\left(\frac{1}{n}\right), \qquad \beta_n(s) = \frac{1}{4} -\frac{q^2(-L_2)+q'(-L_2)}{2}\frac{1}{n^{2/3}} + \mathcal{O}\left(\frac{1}{n}\right),
			\end{equation}
		as $n\to\infty$, where $q$ is the generalized Hastings-McLeod solution to Painlev\'e II with parameter $\alpha=1/2$. Furthermore, the function $U(w)=q^2(w)+q'(w)$ is free of poles for $w\in\mathbb{R}$.
	\end{theorem}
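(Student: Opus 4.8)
The plan is to run the nonlinear steepest descent analysis of the Riemann--Hilbert problem for $p_n(z;s)$ in the double scaling regime \eqref{def: ds crit breaking point}, keeping track of how the local behavior at the hard edge $z=1$ degenerates as $s\to 2$. Recall that in the genus $0$ regime $\mathfrak{G}_0$ the relevant $h$-function is the one of \eqref{eq: subcritical h function}, with the band $\gamma_m(s)$ emanating from $\pm 1$; the local parametrix at $z=1$ is normally built from Bessel functions, reflecting the $\mathcal{O}((z-1)^{1/2})$ vanishing of $\Re h$. The point $s=2$ is exactly where the coefficient $2+isz$ in the quadratic differential $\varpi_t^{(0)}$ vanishes at the endpoint $z=1$, so the zero of $\Re h$ at $z=1$ collides with a zero of $(2+isz)^2$ and the order of vanishing jumps; this is precisely the mechanism producing a Painlev\'e~II parametrix at $z=1$ rather than a Bessel one. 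First I would expand the $h$-function near $z=1$ and near $s=2$ simultaneously, writing $s = 2 + L_2 n^{-2/3}$ and $z - 1 = \zeta \, c\, n^{-2/3}$ for a suitable constant $c$, and show that the exponent $n\,h(z;s)$, after this rescaling, converges to the phase function governing the Hastings--McLeod parametrix with the spectral variable $w = -L_2$ (up to fixing constants). This identifies the Painlev\'e~II $\Psi$-function with parameter $\alpha = 1/2$ as the correct local model near $z=1$; the jump matrices of the model RHP must be matched to those of the OP problem after the $\gamma_{c,0}$-cut contributes the jump $e^{h_+ - h_-} = 4\pi i$ there.

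Next I would assemble the global parametrix (the genus $0$ outer parametrix built from the map to the exterior of $\gamma_m(s)$, already available from the $\mathfrak{G}_0$ analysis), the Painlev\'e~II parametrix at $z=1$, and the ordinary Bessel parametrix at $z=-1$, and show that they match on the boundaries of the shrinking disks up to an error of size $\mathcal{O}(n^{-1/3})$ on the circle around $z=1$ (the slower rate, because the disk there has radius $\sim n^{-2/3}$ and the Painlev\'e parametrix matches the outer one only to leading order). The small-norm argument for the final error matrix $R(z)$ then gives $R = I + \mathcal{O}(n^{-1/3})$ uniformly. To extract the recurrence coefficients I would use the standard formulas expressing $\alpha_n$ and $\beta_n$ in terms of the $1/z$ and higher expansion coefficients of the RHP solution at $z=\infty$ (the same formulas used in the $\mathfrak{G}_0$ case for Theorem~\ref{thm: recurrence coeffs genus 0}), now corrected by the contribution of the Painlev\'e~II parametrix; the $(z-1)$-scaling combined with the behavior of the Hastings--McLeod $\Psi$-function at its own $\infty$ feeds in the combination $q^2(-L_2) + q'(-L_2)$. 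Tracking the powers of $n$: the disk radius $n^{-2/3}$ together with the residue computation at $z=1$ produces the $n^{-2/3}$ prefactor, and the coefficient is exactly $-(q^2(-L_2)+q'(-L_2))$ for $\alpha_n$ and half of that for $\beta_n$, as the $\beta_n$ formula is (morally) the square root structure $\tfrac14$ plus a first-order correction. The constraint $L_2 < 0$ ensures $-L_2 > 0$, keeping us on the side of the Painlev\'e~II axis where the Hastings--McLeod solution is pole-free.

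The final sentence of the theorem, that $U(w) = q^2(w) + q'(w)$ has no poles for $w\in\R$, I would prove separately and directly from Painlev\'e~II theory: the generalized Hastings--McLeod solution with $\alpha = 1/2$ is known to be pole-free on the whole real line (this is part of its defining characterization, and follows from the solvability of the associated RHP for all real $x$, cf.\ the Its--Kapaev--type analysis), and since $q$ is real-analytic and pole-free on $\R$, so is its derivative, hence $U$ is finite everywhere on $\R$. Alternatively one can note that if $U$ had a pole it would have to come from a pole of $q$, contradicting pole-freeness. I would state this as a consequence of the known properties of the Hastings--McLeod solution, with a reference, rather than reprove it.

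The main obstacle I anticipate is the bookkeeping in the matching step at $z = 1$: one must carefully normalize the Painlev\'e~II model RHP (choice of the constant $c$ in $z - 1 = c\,\zeta\, n^{-2/3}$, orientation of the contour rays, and the precise conjugation by powers of $e^{nh/2}$ and by the $d$-dependent phases inherited from the outer parametrix) so that the model jumps coincide with the transformed OP jumps to the required order, and simultaneously verify that the parameter of the resulting Painlev\'e~II equation is $\alpha = 1/2$ and its spectral/similarity variable is $-L_2$ (not $L_2$, and not some rescaling of it). Getting the sign and scaling right here is what pins down the argument $-L_2$ in the final formulas, and it is where the condition $L_2<0$ becomes essential for the parametrix to exist. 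Once the local model is correctly identified and matched, the extraction of the recurrence coefficients and the order-$n^{-2/3}$ expansion are routine residue computations of the type already carried out elsewhere in the paper.
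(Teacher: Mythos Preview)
Your overall strategy is correct and matches the paper's: genus~$0$ outer parametrix, Bessel parametrix at $z=-1$, and a Painlev\'e~II/XXXIV parametrix at $z=1$ because the saddle $z_0=2/s$ of the genus~$0$ $h$-function collides with the hard edge as $s\to 2$. The identification of the Hastings--McLeod solution with $\alpha=1/2$ and the argument $w=-L_2$ is also right, as is the final pole-freeness remark (the paper likewise cites Its--Kuijlaars--\"Ostensson and Xu--Zhao rather than reproving it).

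There is, however, a genuine technical gap in your implementation. You propose a \emph{shrinking} disk of radius $\sim n^{-2/3}$ around $z=1$ and attribute the $n^{-2/3}$ rate in $\alpha_n,\beta_n$ to this disk size. The paper does \emph{not} shrink the disk: $D_1$ has fixed radius $\delta$, and the local parametrix is $P^{(1)}(z)=E_n^{(1)}(z)\,\Psi\bigl(n^{2/3}\zeta(z;s),\,n^{2/3}A(s)\bigr)\,e^{-\frac{n}{2}h(z)\sigma_3}$, where $\zeta(z;s)$ is an $n$-independent conformal map constructed so that
\[
-\tfrac{1}{2}h(z;s)=\tfrac{4}{3}\zeta(z;s)^{3/2}-A(s)\,\zeta(z;s)^{1/2},
\qquad A(s)=-(s-2)+\mathcal{O}\bigl((s-2)^2\bigr),
\]
uniformly for $s$ near $2$. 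This lemma (the analogue of the cubic--to--quadratic conformal change of variables) is the crux of the matching step you flag as the main obstacle; without it the jumps of the model $\Psi$-RHP are not matched exactly. With a shrinking disk of radius $n^{-2/3}$ one would instead have $n^{2/3}\zeta$ bounded on $\partial D_1$, so the large-$\zeta$ asymptotics of $\Psi$ are unavailable and the matching with the outer parametrix (which blows up like $(z-1)^{-1/4}$) becomes much more delicate; your stated reason for the $\mathcal{O}(n^{-1/3})$ error does not hold in that setup.

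A second point: $R=I+\mathcal{O}(n^{-1/3})$ by itself is not enough to produce the stated $\mathcal{O}(n^{-1})$ remainder in $\alpha_n,\beta_n$. The paper expands $R$ in powers of $n^{-1/3}$ and computes both $R_{1/3}$ and $R_{2/3}$ explicitly. The $n^{-1/3}$ contribution to $T^{(1)},T^{(2)}$ is governed by a nilpotent matrix (proportional to $\bigl(\begin{smallmatrix} i & 1\\ 1 & -i\end{smallmatrix}\bigr)$), and it cancels in the combinations $T^{(2)}_{12}/T^{(1)}_{12}-T^{(1)}_{22}$ and $T^{(1)}_{12}T^{(1)}_{21}$; the first surviving correction is the $n^{-2/3}$ term, carrying exactly $\Psi_{1,11}-\Psi_{1,22}+\Psi_{1,12}^2=-\tfrac12\bigl(q^2+q'\bigr)$. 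So the $n^{-2/3}$ rate is an algebraic cancellation in the two-term expansion of $R$, not a consequence of a disk radius.
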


	Plots of the recurrence coefficients are given in Figures~\ref{fig: rec coeff} and \ref{fig: rec coeff imag}, and should be compared with Theorems~\ref{thm: recurrence coeffs genus 0} and \ref{thm: rec coeff double scaling critical}.
	\begin{figure}[h!]
		\centering
		\begin{subfigure}{.245\textwidth}
			\centering
			\includegraphics[width=\linewidth]{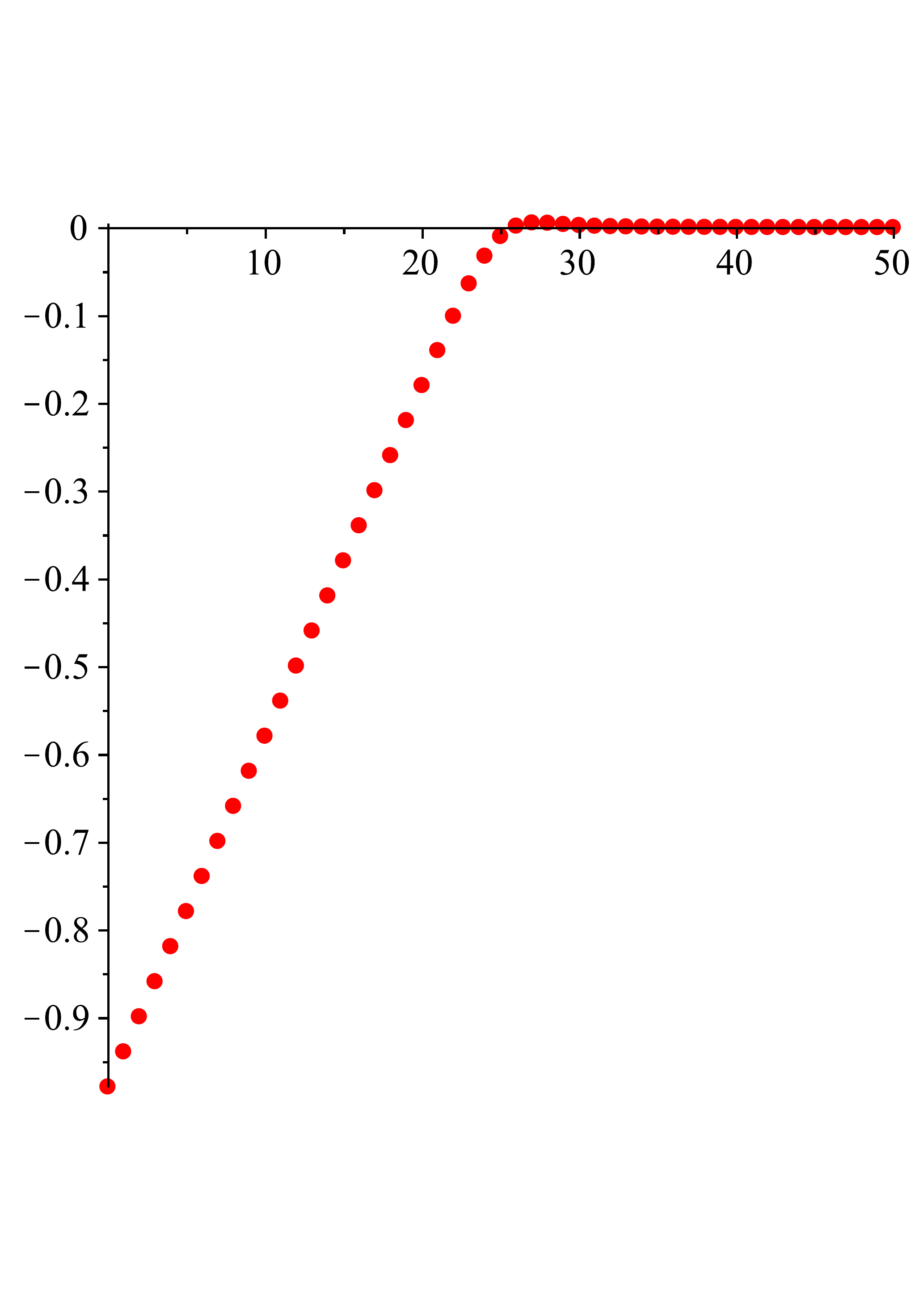}
			\caption{$\alpha_n(1)$.}
			\label{fig:s1alpha}
		\end{subfigure}%
		\begin{subfigure}{.245\textwidth}
			\centering
			\includegraphics[width=\linewidth]{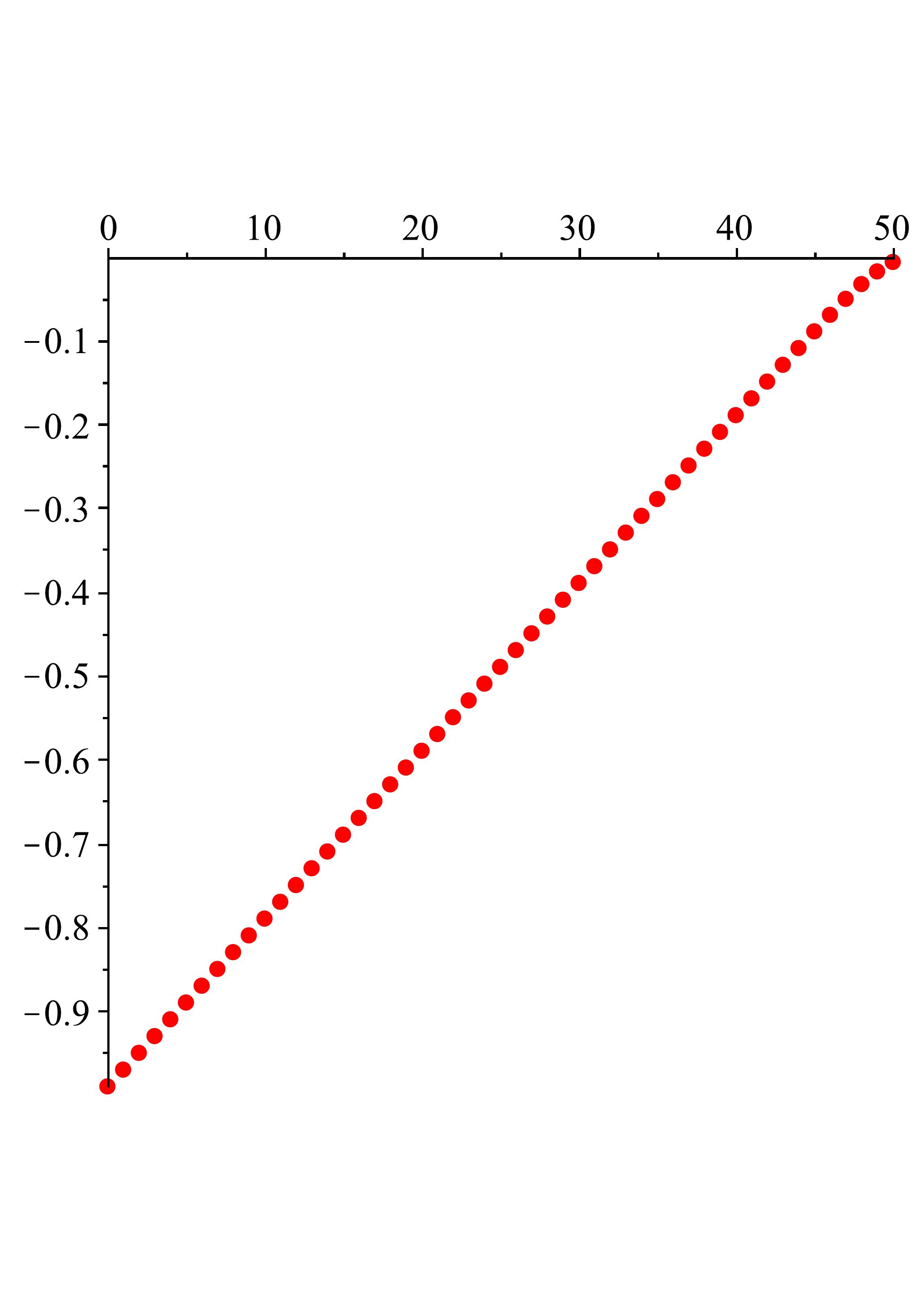}
			\caption{$\alpha_n(2)$.}
			\label{fig:s2lpha}
		\end{subfigure}
		\begin{subfigure}{.245\textwidth}
			\centering
			\includegraphics[width=\linewidth]{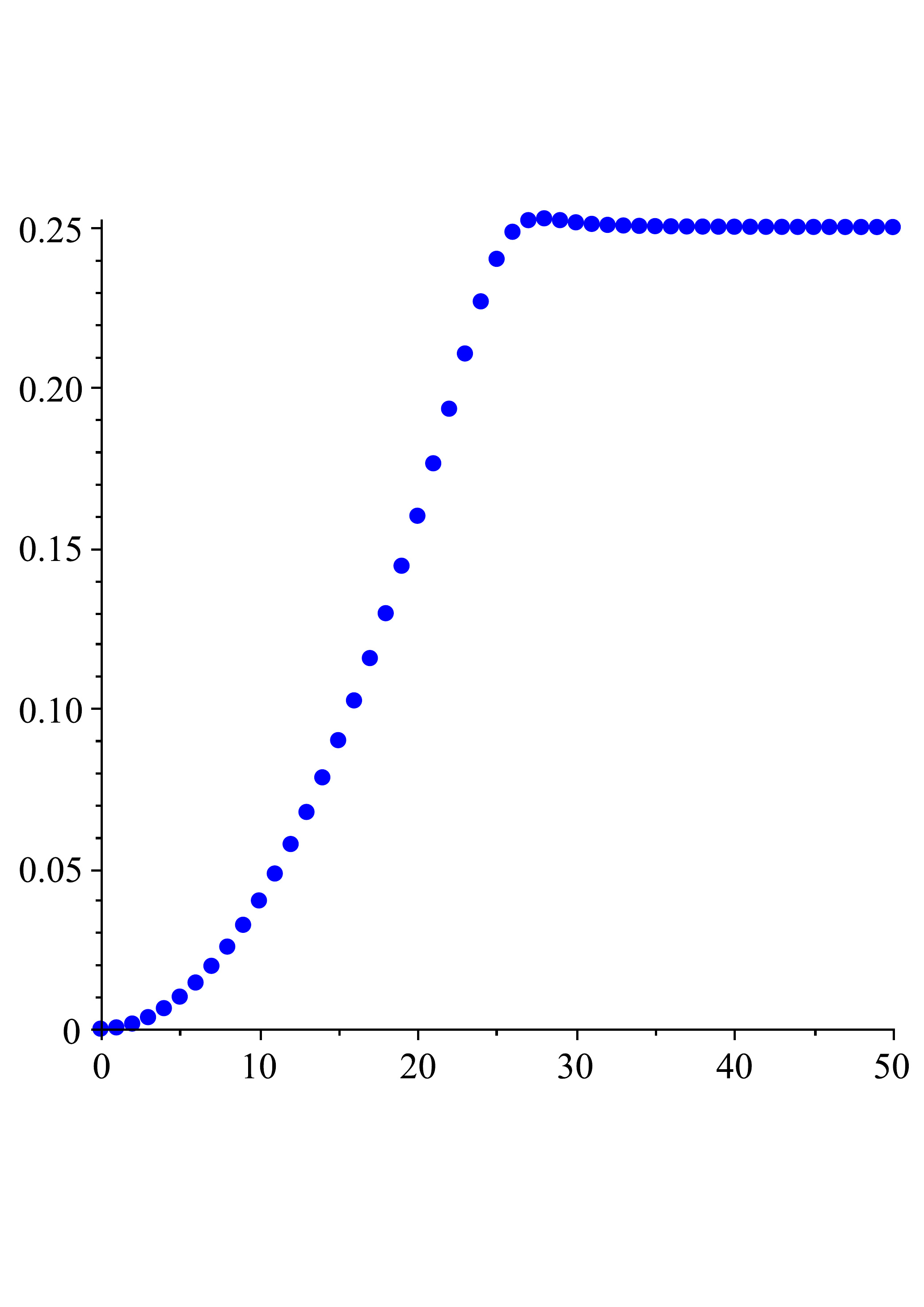}
			\caption{$\beta_n(1)$.}
			\label{fig:s1beta}
		\end{subfigure}%
		\begin{subfigure}{.245\textwidth}
			\centering
			\includegraphics[width=\linewidth]{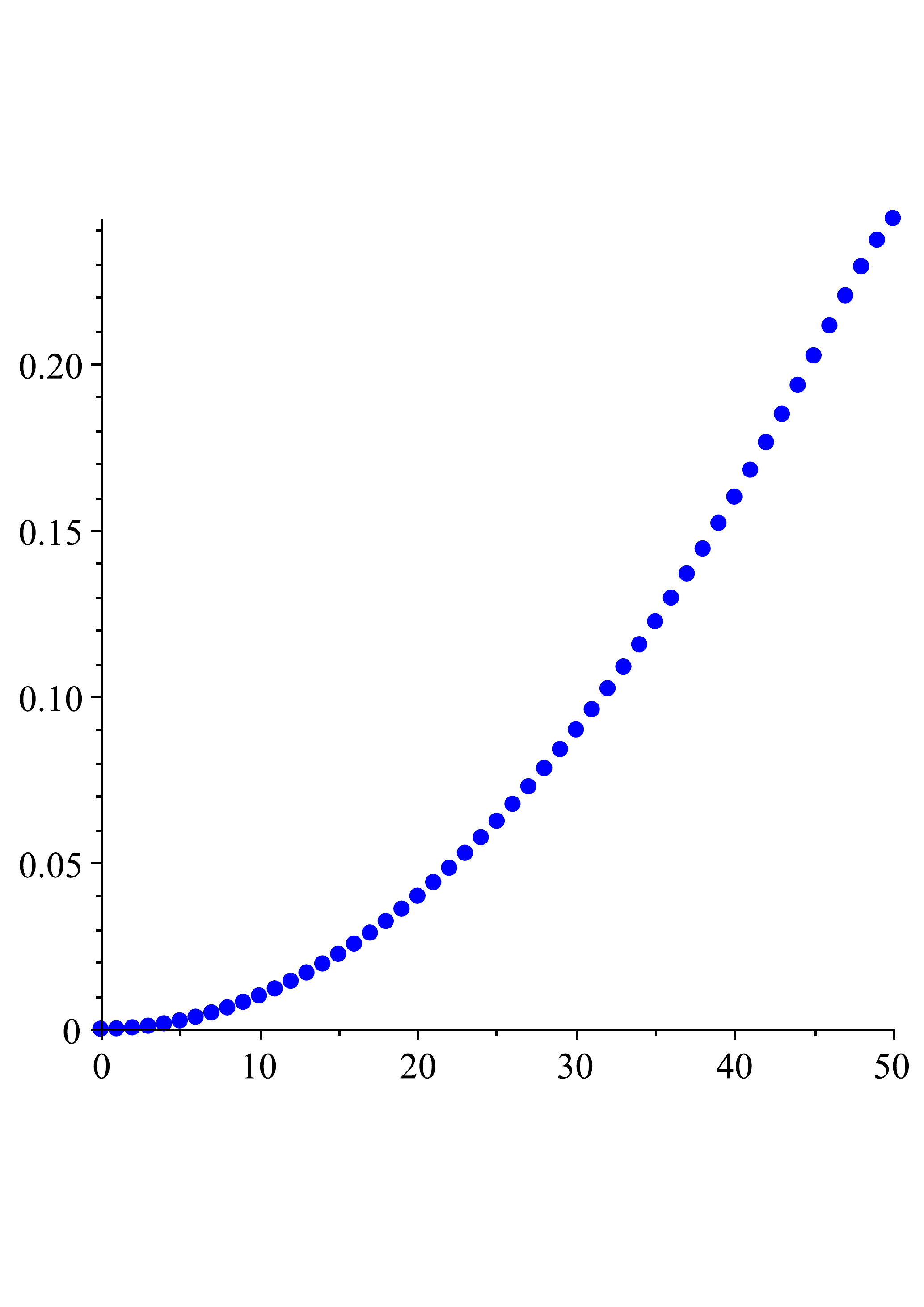}
			\caption{$\beta_n(2)$.}
			\label{fig:s2beta}
		\end{subfigure}
		\caption{Plots of $\alpha_n(s)$ and $\beta_n(s)$ for $n=0, \dots, 50$, with $s=1, 2$.}
		\label{fig: rec coeff}
	\end{figure}
	\begin{figure}[h!]
		\centering
		\begin{subfigure}{.245\textwidth}
			\centering
			\includegraphics[width=\linewidth]{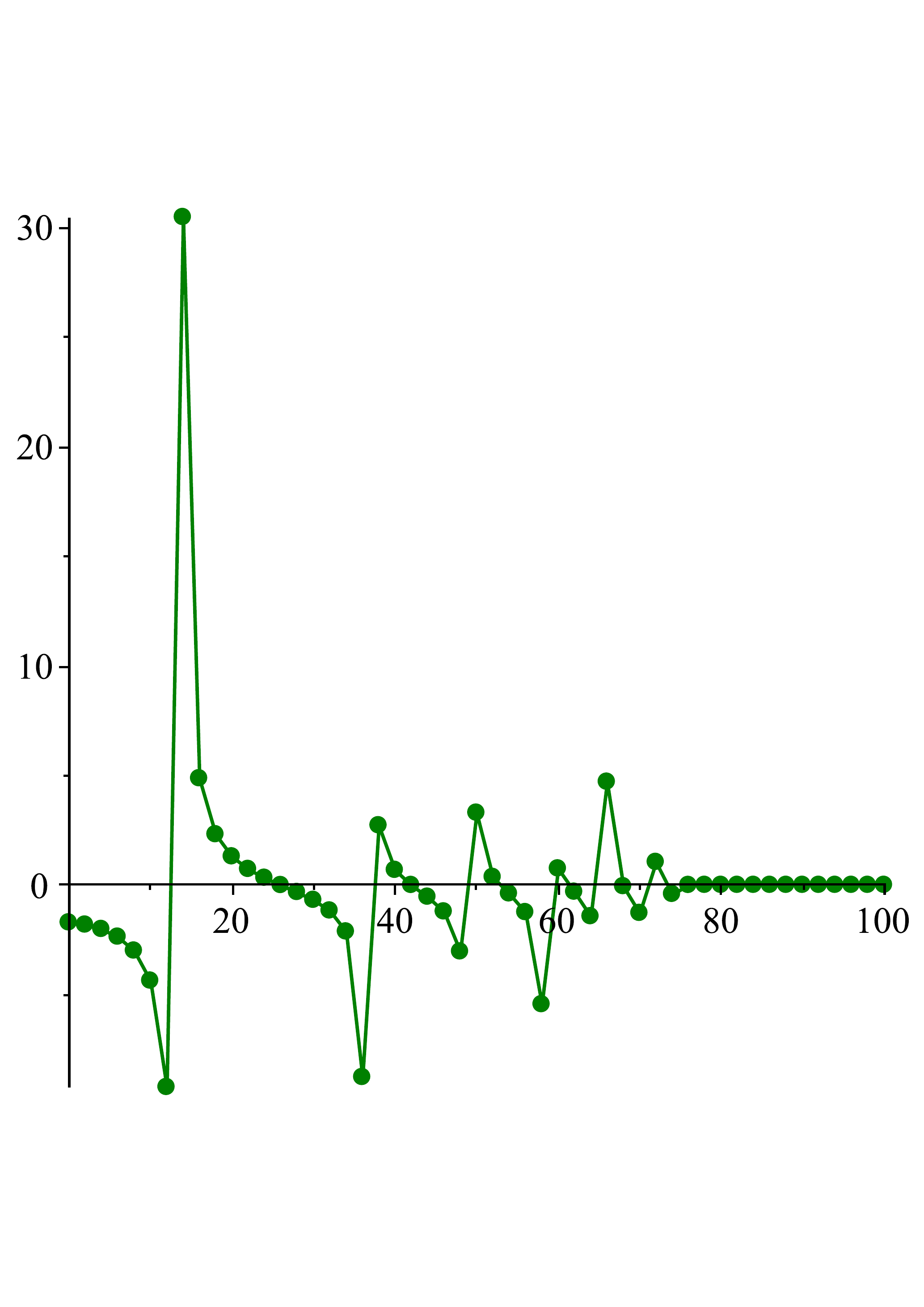}
			\caption{$\Im\alpha_{2n}(i)$.}
			\label{fig:sialpha}
		\end{subfigure}%
		\begin{subfigure}{.245\textwidth}
			\centering
			\includegraphics[width=\linewidth]{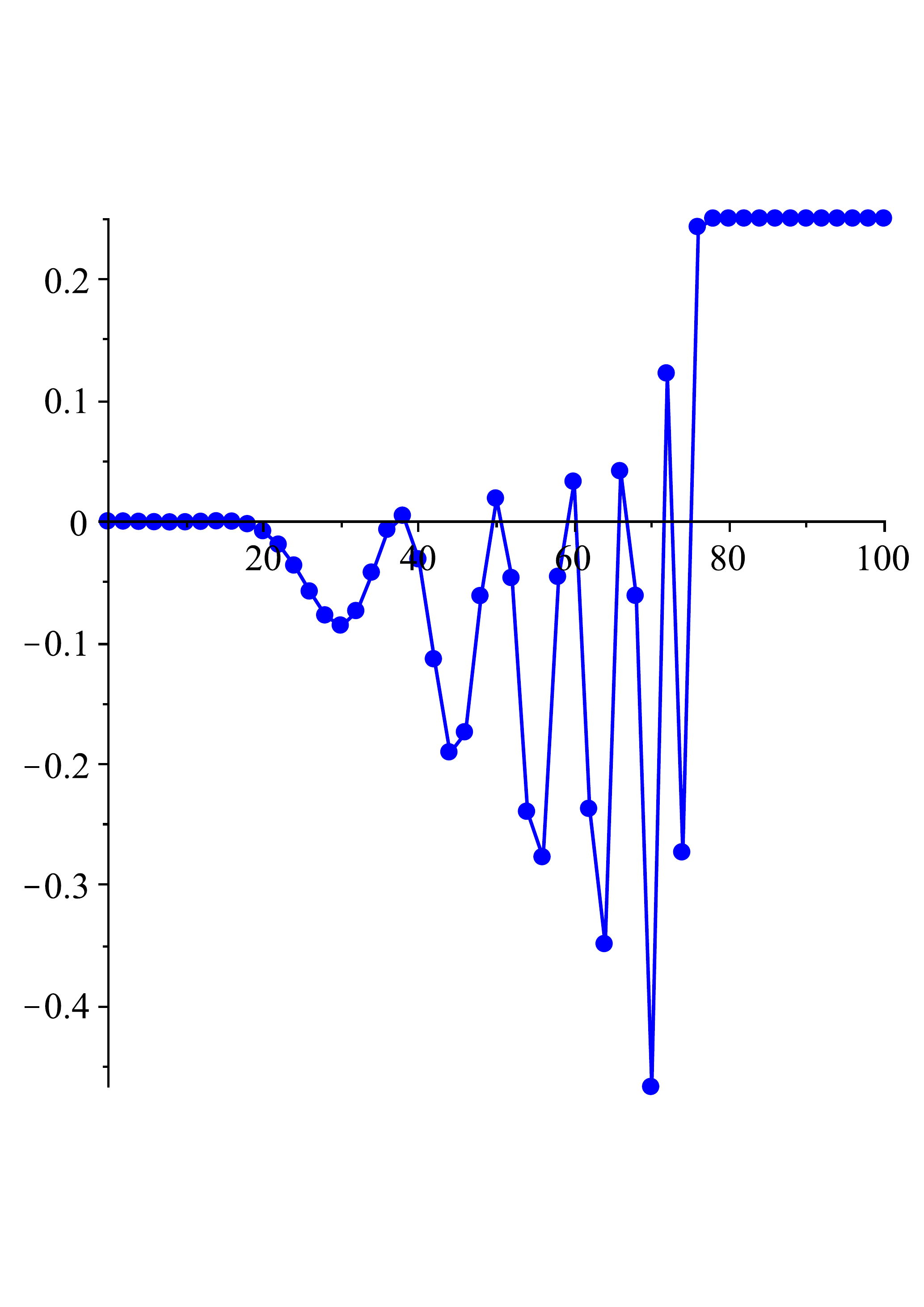}
			\caption{$\Re \beta_{2n}(i)$.}
			\label{fig:sibeta}
		\end{subfigure}
		\begin{subfigure}{.245\textwidth}
			\centering
			\includegraphics[width=\linewidth]{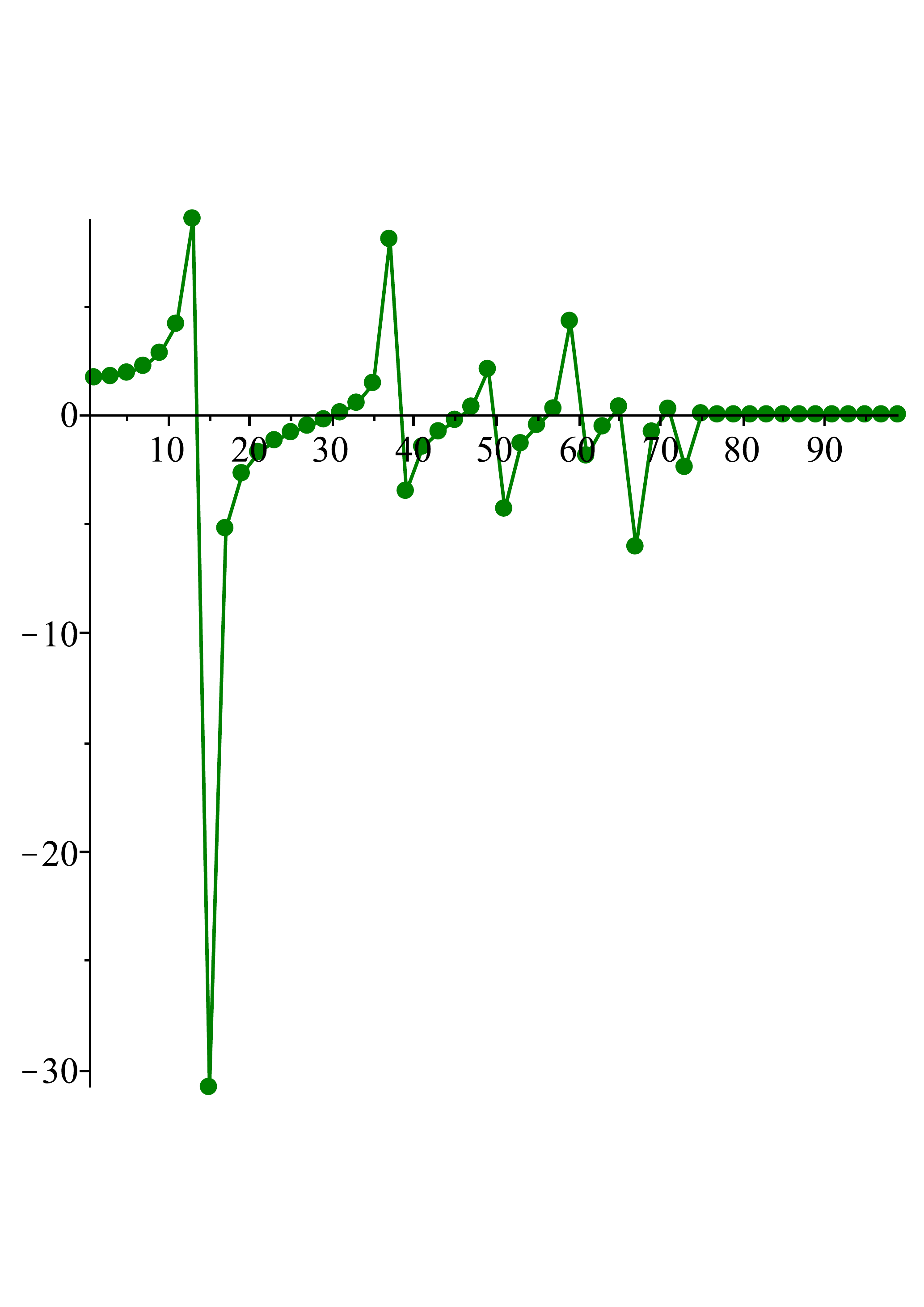}
			\caption{$\Im\alpha_{2n+1}(i)$.}
			\label{fig:sialphaodd}
		\end{subfigure}%
		\begin{subfigure}{.245\textwidth}
			\centering
			\includegraphics[width=\linewidth]{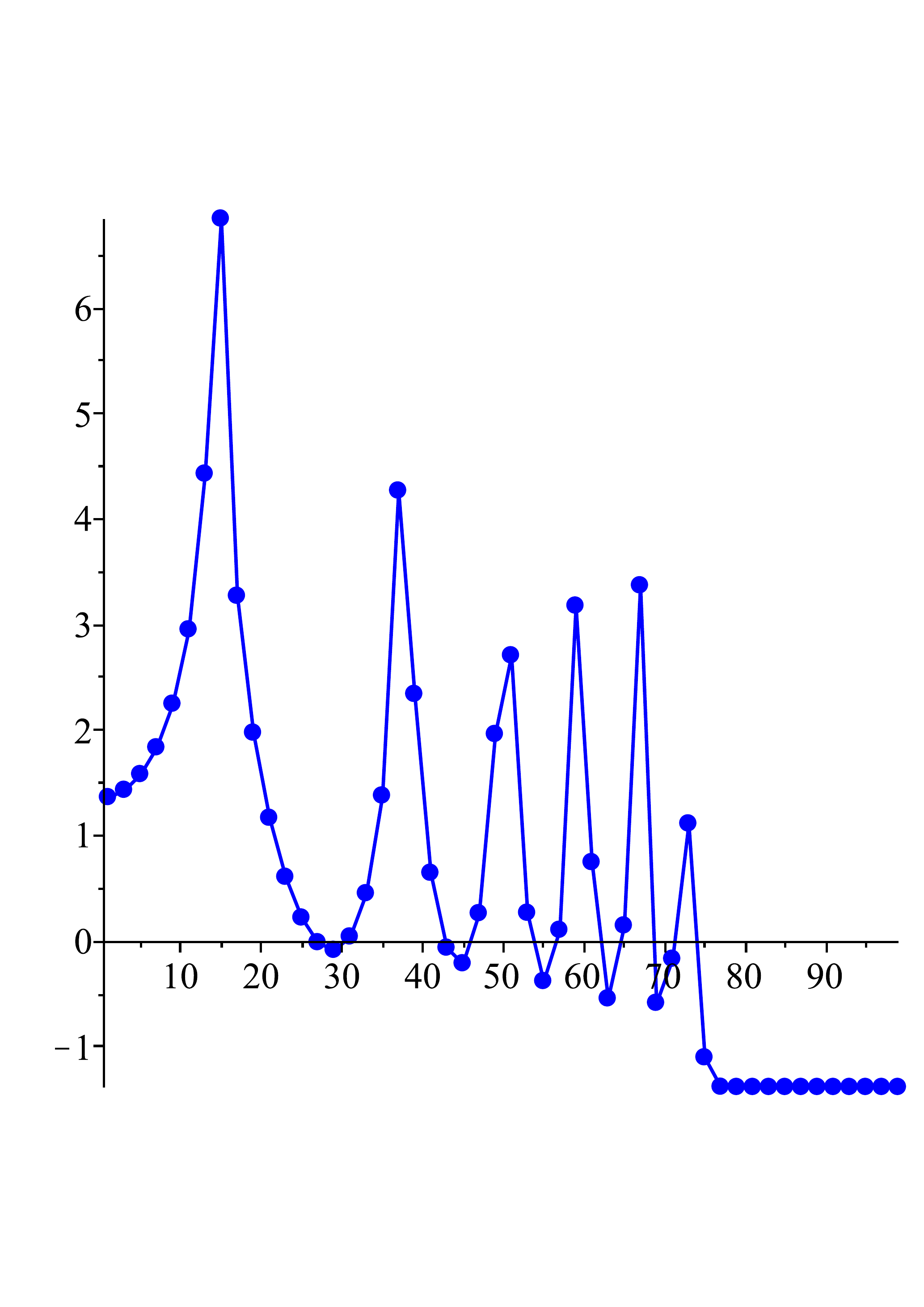}
			\caption{$\log\Re \beta_{2n+1}(i)$.}
			\label{fig:sibetalog}
		\end{subfigure}
		\caption{Plots of $\Im \alpha_n(s)$ and $\Re \beta_n(s)$ for $n=0, \dots, 100$, with $s=i$.}
		\label{fig: rec coeff imag}
	\end{figure}

	Figures \ref{fig: sub and super zeros}, \ref{fig: move across breaking curve}, \ref{fig: rec coeff}, and \ref{fig: rec coeff imag} have been computed using the nonlinear discrete string equations for the recurrence coefficients presented in \cite[Theorem 2, Theorem 4]{BCE_Jacobi}, see also \cite[\S 5.2]{Walter_book}. In Figure~\ref{fig: rec coeff imag}, we have used from \cite{deano2015kissing} that $\beta_n(s)\in\R$ and $\alpha_n(s)\in i\R$ when $s\in i\R$. Moreover, it was also shown in \cite{deano2015kissing} that for fixed $n$, $\alpha_n(t)$ and $\beta_{2n+1}(it)$ will have poles (as a function of $t$) for $t\in\R$. As such, we have plotted $\Re \beta_{2n+1}$ on a log scale in Figure~\ref{fig:sibetalog}. Once the recurrence coefficients $\alpha_n(s)$ and $\beta_n(s)$ have been computed, we assemble the Jacobi matrix for the orthogonal polynomials and calculate its eigenvalues, which correspond to the zeros of $p_{50}(z;s)$, as explained for instance in \cite{Chihara_OPs,Gautschi_OPs}. Calculations have been done in Maple, using an extended precision of 100 digits.
	
	\subsection*{Overview of Paper}
		The outline of the present paper is as follows. In Section~\ref{sec: steepest descent}, we provide a review on the Riemann-Hilbert problem for the orthogonal polynomials and the method of nonlinear steepest descent pioneered by Deift and Zhou in the early 1990s. In particular, we show how the existence of a suitable $h$-function can be used to obtain strong asymptotics of the polynomials throughout the complex plane and asymptotics of the recurrence coefficients. Moreover, we also prove Lemma~\ref{seq-prop} when discussing solutions to the global parametrix in Section~\ref{sub: Genus 1 Global Parametrix}.
		
		In Section~\ref{sec: Existence of h}, we implement the technique of continuation in parameter space to obtain the desired $h$-function for $s\in\C\setminus\mathfrak{B}$. In this section, we prove Theorems~\ref{thm: global phase portrait}, \ref{thm: recurrence coeffs genus 0}, and \ref{thm: recurrence coeffs genus 1}. 
		
		In Section~\ref{sec: DS Reg}, we study the double scaling limit as $s\to s_*$, where 
		$s^*\in\mathfrak{b}_+\cup\mathfrak{b}_-$. Moreover, we prove Theorem~\ref{thm: rec coeff double scaling regular} in the final part of this section.
		
		Finally, in Section~\ref{sec: DS Crit}, we complete our analysis by investigating the double scaling limit as $s\to 2$ via \eqref{def: ds crit breaking point}. We end the paper with a proof of Theorem~\ref{thm: rec coeff double scaling critical}.
		
	\subsection*{Acknowledgments}
		The authors would like to thank Arieh Iserles, Guilherme Silva, and Maxim Yattselev for their guidance and discussions on all aspects of the Kissing polynomials and the Riemann-Hilbert approach to these polynomials. This work was carried out while A.F.C. was a PhD student at the University of Cambridge, and he is thankful for his current support by the Cantab Capital Institute for the Mathematics of Information and the Cambridge Centre for Analysis.  A. D. gratefully acknowledges financial support from EPSRC, grant EP/P026532/1, Painlev\'e equations: analytical properties and numerical computation.

\section{The Riemann-Hilbert Problem and Overview of Steepest Descent}\label{sec: steepest descent}
	The formulation of the orthogonal polynomials as a solution to a Riemann-Hilbert problem was first given by Fokas, Its, and Kitaev in the early 1990s \cite{fokas1992isomonodromy}. This formulation became even more powerful in the late 1990s due to the development of the nonlinear steepest descent method to obtain asymptotic solutions to Riemann-Hilbert problems, developed by Deift and Zhou \cite{deift1999uniform,deift1999strong,deift1993steepest}. In this section, we review the Riemann-Hilbert problem and nonlinear steepest descent as it relates to the polynomials defined in \eqref{eq: ortho p defn}. We refer the reader to the works \cite{bleher2011lectures,deift1999orthogonal} for more details on these issues.

	\subsection{The Riemann-Hilbert Problem and The Modified External Field}
		Given a smooth curve $\Sigma$ connecting $-1$ to $1$ in $\C$, oriented from $-1$ to $1$, consider the following Riemann-Hilbert problem for $Y: \C\setminus\Sigma\to \C^{2\times 2}$, 
		\begin{subequations}
			\label{eq: Y RHP}
			\begin{alignat}{2}
				&Y_n^N(z;s) \text{ is analytic for } z\in \C\setminus \Sigma, \qquad && \\
				&Y_{n,+}^N(z;s) = Y_{n,-}^N(z;s) \begin{pmatrix}
					1 & e^{-N f(z;s)} \\
					0 & 1
				\end{pmatrix}, \qquad && z \in \Sigma, \\
				&Y_n^N(z;s) = \left(I + \mathcal{O}\left(\frac{1}{z}\right)\right) z^{n \sigma_3}, \qquad && z \to \infty, \\
				&Y_n^N(z;s) = \mathcal{O}\begin{pmatrix}
					1 & \log\left|z\mp 1\right| \\
					1 & \log\left|z\mp 1\right| 
				\end{pmatrix}, \qquad && z\to \pm 1.
			\end{alignat}
		\end{subequations}
		Above, $\sigma_3$ is the Pauli matrix given by $\sigma_3 = \diag(1,-1)$. For notational convenience, we drop the dependence of the above RHP and its solution on $s$ and $n$. We also define $\kappa_{n,N}$ as the normalizing constant for $p_n^N$, obtained via
		\begin{equation}\label{eq: kappa n}
			\int_{-1}^{1}(p_n^N(z;s))^2e^{-Nf(z;s)}dz=\frac{1}{\kappa_{n,N}^2}.
		\end{equation}
		The existence of $Y$ is equivalent to the existence of the monic orthogonal polynomial $p_n^N$ defined in \eqref{eq: ortho p defn N n}, of degree \textit{exactly} $n$, and, furthermore, if $\kappa_{n-1,N}$ is finite and non-zero, then $Y$ is explicitly given by
		\begin{equation}\label{eq: Y eqn}
			Y(z) = \begin{pmatrix}
			p_n^N(z) & \left(\mathcal{C}p_n^N e^{-Nf}\right)(z) \\
			-2\pi i \kappa_{n-1,N}^2 p_{n-1}^N(z) & -2\pi i \kappa_{n-1,N}^2 \left(\mathcal{C}p_{n-1}^N e^{-Nf}\right)(z)
		\end{pmatrix}.
		\end{equation}
		We recall that throughout the present analysis we take $N=n$, and we also drop the dependence of $Y$ on $N$ for notational convenience. In \eqref{eq: Y eqn}, $\mathcal{C}g$ denotes the Cauchy transform of the function $g$ along $\Sigma$, i.e.
		\begin{equation*}
			\left(\mathcal{C}g\right)(z) = \frac{1}{2\pi i}\int_{\Sigma} \frac{g(u)}{u-z}\, du,
		\end{equation*}
		which is analytic in $\mathbb{C}\setminus \Sigma$. The Deift-Zhou method of nonlinear steepest descent is a powerful method of determining large $n$ asymptotics of solutions to these types of Riemann-Hilbert problems, and as such, we can use it to determine asymptotics of the polynomials $p_n$ and related quantities. 
		
		The first transformation requires the existence of a \textit{modified external field}, or \textit{h-function}, whose properties we describe below. First, we define $\gamma_{c,0}:=(-\infty, -1]$ and set $\Omega = \Omega(s)=\gamma_{c,0} \cup \Sigma$. Next we partition $\Omega$ into two subsets as $\Omega= \mathfrak{M}\cup\mathfrak{C}$, where the arcs in $\mathfrak{M}$ are denoted \textit{main arcs} and those in $\mathfrak{C}$ are denoted \textit{complementary arcs}. Once this partitioning has been completed, we may define a hyperelliptic Riemann surface $\mathfrak{R} = \mathfrak{R}(s)$ whose branchcuts are precisely the main arcs in $\mathfrak{M}$ and whose branchpoints we define to be the set $\Lambda=\Lambda(s)$ . If the genus of $\mathfrak{R}$ is $L$, we may write $\mathfrak{M} = \cup_{j=0}^L \gamma_{m,j}$ and $\mathfrak{C}=\cup_{j=0}^L \gamma_{c,j}$. Moreover, when we refer to the genus of $h$, we are referring to the genus of the associated Riemann surface. Finally, we impose that all arcs in $\Omega$ are bounded, aside from the one complementary arc $\gamma_{c,0}$. 
		
		The question remains: how do we partition $\Sigma$ and choose the arcs in $\mathfrak{M}$ and $\mathfrak{C}$? Equivalently, we may ask: how do we choose the appropriate genus $L$? In order to make the appropriate first transformation to \eqref{eq: Y RHP} to begin the process of steepest descent, we must first construct a function $h$ that satisfies both a scalar Riemann-Hilbert problem on $\Omega$ and certain inequalities, to be described below. Therefore, the arcs in $\mathfrak{M}$ and $\mathfrak{C}$, and also the genus $L$, are chosen so that we can construct a suitable $h$-function. The modified external field must satisfy:
		\begin{subequations}\label{eq: RHP for h}
			\begin{alignat}{2}
				&h(z;s) \text{ is analytic for } z\in\C\setminus\Omega, \qquad && \\
				&h_+(z;s) - h_-(z;s) = 4\pi i \eta_j, \qquad && z\in \gamma_{c,j}, \\
				&h_+(z;s) + h_-(z;s) = 4\pi i \omega_j, \qquad && z\in \gamma_{m,j}, \label{eq: h jump main arc}\\
				&h(z;s) = - f(z;s) - \ell + 2\log z + \mathcal{O}\left(\frac{1}{z}\right), \qquad && z \to \infty\label{eq: h asymptotics}\\ 
				&\Re h(z;s) =\mathcal{O}\left((z\mp 1)^{1/2}\right), \qquad && z\to \pm 1, \\
				&\Re h(z;s) = \mathcal{O}\left((z-\lambda)^{3/2}\right), \qquad && z \to \lambda, \quad \lambda \in \Lambda\setminus\{\pm1 \},
			\end{alignat}
		\end{subequations}
		for $j = 0, 1, \dots, L$. Above, we impose that $\omega_L=0$ and $\eta_0=1$; the remaining real constants $\eta_j$ and $\omega_j$ (which only depend on $s$) can be chosen arbitrarily to satisfy \eqref{eq: RHP for h}. Furthermore, the constant $\ell=\ell(s)$ depends only on the parameter $s$.
		\begin{remark}
			Given any genus $L$ and arbitrary constants $\ell, \eta_j, \omega_j \in \R$, there is no guarantee that a solution to \eqref{eq: RHP for h} even exists. However, if such a solution does exist, it will be unique.
		\end{remark}
		\begin{remark}\label{rem: Riemann surface}
			Assuming that $L=0$ or $L=1$ and provided we are able to construct a solution to \eqref{eq: RHP for h}, we define the Riemann surface $\mathfrak{R}$ to be the two-sheeted genus $L$ Riemann surface associated to the algebraic equation
			\begin{equation}
				\xi^2 = h'(z)^2 = \Pi^2(z)R(z),
			\end{equation}
			where 
			\begin{equation}
				R(z) = \frac{1}{z^2-1} \prod_{j=0}^{2L-1} (z-\lambda_j),
			\end{equation}
			and $\Pi(z)$ is a polynomial of degree $1-L$, chosen so that $h'$ possess the correct asymptotics at infinity. The branch cuts of $\mathfrak{R}$ are taken along $\gamma_{m,j}$, $j = 0, \dots, L$, and the top sheet is fixed so that 
			\begin{equation}
				\xi(z) =-f'(z;s) + \mathcal{O}\left(\frac{1}{z}\right)= -s + \mathcal{O}\left(\frac{1}{z}\right), \qquad z\to \infty_1.
			\end{equation}
		\end{remark}
		In addition to solving the above scalar Riemann-Hilbert problem, $h$ must also satisfy the following inequalities
		\begin{subequations}\label{eq: both inequalities}
			\begin{align}
			&\Re h(z)<0 \text{ if } z \text{ is an interior point of any bounded complementary arc }\gamma_c \in \mathfrak{C}, \label{eq: complementary arc inequality}\\
			&\Re h(z_0)>0 \text{ for } z_0 \text{ in close proximity to any interior point of a main arc } \gamma_m \in \mathfrak{M}. \label{eq: main arc inequality}
			\end{align}
		\end{subequations}
		If $s\in \C$ is such that we can construct a function $h(z;s)$ satisfying both \eqref{eq: RHP for h} and \eqref{eq: both inequalities}, we call $s$ a \textit{regular point}. Now, provided $s$ is a regular point, we may proceed with the process of nonlinear steepest descent as follows.

		\subsection{Overview of Deift-Zhou Nonlinear Steepest Descent}
			The first transformation of steepest descent aims to normalize the Riemann-Hilbert problem at infinity. To do so, we define
			\begin{equation}\label{eq: T def}
				T(z) := e^{-n\ell \sigma_3/2} Y(z) e^{-\frac{n}{2}\left[h(z)+f(z)\right]\sigma_3},
			\end{equation}
			where we recall that $\ell\in\R$ is defined by \eqref{eq: h asymptotics} and $f(z;s)=sz$. By making this transformation, we see that $T$ satisfies the following Riemann-Hilbert problem:
			\begin{subequations}
				\label{eq: T RHP}
				\begin{alignat}{2}
				&T(z) \text{ is analytic for } z\in \C\setminus \Sigma, \qquad && \\
				&T_+(z) = T_-(z) \begin{pmatrix}
					e^{-\frac{n}{2}\left(h_+(z)-h_-(z)\right)} & e^{\frac{n}{2}\left(h_+(z)+h_-(z)\right)}  \\
					0 & e^{\frac{n}{2}\left(h_+(z)-h_-(z)\right)} 
				\end{pmatrix}, \qquad && z \in \Sigma, \\
				&T(z) = I + \mathcal{O}\left(\frac{1}{z}\right), \qquad && z \to \infty, \\
				&T(z) = \mathcal{O}\begin{pmatrix}
					1 & \log\left|z\mp 1\right| \\
					1 & \log\left|z\mp 1\right| 
				\end{pmatrix}, \qquad && z\to \pm 1.
				\end{alignat}
			\end{subequations}
			Note that the Riemann-Hilbert problem above also depends on $s$, but we have again dropped this dependence for notational convenience. We also remark that \eqref{eq: h jump main arc} and \eqref{eq: main arc inequality} imply that $\Re h(z) =0$ for $z\in\mathfrak{M}$. As $\mathfrak{M}$ is part of the zero level set of $\Re h$, the jump matrix for $T$ has highly oscillatory diagonal entries when $z \in \mathfrak{M}$.  Furthermore, if $z \in \mathfrak{C}\setminus \gamma_{c,0}$, the diagonal entries of the jump matrix will be constant and purely imaginary. Moreover, the $(1,2)$-entry of the jump matrix will decay exponentially quickly to $0$ by \eqref{eq: complementary arc inequality}. The next transformation of the steepest descent process deforms the jump contours so that the highly oscillatory entries of the jump matrix decay exponentially quickly, and is referred to as the \textit{opening of lenses}.
			
			The opening of lenses relies on the following factorization of the jump matrix across a main arc
			\begin{equation}\label{eq: jump factorization}
				\begin{pmatrix}
				e^{-nH(z)} & e^{2\pi i n \omega_j}  \\
				0 & e^{nH(z)} 
				\end{pmatrix} = \begin{pmatrix}
					1 & 0 \\
					e^{n\left(H(z)-2\pi i \omega_j\right)} & 1 
				\end{pmatrix}\begin{pmatrix}
					0 & e^{2\pi in \omega_j} \\
					-e^{-2\pi in \omega_j} & 0 
				\end{pmatrix} \begin{pmatrix}
					1 & 0 \\
					e^{n\left(-H(z)-2\pi i \omega_j\right)} & 1
				\end{pmatrix},
			\end{equation}
			where we have defined $H(z) = \left(h_+(z)-h_-(z)\right)/2$. On the $+$-side ($-$-side) of each main arc, we define $\gamma_{m,j}^+$ ($\gamma_{m,j}^-$) to be an arc which starts and ends at the endpoints of $\gamma_{m,j}$ and remains entirely on the $+$($-$) side of $\gamma_{m,j}$. For now we do not impose any restrictions on the precise description of these arcs, but we enforce that they remain in the region where $\Re h >0$, which is possible due to \eqref{eq: main arc inequality}. We define $\mathcal{L}_j^\pm$ to be the region bounded between the arcs $\gamma_{m,j}$ and $\gamma_{m,j}^\pm$, respectively, and set $\hat{\Sigma} := \Sigma \cup_{j=0}^L \left(\gamma_{m,j}^+ \cup \gamma_{m,j}^-\right)$ as in Figure~\ref{fig: opening of lenses}. 
			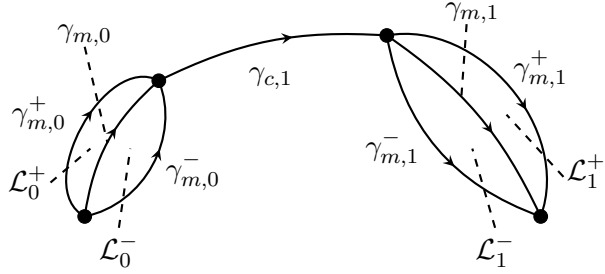
\begin{figure}[t]
				\centering
				\begin{tikzpicture}[scale=1.5]
				\draw [thick,postaction={mid arrow=black}] (-2,0) to [bend left=20] (-1.35, 1.2);
				\draw [thick,postaction={mid arrow=black}] (0.65,1.6) to [bend left=15] (2, 0);
				\draw [thick,postaction={mid arrow=black}] (-1.35, 1.2) to [bend left=15] (0.65, 1.6);
				\node [right] at (-.65, 1.22) {$\gamma_{c,1}$};
				
				\draw [thick,postaction={mid arrow=black}] (-2,0) to [bend left=90] (-1.35, 1.2);
				\draw [thick,postaction={mid arrow=black}] (-2,0) to [bend right=50] (-1.35, 1.2);
				
				\draw [thick,postaction={mid arrow=black}] (0.65,1.6) to [bend left=60] (2, 0);
				\draw [thick,postaction={mid arrow=black}] (0.65,1.6) to [bend right=30] (2, 0);
				
				\draw [fill] (0.65,1.6) circle [radius=0.06];
				\draw [fill] (-1.35,1.2) circle [radius=0.06];
				\draw [fill] (2,0) circle [radius=0.06];
				\draw [fill] (-2,0) circle [radius=0.06];
				
				\node  at (-1.05, 0.4) {$\gamma_{m,0}^-$};
				\node  at (-2.4, 0.9) {$\gamma_{m,0}^+$};	
				\node  at (-2, 1.6) {$\gamma_{m,0}$};
				\draw [thick, dashed] (-2, 1.5) to (-1.8,0.65);
				\node  at (-2.5, .3) {$\mathcal{L}_{0}^+$};
				\draw [thick, dashed] (-2.3, .3) to (-1.95,0.6);
				\node  at (-1.7, -.3) {$\mathcal{L}_{0}^-$};
				\draw [thick, dashed] (-1.7, -.1) to (-1.6,0.6);
				
				\node  at (0.7, 0.6) {$\gamma_{m,1}^-$};
				\node  at (2, 1.3) {$\gamma_{m,1}^+$};		
				\node  at (1.4, 1.8) {$\gamma_{m,1}$};		
				\draw [thick, dashed] (1.35, 1.7) to (1.3,1);		
				\node  at (1.6, -.3) {$\mathcal{L}_{1}^-$};
				\draw [thick, dashed] (1.6, -.1) to (1.4,0.6);		
				\node  at (2.4, .4) {$\mathcal{L}_{1}^+$};
				\draw [thick, dashed] (2.2, .35) to (1.7,0.9);		
				\end{tikzpicture}
				\caption{The contour $\hat{\Sigma}$ after opening lenses in the case $L=1$.}
				\label{fig: opening of lenses}
			\end{figure}
			We can now define the third transformation of the steepest descent process as 
			\begin{equation}
				S(z):= \begin{cases}
					T(z)\begin{pmatrix}
						1 & 0 \\
						\mp e^{-nh(z)} & 1
					\end{pmatrix}, \qquad &z \in \mathcal{L}_j^\pm, \\
					T(z), \qquad &\text{otherwise}.
				\end{cases}
			\end{equation}
			We then have that $S$ solves the following Riemann-Hilbert problem on $\hat{\Sigma}$:
			\begin{subequations}
				\label{eq: S RHP}
				\begin{alignat}{2}
					&S(z) \text{ is analytic for } z\in \C\setminus \hat{\Sigma}, \qquad && \\
					&S_+(z) = S_-(z) j_S(z), \qquad && z \in \hat{\Sigma}, \\
					&S(z) = I + \mathcal{O}\left(\frac{1}{z}\right), \qquad && z \to \infty, 
				\end{alignat}
			\end{subequations}
			Note that for $z\in \gamma_{m,j}^\pm$, 
			\begin{equation}
				j_S(z) = \begin{pmatrix}
					1 & 0 \\
					e^{-nh(z)} & 1
				\end{pmatrix}, 
			\end{equation}
			which decays exponentially quickly to the identity as $n\to \infty$, due to \eqref{eq: main arc inequality}. As $S=T$ outside of the lenses, we see that there are no changes to the jump matrix across a complementary arc, so that
			\begin{equation}
				j_S(z) = \begin{pmatrix}
					e^{-2\pi i n \eta_j} & e^{\frac{n}{2}(h_+(z)+h_-(z))} \\
					0 & e^{2\pi i n \eta_j}
				\end{pmatrix},  \qquad z \in \gamma_{c,j},
			\end{equation}
			which again tends exponentially quickly to a diagonal matrix as $n\to\infty$. Finally, we see that over $\gamma_{m,j}$, the jump matrix is given by
			\begin{equation}
				j_S(z) = \begin{pmatrix}
					0 & e^{2\pi i n \omega_j} \\
					-e^{-2\pi i n \omega_j} & 0
				\end{pmatrix}, \qquad z \in \gamma_{m,j},
			\end{equation}
			which follows from the factorization \eqref{eq: jump factorization}. Now consider the following model Riemann-Hilbert problem for the global parametrix, $M$, which is obtained by neglecting those entries in the jump matrices which are exponentially close to the identity in the Riemann-Hilbert problem for $S$,
			\begin{subequations}
				\label{eq: M RHP}
				\begin{alignat}{2}
					&M(z) \text{ is analytic for } z\in \C\setminus \Sigma, \qquad && \\
					&M_+(z) = M_-(z) \begin{pmatrix}
						e^{-2\pi i n \eta_j} & 0 \\
						0 & e^{2\pi i n \eta_j}
					\end{pmatrix},  \qquad &&z \in \gamma_{c,j}, \,\, j=1, \dots, L, \\
					&M_+(z) = M_-(z) \begin{pmatrix}
					0 & e^{2\pi i n \omega_j} \\
					-e^{-2\pi i n \omega_j} & 0
					\end{pmatrix},  \qquad &&z \in \gamma_{m,j}, \,\, j=0, \dots, L, \\
					&M(z) = I + \mathcal{O}\left(\frac{1}{z}\right), \qquad && z \to \infty, 
				\end{alignat}
			\end{subequations}
			Assuming we are able to solve the model Riemann-Hilbert problem, we would like to make the final transformation by setting $R = SM^{-1}$, however, this will turn out not to be valid near the endpoints. As such, we will need a more refined local analysis near these points. More precisely, we will solve the Riemann-Hilbert problem for $S$ \textit{exactly} near these points, and impose further that it matches with the global parametrix as $n\to\infty$. Therefore, define  $D_{\lambda}=D_\delta(\lambda)$ to be discs of fixed radius $\delta$ around each endpoint $\lambda \in \Lambda$. For each $\lambda \in \Lambda$, we seek a local parametrix $P^{(\lambda)}$, dependent on $n$, which solves
				\begin{subequations}
				\label{eq: P RHP}
					\begin{alignat}{2}
						&P^{(\lambda)}(z) \text{ is analytic for } z\in D_\lambda\setminus \hat{\Sigma}, \qquad && \\
						&P^{(\lambda)}_+(z) = P^{(\lambda)}_-(z) j_S(z),  \qquad &&z \in D_\lambda\cap \hat{\Sigma} \\
						&P^{(\lambda)}(z)= M(z)\left(I + \mathcal{O}\left(\frac{1}{n}\right)\right), \qquad && n \to \infty, \quad z\in \partial D_\lambda
					\end{alignat}
			\end{subequations}
			We also require that $P^{(\lambda)}$ has a continuous extension to $\overline{D_\delta(\lambda)}\setminus\hat{\Sigma}$ and remains bounded as $z\to\lambda$. The construction of both the global and local parametrices are now standard, but are included below for completeness. Near the hard edges at $\pm 1$, the local parametrix can be constructed with the help of Bessel functions. Near the soft edges, if any, the local parametrices can be constructed using Airy functions. 
		
		\subsection{Small Norm Riemann-Hilbert Problems}\label{sec: Small Norm}
			We may complete the process of nonlinear steepest descent by defining the final transformation as
			\begin{equation}
			R(z) = \begin{cases}
			S(z)M(z)^{-1}, \qquad & z \in \C\setminus \left(\hat{\Sigma}\cup_{\lambda\in\Lambda} D_\lambda)\right) \\
			S(z)P^{(\lambda)}(z)^{-1}, \qquad & z\in D_\lambda\setminus\hat{\Sigma}, \, \lambda\in\Lambda. 
			\end{cases}
			\end{equation}
			Provided we were able to appropriately construct both the local and global parametrices, the matrix $R$ will satisfy a \say{small norm} Riemann-Hilbert problem on a new contour, $\Sigma_R$, whose jumps decay to the identity in the appropriate sense. The contour $\Sigma_R$ will consist of the oriented arcs forming the boundaries $\partial D_\lambda$ about each $\lambda \in \Lambda$ and the portions of $\gamma_{m,L}^\pm$ which are not in the interior of $D_\lambda$, as illustrated in Figure~\ref{fig: Sigma R} for the genus $L=1$ case.  
			\begin{figure}[t]
				\centering
				\begin{tikzpicture}[scale=1.5]
				\draw [ultra thick,postaction={mid arrow=black}] (-2.2,0.35) to [bend left=30] (-1.75, 1.5);
				\draw [ultra thick,postaction={mid arrow=black}] (-1.6,0.15) to [bend right=30] (-1.25, 1.25);
				
				\draw [ultra thick,postaction={mid arrow=black}] (1.75,2.35) to [bend left=30] (2.2, 0.35);
				\draw [ultra thick,postaction={mid arrow=black}] (1.1,2.0) to [bend right=30] (1.6, 0.15);
				
				\draw [ultra thick, postaction={mid arrow=black}] (-1.15,2) to [bend left =40] (1.0,2.5);
				
				\draw[ultra thick, postaction={mid arrow=black}] (-1.6,0) arc (0:-360:0.4 and 0.4);
				\draw[ultra thick, postaction={mid arrow=black}] (1.6,0) arc (-180:-540:0.4 and 0.4);
				\draw[ultra thick, postaction={mid arrow=black}] (-0.95,1.65) arc (0:-360:0.4 and 0.4);
				\draw[ultra thick, postaction={mid arrow=black}] (1.75,2.34) arc (0:-360:0.4 and 0.4);
				
				\node at (2,0) {$D_{1}$};
				\node at (-2,0) {$D_{-1}$};
				\node at (-1.35,1.65) {$D_{\lambda_0}$};
				\node at (1.35,2.35) {$D_{\lambda_1}$};
				\end{tikzpicture}
				\caption{The contour $\Sigma_R$ in the case $L=1$. Note that we have chosen the contours $\partial D_\lambda$ to have clockwise orientation.}
				\label{fig: Sigma R}
			\end{figure}
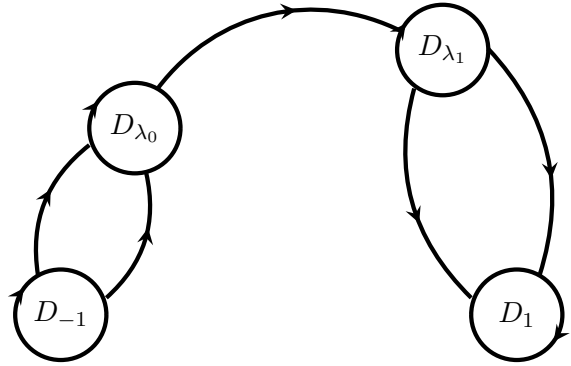
			Moreover, the jump matrix $j_R(z)$ will satisfy
			\begin{align}
				j_R(z) = \begin{cases}
				I + \mathcal{O}\left(e^{-cn}\right), \qquad &z \in \Sigma_R\setminus\bigcup_{\lambda\in\Lambda}\partial D_\lambda \\
				I +\mathcal{O}\left(\frac{1}{n}\right), \qquad &z\in\bigcup_{\lambda\in\Lambda}\partial D_\lambda
			\end{cases},
			\end{align}
			for some $c>0$ with uniform error terms. In particular, we may write the jump matrix as $j_R(z) = I + \Delta(z)$, where
			\begin{equation}\label{eq: Delta expansion}
				\Delta(z) \sim \sum_{k=1}^{\infty} \frac{\Delta_k(z)}{n^k}, \qquad n\to\infty,\, z\in \Sigma_R.
			\end{equation}
			By \cite[Theorem~7.10]{deift1999strong}, this behavior then implies that $R$ has an asymptotic expansion of the form
			\begin{equation}\label{eq: R expansion}
				R(z) \sim I + \sum_{k=1}^\infty \frac{R_k(z)}{n^k}, \qquad n\to\infty, 
			\end{equation}
			valid uniformly for $z \in\C\setminus\cup_{\lambda\in\Lambda}\partial D_\lambda$. Above, the $R_k(z)$ are solutions to the following Riemann-Hilbert problem (c.f \cite[Section~8.2]{arnojacobi}),
			\begin{subequations}
				\label{eq: R_k RHP}
				\begin{alignat}{2}
				&R_k(z) \text{ is analytic for } z \in \C\setminus  \bigcup_{\lambda\in\Lambda}\partial D_\lambda \qquad &&\\
				& R_{k,+}(z) = R_{k,-}(z) + \sum_{j=1}^{k-1}R_{k-j,-}\Delta_j(z), \qquad &&z\in \bigcup_{\lambda\in\Lambda}\partial D_\lambda\\
				& R_k(z) = \frac{R_k^{(1)}}{z}+\frac{R_k^{(2)}}{z^2}+\mathcal{O}\left(\frac{1}{z^3}\right), \qquad &&z \to\infty,
				\end{alignat}
			\end{subequations}
			where the $\Delta_j$ are given by \eqref{eq: Delta expansion}. Therefore, if we are able to determine the $\Delta_k$ in \eqref{eq: Delta expansion}, we will be able to sequentially solve for the $R_k$ in the expansion for $R$ in \eqref{eq: R expansion} via the Riemann-Hilbert problem \eqref{eq: R_k RHP}.
			
		\subsection{Unwinding the Transformations}\label{sec: unwinding transormations}
		The process of retracing the steps of Deift-Zhou steepest descent to obtain uniform asymptotics of the orthogonal polynomials in the plane is now standard. Of particular interest to us is to obtain the asymptotics of the recurrence coefficients. 
		 Unwinding the transformations away from the lenses, we see that
		\begin{align}
		Y(z)= e^{n\ell\sigma_3/2} T(z) e^{\frac{n}{2}\left[h(z)+sz\right]\sigma_3}
		&= e^{n\ell\sigma_3/2} S(z) e^{\frac{n}{2}\left[h(z)+sz\right]\sigma_3},\notag \\
		&= e^{n\ell\sigma_3/2} R(z)M(z) e^{\frac{n}{2}\left[h(z)+sz\right]\sigma_3}, \label{eq: Y away from cut}
		\end{align}
		where $M(z)$ above is the appropriate global parametrix. 
%
%
		We recall that the three term recurrence relation is given by
		\begin{equation*}
		z p_n^n(z) = p_{n+1}^n(z) + \alpha_n p_n^n(z) + \beta_n p_{n-1}^n(z).
		\end{equation*}
		To state the recurrence coefficients in terms of $Y$, we first note that from \eqref{eq: Y RHP} that we may write
		\begin{equation}
		Y(z) z^{-n\sigma_3} = I + \frac{Y^{(1)}}{z} + \frac{Y^{(2)}}{z^2} + \mathcal{O}\left(\frac{1}{z^3}\right), \qquad z\to\infty. 
		\end{equation}
		Then, we may write the recurrence coefficients as 
		\begin{equation}\label{eq: recurrence coeffs in terms of Y}
		\alpha_n = \frac{Y^{(2)}_{12}}{Y^{(1)}_{12}} - Y^{(1)}_{22}, \qquad \beta_n = Y^{(1)}_{12}Y^{(1)}_{21}. 
		\end{equation}
		see \cite[Theorem 3.1]{deift1999strong}, noting also that the matrix $Y^{(1)}$ is traceless, so $Y^{(1)}_{11}=-Y^{(1)}_{22}$. 
		As before, we will unwind these transformations until we are able to express the recurrence coefficients in terms of the global parametrix and the matrix valued $R(z)$. We continue by writing
		\begin{equation}
		T(z) = I + \frac{T^{(1)}}{z} + \frac{T^{(2)}}{z^2} + \mathcal{O}\left(\frac{1}{z^3}\right), \qquad z\to\infty. 
		\end{equation}
		Using \eqref{eq: h asymptotics}, we recall that
		\begin{equation}
		h(z;s) = -f(z;s)-l + 2\log(z) + \frac{c_1}{z}+\frac{c_2}{z^2} + \mathcal{O}\left(\frac{1}{z^3}\right), \qquad z\to\infty, 
		\end{equation}
		so that
		\begin{equation}
		e^{-\frac{n}{2}\left(h(z;s)+f(z;s)\right)}= z^{-n} e^{\frac{n\ell}{2}}\left(1-\frac{nc_1}{2z} + \frac{nc_1^2-4nc_2}{8z^2} +\mathcal{O}\left(\frac{1}{z^3}\right)\right), \qquad z\to\infty. 
		\end{equation}
		Next, using \eqref{eq: T def} we compute
		\begin{subequations}
			\begin{equation}
			T^{(1)}_{12} = e^{-n \ell} Y^{(1)}_{12}, \qquad T^{(1)}_{21} = e^{n \ell} Y^{(1)}_{21}
			\end{equation}
			\begin{equation}
			T^{(1)}_{22} = Y^{(1)}_{22} + \frac{n c_1}{2},\qquad T^{(2)}_{12} = e^{-n \ell} \left(\frac{n c_1}{2}Y^{(1)}_{12}+Y^{(2)}_{12}\right).
			\end{equation}
		\end{subequations}
		Then, it easily follows that \eqref{eq: recurrence coeffs in terms of Y} becomes
		\begin{equation}\label{eq: recurrence coeffs in terms of T}
		\alpha_n = \frac{T^{(2)}_{12}}{T^{(1)}_{12}} - T^{(1)}_{22}, \qquad \beta_n = T^{(1)}_{12}T^{(1)}_{21}. 
		\end{equation}
		The above equation will be the starting point of our analysis in Sections~\ref{sec: asymptotics I} and \ref{sec: asymptotics II}, where we prove Theorems~\ref{thm: recurrence coeffs genus 0} and \ref{thm: recurrence coeffs genus 1} , respectively, providing the asymptotics of the recurrence coefficients. 
		
		Below, we give a detailed description on how to solve the model problem \eqref{eq: M RHP} in the genus $0$ and genus $1$ cases, which will be the only two regimes we see for the linear weight under consideration. The arguments below can be easily adapted to cases of higher genera corresponding to other weights, as in \cite{bertola2016asymptotics}.
		
		\subsection{The Global Parametrix in genus 0}\label{sub: Global parametrix genus 0}	
				In the genus $0$ regime, $\Sigma=\gamma_{m,0}(s)$, where $\gamma_{m,0}$ is chosen so that we may construct a suitable $h$ function satisfying both \eqref{eq: RHP for h} and \eqref{eq: both inequalities}. The model Riemann-Hilbert problem \eqref{eq: M RHP} in the genus $0$ case takes the following form. We seek $M: \C\setminus\gamma_{m,0} \to \C^{2\times 2}$ such that
				\begin{subequations}
					\label{eq: M RHP genus 0}
					\begin{alignat}{2}
						&M(z) \text{ is analytic for } z\in \C\setminus \gamma_{m,0}, \qquad && \\
						&M_+(z) = M_-(z) \begin{pmatrix}
							0 & 1 \\
							-1 & 0
						\end{pmatrix},  \qquad &&z \in \gamma_{m,0}, \label{eq: genus 0 model problem jump}\\
						&M(z) = I + \mathcal{O}\left(\frac{1}{z}\right), \qquad && z \to \infty. 
					\end{alignat}
				\end{subequations}
				This can be solved explicitly \cite{bleher2011lectures,deano2014large} as
				\begin{equation}\label{eq: global parametrix eq genus 0}
					M(z) = \frac{1}{\sqrt{2}\left(z^2-1\right)^{1/4}}\begin{pmatrix}
						\varphi(z)^{1/2} & i \varphi(z)^{-1/2} \\
						-i\varphi(z)^{-1/2} & \varphi(z)^{1/2}
					\end{pmatrix},
				\end{equation}
				where $\varphi(z) = z+(z^2-1)^{1/2}$, with branch cuts taken on $\gamma_{m,0}$ so that $\varphi(z) = 2z + \mathcal{O}\left(1/z\right)$, $(z^2-1)^{1/4} = z^{1/2} +\mathcal{O}(z^{-3/2})$ as $z\to\infty$. 
		
			\subsection{The Global Parametrix in genus $1$}\label{sub: Genus 1 Global Parametrix}
			In the genus $1$ regime, we have that $\Sigma=\gamma_{m,0}(s)\cup\gamma_{c,1}(s)\cup\gamma_{m,1}(s)$, and the set of branchpoints is given by $\Lambda(s)=\{-1,1,\lambda_0(s),\lambda_1(s)\}$, where the arcs and endpoints are chosen so that we may construct a suitable $h$-function. Now, the model problem \eqref{eq: M RHP} takes the form
			\begin{subequations}
				\label{eq: M RHP genus 1}
				\begin{alignat}{2}
					&M(z) \text{ is analytic for } z\in \C\setminus \Sigma, \qquad && \\
					&M_+(z) = M_-(z) \begin{pmatrix}
						e^{-2\pi i n \eta_1} & 0\\
						0 & e^{2\pi i n \eta_1}
					\end{pmatrix},  \qquad &&z \in \gamma_{c,1}, \\
					&M_+(z) = M_-(z) \begin{pmatrix}
						0 & 1 \\
						-1& 0
					\end{pmatrix},  \qquad &&z \in \gamma_{m,1},  \\
					&M_+(z) = M_-(z) \begin{pmatrix}
						0 & e^{2\pi i n \omega_0} \\
						-e^{-2\pi i n \omega_0} & 0
					\end{pmatrix},  \qquad &&z \in \gamma_{m,0},  \\
					&M(z) = I + \mathcal{O}\left(\frac{1}{z}\right), \qquad && z \to \infty.
				\end{alignat}
			\end{subequations}
			We follow the approach of \cite{bertola2016asymptotics,deift1999strong, tovbis2004semiclassical}, and solve this problem in four steps. We recall from Remark~\ref{rem: Riemann surface} that $\mathfrak{R}$ is the hyperelliptic Riemann surface associated with the algebraic equation
			\begin{equation}\label{eq: xi eqn}
				\xi^2(z) =  \frac{s^2(z-\lambda_0)(z-\lambda_1)}{z^2-1}, 
			\end{equation}
			whose branchcuts are taken along $\gamma_{m,0}$ and $\gamma_{m,1}$, and whose top sheet fixed so that 
			\begin{equation}
				\xi(z) = -s + \mathcal{O}\left(\frac{1}{z}\right),
			\end{equation}
			as $z\to \infty$ on the top sheet of $\mathfrak{R}$. We form a homology basis on $\mathfrak{R}$ using the $A$ and $B$ cycles defined in Figure~\ref{fig: homology basis}.
			\begin{figure}[t]
				\centering
				\begin{tikzpicture}[scale=1.5]
				\draw [thick] (-2,0) to [bend left=40] (-1.35, 0.9);
				\draw [thick] (0.65,1.6) to [bend left=50] (2, 0);
				\node [right] at (1.9, 0.75) {$\gamma_{m,1}$};
				\node [right] at (-2, 0) {$\gamma_{m,0}$};
				
				\draw [ultra thick,postaction={mid arrow=black}] (-2.2,-0.5) to [bend right=90] (-1.15, 1.4);
				\draw [ultra thick,postaction={mid arrow=black}] (-1.15, 1.4) to [bend right=90] (-2.2,-0.5);
				\node [left] at (-2.6, 0.1) {$A$};
				
				\draw [ultra thick,postaction={mid arrow=black}] (-1.8,0.65) to [bend left=90] (1.15, 1.5);
				\draw [dashed, ultra thick,postaction={mid arrow=black}]  (1.15, 1.5) to [bend left=90] (-1.8,0.65);
				\node [right] at (-0.4, 1.7) {$B$};
				
				\draw [fill] (0.65,1.6) circle [radius=0.06];
				\draw [fill] (-1.35,0.9) circle [radius=0.06];
				\draw [fill] (2,0) circle [radius=0.06];
				\draw [fill] (-2,0) circle [radius=0.06];
				
				\end{tikzpicture}
				\caption{The homology basis on $\mathfrak{R }$. The bold contours are on the top sheet of $\mathfrak{R}$, and the dashed contours are on the second sheet of $\mathfrak{R}$. }
				\label{fig: homology basis}
			\end{figure}
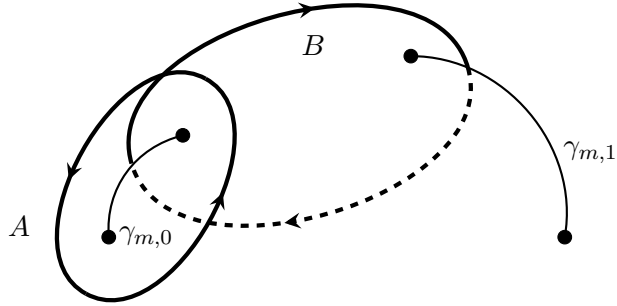
			We also recall that as $\mathfrak{R}$ is of genus $1$, the vector space of holomorphic differentials on $\mathfrak{R}$ has dimension $1$ and is linearly generated by 
			\begin{equation}
				\Omega_0 = \frac{dz}{\xi(z)\left(z^2-1\right)}.
			\end{equation}
			We then define $\omega := b \Omega_0$, with $b$ chosen to normalize $\omega$ so that
			\begin{equation}\label{eq: normalization of omega}
				\oint_A \omega = 1. 
			\end{equation}
			Moreover, if we define
			\begin{equation}\label{eq: tau def}
				\tau := \oint_B \omega, 
			\end{equation}
			it is well known that $\Im \tau > 0$, see \cite[Chapter~III.2]{farkas1992riemann}.  
			
			\subsubsection{Step One - Remove Jumps on Complementary Arcs}
				The first step aims to remove the jumps over the complementary arcs and we will follow the procedure outlined in \cite{tovbis2004semiclassical}. First, we introduce the function
				\begin{equation}\label{varXi}
					\varXi(z) = \left[\left(z^2-1\right)\left(z-\lambda_0\right)\left(z-\lambda_1\right)\right]^{1/2}, 
				\end{equation}
				with a branch cut taken on $\gamma_{m,0}$ and ${\gamma_{m,1}}$ and branch chosen so that $\varXi(z)\to z^2$ as $z\to\infty$. Next, define
				\begin{equation}\label{eq: tilde g eqn}
					\tilde{g}(z) = \varXi(z)\left[\int_{\gamma_{c,1}} \frac{ \eta_1\, d\zeta}{(\zeta-z)\varXi(\zeta)}-\int_{\gamma_{m,0}}\frac{\Delta_0\, d\zeta}{(\zeta-z)\varXi_+(\zeta)}\right],
				\end{equation}
				The constant $\Delta_0$ is chosen so that $\tilde{g}$ is analytic at infinity. More precisely, $\Delta_0$ is defined so that 
				\begin{equation}
					\int_{\gamma_{c,1}} \frac{ \eta_1\, d\zeta}{\varXi(\zeta)}- \int_{\gamma_{m,0}}\frac{\Delta_0\, d\zeta}{\varXi_+(\zeta)} = 0. 
				\end{equation}
				Note that by \eqref{eq: normalization of omega} and the definition of $\omega$, it follows that $\Delta_0 = -\eta_1 \tau$. It follows that $\tilde{g}$ is bounded near each $\lambda \in \Lambda$ and satisfies
				\begin{subequations}\label{eq : g-tide-jump}
					\begin{alignat}{2}
						&\tilde{g}_+(z) - \tilde{g}_-(z) = 2\pi i \eta_1, \qquad &&z\in \gamma_{c,1} \\
						&\tilde{g}_+(z) + \tilde{g}_-(z) = -2\pi i\Delta_0, \qquad &&z\in \gamma_{m,0},\\
						&\tilde{g}_+(z) + \tilde{g}_-(z) = 0, \qquad &&z\in \gamma_{m,1}.
					\end{alignat}
				\end{subequations}
				Next, we define
				\begin{equation}\label{eq: M0 eqn}
					M_{0}(z) = e^{-n \tilde{g}(\infty)\sigma_3} M(z) e^{n\tilde{g}(z)\sigma_3}.
				\end{equation}
				Then, $M_0$ solves the following Riemann-Hilbert problem
				\begin{subequations}
					\label{eq: M0 RHP genus 1}
					\begin{alignat}{2}
						&M_0(z) \text{ is analytic for } z\in \C\setminus \mathfrak{M}, \qquad && \\
						&M_{0,+}(z) = M_{0,-}(z) \begin{pmatrix}
							0 & e^{2\pi i n (\omega_0+\Delta_0)} \\
							-e^{-2\pi i n (\omega_0+\Delta_0)} & 0
						\end{pmatrix},  \qquad &&z \in \gamma_{m,0},  \label{eq: model problem jump condition 1}\\
						&M_{0,+}(z) = M_{0,-}(z) \begin{pmatrix}
							0 & 1 \\
							-1 & 0
						\end{pmatrix},  \qquad &&z \in \gamma_{m,1}, \label{eq: model problem jump condition 2}\\
						&M_0(z) = I + \mathcal{O}\left(\frac{1}{z}\right), \qquad && z \to \infty.
					\end{alignat}
				\end{subequations}
				Note that $M_0$ has no longer has any jumps over the complementary arcs. 
				
			\subsubsection{Step Two - Solve $n = 0$}
				In the case that $n=0$, the model problem for $M_0$ takes the form
				\begin{subequations}
					\label{eq: M0 RHP genus 1 n = 0}
					\begin{alignat}{2}
						&M_0^{(0)}(z) \text{ is analytic for } z\in \C\setminus \mathfrak{M}, \qquad && \\
						&M_{0,+}^{(0)}(z) = M_{0,-}^{(0)}(z) \begin{pmatrix}
							0 & 1 \\
							-1 & 0
					\end{pmatrix},  \qquad &&z \in \mathfrak{M}, \label{eq: model n = 0 jump}\\
					&M_0^{(0)}(z) = I + \mathcal{O}\left(\frac{1}{z}\right), \qquad && z \to \infty.
					\end{alignat}
				\end{subequations}
				The solution to \eqref{eq: M0 RHP genus 1 n = 0} is well known (see for instance \cite{bleher2011lectures}), and is given by
				\begin{equation}\label{eq: M00 eqn}
					M_0^{(0)}(z) = 
					\frac{1}{2}
					\begin{pmatrix}
						\phi(z)+\phi(z)^{-1} & i(\phi(z)-\phi(z)^{-1})\\
						-i(\phi(z)-\phi(z)^{-1}) & \phi(z)+\phi(z)^{-1}
					\end{pmatrix},
				\end{equation}
				where 
				\begin{equation}\label{eq: beta eqn}
					\phi(z) = \left(\frac{\left(z+1\right)\left(z-\lambda_1\right)}{\left(z-\lambda_0\right)\left(z-1\right)}\right)^{1/4}
				\end{equation}
				with branch cuts on $\gamma_{m,0}$ and $\gamma_{m,1}$ and the branch of the root chosen so that
				\begin{equation}\label{eq:normalization_eta}
					\lim\limits_{z\to \infty} \phi(z) = 1. 
				\end{equation}
				
				It is important to understand the location of the zeros of the entries of $M_0^{(0)}(z)$, as they will play a role later on in this construction. Note first that the zeros of $\phi(z) +\phi^{-1}(z)$ are the zeros of $\phi^4(z) - 1=\left(\phi^2(z)-1\right)\left(\phi^2(z)+1\right)$, which is meromorphic on $\mathfrak{R}$, with a zero at $\infty_1$ and one simple zero on each sheet of $\mathfrak{R}$. If we denote by $z_1$ the zero of $\phi^2(z)-1$, then $\hat{z}_1$, which denotes the projection of $z_1$ onto the opposite sheet of $\mathfrak{R}$, solves $\phi^2(z)+1$.

			\subsubsection{Step Three - Match the jumps on $\mathfrak{M}$}
				The next step in the solution is to match the jump conditions \eqref{eq: model problem jump condition 1} ans \eqref{eq: model problem jump condition 2}. We will do this by constructing two scalar functions, $\mathcal{M}_1(z, d)$ and $\mathcal{M}_2(z, d)$ which satisfy 
				\begin{equation}\label{eq: Mcal jumps}
					\mathcal{M}_+ =\begin{cases}
					\mathcal{M}_-\begin{pmatrix}
					0 & e^{2\pi i W} \\
					e^{-2\pi i W} & 0
					\end{pmatrix}, \qquad & z \in \gamma_{m,0}, \\
					\mathcal{M}_-\begin{pmatrix}
					0 & 1 \\
					1 & 0
					\end{pmatrix}, \qquad & z \in \gamma_{m,1},
					\end{cases}
				\end{equation}
				where 
				\begin{equation}
					\mathcal{M}(z,d) = \left(\mathcal{M}_1(z, d), \mathcal{M}_2(z,d)\right), 
				\end{equation}
				$W = n(\omega_0+\Delta_0)$, and $d\in\C$ is a yet to be defined constant that will be chosen to cancel the simple poles of the entries of $M_0^{(0)}$. If we can construct such functions, then it is immediate from \eqref{eq: model n = 0 jump} and \eqref{eq: Mcal jumps} that
				\begin{equation}\label{eq: L cal defn}
					\mathcal{L}(z) := \frac{1}{2}\begin{pmatrix}
						\left(\phi(z)+\phi(z)^{-1}\right)\mathcal{M}_1(z,d) & i\left(\phi(z)-\phi(z)^{-1}\right) \mathcal{M}_2(z,d) \\
						-i\left(\phi(z)-\phi(z)^{-1}\right)\mathcal{M}_1(z,-d) & \left(\phi(z)+\phi(z)^{-1}\right) \mathcal{M}_2(z,-d) \\
					\end{pmatrix}
				\end{equation}
				satisfies \eqref{eq: model problem jump condition 1} and \eqref{eq: model problem jump condition 2}. We can construct $\mathcal{M}_1$ and $\mathcal{M}_2$ with the help of theta functions on $\mathfrak{R}$. We define the Riemann theta function associated with $\tau$ in \eqref{eq: tau def} in the standard way
				\begin{equation}\label{eq: theta function def}
					\Theta(\zeta) = \sum_{m\in\Z} e^{2\pi im\zeta + \pi i \tau m^2}, \qquad \zeta\in \C. 
				\end{equation}
				The following properties of the theta function follow immediately from \eqref{eq: theta function def}:
				\begin{subequations}\label{eq: theta properties}
					\begin{align}
						& \Theta \text{ is analytic in }\C,\\
						&\Theta(\zeta) = \Theta(-\zeta),\\
						&\Theta(\zeta + 1) = \Theta(\zeta),\\
						&\Theta(\zeta+\tau) = e^{-2\pi i\zeta-\pi i \tau} \Theta(\zeta).
					\end{align}
				\end{subequations} 
				Associated with $\Theta$ is the \textit{period lattice}, $\Lambda_\tau:= \Z+\tau\Z$. The function $\Theta(\zeta)$ has a simple zero at 
				$\zeta = \frac{1}{2} + \frac{\tau}{2} \mod \Lambda_\tau$. We remark that in genus $\geq 2$ one needs to be careful as the $\Theta$ function could vanish identically.
				Next we define the Abel map as
				\begin{equation}\label{eq: Abel map}
					u(z) = -\int_{1}^z \omega, \qquad z \in \C\setminus\Sigma, 
				\end{equation}
				where we recall $\omega$ was normalized to satisfy \eqref{eq: normalization of omega}. Above, we take the path of integration on the upper sheet of $\mathfrak{R}$ in the complement of $\mathfrak{C} \cup \mathfrak{M} \cup [1, \infty)$. By \eqref{eq: normalization of omega}, we have that $u$ is well defined on $\C\setminus\mathfrak{M} \cup \gamma_{c, 1}$. We emphasize here that $u(z)$ defined in such a way has no jumps on $(-\infty, -1) \cup (1, \infty)$. From \eqref{eq: normalization of omega} and \eqref{eq: tau def} it follows that
				\begin{subequations}\label{eq: u jumps}
					\begin{alignat}{2}
						& u_+(z) + u_-(z) = 0, \qquad &&z \in \gamma_{m,1}, \\
						& u_+(z) + u_-(z) = \tau, \qquad &&z \in \gamma_{m,0}, \\
						&u_+(z) - u_-(z)= 1, \qquad &&z \in \gamma_{c,1}.
					\end{alignat} 
				\end{subequations}
				\begin{remark}\label{g-tilde-remark}
				Observe that $u(z)$ defined in this way satisfies $\tilde{g}(z) = 2\pi i \eta_1 u(z)$. To see this, consider the function $f(z) := \tilde{g}(z)  - 2\pi i \eta_1 u(z)$. From the behavior of $\tilde{g}(z),  u(z)$, the function $f(z)$ is bounded $z \to z_0, z_0 \in \Lambda \cup \{ \infty\}$. From \eqref{eq : g-tide-jump}, \eqref{eq: u jumps}, we see that $f_+(z) = -f_-(z)$ for $z \in \mathfrak{M}$ and is otherwise analytic. Applying Liouville's theorem to $f(z)/ \varXi(z)$ yields the claim.
				\end{remark}
				Next we set
				\begin{equation}
					\mathcal{M}_1(z, d) := \frac{\Theta(u(z)- W + d)}{\Theta(u(z)+d)}, \qquad \mathcal{M}_2(z, d) := \frac{\Theta(-u(z)- W + d)}{\Theta(-u(z)+d)},
				\end{equation}
				where we recall that $W = n(\omega_0+\Delta_0)$ and $d$ is yet to be determined. Then, both $\mathcal{M}_1$ and $\mathcal{M}_2$ are single valued on $\C\setminus\mathfrak{M}$. Equations \eqref{eq: theta properties} and \eqref{eq: u jumps} immediately show that the functions $\mathcal{M}_1$ and $\mathcal{M}_2$ satisfy \eqref{eq: Mcal jumps}, as desired. In the remainder of this section, we will slightly abuse notation and think of the functions $\phi^2(z)$ and $\mathcal{M}_{1, 2}(z)$ as functions on $\mathfrak{R}$. The latter are multiplicatively multi-valued on $\mathfrak{R}$, but one may still consider the order of zeros and poles in the usual fashion.
				
			\subsubsection{Step 4 - Choose $d$ and normalize $\mathcal{L}$}
				We have now constructed $\mathcal{M}_1$ and $\mathcal{M}_2$ so that $\mathcal{L}$ defined in \eqref{eq: L cal defn} satisfies \eqref{eq: model problem jump condition 1} and \eqref{eq: model problem jump condition 2}. We must now choose $d$ so that $\mathcal{L}$ is analytic in $\C\setminus\mathfrak{M}$ and normalize $\mathcal{L}$ so that it tends to the identity as $z\to\infty$. By standard theory \cite{farkas1992riemann}, for arbitrary $d\in\C$ the function $\Theta(u(z)-d)$ on $\mathfrak{R}$ either vanishes identically or vanishes at a single point $p_1$, counted with multiplicity. Recall that we have defined $z_1$ to be the unique finite solution to $\phi(z)^2 -1 =0$ and $\hat{z}_1$, its projection onto the opposite sheet of $\mathfrak{R}$, to be the unique finite solution to $\phi(z)^2+1=0$ on $\mathfrak{R}$. 
				
				We now choose $d$ so that the simple zeros of the denominators of $\mathcal{L}$ cancel the zeros of $\phi\pm\phi^{-1}$. From the remarks immediately following \eqref{eq: theta properties}, this is satisfied if we set
				\begin{equation}\label{eq: d def}
					d = -u\left(\hat{z}_1\right)+ \frac{1}{2} + \frac{\tau}{2} \mod \Lambda_\tau,
				\end{equation}
				as $\Theta(\zeta)= 0$ when $\zeta = \frac{1}{2} + \frac{\tau}{2} \mod \Lambda_\tau$. For definiteness, we choose $d = -u\left(\hat{z}_1\right)+ \frac{1}{2} + \frac{\tau}{2}$. As the Theta function is even, we have that
				\begin{equation}
					\Theta(u(\hat{z}_1)+ d)
					 = \Theta(-u(z_1)+ d) = \Theta(u(z_1)-d) = 0, 
				\end{equation}
				which verifies that each entry of $\mathcal{L}$ is analytic in $\C\setminus \mathfrak{M}$. 
				
				Now we must normalize $\mathcal{L}$ so that it decays to the identity as $z\to\infty$. We first note that we have alternative formula for $d$, 
				\begin{equation}\label{eq: d at infinity}
					d = -u(\infty_1) \mod \Lambda_\tau.
				\end{equation}
				To see this, we note that $\phi^2(z)-1$ is meromorphic on $\mathfrak{R}$ with a zero at $\infty_1$, a simple zero at $z_1$, and poles at $\lambda_0$ and $1$. By Abel's Theorem \cite[Theorem~III.6.3]{farkas1992riemann}, we have that
				\begin{equation*}
					u(\infty_1) + u(z_1) - u(1) - u(\lambda_0)=0 \mod \Lambda_\tau.
				\end{equation*}
				Using \eqref{eq: Abel map}, along with \eqref{eq: normalization of omega} and \eqref{eq: tau def}, we see that
				\begin{equation}
					u(1)= 0, \qquad u(\lambda_0) = -\frac{1}{2} -\frac{\tau}{2}, 
				\end{equation}
				so that \eqref{eq: d at infinity} follows by \eqref{eq: d def}. As $\phi(z)-\phi(z)^{-1}\to 0$ as $z\to\infty$, 
				\begin{equation}
					\det\mathcal{L}(\infty) = \mathcal{M}_1(\infty, d)\mathcal{M}_2(\infty,-d) = \frac{\Theta^2(W)}{\Theta^2(0)}. 
				\end{equation}
				As $\mathcal{L}$ has the same jumps as $M_0$ in \eqref{eq: model problem jump condition 1} and \eqref{eq: model problem jump condition 2}, we can conclude that $\det \mathcal{L}$ is entire, and as $\mathcal{L}$ is bounded at infinity, we have that
				\begin{equation}
					\det \mathcal{L}(z) = \frac{\Theta^2(W)}{\Theta^2(0)}. 
				\end{equation}
				If $\Theta(W)\not =0$, then
				\begin{equation}
					M_0(z) = \mathcal{L}^{-1}(\infty)\mathcal{L}(z)
				\end{equation}
				solves \eqref{eq: M0 RHP genus 1}. The condition $\Theta(W)\not=0$ can be rewritten as
				\begin{equation}\label{eq:n0w0D0}
					n(\omega_0+\Delta_0)\not= \frac{1}{2} + \frac{\tau}{2} \mod \Lambda_\tau. 
				\end{equation}
    				In the genus 1 case, the fact that $\mathcal{L}$ in \eqref{eq: L cal defn} is well defined implies that the previous condition is in fact necessary and sufficient; to see this, we note that the solution of the RHP \eqref{eq: M0 RHP genus 1} is unique, but when condition \eqref{eq:n0w0D0} is not satisfied, given a solution $\widetilde{M}_0(z)$, the matrix $\widetilde{M}_0(z)+k\mathcal{L}(z)$ is a solution for any $k\in\mathbb{Z}$. Therefore, we have proven the following Lemma (see Theorem~2.17 of \cite{bertola2016asymptotics}).
				\begin{lemma}\label{lem: model solution genus 1}
					The model Riemann-Hilbert problem \eqref{eq: M0 RHP genus 1} has a solution if and only if 
					\begin{equation}
							n(\omega_0+\Delta_0)\not= \frac{1}{2} + \frac{\tau}{2} \mod \Lambda_\tau.
					\end{equation}
					Moreover, the solution is given by $M_0(z) = \mathcal{L}^{-1}(\infty)\mathcal{L}(z)$,	where $\mathcal{L}$ is defined in \eqref{eq: L cal defn}. 
				\end{lemma}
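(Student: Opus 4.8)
The plan is to read off both directions of the equivalence from the explicit construction in Steps~1--4. For \emph{sufficiency}, assume $n(\omega_0+\Delta_0)\not\equiv\tfrac12+\tfrac\tau2\bmod\Lambda_\tau$; since in genus $1$ the theta function $\Theta$ is not identically zero and has its unique zero at $\tfrac12+\tfrac\tau2\bmod\Lambda_\tau$, this is equivalent to $\Theta(W)\neq0$. One then checks that $\mathcal{L}(z)$ of \eqref{eq: L cal defn} solves the homogeneous version of \eqref{eq: M0 RHP genus 1}: the jumps on $\gamma_{m,0}$ and $\gamma_{m,1}$ come from combining \eqref{eq: model n = 0 jump} with \eqref{eq: Mcal jumps} (itself a consequence of \eqref{eq: theta properties} and \eqref{eq: u jumps}); analyticity in $\C\setminus\mathfrak{M}$ holds because the choice \eqref{eq: d def} of $d$ cancels the zeros of the denominators $\Theta(\pm u(z)+d)$ against the zeros of $\phi(z)+\phi(z)^{-1}$ and $\phi(z)-\phi(z)^{-1}$; and $\mathcal{L}$ stays bounded at each $\lambda\in\Lambda$ and at $\infty$. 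As $\det\mathcal{L}$ is then jump-free, entire and bounded, it is the constant $\Theta^2(W)/\Theta^2(0)$, which is nonzero under our hypothesis, so $\mathcal{L}(\infty)$ is invertible and $M_0(z)=\mathcal{L}^{-1}(\infty)\mathcal{L}(z)$ solves \eqref{eq: M0 RHP genus 1}, giving the asserted formula. Uniqueness of the solution is standard: if $M_0,\widetilde M_0$ both solve \eqref{eq: M0 RHP genus 1}, then $\det\widetilde M_0$ is entire, bounded and $\to1$, hence $\equiv1$, so $M_0\widetilde M_0^{-1}$ is jump-free, entire, bounded and $\to I$, hence $\equiv I$.

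For \emph{necessity}, I would argue by contradiction: suppose $\Theta(W)=0$ but a solution $\widetilde M_0$ of \eqref{eq: M0 RHP genus 1} exists. The same matrix $\mathcal{L}(z)$ is still analytic in $\C\setminus\mathfrak{M}$ with the jumps \eqref{eq: model problem jump condition 1}--\eqref{eq: model problem jump condition 2}, bounded, and not identically zero (each $\phi\pm\phi^{-1}$ is a nonzero function and each numerator theta factor in $\mathcal{M}_{1,2}(z,\pm d)$ vanishes at a single point rather than identically, $u$ being a bijection onto the Jacobian in genus $1$). Moreover $\mathcal{L}(\infty)=0$: using \eqref{eq: d at infinity} one gets $\mathcal{M}_1(\infty,d)=\mathcal{M}_2(\infty,-d)=\Theta(W)/\Theta(0)=0$, while the $(1,2)$ and $(2,1)$ entries of $\mathcal{L}$ vanish at $\infty$ because $\phi(z)-\phi(z)^{-1}\to0$ there and the remaining $\mathcal{M}$-factors are finite at $\infty$ (their denominators vanish only at the finite point $z_1$). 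Then $\widetilde M_0(z)+k\,\mathcal{L}(z)$ solves \eqref{eq: M0 RHP genus 1} for every $k\in\C$, contradicting the uniqueness above; hence no solution exists, which completes the equivalence. This is the mechanism recorded in the paragraph preceding the lemma, and is Theorem~2.17 of \cite{bertola2016asymptotics}.

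The one genuinely non-routine point is the necessity direction, and within it the two facts that $\mathcal{L}\not\equiv0$ and $\mathcal{L}(\infty)=0$ when $\Theta(W)=0$: the first makes $\mathcal{L}$ a nontrivial element of the homogeneous solution space, and the second keeps the normalization at $\infty$ intact when we add $k\mathcal{L}$. Both rest on the genus~$1$ facts that $\Theta$ is nondegenerate and that the Abel map $u$ is a bijection onto the one-dimensional Jacobian --- which is precisely why the construction and this lemma are stated only for $L=0,1$ (cf.~Remark~\ref{rem: Riemann surface}); in higher genus one would additionally need to rule out the identical vanishing of $\Theta$ along $u(\mathfrak{R})$. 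Everything else reduces to bookkeeping with the periodicity identities \eqref{eq: theta properties}, the jump relations \eqref{eq: u jumps}, and Liouville's theorem.
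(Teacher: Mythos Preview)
Your proposal is correct and follows essentially the same route as the paper's argument preceding the lemma: sufficiency via the explicit construction of $\mathcal{L}$ together with Liouville's theorem for $\det\mathcal{L}$, and necessity via the contradiction that $\widetilde M_0+k\,\mathcal{L}$ would furnish a one-parameter family of solutions violating uniqueness. You supply more detail than the paper does on why $\mathcal{L}(\infty)=0$ and $\mathcal{L}\not\equiv0$ when $\Theta(W)=0$, which is welcome; otherwise the structure is identical, including the reference to \cite[Theorem~2.17]{bertola2016asymptotics}.
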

				Next, we will define the sequence of indices $\N(s, \epsilon)$. To do so, note that zeros of $\mathcal{M}_{1, n}(z, d), \mathcal{M}_{2, n}(z, -d)$, denoted $z_{n, 1}, z_{n, 2}$ respectively, are defined via the Jacobi inversion problem 
				\begin{equation}\label{Jacobi Inversion Problem}
				u(z_{n, i}) - (-1)^{i+1} n(\Delta_0 + \omega_0) - u(\infty_1)  = \dfrac{1}{2} + \dfrac{\tau}{2} \mod \Lambda_{\tau}.
				\end{equation}
				In particular, $z_{n, 1} = \infty_1 = z_{n, 2}$ exactly when $n(\Delta_0 + \omega_0) = \frac{1}{2} + \frac{\tau}{2} \mod \Lambda_{\tau}$. As such, we let \[\N(s, \epsilon) = \left\{n \in \N \ | \ z_{n, 1} \not \in \pi^{-1}(\{ z \ | \ |z| > 1/\epsilon \}) \cap \mathfrak{R}^{(1)} \right\} \] where $\pi: \mathfrak{R} \to \overline{\C}$ is the natural projection and $\mathfrak{R}^{(1)}$ is the first sheet. With this definition, we have the following lemma.
				\begin{lemma} \label{seq-prop} 
				For all $n \geq 1$ and $\epsilon>0$ small enough, if $n \not\in \N(s, \epsilon)$, then $n+1 \in \N(s, \epsilon)$.
				\end{lemma}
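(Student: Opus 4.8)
The plan is to transport the problem to the Jacobian torus $\C/\Lambda_\tau$ via the Abel map. Since $\mathfrak{R}$ has genus $1$ and $z=\infty$ is not a branch point of \eqref{eq: xi eqn} (there $\xi^{2}\to s^{2}$, so there are two distinct points $\infty_{1},\infty_{2}$ above $\infty$), the Abel map $u\colon\mathfrak{R}\to\C/\Lambda_\tau$ from \eqref{eq: Abel map} is a biholomorphism; in particular $z_{n,1}$ is uniquely determined by the class $u(z_{n,1})\in\C/\Lambda_\tau$. Solving the Jacobi inversion problem \eqref{Jacobi Inversion Problem} with $i=1$ for $u(z_{n,1})$, and using \eqref{eq: d at infinity}, yields
\[
 u(z_{n,1})\equiv u(\infty_{1})+\tfrac12+\tfrac{\tau}{2}+n\,c \pmod{\Lambda_\tau},\qquad c:=\Delta_0+\omega_0,
\]
so that $u(z_{n+1,1})-u(z_{n,1})\equiv c\pmod{\Lambda_\tau}$ for every $n$; i.e.\ $n\mapsto u(z_{n,1})$ is an arithmetic progression on the torus with common difference $c$.

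Next I would translate membership in $\N(s,\epsilon)$ onto the torus. For $\epsilon>0$ small enough the neighbourhood $\{\,|z|>1/\epsilon\,\}$ of $\infty$ in $\overline{\C}$ contains none of the branch points $\pm1,\lambda_0,\lambda_1$, so $V_\epsilon:=\pi^{-1}(\{|z|>1/\epsilon\})\cap\mathfrak{R}^{(1)}$ is a topological disc around $\infty_{1}$ which shrinks to $\{\infty_{1}\}$ as $\epsilon\downarrow0$; consequently $U_\epsilon:=u(V_\epsilon)$ is an open neighbourhood of $p_\infty:=u(\infty_{1})$ in $\C/\Lambda_\tau$ whose diameter tends to $0$ as $\epsilon\downarrow0$. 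By the definition of $\N(s,\epsilon)$ and injectivity of $u$ we have $n\notin\N(s,\epsilon)\iff z_{n,1}\in V_\epsilon\iff u(z_{n,1})\in U_\epsilon$.

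To conclude, first handle the degenerate case $c\equiv0\pmod{\Lambda_\tau}$: then $u(z_{n,1})\equiv p_\infty+\tfrac12+\tfrac{\tau}{2}$ is independent of $n$ and distinct from $p_\infty$, since $\tfrac12+\tfrac{\tau}{2}$ is the (nonzero) zero of $\Theta$ on the torus; hence for $\epsilon$ small enough $U_\epsilon$ excludes it, $\N(s,\epsilon)=\N$, and there is nothing to prove. Otherwise, let $\rho:=\operatorname{dist}(c,0)>0$ in the flat metric on $\C/\Lambda_\tau$ and choose $\epsilon$ so small that $U_\epsilon\subset B(p_\infty,\rho/3)$. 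If $n\notin\N(s,\epsilon)$ then $u(z_{n,1})\in U_\epsilon$, and by translation invariance of the flat metric and the triangle inequality,
\[
 \operatorname{dist}\!\bigl(u(z_{n+1,1}),p_\infty\bigr)\;\ge\;\operatorname{dist}(c,0)-\operatorname{dist}\!\bigl(u(z_{n,1}),p_\infty\bigr)\;\ge\;\rho-\tfrac{\rho}{3}\;=\;\tfrac{2\rho}{3}\;>\;\tfrac{\rho}{3},
\]
so $u(z_{n+1,1})\notin U_\epsilon$ and therefore $n+1\in\N(s,\epsilon)$. Since $\rho$ is independent of $n$, a single $\epsilon=\epsilon(s)$ works for all $n\ge1$.

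The argument is essentially elementary geometry on the torus once the translation structure of the orbit $\{u(z_{n,1})\}$ is in place. The only points needing a little care are the identification of $V_\epsilon$ with a genuine shrinking neighbourhood of $\infty_{1}$ (resting on $\infty$ not being a branch point of \eqref{eq: xi eqn} and on $u$ being a biholomorphism in genus $1$) and the separate treatment of the degenerate case $c\equiv 0$; I do not expect any substantial obstacle beyond bookkeeping.
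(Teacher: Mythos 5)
Your proof is correct, and it rests on the same underlying idea as the paper's: both exploit the Jacobi inversion relation \eqref{Jacobi Inversion Problem} to deduce that $n\mapsto u(z_{n,1})$ traces an arithmetic progression on the torus $\C/\Lambda_\tau$ with common difference $c=\Delta_0+\omega_0$. The difference is in how the argument is closed. The paper passes to the limit $\epsilon\to 0$, so that both $z_{n,1}$ and $z_{n+1,1}$ would have to equal $\infty_1$, forcing $c\equiv 0\pmod{\Lambda_\tau}$; it then rules this out by appealing to the positivity of the equilibrium measure from \cite{celsus2020supercritical}, which yields $\eta_1\in(0,1)$ and hence $\Delta_0+\omega_0=-\tau\eta_1+\omega_0\not\equiv 0$. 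You instead stay quantitative, transporting $\N(s,\epsilon)$-membership to a shrinking metric ball $U_\epsilon$ about $p_\infty=u(\infty_1)$ and invoking translation invariance of the flat metric together with the triangle inequality. This buys two things. First, it exhibits a single $\epsilon=\epsilon(s)$ that works uniformly in $n$, whereas the paper's limiting argument leaves the uniformity implicit in the phrase ``$\epsilon$ small enough.'' Second, you never need to establish $c\not\equiv 0$: the degenerate case $c\equiv 0$ is handled directly by observing that then $u(z_{n,1})=p_\infty+\tfrac12+\tfrac\tau2$ is constant and distinct from $p_\infty$, so $z_{n,1}\neq\infty_1$ for all $n$, $\N(s,\epsilon)=\N$ for small $\epsilon$, and the implication is vacuous. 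This sidesteps the probability-measure input that the paper relies on. Both proofs are valid, but yours is tighter, self-contained, and makes the uniformity in $n$ explicit.
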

				\begin{proof}
				To begin with, observe that \eqref{Jacobi Inversion Problem} yields
				\[
				u(z_{n + 1, i}) - u(z_{n, i}) = (-1)^{i+1}(\Delta_0 + \omega_0) \mod \Lambda_{\tau}.
				\] 
				Let $\epsilon_0 >0$ be such that for all $\epsilon < \epsilon_0$, $n\not\in \N(s, \epsilon)$.  For the sake of a contradiction, $n + 1 \not \in \mathbb{N}(s, \epsilon)$. Then, taking $\epsilon \to 0$, the above equation immediately yields that $0 = \Delta_0 + \omega_0 \mod \Lambda_\tau$. However, by deforming the contour and using expansion \eqref{eq: h prime asymptotics genus 1}, one can check that 
				\[
				\dfrac{1}{2\pi i} h_+'(z; s) \ dz, \ z \in \mathfrak{M}
				\]
				is a positive probability measure (cf. \cite[Theorem 2.3]{celsus2020supercritical}) and $\Delta_0 = -\tau \eta_1$ where $\eta_1$ is the measure of $\gamma_{m, 1}$. Hence, $\eta_1 \in (0, 1)$ and since $\omega_0 \in \R$, we have $\Delta_0 + \omega_0 \neq 0 \mod \Lambda_\tau$ and thus have reached a contradiction. 
				\end{proof}
				Let us pause here to note that the matrix $M(z)$ depends on $n$, and we now show that for large $n \in \N(s, \epsilon)$, $M(z)$ remains bounded. Write $n \omega_0 = \{ n\omega_0 \} + [n\omega_0], n \eta_1 = \{ n\eta_1 \} + [n\eta_1] $, where $\{x\}, [x]$ are the integer and fractional parts of $x \in \R$, respectively. Applying \eqref{eq: theta properties} and using the fact that $g(z) = 2\pi i \eta_1 u(z)$ (see Remark \ref{g-tilde-remark}) shows that the expressions dependent on $n$ in $M(z)$ are of the form
				\[e^{ \pm 2\pi i \{n \eta_1\} (u(z) \pm  u(\infty))} \dfrac{\Theta \left( \pm u(z) - \{n\omega_0\} - \{n \eta_1\}\tau \pm d \right)}{\Theta \left( \pm u(\infty) - \{n\omega_0\} - \{n \eta_1\}\tau \pm d \right)} \dfrac{\Theta \left( \pm u(\infty) \pm d \right)}{\Theta \left( \pm u(z) \pm d \right)},\] 
				where the choice of sign in each instance depends on the entry of $M(z)$ being considered. Since quantities $\{ n\omega_0 \}, \{ n\eta_1 \}$ remain bounded, we conclude that along any convergent subsequence, the sequence of functions $\{M(z) \}_{n \in \N(s, \epsilon)}$ is uniformly bounded as $n \to \infty$.
		\subsection{The Local Parametrices}\label{sec: Local Parametrices}
			Recalling the discussion preceding \eqref{eq: P RHP}, we will need a more detailed local analysis about the endpoints $\lambda\in\Lambda$. Although these constructions are now standard, we state them below for completeness. For details we refer the reader to \cite{bleher2011lectures,deift1999orthogonal,deift1999strong,arnojacobi}. 
			\subsubsection{Soft Edge}
				In light of \eqref{eq: RHP for h}, let $\lambda \in \Lambda$ be such that  $\Re h(z) = c\left(z-\lambda\right)^{3/2} + \mathcal{O}\left(\left(z-\lambda\right)^{5/2} \right)$ as $z\to\lambda$ for some $c\not=0$. We will also make use of the following function
				\begin{equation}
					h^{(\lambda)}(z) = \int_{\lambda}^z h'(z;s)\, ds,
				\end{equation}
				where the path of integration emanates upwards in the complex plane from $\lambda$ and does not cross $\Omega(s)$. Then, 
				\begin{equation}
					h^{(\lambda)}_\pm(z) = c\left(z-\lambda\right)^{3/2}+\mathcal{O}\left(\left(z-\lambda\right)^{5/2}\right), \qquad z\to\lambda,
				\end{equation}
				where $c\not=0$. There exist real constants $K^{\lambda}_\pm$ such that
				\begin{equation}
					h^{(\lambda)}_\pm(z) = h(z) + i K^{\lambda}_\pm
				\end{equation} 
				where in light of \eqref{eq: RHP for h}, $K^\lambda_+-K^\lambda_-=4\pi i \eta_1$.
				
				We assume $\lambda=\lambda_0$ so that the main arc $\gamma_{m,0}$ lies to the left of $\lambda$ and the complementary arc $\gamma_{c,1}$ lies to the right of $\lambda$, where left and right are in reference to the orientation of $\hat{\Sigma}$. The case where the complementary arc leads into $\lambda$ and the main arc exits $\lambda$ can be handled similarly with minor alterations. 
				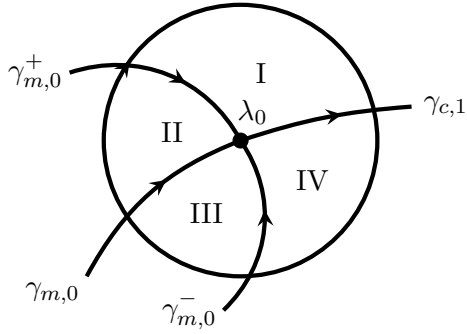
\begin{figure}[h]
					\centering
					\begin{tikzpicture}[scale=4.5]
					\draw[ultra thick, postaction={mid arrow=black}] (0,0) arc (0:-360:0.4 and 0.4);
					\draw [fill] (-0.4,0) circle [radius=0.022];
					\node at (-0.365,0.085) {$\lambda_0$};
					
					\draw [ultra thick,postaction={mid arrow=black}] (-0.45,-0.5) to [bend right=40] (-0.4, 0);
					\draw [ultra thick,postaction={mid arrow=black}] (-0.9,0.2) to [bend left=40] (-0.4, 0);
					\draw [ultra thick,postaction={mid arrow=black}] (-0.85,-0.4) to [bend left=20] (-0.4, 0);
					\draw [ultra thick,postaction={mid arrow=black}] (-0.4,0) to [bend left=7] (0.1, 0.1);
					
					\node at (0.2,.10) {$\gamma_{c,1}$};
					\node at (-0.55,-.5) {$\gamma_{m,0}^-$};
					\node at (-0.95,-.45) {$\gamma_{m,0}$};
					\node at (-1,.2) {$\gamma_{m,0}^+$};
					
					\node at (-0.34,.20) {I};
					\node at (-0.6,.025) {II};
					\node at (-0.5,-.215) {III};
					\node at (-0.19,-.115) {IV};
					\end{tikzpicture}
					\caption{Definition of Sectors I, II, III, and IV within $D_{\lambda_0}$.}
					\label{fig: defn of I II III IIV}
				\end{figure}
				We want to solve the following Riemann-Hilbert problem in a neighborhood of $\lambda_0$, $D_{\lambda_0}$, 
				\begin{subequations}\label{eq: P RHP soft}
					\begin{alignat}{2}
						&P^{(\lambda_0)}(z) \text{ is analytic for } z\in D_{\lambda_0}\setminus \hat{\Sigma}, \qquad && \\
						&P^{(\lambda_0)}_+(z) = P^{(\lambda_0)}_-(z) j_S(z),  \qquad &&z \in D_{\lambda_0}\cap \hat{\Sigma}, \\
						&P^{(\lambda_0)}(z)= \left(I + \mathcal{O}\left(\frac{1}{n}\right)\right)M(z), \qquad && n \to \infty, \quad z\in \partial D_{\lambda_0},\label{eq: matching condition soft edge}
					\end{alignat}
				\end{subequations}
				where $j_S(z)$ is as in \eqref{eq: S RHP}.
				We also require that $P^{(\lambda_0)}$ has a continuous extension to $\overline{D}_{\lambda_0}\setminus\hat{\Sigma}$ and remains bounded as $z\to\lambda_0$. $P^{(\lambda_0)}(z)$ is given by
				\begin{equation}
					P^{(\lambda_0)}(z) = E_n^{(\lambda_0)}(z) A\left(f_{n,A}(z)\right)e^{-\frac{n}{2}h(z)\sigma_3}
				\end{equation}
				where $A(\zeta)$ is built out of Airy functions as in \cite{bleher2011lectures,deift1999strong}. Above, 
				\begin{equation}
					f_{n,A}(z) = n^{2/3}f_A(z), \qquad f_A(z) = \left[-\frac{3}{4} h^{(\lambda)}(z)\right]^{2/3}, 
				\end{equation}
				so that $f_A(z)$ conformally maps a neighborhood of $\lambda_0$ to a neighborhood of $0$. Recall that we still have the freedom to choose the precise description of $\gamma_{m,0}^\pm$, so we choose them in $D_{\lambda_0}$ so they are mapped to the rays $\{z: \arg z = \pm \frac{2\pi}{3}\}$, respectively, under the map $f_A$. $E_{n}^{(\lambda_0)}(z)$ is the analytic prefactor chosen to satisfy the matching condition \eqref{eq: matching condition soft edge} and is given by 
				\begin{align}\label{eq: En eqn}
					E_n^{(\lambda_0)}(z) =\begin{cases}
					M(z) e^{-\frac{1}{2}niK^\lambda_+\sigma_3}L^{(\lambda_0)}_n(z)^{-1}, \qquad & z\in \text{I}, \text{II}, \\
					M(z) e^{-\frac{1}{2}niK^\lambda_-\sigma_3}L^{(\lambda_0)}_n(z)^{-1}, \qquad & z\in \text{III}, \text{IV},
					\end{cases}
				\end{align}
				where Sectors I, II, III, and IV are defined in Figure~\ref{fig: defn of I II III IIV}. Here, 
				\begin{equation*}
					L^{(\lambda_0)}_n(z) = \frac{1}{2\sqrt{\pi}}n^{-\sigma_3/6}f_A(z)^{-\sigma_3/4}\begin{pmatrix}
					1 & i \\
					-1 & i
					\end{pmatrix}.
				\end{equation*}
				In the formulas above, the branch cut for $f_A^{1/4}$ is taken on $\gamma_{m,0}$ and is the principal branch.

			\subsubsection{Hard Edge}
				Now we assume that we are looking at the analysis near $z=1$, and we recall that $\Re h(z) = \mathcal{O}\left(\left(z-1\right)^{1/2}\right)$ as $z\to 1$. We will show in the construction of $h$ in Section~\ref{sec: Existence of h} that
				\begin{equation}
					h(z) = c\left(z-1\right)^{1/2}+\mathcal{O}\left(\left(z-1\right)^{3/2}\right), \qquad z\to 1,
				\end{equation}
				for some $c\not=0$. Recall that we wish to solve
				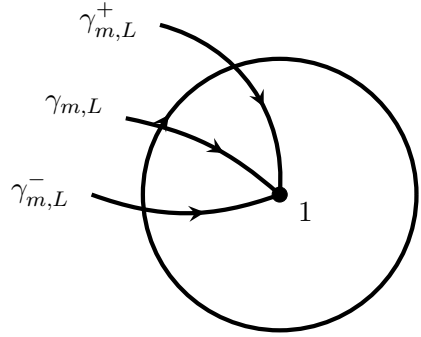
\begin{figure}[h]
					\centering
					\begin{tikzpicture}[scale=4.5]
					\draw[ultra thick, postaction={mid arrow=black}] (0,0) arc (0:-360:0.4 and 0.4);
					\draw [fill] (-0.4,0) circle [radius=0.022];
					
					\draw [ultra thick,postaction={mid arrow=black}] (-0.95,0) to [bend right=20] (-0.4, 0);
					\draw [ultra thick,postaction={mid arrow=black}] (-0.75,.5) to [bend left=40] (-0.4, 0);
					\draw [ultra thick,postaction={mid arrow=black}] (-0.85,0.225) to [bend left=15] (-0.4, 0);
					
					\node at (-1.1,0.01) {$\gamma_{m,L}^-$};
					\node at (-1,.26) {$\gamma_{m,L}$};
					\node at (-0.9,.51) {$\gamma_{m,L}^+$};
					\node at (-0.325,-0.05) {$1$};
					\end{tikzpicture}
					\caption{Structure of $\hat{\Sigma}$ in $D_1$.}
					\label{fig: hard edge}
				\end{figure}
				
				\begin{subequations}\label{eq: P RHP hard}
					\begin{alignat}{2}
					&P^{(1)}(z) \text{ is analytic for } z\in D_{1}\setminus \hat{\Sigma}, \qquad && \\
					&P^{(1)}_+(z) = P^{(1)}_-(z) j_S(z),  \qquad &&z \in D_{1}\cap \hat{\Sigma}, \\
					&P^{(1)}(z)= \left(I + \mathcal{O}\left(\frac{1}{n}\right)\right)M(z), \qquad && n \to \infty, \quad z\in \partial D_{1},\label{eq: matching condition hard edge}
					\end{alignat}
				\end{subequations}
				where $P^{(1)}$ has a continuous extension to $\overline{D}_1\setminus\hat{\Sigma}$ and remains bounded as $z\to 1$, and where the jump matrix $j_S$ in $D_1$ is given by
				\begin{align}
					j_S(z) = \begin{cases}
					\begin{pmatrix}
					1 & 0 \\
					e^{-nh(z)} & 1
					\end{pmatrix}, \qquad & z \in \gamma_{m,L}^{\pm}, \\
					\begin{pmatrix}
					0 & 1 \\
					-1 & 0
					\end{pmatrix}, \qquad & z \in \gamma_{m,L}. \\
					\end{cases}
				\end{align}
				Analogously to the analysis in the soft edge, we define $P^{(1)}(z) = U^{(1)}(z) e^{-\frac{n}{2}h(z)\sigma_3}$, so that $U^{(1)}$ solves a new Riemann-Hilbert problem in $D_1$, with jump matrix given by
				\begin{align}\label{eq: U jumps hard edge}
					j_{U^{(1)}}(z) = \begin{cases}
					\begin{pmatrix}
					1 & 0 \\
					1 & 1
					\end{pmatrix}, \qquad & z \in \gamma_{m,L}^{\pm}, \\
					\begin{pmatrix}
					0 & 1 \\
					-1 & 0
					\end{pmatrix}, \qquad & z \in \gamma_{m,L}.
					\end{cases}
				\end{align}
				Now, $U^{(1)}$ can be written explicitly in terms of Bessel functions, as in \cite{arnojacobi}, and we state this construction below. First set
				\begin{subequations}
					\begin{equation}
					b_1(\zeta) = H_0^{(1)}\left(2\left(-\zeta\right)^{1/2}\right), \qquad b_2(\zeta) = H_0^{(2)}\left(2\left(-\zeta\right)^{1/2}\right),
					\end{equation}
					\begin{equation}
					b_3(\zeta) = I_0\left(2\zeta^{1/2}\right), \qquad b_4(\zeta) = K_0\left(2\zeta^{1/2}\right),
					\end{equation}
				\end{subequations}
				where $I_0$ is the modified Bessel function of the first kind, $K_0$ is the modified Bessel function of the second kind, and $H_0^{(1)}$ and $H_0^{(2)}$ are Hankel functions of the first and second kind, respectively. With this in hand, we may define the Bessel parametrix as
				\begin{align}\label{eq: Bessel parametrix}
					B(\zeta) = \begin{cases}
					\begin{pmatrix}
					\frac{1}{2} b_2\left(\zeta\right) & -\frac{1}{2}b_1\left(\zeta\right) \\
					-\pi z^{1/2} b_2'\left(\zeta\right) & \pi z^{1/2} b_1'\left(\zeta\right)
					\end{pmatrix}, \qquad & -\pi < \arg \zeta < -\frac{2\pi}{3}, \\
					\begin{pmatrix}
					b_3(\zeta) & \frac{i}{\pi}b_4(\zeta) \\
					2\pi i z^{1/2} b_3'(\zeta) & -2z^{1/2} b_4'(\zeta)
					\end{pmatrix}, \qquad & \left|\arg \zeta\right| < \frac{2\pi}{3}, \\
					\begin{pmatrix}
					\frac{1}{2} b_1\left(\zeta\right) & \frac{1}{2}b_2\left(\zeta\right) \\
					\pi z^{1/2} b_1'\left(\zeta\right) & \pi \zeta^{1/2} b_2'\left(\zeta\right)
					\end{pmatrix}, \qquad & \frac{2\pi}{3} < \arg \zeta < \pi.
					\end{cases}
				\end{align}
				Using the conformal map, $f_{n,B}$, where
				\begin{equation}\label{eq: Bessel conformal map}
				f_{n,B}(z) = n^2 f_B(z), \qquad \text{where} \qquad f_B(z) = \frac{h(z)^2}{16},
				\end{equation}
				the matrix $U^{(1)}$ is given by
				\begin{equation}\label{eq: U4 eqn}
					U^{(1)}(z) = E_n^{(1)}(z) B(f_{n,B}(z)),
				\end{equation}
				where $E_n^{(1)}$ is analytic prefactor chosen to ensure the matching condition \eqref{eq: matching condition hard edge}. Therefore, we have that 
				\begin{equation}
					E_n^{(1)}(z) = M(z) L_n^{(1)}(z)^{-1}, 
				\qquad			
				L_n^{(1)}(z) := \frac{1}{\sqrt{2}} \left(2\pi n\right)^{-\sigma_3/2} f_B(z)^{-\sigma_3/4}
				\begin{pmatrix}
				1 & i \\
				i & 1
				\end{pmatrix},
				\end{equation}
				where all branch cuts above are again taken to be principal branches. 
				
				A similar analysis may be conducted around $z=-1$, and we state the solution to the local parametrix here is given by
				\begin{equation}\label{eq: p-1 Bessel}
					P^{(-1)}(z) = E_n^{(-1)}(z) \tilde{B}\left(\tilde{f}_{n,B}(z)\right)e^{-\frac{n}{2}h(z)}
				\end{equation}
				where $\tilde{B}(z)=\sigma_3 B(z) \sigma_3$,
				\begin{equation}
					\tilde{f}_{n,B}(z) = n^2 \tilde{f}_B(z), \qquad \tilde{f}_B(z) = \frac{\tilde{h}(z)^2}{16},
				\end{equation}
				and $\tilde{h}(z)=h(z)-2\pi i$. Similarly, we have
				\begin{equation}
				E_n^{(-1)}(z) = M(z) L_n^{(-1)}(z)^{-1}, 
				\qquad			
				L_n^{(-1)}(z) := \frac{1}{\sqrt{2}} \left(2\pi n\right)^{-\sigma_3/2} \tilde{f}_B(z)^{-\sigma_3/4}
				\begin{pmatrix}
				-1 & i \\
				i & -1
				\end{pmatrix}.
				\end{equation}
			
\section{The Global Phase Portrait - Continuation in Parameter Space}\label{sec: Existence of h}
	As seen above, one of the keys to implementing the Deift-Zhou method of nonlinear steepest descent is the existence of the $h$-function. Fortunately, genus $0$ and $1$ solutions for $s \in i \R$ have already been established in \cite{deano2014large, celsus2020supercritical}, so we can implement the continuation in parameter space technique developed in \cite{bertola2016asymptotics, tovbis2010nonlinear,tovbis2004semiclassical}. By following this procedure, we will show that by starting with some genus $L$ $h$-function for $s\in i\R \cap \mathfrak{G}_L$, we will  be able to continue this genus $L$ solution to all $s \in \mathfrak{G}_L$.
	
	Below, we will first define breaking points and breaking curves. The set of breaking curves along with their endpoints will be denoted as $\mathfrak{B}$ and we will show that the inequalities \eqref{eq: both inequalities} can only break down as we cross a breaking curve. Next, we provide the basic background on quadratic differentials needed for our analysis. Finally, we recap the previous work on orthogonal polynomials of the form \eqref{eq: ortho p defn} where $s\in i \R$ and show how we may deform these solutions to all $s \in \C\setminus\mathfrak{B}$.

	\subsection{Breaking Curves}
		We define a breaking point as follows: $s_b \in \mathbb{C}$ is a breaking point if there exists a saddle point $z_0 \in \Omega(s)$ such that 
		\begin{equation}\label{eq: def of breaking point}
			h'(z_0; s_b) = 0, \qquad \text{ and } \qquad \Re h(z_0; s_b) = 0.
		\end{equation} 
		Above, we also impose that the zero of $h'$ is of at least order 1. We call a breaking point \textit{critical} if either:
		\begin{enumerate}[(i)]
			\item The saddle point in \eqref{eq: def of breaking point} coincides with a branchpoint in $\Lambda(s)$, or
			\item The order of the zero at the saddle point is greater than one or there are at least two saddle points of $h$ on $\Omega$ counted with multiplicity. 
		\end{enumerate}
		If a breaking point $s$ is not a critical breaking point, it is a \textit{regular breaking point}. 
		\begin{remark}
			Note that $h'$ is analytic in $\mathbb{C}\backslash\mathfrak{M}(s)$. In the above definition of breaking point, if $z_0\in\mathfrak{M}(s)$, we mean $h'(z_0) = 0$ in the following sense. Note that $h'_+(z)$ and $h'_-(z)$ have analytic extensions to a neighborhood of $z_0\in\mathfrak{M}(s)$. Moreover, in this neighborhood, the two extensions are related via $h'_+(z) = -h'_-(z)$. Therefore, if $z_0$ is such that $h'_+(z_0) = 0$ (where here we are referring to the extension so this is well defined), then $h'_-(z_0)=0$, so we say $h'(z_0) = 0$.
		\end{remark}
		We have the following lemma from \cite[Lemma~4.3]{bertola2016asymptotics}, and we include the proof for convenience. 
		\begin{lemma}\label{lem: def of breaking curves}
			Let $s=s_1+is_2$ where $s_1, s_2 \in \R$ and let $s_b$ be a regular breaking point. If both $\partial_{s_k} h\left(z_0;s_b\right)$, for $k=1,2$, exist and at least one of them is $ \not=0$, then there exists a smooth curve passing through $s_b$ consisting of breaking points. 
		\end{lemma}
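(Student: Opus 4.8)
The plan is to invoke the implicit function theorem in the parameter $s$, viewing the two defining conditions of a breaking point as a single real equation whose zero set is locally a smooth curve. Concretely, set $\Phi(s) := \Re h(z_0(s); s)$, where $z_0(s)$ is the saddle point, i.e. the solution of $h'(z_0;s)=0$ near $s_b$. The first thing I would do is establish that $z_0(s)$ is a well-defined analytic function of $s$ in a neighborhood of $s_b$: since $s_b$ is a \emph{regular} breaking point, the zero of $h'$ at $z_0$ is simple (order exactly $1$) and does not collide with a branchpoint, so $h''(z_0;s_b)\neq 0$ and the scalar implicit function theorem (applied to $h'(z;s)=0$, using that $h'$ depends analytically on $z$ and smoothly on $s=s_1+is_2$ away from $\mathfrak{M}$) produces a unique smooth branch $z_0(s)$ with $z_0(s_b)=z_0$. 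One should be a little careful here about the case $z_0\in\mathfrak{M}(s)$, but as the remark preceding the lemma explains, $h'$ has analytic $\pm$-extensions across $\mathfrak{M}$ related by $h'_+=-h'_-$, so working with either extension makes $h'(z_0;s)=0$ an honest analytic equation and the argument is unchanged.

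Next I would compute the gradient of $\Phi$. By the chain rule,
\begin{equation*}
	\partial_{s_k}\Phi(s) = \Re\!\left[ h'(z_0(s);s)\,\partial_{s_k} z_0(s) + (\partial_{s_k} h)(z_0(s);s)\right] = \Re\,(\partial_{s_k} h)(z_0(s);s), \qquad k=1,2,
\end{equation*}
where the first term vanishes identically because $h'(z_0(s);s)\equiv 0$ along the branch $z_0(s)$. Evaluating at $s=s_b$ and using $\Re h(z_0;s_b)=0$ we have $\Phi(s_b)=0$, and by hypothesis $(\partial_{s_k}h)(z_0;s_b)$ exists for $k=1,2$ with at least one nonzero. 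The subtlety is that the nonvanishing hypothesis is on the \emph{complex} derivative $\partial_{s_k}h$, not on its real part, so I would note that $(\partial_{s_1}h, \partial_{s_2}h)$ are not independent: because $f(z;s)=sz$ and the construction of $h$ (via the scalar RHP \eqref{eq: RHP for h}, whose data depend holomorphically on $s$) make $h(z;s)$ depend holomorphically on $s$ on $\C\setminus\mathfrak{B}$, one has $\partial_{s_2} h = i\,\partial_{s_1} h$. Hence if $\partial_{s_1} h(z_0;s_b)=c\neq 0$ then $\nabla\Phi(s_b) = (\Re c, \Re(ic)) = (\Re c, -\Im c)$, which is the nonzero vector of modulus $|c|$; the same conclusion holds starting from $\partial_{s_2}h\neq0$. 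Thus $\nabla\Phi(s_b)\neq 0$.

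With $\Phi$ a $C^1$ real-valued function of $(s_1,s_2)$ near $s_b$, $\Phi(s_b)=0$, and $\nabla\Phi(s_b)\neq 0$, the implicit function theorem gives a smooth curve $\{\Phi(s)=0\}$ through $s_b$. It remains to argue that every point of this curve sufficiently close to $s_b$ is again a breaking point: for such $s$, $z_0(s)$ is still a saddle (a simple zero of $h'$, not hitting $\Lambda(s)$, by continuity and the regularity at $s_b$) lying in $\Omega(s)$, and $\Re h(z_0(s);s)=\Phi(s)=0$, which are exactly the two conditions \eqref{eq: def of breaking point}. I expect the main obstacle to be the bookkeeping around holomorphic dependence of $h$ on $s$ — specifically justifying $\partial_{s_2}h = i\,\partial_{s_1}h$ and that $z_0(s)$ stays in $\Omega(s)$ and away from the branchpoints as $\Omega(s)$ itself deforms with $s$ — rather than the implicit function theorem step, which is routine once the gradient is shown to be nonzero.
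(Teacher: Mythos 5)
Your proposal is correct and rests on the same two ingredients as the paper's proof: $h''(z_0;s_b)\neq 0$ (from regularity of $s_b$) feeds an implicit function theorem, and the nonvanishing of $\partial_{s_k}h$ supplies a nondegenerate transversal direction. The paper treats \eqref{eq: def of breaking point} as a single system of three real equations in the four real unknowns $(u,v,s_1,s_2)$ and computes a $3\times 3$ Jacobian whose determinant factors as $\Re h_{s_j}(z_0;s_b)\,|h''(z_0;s_b)|^2$, whereas you solve for $z_0(s)$ first and then apply the IFT to the scalar function $\Phi(s)=\Re h(z_0(s);s)$; these are the same computation, since the vanishing of $h'(z_0)$ makes the third row of the paper's Jacobian degenerate in exactly the way that kills the $\partial_{s_k}z_0$ term in your chain rule. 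One small thing you do that the paper leaves implicit: the hypothesis gives nonvanishing of the complex derivatives $\partial_{s_k}h$, while the Jacobian requires nonvanishing of $\Re\partial_{s_j}h$ for some $j$, and your observation that $h$ is holomorphic in $s$ (so $\partial_{s_2}h=i\,\partial_{s_1}h$, giving $\nabla\Phi(s_b)=(\Re c,-\Im c)\neq 0$) is exactly the bridge needed there. Both proofs leave the check that $z_0(s)$ remains in $\Omega(s)$ implicit, so you are not missing anything the paper supplies.
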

		\begin{proof}
			Writing $z = u+iv$ and $s=s_1+is_2$, we may consider \eqref{eq: def of breaking point} to be a system of 3 real equations in 4 real unknowns in the form $G(u,v,s_1,s_2)=0$. We may choose either $j=1$ or $j=2$ so that  $\Re \partial_{s_j} h(z_0;s_b)\not=0$. Then, as $h'(z_0;s_b)=0$, we may calculate the Jacobian as
			\begin{align*}
				\det\left(\frac{\partial G}{\partial (u,v,s_j)}\right) &= i^{j-1} \Re h_{s_j}\left(z_0;s_b\right)\begin{vmatrix}
					\frac{\partial}{\partial u} \Re h'(z_0;s_b) & \frac{\partial}{\partial v} \Re h'(z_0;s_b) \\
					\frac{\partial}{\partial u} \Im h'(z_0;s_b) & \frac{\partial}{\partial v} \Im h'(z_0;s_b)
				\end{vmatrix} \\
				&= i^{j-1} \Re h_{s_j}\left(z_0;s_b\right) \left|h''(z_0;s_b)\right|^2, 
			\end{align*}
			where we have used the Cauchy-Riemann equations for the second equality above. As $h''\not=0$, as $s_b$ is a regular breaking point, the Implicit Function Theorem completes the proof.
		\end{proof}
		The curves in Lemma~\ref{lem: def of breaking curves} are defined to be \textit{breaking curves}. We will see that the breaking curves partition the parameter space so as to separate regions of different genus of $h$ function, as they are precisely where the inequalities on $h$ break down. Assume that $h(z;s)$ satisfies the scalar Riemann--Hilbert problem \eqref{eq: RHP for h}.
		\begin{lemma}\label{lem: breaking point lemma}
			Let $s(t)$ for $t\in [0,1]$ be a smooth curve in the parameter space starting from $s_0 = s(0)$ and ending at $s_1 = s(1)$. Assume further that $s(t)$ is a regular point for all $0\leq t<1$, that is, the inequalities \eqref{eq: both inequalities} are satisfied for $0\leq t<1$, and that $\Re h(z;s)$ is a continuous function of $s$. Then, the inequalities \eqref{eq: both inequalities} do not hold at $s_1$ if and only if $s_1$ is a breaking point. 
		\end{lemma}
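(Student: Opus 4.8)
\noindent\emph{Proof idea.} The plan is to reduce the statement to a question about the zero level set of the harmonic function $\Re h(\cdot;s_1)$. First note that, since $\Re h(z;s)$ is continuous in $s$ and since \eqref{eq: both inequalities} holds for all $0\le t<1$, passing to the limit $t\to 1$ gives the non-strict inequalities at $s_1$: $\Re h(z;s_1)\le 0$ at interior points of bounded complementary arcs, and $\Re h(z_0;s_1)\ge 0$ for $z_0$ in close proximity to interior points of main arcs. Consequently \eqref{eq: both inequalities} fails at $s_1$ precisely when one of these estimates is saturated, that is, when $\Re h(\cdot;s_1)$ vanishes either at an interior point of a bounded complementary arc, or at points lying off $\mathfrak M(s_1)$ but arbitrarily close to an interior point of a main arc. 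The whole matter is therefore to identify this saturation with the defining condition \eqref{eq: def of breaking point}, namely the existence of $z_0\in\Omega(s_1)$ with $h'(z_0)=0$ and $\Re h(z_0)=0$.

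For the implication that a breaking point forces \eqref{eq: both inequalities} to fail, take such a saddle $z_0\in\Omega(s_1)$. If $z_0$ is an interior point of a bounded complementary arc, then $\Re h(z_0;s_1)=0$ already contradicts \eqref{eq: complementary arc inequality}. If $z_0$ lies on a main arc $\gamma_{m,j}$, recall that $h_\pm$ continue analytically past $z_0$ with $h'_+(z_0)=-h'_-(z_0)=0$, so $\Re h_+$ is harmonic with a degenerate critical point of value $0$ at $z_0$; its zero set near $z_0$ consists of at least four analytic arcs through $z_0$ bounding sectors on which $\Re h_+$ alternates sign, and $\gamma_{m,j}$ enters and leaves $z_0$ along two of them. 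Hence one of the two sides of $\gamma_{m,j}$ at $z_0$ contains at least two of these sectors, so the boundary value of $\Re h$ on that side changes sign arbitrarily close to the interior point $z_0$, contradicting \eqref{eq: main arc inequality}. (If the saddle happens to lie on the unbounded arc $\gamma_{c,0}$ a separate, easy check is needed, using $h'(z)\to -s$ and the behaviour of $\Re h$ along $\gamma_{c,0}$ for the $s$ under consideration.)

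For the converse, assume \eqref{eq: both inequalities} fails at $s_1$. In the main-arc saturation case there are points $z_k\notin\mathfrak M(s_1)$ with $\Re h(z_k;s_1)=0$ accumulating at an interior point $p$ of a main arc $\gamma_{m,j}$; if $h'(p)\ne 0$, then near $p$ the zero set of the analytic continuation $\Re h_+$ is a single analytic arc which, containing $\gamma_{m,j}$ locally, must coincide with it, so no $z_k$ off $\gamma_{m,j}$ could satisfy $\Re h(z_k;s_1)=0$ --- a contradiction; thus $h'(p)=0$, and with $\Re h(p)=0$ and $p\in\Omega(s_1)$ this exhibits $s_1$ as a breaking point. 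In the complementary-arc saturation case, $\Re h(\cdot;s_1)\le 0$ on a bounded complementary arc with equality attained; since the jump $h_+-h_-=4\pi i\eta_j$ is an imaginary constant, $\Re h$ extends harmonically across the arc, and the failure of the strict inequality forces the open set $\{\Re h(\cdot;s_1)<0\}$ --- which for nearby regular $s$ contains a neighbourhood of the interior of that arc --- to pinch at a point $z^*$ through which the arc must pass; at such a pinch the level set $\{\Re h(\cdot;s_1)=0\}$ has a crossing, hence $\Re h$ has a critical point there, i.e. $h'(z^*)=0$, while $\Re h(z^*)=0$ and $z^*\in\Omega(s_1)$, so again $s_1$ is a breaking point.

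The step I expect to be the main obstacle is the complementary-arc case of the converse. Unlike the main-arc case, the vanishing of $\Re h$ at a point of a complementary arc does not by itself force $h'$ to vanish there by pure complex analysis, since $\Re h$ need not keep a fixed sign in a two-dimensional neighbourhood of that point; making the pinch argument rigorous requires a careful analysis of the connectivity of the sublevel region $\{\Re h(\cdot;s)<0\}$ along the family $s(t)$, together with the understanding that the complementary arcs are chosen as admissibly as the fixed genus permits, so that a non-removable breakdown of \eqref{eq: complementary arc inequality} genuinely corresponds to a change in the topology of that region and hence to a zero of $h'$ landing on its boundary with $\Re h=0$, which is exactly the saddle in \eqref{eq: def of breaking point}. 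A minor additional point is to invoke the strong maximum principle to rule out that a saturating point is an isolated interior zero of $\Re h$ lying strictly off $\Omega(s_1)$, so that it does sit on $\Omega(s_1)$ as required.
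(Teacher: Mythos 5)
Your proof follows essentially the same approach as the paper's: limit the strict inequalities to weak ones, then use that at a non-critical point of $h'$ the zero level set of $\Re h$ is a single analytic arc, while at a critical point it branches and forces a sign alternation. For the main-arc case (both directions) your reasoning matches the paper's argument; your 4-arc sign-alternation picture just spells out what the paper states briefly. The one place where the arguments genuinely diverge is the complementary-arc half of ``failure $\Rightarrow$ breaking point,'' which you correctly flag as the delicate step. You propose a topological ``pinch'' argument on the sublevel region $\{\Re h<0\}$; the paper avoids this by arguing \emph{via deformation}: if $h'(z_0)\neq 0$ at a point of $\gamma_{c}$ where $\Re h=0$, then the zero level set through $z_0$ is a single analytic arc, so the complementary arc (whose precise shape is not fixed by the Riemann--Hilbert problem) can be nudged into the region $\{\Re h<0\}$, restoring the inequality and showing $s_1$ is still a regular point. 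The deformation argument is cleaner because it bypasses the connectivity analysis you anticipate as the main obstacle, and it makes explicit the role played by the freedom to choose the complementary arcs. Your ``pinch'' intuition describes the same geometric picture, but turning it into a proof would essentially reduce to the paper's deformability criterion anyway; so you have the right idea, just packaged in a form that is harder to close.
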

		\begin{proof}
			To see this, first consider the case that the Inequality~\eqref{eq: main arc inequality} breaks down in a vicinity of $z_0$, where $z_0$ is an interior point of a main arc. By definition, $\Re h(z;s)=0$ for $s=s(t)$, $0\leq t<1$ and for all interior points $z$ of a main arc, so by continuity we must have that $\Re h(z_0;s_1)=0$. To show that $s_1$ is a breaking point, we must just show that $h'(z_0; s_1)=0$. To get a contradiction, assume that $h'(z_0;s_1)\not=0$. As $h_+$ is analytic at $z_0$ and its derivative doesn't vanish, we may write that $h_+'(z) = c + (z-z_0)a(z)$, where $a$ is analytic in a neighborhood of $z_0$ and does not vanish in this neighborhood and $c\neq 0$, which implies that the map is conformal. Therefore, $\Re h_+(z)$ does not change sign in close proximity to $z_0$ on the $+$-side of the cut, and as $h=h_+$ here, the real part of $h$ does not change on the $+$ side of the cut in close proximity of $z_0$. A similar argument applied to $h_-$ shows that the real part of $h$ does not change on the $-$-side of the cut in close proximity of $z_0$, either. As $\Re h(z;s(t))>0$ for all $z$ in close proximity of a main arc for $t<1$, we have that by continuity in $s$ and by the constant sign of $\Re h(z;s_1)$ in close proximity to $z_0$ that $\Re  h(z;s_1)>0$ for all $z$ in close proximity to $z_0$. This is precisely the inequality which we have assumed to have broken down, so we have reached the desired contradiction. As such $h'(z_0;s_1)=0$, and $s_1$ is a breaking point. Going the other way, we have that the real part of $h_+$ must change sign above/below the cut if $h'_\pm(z_0)=0$, which clearly violates Inequality~\eqref{eq: main arc inequality}. 
			
			Next, assume that Inequality~\eqref{eq: complementary arc inequality} breaks down at $z_0$, where $z_0$ is an interior point of a complementary arc, $\gamma_c$. Given that $\Re h(z; s(t))<0$ for all interior points of a complementary arc, we have by continuity that if the inequality breaks down for $s_1$ at some point $z_0$, we must have that $\Re h(z_0;s_1) = 0$. We are now left to show that $h'(z_0)=0$. To get a contradiction, assume that $h'(z_0)\not=0$. Then there is a zero level curve of $\Re h(z)$ passing through $z_0$ which looks locally like an analytic arc (that is, no intersections). Furthermore, the sign of $\Re h(z)$ is constant on either side of $\gamma_c$ in close proximity to $z_0$. By continuity, we have that $\Re h(z;s_1)<0$ for all interior points $z \in \gamma_c\backslash\{z_0\}$. Therefore, we are able to deform the complementary arc back into the region where $\Re h(z) <0$ for all $z \in \gamma_c$, contradicting the assumption the inequality was violated. Therefore, we must have that $h'(z_0;s_1)=0$, and as such $s_1$ is a breaking point. On the other hand, assume that $s_1$ is a breaking point. Then as $\Re h(z_0;s_1) = 0$, we clearly have that the strict inequality \eqref{eq: complementary arc inequality} is violated at $z_0$. Moreover, the condition that $h'(z_0) =0$ enforces that we can not deform the complementary arc so as to fix the inequality.
		\end{proof}

	\subsection{Quadratic Differentials}\label{sec: Quadratic Differentials}
		In this subsection, we review the basic theory of quadratic differentials needed for the subsequent analysis. The theory presented below follows \cite{guilhermeThesis, strebel1984quadratic}, and we refer the reader to these works for complete details.  
		
		A meromorphic differential $\varpi$ on a Riemann surface $\mathfrak{R}$ is a second order form on the Riemann surface, given locally by the expression $-f(z)\, dz^2$, where $f$ is a meromorphic function of the local coordinate $z$. In particular, if $z=z(\zeta)$ is a conformal change of variables, 
		\begin{equation}
			-\tilde{f}(\zeta) \, d\zeta^2 = -f(z(\zeta))z'(\zeta)^2 \, d\zeta^2
		\end{equation}
		represents $\varpi$ in the local coordinate $\zeta$. In the present context, we may always take the underlying Riemann Surface to be the Riemann sphere. Of particular interest to us is the \textit{critical graph} of a quadratic differential $\varpi$, which we explain below.
		
		First, we define the \textit{critical points} of $\varpi = -f \, dz^2$ to be the zeros and poles of $-f$. The order of the critical point, $p$, is the order of the zero or pole, and is denoted by $\eta(p)$. Zeros and simple poles are called \textit{finite critical points}; all other critical points are \textit{infinite}. Any point which is not a critical point, is a \textit{regular point}. 
		
		In a neighborhood of any regular point $p$, the primitive
		\begin{equation}
			\Upsilon(z) = \int_p^z \sqrt{-\varpi} = \int_{p}^{z} \sqrt{f(s)}\, ds
		\end{equation}
		is well defined by specifying the branch of the root at $p$ and analytically continuing this along the path of integration. Then, we define an arc $\gamma\subset\mathfrak{R}$ to be an \textit{arc of trajectory} of $\varpi$ if it is locally mapped by $\Upsilon$ to a vertical line. Equivalently, for any point $p\in\gamma$, there exists a neighborhood $U$ where $\Upsilon$ is well defined and moreover, $\Re \Upsilon(z)$ is constant for $z \in \gamma \cap U$. A maximal arc of trajectory is called a \textit{trajectory} of $\varpi$. Moreover, any trajectory which extends to a finite critical point along one of its directions is called a \textit{critical trajectory} of $\varpi$ and the set of critical trajectories of $\varpi$, along with their limit points, is defined to be the \textit{critical graph} of $\varpi$. 
		
		To understand the topology of the critical graph of a quadratic differential $\varpi$, we must necessarily study both the local structure of trajectories near finite critical points, along with the global structure of the critical trajectories. Fortunately, the local behavior near a finite critical point is quite regular. Indeed, from a point $p$ of order $\eta(p)=m\geq -1$ emanate $m+2$ trajectories, from equal angles of $2\pi/(m+2)$ at $p$. This also includes regular points, which implies that through any regular point passes exactly one trajectory, which is locally an analytic arc. In particular, this implies that trajectories may only intersect at critical points. 
		
		The global structure of trajectories is more involved, and requires more detailed analysis. In general, a trajectory $\gamma$ is either
		\begin{enumerate}[(i)]
			\item a closed curve containing no critical points,
			\item an arc connecting two critical points (which may coincide), or
			\item an arc that has no limit along at least one of its directions.
		\end{enumerate}
		Trajectories satisfying $(iii)$ are called \textit{recurrent trajectories}, and their absence in the present work is assured by Jenkins' Three Poles Theorem \cite[Theorem~8.5]{pommerenke1975univalent}.
		
		With the necessary background on quadratic differentials now complete, we will see how their trajectories play a crucial role in the construction of the $h$-function. 
		
	\subsection{The Genus $0$ and $1$ $h$-functions}
		In this section, we review the previous work in the literature for polynomials of the form \eqref{eq: ortho p defn} where $s\in i \R$ and show how they can be extended to all $s\in \mathfrak{G}_0\cup\mathfrak{G}^{\pm}_1$, where these domains have been defined in Figure \ref{fig: breaking curve}.

			\subsubsection{Genus 0}
				The case where $s = -it$ and $0 < t < t_0$ was studied in \cite{deano2014large}. We recall that $t_0$ was defined as the unique positive solution to 
				\begin{equation}
					2 \log\left(\frac{2+\sqrt{t^2+4}}{t}\right) - \sqrt{t^2+4} = 0.
				\end{equation}
				We want to show that we may extend the results of \cite{deano2014large}, by using the technique of continuation in parameter space discussed above, to construct a genus $0$ $h$-function which satisfies both \eqref{eq: RHP for h} and \eqref{eq: both inequalities}. In order to state some of the results from \cite{deano2014large}, we first define
				\begin{equation}\label{eq: h prime genus 0}
					h'(z;s) = \frac{2-sz}{\left(z^2-1\right)^{1/2}}.
				\end{equation}
				Next, we consider the quadratic differential $\varpi_s := -h'(z;s)^2\, dz^2$. The following is a restatement of \cite[Theorem~2.1]{deano2014large}.
				\begin{lemma}
					Let $s=-it$ where $0<t<t_0$. There exists a smooth curve $\gamma_{m,0}(s)$ connecting $-1$ and $1$ which is a trajectory of the quadratic differential $\varpi_s$. 
				\end{lemma}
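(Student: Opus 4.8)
The statement is a restatement of \cite[Theorem~2.1]{deano2014large}, so one option is simply to cite it; for completeness I would recap the argument, which is an analysis of the critical graph of the meromorphic quadratic differential
\[
\varpi_s = -h'(z;s)^2\,dz^2 = -\frac{(2-sz)^2}{z^2-1}\,dz^2
\]
on the Riemann sphere. The plan is first to enumerate the critical points: a double zero at $z_0 = 2/s = 2i/t$ (note $|z_0| = 2/t > 1$ since $0<t<t_0<2$), simple poles at $z=\pm 1$, and a pole of order $4$ at $z=\infty$; the orders sum to $-4$, the correct total on the sphere. The local trajectory structure is standard: four trajectories emanate from $z_0$ at successive angles $\pi/2$; exactly one trajectory terminates at each of $\pm 1$; and near $\infty$ there are two admissible asymptotic directions bounding two half-plane end domains. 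Recurrent trajectories are excluded by Jenkins' Three Poles Theorem (quoted above).

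Next I would exploit symmetry. For $s\in i\R_-$ one checks $h'(-\bar z;s) = \overline{h'(z;s)}$, so $\varpi_s$ is invariant under the anti-holomorphic involution $z\mapsto-\bar z$ and its critical graph is symmetric across the imaginary axis. Moreover, along $i\R$ one has $\varpi_s < 0$, so the imaginary axis is an \emph{orthogonal} trajectory; hence no trajectory of $\varpi_s$ meets $i\R$ except possibly at $z_0$. Consequently the unique trajectory $\beta$ issuing from $-1$ (equivalently, by the reflection symmetry, the unique one issuing from $1$) cannot return to $-1$, and so must terminate at a finite critical point or escape to $\infty$.

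To identify $\beta$ I would use the explicit primitive. Integrating \eqref{eq: h prime genus 0} gives
\[
h(z;s) = 2\log\!\left(z+(z^2-1)^{1/2}\right) - s\,(z^2-1)^{1/2},
\]
with the branch normalized so that $h(z;s) = -sz + 2\log z + 2\log 2 + \mathcal{O}(1/z)$ as $z\to\infty$; a trajectory of $\varpi_s$ then coincides locally with a branch of the level set $\{\Re h(\cdot\,;s)=0\}$, and one checks using $\Re h(z;s)=\mathcal{O}((z\mp1)^{1/2})$ near $\pm1$ that a single trajectory-arc terminates at each endpoint. A short computation gives
\[
\Re h(z_0;s) = 2\log\!\left(\frac{2+\sqrt{t^2+4}}{t}\right) - \sqrt{t^2+4},
\]
which, by the definition of $t_0$ (the unique positive root of \eqref{eq: tc def}) together with monotonicity in $t$, is strictly positive for $0<t<t_0$ and vanishes only at $t=t_0$. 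Hence for $0<t<t_0$ the double zero $z_0$ does not lie on $\{\Re h=0\}$, so $\beta$ contains no critical point in its interior; combining this with the reflection symmetry and the fact that the two end domains at $\infty$ cannot both be closed off by a symmetric pair of finite arcs (a configuration incompatible with the four trajectories emanating from $z_0$), $\beta$ must terminate at $1$. This $\beta = \gamma_{m,0}(s)$ is the asserted smooth trajectory connecting $-1$ and $1$.

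The main obstacle is the last step: rigorously excluding the degenerate global configurations --- chiefly that $\beta$ and its mirror image run off to $\infty$ --- without simply deferring to \cite{deano2014large}. This is handled either by the structure theorem for quadratic differentials on the sphere (decomposing the complement of the critical graph into strip, half-plane and ring domains and matching them against the pole/zero orders above), or, equivalently, by a direct topological analysis of the level set $\{\Re h(\cdot\,;s)=0\}$ from the explicit formula. A continuation argument in $t\in(0,t_0)$ anchored at $s=0$, where $h(z;0)=2\log(z+(z^2-1)^{1/2})$ and $\gamma_{m,0}=[-1,1]$ manifestly, gives an alternative route: the trajectory persists and stays bounded away from $z_0$ on all of $(0,t_0)$, degenerating only as $t\to t_0$.
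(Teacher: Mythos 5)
The paper itself offers no proof of this Lemma; it is stated as a verbatim restatement of \cite[Theorem~2.1]{deano2014large}, so a citation alone would suffice. Your recap is largely on the right track — the enumeration of critical points, the local structure, the appeal to Jenkins' Three Poles Theorem to exclude recurrent trajectories, the reflection symmetry under $z\mapsto -\bar z$, the formula $\Re h(z_0;s) = 2\log\!\big((2+\sqrt{t^2+4})/t\big)-\sqrt{t^2+4}$, and its strict positivity and monotonicity on $(0,t_c)$ are all correct — but one step is wrong, and fatally so. You claim that because $\varpi_s<0$ along $i\R$, the imaginary axis is an orthogonal trajectory and ``hence no trajectory of $\varpi_s$ meets $i\R$ except possibly at $z_0$.'' Orthogonal trajectories and trajectories are not mutually avoiding: through every \emph{regular} point passes exactly one of each, and they cross there transversally. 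So trajectories certainly do cross $i\R$; in fact the arc $\gamma_{m,0}(s)$ you are trying to exhibit \emph{must} cross $i\R$ to travel from $-1$ (in the left half-plane) to $1$ (in the right half-plane). Your claim is therefore not merely unproved but actually contradicted by the statement of the lemma.

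The correct way to exploit the $z\mapsto-\bar z$ symmetry is the opposite: let $\beta$ be the unique trajectory emanating from the simple pole at $-1$ and let $\beta'=\{-\bar z: z\in\beta\}$ be its mirror image, which is the unique trajectory emanating from $1$. If $\beta$ crosses $i\R$ at a regular point $p$, then $\beta'$ also passes through $p$; since exactly one trajectory passes through a regular point, $\beta=\beta'$, so $\beta$ contains both $-1$ and $1$ and is the desired arc. If $\beta$ never crosses $i\R$, it stays in the closed left half-plane and (being non-recurrent) must terminate at $-1$, at $z_0$, or at $\infty$; a loop back to $-1$ is impossible since a simple pole has a single local trajectory direction, termination at $z_0$ is excluded by the strict positivity of $\Re h(z_0;s)$ which you correctly computed, and the structure theorem for $\varpi_s$ (matching strip, half-plane and ring domains against the pole/zero data) rules out the escape to $\infty$. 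Your final paragraph already gestures toward this decomposition; the point is that it must replace, not supplement, the erroneous ``trajectories avoid $i\R$'' step.
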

				With this lemma in hand, we take the branch cut of \eqref{eq: h prime genus 0} on $\gamma_{m,0}(s)$, with the branch chosen so that
				\begin{equation}\label{eq: h prime genus 0 asymptotics}
					h'(z;s) = -s +\frac{2}{z}+ \mathcal{O}\left(\frac{1}{z^2}\right), \qquad z \to\infty. 
				\end{equation}
				
				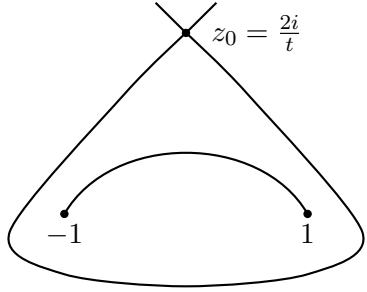
\begin{figure}[t]
					\centering
					\begin{tikzpicture}[scale=.8]
					\draw [thick] (-2,0) to[bend left=60] (2,0);

					\draw [thick] plot [smooth] coordinates {(0,3)(1,2.) (2.9,-.3) (2,-1) (0,-1.2) (-2,-1) (-2.9,-.3) (-1,2.) (0,3)};
					\draw [thick] (0,3) to (0.5,3.5);
					\draw [thick] (0,3) to (-0.5,3.5);
					\draw [fill] (0,3) circle [radius=0.06];
					\draw [fill] (2,0) circle [radius=0.06];
					\draw [fill] (-2,-0) circle [radius=0.06];
					\node [below] at (-2,-0) {$-1$};
					\node [below] at (2,-0) {$1$};
					\node [right] at (0.25,3) {$z_0 = \frac{2i}{t}$};				
					\end{tikzpicture}
					\caption{Critical Graph of $-h'^2 \, dz^2$ for $h'$ defined in \eqref{eq: h prime genus 0} and $s =-i t$ with $0 < t < t_0$. }
					\label{fig: subcritical kissing trajectories}
				\end{figure}
				The critical graph of $\varpi_s$ is depicted in Figure~\ref{fig: subcritical kissing trajectories}. We see that there are four trajectories emanating from the double zero at $z= 2i/t = 2/s$, two of which form a loop surrounding the endpoints $-1$ and $1$. We may easily extend this critical graph from the subset of the imaginary axis to all $s \in \mathfrak{G}_0$. 
				\begin{lemma}\label{lem: QD in genus 0}
					For all $s \in \mathfrak{G}_0$, there exists a smooth curve $\gamma_{m,0}(s)$ connecting $-1$ and $1$ which is a trajectory of the quadratic differential $\varpi_s$. 
				\end{lemma}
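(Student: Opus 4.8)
The plan is to argue by continuation in the parameter $s$, in the spirit of \cite{tovbis2004semiclassical,bertola2016asymptotics}, starting from the segment of the imaginary axis where the statement is already available from \cite{deano2014large}. The case $s=0$ is the classical Legendre weight, for which one checks directly that $[-1,1]$ is a trajectory of $\varpi_0=-4(z^2-1)^{-1}\,dz^2$; so I may assume $s\in\mathfrak{G}_0\setminus\{0\}$, where the saddle $z_0(s)=2/s$ is finite. First I would record the critical points of $\varpi_s=-h'(z;s)^2\,dz^2$: since $h'(z;s)^2=(2-sz)^2/(z^2-1)$, it has a double zero at $z_0(s)$, simple poles at $z=\pm1$, and, in the coordinate $w=1/z$, a pole of order four at infinity with leading term $-s^2\,dw^2/w^4$ (so two asymptotic directions $\pm i/s$ there). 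For $s\in\mathfrak{G}_0$ one has $s\neq\pm2$, so these four critical points are pairwise distinct, and recurrent trajectories are excluded by Jenkins' Three Poles Theorem \cite{pommerenke1975univalent}. The curve $\gamma_{m,0}(s)$ sought is the unique critical trajectory emanating from the simple pole at $-1$, and the claim is that it terminates at $+1$; the rest of the critical graph is then forced into the shape of Figure~\ref{fig: subcritical kissing trajectories} --- a homoclinic loop based at $z_0$ that encircles $\gamma_{m,0}$ (its period $\oint h'\,dz=4\pi i$ is purely imaginary for every $s$, so the loop is always admissible) together with the two remaining $z_0$-trajectories running off to infinity.

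I would then let $\mathcal{S}$ be the set of $s\in\mathfrak{G}_0\setminus\{0\}$ realizing this critical graph and prove $\mathcal{S}=\mathfrak{G}_0\setminus\{0\}$, which together with the Legendre case gives the lemma. Since $\mathfrak{G}_0\setminus\{0\}$ is connected it suffices to check that $\mathcal{S}$ is nonempty, open, and closed in it. Nonemptiness follows from \cite[Theorem~2.1]{deano2014large}, which gives $\{-it:0<t<t_0\}\subset\mathcal{S}$. Openness: when $s\in\mathcal{S}$ the critical graph of $\varpi_s$ carries no saddle connection besides $\gamma_{m,0}$ and the homoclinic loop, its critical points are distinct, and the homoclinic period is pinned to $4\pi i$ by homology; hence the critical graph --- in particular the trajectory from $-1$ --- depends continuously on $s$ and keeps its combinatorial type on a neighbourhood of $s$, by the structural stability of the critical graph of a quadratic differential \cite{strebel1984quadratic}.

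The substantive step is closedness. Suppose $s_n\to s_*\in\mathfrak{G}_0\setminus\{0\}$ with $s_n\in\mathcal{S}$; I must show the critical graph does not reorganize at $s_*$. No pair of critical points collides, since $z_0(s_*)=2/s_*$ is finite and $s_*\neq\pm2$. By Jenkins' theorem the limiting trajectory issuing from $-1$ cannot be recurrent, and by the local structure of trajectories near the simple poles $\pm1$, the double zero $z_0$, and the order-four pole, together with the fact that trajectories meet only at critical points (Section~\ref{sec: Quadratic Differentials}), this trajectory can fail to reach $+1$ only if it runs into $z_0$ --- equivalently, only if $\Re h(z_0;s_*)=0$, recalling that $h'(z_0)=0$ automatically. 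This is exactly condition \eqref{eq: def of breaking point} for $s_*$ to be a breaking point, but $\mathfrak{G}_0$ is by construction a connected component of the complement of the breaking set $\mathfrak{B}$, so no such $s_*$ lies in it. Hence $\mathcal{S}$ is closed and $\mathcal{S}=\mathfrak{G}_0\setminus\{0\}$.

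The part I expect to be genuinely delicate --- and which is really the crux --- is the exclusion in the closedness step: making precise, from the classification of trajectories with no recurrence and the local normal forms, that along any path staying inside $\mathfrak{G}_0$ the critical graph can change only by the saddle $z_0$ landing on the zero level set of $\Re h$. This is the same computation that confines the genus-zero breaking curves to $\mathfrak{b}_+\cup\mathfrak{b}_-$ and the real rays $\mathfrak{b}_{-\infty},\mathfrak{b}_\infty$, so in practice this lemma is proved hand in hand with the relevant part of Theorem~\ref{thm: global phase portrait}; nonemptiness and openness are comparatively routine.
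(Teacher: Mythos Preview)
Your proposal is correct and follows essentially the same continuation-in-parameter-space argument as the paper: start from the known result on the imaginary axis, and observe that the critical graph can change topology only when a trajectory from the saddle $z_0=2/s$ meets $\gamma_{m,0}$, which is precisely the breaking-point condition ruled out inside $\mathfrak{G}_0$. The paper phrases this as a direct path argument (connect $s_0$ on the imaginary axis to $s_1\in\mathfrak{G}_0$ by a curve $\rho\subset\mathfrak{G}_0$ and argue no degeneration can occur along $\rho$), while you package the same idea as an open--closed connectedness argument with more explicit bookkeeping (the $s=0$ case, Jenkins' theorem, the logical link to Theorem~\ref{thm: global phase portrait}); the content is the same.
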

				\begin{proof}
					Fix some $s_0 = -it$ with $0<t<t_0$ and some $s_1 \in \mathfrak{G}_0$. The goal is to show that there exists a trajectory of $\varpi_{s_1}$ which connects $-1$ to $1$. As $\mathfrak{G}_0$ is the region bounded by the curves $\mathfrak{b}_{\pm}$, we may connect $s_0$ to $s_1$ with a curve that lies completely within $\mathfrak{G}_0$, which we call $\rho$. As we deform $s$ along $\rho$ towards $s_1$, we note that the topology of the critical graph of $\varpi_s$ will only change if a trajectory emanating from $2/s$ ever meets $\gamma_{m,0}(s)$. Assume for sake of contradiction, there existed some $s_*\in \rho$ for which this occurred. We would then have $\Re h(z;s_*)=0$ for $z \in \gamma_{m,0}(s)$, as it is a trajectory of the quadratic differential $\varpi_s$. Moreover, we would also have that $h'(2/s_*;s_*) = 0$ as $2/s_*$ is a zero of $h'(z;s_*)$. In other words, $s_*$ is a breaking point. However, this contradicts the fact that $\rho$ lies completely within $\mathfrak{G}_0$, which by definition contains no breaking points in its interior. As such, the topology of the critical graph at $s_1$ is the same as it was at $s_0$, and we conclude that there exists a trajectory of $\varpi_{s_1}$ connecting $-1$ and $1$. 
				\end{proof}
				
				In light of the lemma above, we keep the notation of $\gamma_{m,0}(s)$ to be the trajectory of $\varpi_s$ which connects $-1$ and $1$. We then have $\Omega(s) :=\gamma_{c,0} \cup\gamma_{m,0}(s)$, where we recall $\gamma_{c,0} =(-\infty, -1]$. Now, consider the function
				\begin{equation}\label{eq: genus 0 h function}
					h(z;s) = \int_1^z h'(u;s)\, du, 
				\end{equation}
				where the path of integration is taken in $\C\setminus\Omega(s)$. 
				\begin{lemma}
					Let $s \in \mathfrak{G}_0$. Then, $h(z;s)$ defined in \eqref{eq: genus 0 h function} solves the Riemann-Hilbert problem \eqref{eq: RHP for h} and satisfies the inequalities \eqref{eq: both inequalities}. 
				\end{lemma}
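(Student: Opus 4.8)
The plan is to treat this as a (mostly computational) verification that the explicit function $h(z;s)=\int_1^z h'(u;s)\,du$, built from the algebraic function $h'(z;s)=(2-sz)/(z^2-1)^{1/2}$ of \eqref{eq: h prime genus 0} with branch cut on the trajectory $\gamma_{m,0}(s)$ furnished by Lemma~\ref{lem: QD in genus 0}, satisfies the scalar Riemann--Hilbert problem \eqref{eq: RHP for h} with $L=0$, $\eta_0=1$, $\omega_0=\omega_L=0$, and then to obtain the inequalities \eqref{eq: both inequalities} by continuation from the purely imaginary case of \cite{deano2014large} rather than by a direct sign computation.

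For \eqref{eq: RHP for h} I would proceed step by step. Analyticity: $h'$ is analytic on $\C\setminus\gamma_{m,0}(s)$, and because $h'(z;s)=-s+2/z+\mathcal O(1/z^2)$ as $z\to\infty$ its period over a large circle is $4\pi i$; this simultaneously forces the extra cut $\gamma_{c,0}=(-\infty,-1]$ (to kill that period) and shows $h$ is single-valued and analytic exactly on $\C\setminus\Omega(s)$, $\Omega(s)=\gamma_{c,0}\cup\gamma_{m,0}(s)$, with $h_+-h_-=4\pi i=4\pi i\eta_0$ on $\gamma_{c,0}$. Jump on $\gamma_{m,0}(s)$: across this cut $(z^2-1)^{1/2}$ changes sign while $2-sz$ does not, so $h'_+=-h'_-$ and $h_++h_-$ is constant on $\gamma_{m,0}(s)$; since near $z=1$ one has $h'(z;s)\sim\tfrac{2-s}{\sqrt2}(z-1)^{-1/2}$ and hence $h(z)=\sqrt2\,(2-s)(z-1)^{1/2}\bigl(1+\mathcal O(z-1)\bigr)\to0$ (this is where $s\ne2$ is used, $2\in\mathfrak{B}$), the constant equals $0=4\pi i\omega_0$. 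Asymptotics: integrating $h'$ from $1$ and collecting the convergent tail gives $h(z;s)=-sz+2\log z-\ell(s)+\mathcal O(1/z)$ for a constant $\ell(s)$ and a branch of $\log$ compatible with $\gamma_{c,0}$, which is \eqref{eq: h asymptotics} since $f(z;s)=sz$. Endpoints: the local expansion above and its analogue $h(z)\asymp(z+1)^{1/2}$ at $-1$ (using $s\ne-2$) give $\Re h(z)=\mathcal O((z\mp1)^{1/2})$, and since $\Lambda=\{\pm1\}$ in genus $0$ there are no further branch points, so the last line of \eqref{eq: RHP for h} is vacuous. One also records that, $\gamma_{m,0}(s)$ being a trajectory of $\varpi_s=-h'(z;s)^2\,dz^2$, the primitive $\int\sqrt{-\varpi_s}=h(z)-h(1)$ has constant real part along it, so $\Re h\equiv0$ on $\gamma_{m,0}(s)$ and, by continuity, $\Re h(-1)=0$.

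It remains to establish \eqref{eq: both inequalities}. The complementary-arc inequality \eqref{eq: complementary arc inequality} is vacuous here: the only complementary arc of the genus $0$ configuration is the unbounded $\gamma_{c,0}$, and the jump of $T$ across $\gamma_{c,0}$ is in fact trivial because $\eta_0=1$ and $n\in\N$, so $\gamma_{c,0}$ disappears after the first steepest-descent transformation. For the main-arc inequality \eqref{eq: main arc inequality} I would argue by continuation in parameter space. From the explicit formulas for $h'$ and $h$ and the continuous dependence of $\gamma_{m,0}(s)$ on $s$ (coming from the quadratic-differential analysis behind Lemma~\ref{lem: QD in genus 0}), $\Re h(z;s)$ is continuous in $s\in\mathfrak{G}_0$ uniformly near any fixed interior point of $\gamma_{m,0}(s)$; the base case $s\in i\R_-\cap\mathfrak{G}_0$ is precisely that of \cite{deano2014large}, where $\Re h>0$ on both sides of $\gamma_{m,0}(s)$. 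Given $s_1\in\mathfrak{G}_0$, join it to such a base point by a path $\rho$ inside $\mathfrak{G}_0$; the preceding steps show $h(\cdot;s)$ solves \eqref{eq: RHP for h} along all of $\rho$, so if the main-arc inequality first failed at some $s_*\in\rho$, Lemma~\ref{lem: breaking point lemma} would force $s_*$ to be a breaking point, contradicting the fact that $\mathfrak{G}_0$, being a connected component of $\C\setminus\mathfrak{B}$, contains no breaking points. Hence the inequality holds throughout $\mathfrak{G}_0$, completing the proof.

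The main obstacle is this continuation step: one must check that the hypotheses of Lemma~\ref{lem: breaking point lemma} are genuinely in force along $\rho$---that $h(\cdot;s)$ solves the scalar problem for every $s\in\rho$ (not just at the endpoints) and that $\gamma_{m,0}(s)$, hence $\Re h$ near it, varies continuously---so that the sign $\Re h>0$ is transported from the base point without recomputation. By contrast, the verification of the scalar Riemann--Hilbert problem itself is routine once $\gamma_{m,0}(s)$ is in hand.
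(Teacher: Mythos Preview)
Your proposal is correct and follows essentially the same approach as the paper: verify the scalar Riemann--Hilbert data \eqref{eq: RHP for h} directly from the explicit integrand $h'(z;s)=(2-sz)/(z^2-1)^{1/2}$ (using $h'_+=-h'_-$ on $\gamma_{m,0}$, the residue/period $4\pi i$ for the jump on $\gamma_{c,0}$, and the local expansion at $z=\pm1$), and then obtain the inequalities \eqref{eq: both inequalities} by continuation from the base case $s\in i\R_-\cap\mathfrak{G}_0$ of \cite{deano2014large} via Lemma~\ref{lem: breaking point lemma}. The only cosmetic difference is that the paper also records the explicit primitive $h(z;s)=2\log\bigl(z+(z^2-1)^{1/2}\bigr)-s(z^2-1)^{1/2}$ to read off the asymptotics at infinity, whereas you argue asymptotically; both are fine.
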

				\begin{proof}
					It is clear that $h$ is analytic in $\C\setminus\Omega(s)$. Next, note that $\Re h(z;s)\to 0$ as $z\to 1$ and $\Re h(z;s)$ is constant along $\gamma_{m,0}(s)$, as it is a trajectory of $\varpi_s$. Therefore, we have that $\Re h(z;s)=0$ for $z\in\gamma_{m,0}(s)$. As $h'_+ = - h'_-$ on $\gamma_{m,0}$, we we have that $h_+(z) + h_-(z) = 0$ for $z \in \gamma_{m,0}$, so that $h$ satisfies the appropriate jump over $\gamma_{m,0}$. Next, a residue calculation gives us that $h_+(z)-h_-(z) = 4\pi i$ for $z \in \gamma_{c,0}$.
					
					We can integrate \eqref{eq: genus 0 h function} directly to yield, 
					\begin{equation}\label{eq: genus 0 h function explicit}
						h(z;s) = 2 \log\left(z+\left(z^2-1\right)^{1/2}\right) - s\left(z^2-1\right)^{1/2}.
					\end{equation}
					From this, we can compute that
					\begin{equation}\label{eq: genus 0 h at infinity}
						h(z;s) = - sz + 2\log 2 + 2\log z + \mathcal{O}\left(\frac{1}{z}\right), \qquad z\to\infty, 
					\end{equation}
					so that $h$ satisfies \eqref{eq: h asymptotics}. Finally, it is clear from \eqref{eq: h prime genus 0} that $h(z) = \mathcal{O}\left(\sqrt{z\mp 1}\right)$ as $z\to\pm 1$, so that the $h$ constructed above satisfies all of the requirements of \eqref{eq: RHP for h}. 
					
					To see that $h(z;s)$ satisfies \eqref{eq: both inequalities}, we note that the inequalities were proven directly in \cite{deano2014large} for $s = -it$ with $0<t<t_0$. By using Lemma~\ref{lem: breaking point lemma}, we see that the inequalities will hold for all $s \in \mathfrak{G}_0$, completing the proof. 
				\end{proof}
				
				With the genus $0$ $h$-function now constructed explicitly for all $s\in\mathfrak{G}_0$, we now turn to the genus $1$ case. 				
				
			\subsubsection{Genus 1}
				The genus $1$ case is slightly more involved, but as before, we will deform the existing solution on the imaginary axis to all other values of $s$. Therefore, we start with defining
				\begin{equation}\label{eq: h prime genus 1}
					h'(z;s) = -s\left(\frac{(z-\lambda_0(s))(z-\lambda_1(s))}{z^2-1}\right)^{1/2}, 
				\end{equation}
				and we now set $\varpi_s := -h'(z;s)^2\, dz^2$, where $h'$ is defined in \eqref{eq: h prime genus 1}. It was shown in \cite{celsus2020supercritical} that for $s= -it$ where $t>t_0$, there exist trajectories of the quadratic differential $\varpi_s$ connecting $-1$ to $\lambda_0$ and $\lambda_1$ to $1$. Here, $\lambda_0$ and $\lambda_1$ satisfy 
				\begin{equation}\label{eq: endpoint boutroux}
					\lambda_0 + \lambda_1 = \frac{4}{s} ,\qquad \Re \oint_C h'(z)\, dz = 0, 
				\end{equation}
				and where $C$ is any loop on the Riemann surface $\mathfrak{R}$ associated to the algebraic equation $y^2 = (h')^2$, defined in Remark~\ref{rem: Riemann surface} and Subsection~\ref{sub: Genus 1 Global Parametrix}.
				Note that the first condition in \eqref{eq: endpoint boutroux} ensures that 
				\begin{equation}\label{eq: h prime asymptotics genus 1}
					h'(z) = -f'(z) + \frac{2}{z} + \mathcal{O}\left(\frac{1}{z^2}\right), \qquad z \to \infty. 
				\end{equation}
				The second condition of \eqref{eq: endpoint boutroux} is known as the Boutroux Condition, and its importance will become clear shortly. The critical graph of $\varpi_s$ for $s \in i\R \cap \mathfrak{G}_1^-$ as proven in \cite{celsus2020supercritical} is displayed in Figure~\ref{fig: supercritical kissing trajectories}. In this case, the critical graph is symmetric with respect to the imaginary axis, and there exists a trajectory connecting $-1$ to $\lambda_0$ and one connecting $\lambda_1 = -\overline{\lambda_0}$ to $1$. 

				\begin{figure}[t]
						\centering
						\begin{tikzpicture}[scale=.8]
						\draw [thick] (-1.5,1) to [out=10,in=170] (1.5,1);
						\draw [thick] (1.5,1) to [out=50,in=180] (4,1.7);
						\draw [thick] (-1.5,1) to [out=180-50,in=0] (-4,1.7);								   
						\draw [thick] (1.5,1) to [out=290,in=100] (2,-0.5);		 				
						\draw [thick] (-1.5,1) to [out=180-290,in=180-100] (-2,-0.5);		
						\draw [fill] (1.5,1) circle [radius=0.06];
						\draw [fill] (-1.5,1) circle [radius=0.06];
						\draw [fill] (2,-0.5) circle [radius=0.06];
						\draw [fill] (-2,-0.5) circle [radius=0.06];
						\node [below] at (-2,-0.5) {$-1$};
						\node [below] at (2,-0.5) {$1$};
						\node [left] at (-1.7,1) {$\lambda_0$};
						\node [right] at (1.7,1) {$\lambda_1$};

						\end{tikzpicture}
					\caption{Critical Graph of $-h'^2 \, dz^2$ for $h'$ defined in \eqref{eq: h prime genus 1} and $s \in i\R$ with $\Im s < -t_0$. }
					\label{fig: supercritical kissing trajectories}
				\end{figure}
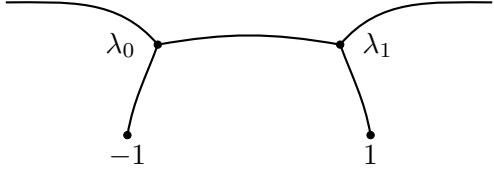

				We consider the case $s \in \mathfrak{G}_1^-$. In particular this means that $s$ is a regular point in the genus 1 region. As in the proof of Lemma~\ref{lem: QD in genus 0}, we note that for any $s \in \mathfrak{G}_1^-$, there will exist trajectories connecting $-1$ to $\lambda_0$ and $\lambda_1$ to $1$, which we define to be $\gamma_{m,0}(s)$ and $\gamma_{m,1}(s)$. Further, we define $\gamma_{c,1}$ to be the curve connecting $\lambda_0$ to $\lambda_1$ along which $\Re h(z) < 0$, whose existence is guaranteed by the definition of a regular point. 
				
				We now show that \eqref{eq: endpoint boutroux} holds for any $s\in\mathfrak{G}_1^-$. Denoting $\lambda_0 = u_0 + i v_0$ and $\lambda_1=u_1+iv_1$, we may write the conditions \eqref{eq: endpoint boutroux} as $F(s; u_0, v_0, u_1, v_1) = 0$, where $F = (f_1, f_2, f_3, f_4)$ and 
					\begin{equation*}
						f_1 = u_0 + u_1 - \Re\frac{4}{s}, \qquad f_2 = v_0 + v_1 - \Im\frac{4}{s}, \qquad f_3 = \Re \oint_A h'(z)\, dz, \qquad f_4 = \Re \oint_B h'(z)\, dz.
					\end{equation*}
				Note that $f_3=0$ and $f_4=0$ are equivalent to the Boutroux condition, as any loop on $\mathfrak{R}$ may be written as a combination of the $A$ and $B$ cycle on $\mathfrak{R}$. Taking the Jacobian of the above conditions with respect to the endpoints yields, 
				\begin{equation}
					\nabla F = \begin{pmatrix}
						1 & 0 & 1 & 0 \\
						0 & 1 & 0 & 1 \\
						\Re \oint_A h'_{\lambda_0} \, dz & \Im \oint_A h'_{\lambda_0} \, dz &\Re \oint_A h'_{\lambda_1} \, dz & \Im \oint_A h'_{\lambda_1} \, dz \\
						\Re \oint_B h'_{\lambda_0}\, dz & \Im \oint_B h'_{\lambda_0}\, dz & \Re \oint_B h'_{\lambda_1} \, dz & \Im \oint_B h'_{\lambda_1} \, dz 
					\end{pmatrix},
				\end{equation}
				where 
				\begin{equation}
					h_{\lambda_j}'(z) = \frac{-1}{2(z-\lambda_j)}h'(z), \qquad j=1,2.
				\end{equation}
				As $\lambda_0 \not=\lambda_1$ since we are at a regular point, note that
				\begin{equation}
					\left(h_{\lambda_1}'(z)-h_{\lambda_0}'(z)\right) \, dz
				\end{equation}
				is the unique (up to multiplicative constant) holomorphic differential on $\mathfrak{R}$. Subtracting the first and second columns from the third and fourth columns, we get that 
				\begin{equation}
					\det \nabla F = \det\begin{pmatrix}
						1 & 0 & 0 & 0 \\
						0 & 1 & 0 & 0 \\
						\Re \oint_A h'_{\lambda_0} \, dz & \Im \oint_A h'_{\lambda_0} \, dz &\Re \mathcal{A} & \Im \mathcal{A} \\
						\Re \oint_B h'_{\lambda_0}\, dz & \Im \oint_B h'_{\lambda_0}\, dz & \Re \mathcal{B}& \Im \mathcal{B} 
					\end{pmatrix},
				\end{equation}
				where 
				\begin{equation}
					\mathcal{A} = \oint_A 	\left(h_{\lambda_1}'(z)-h_{\lambda_0}'(z)\right) \, dz, \qquad \mathcal{B} = \oint_B	\left(h_{\lambda_1}'(z)-h_{\lambda_0}'(z)\right) \, dz.
				\end{equation}
				That is, $\mathcal{A}$ and $\mathcal{B}$ are the $A$ and $B$ periods of a holomorphic differential on $\mathfrak{R}$, and the determinant is given by
				\begin{equation}
					\det\nabla F = \Im (\overline{A}B) > 0, 
				\end{equation}
				which follows from Riemann's Bilinear inequality. As this determinant is non-zero, we can deform the endpoints continuously in $s$ so as to preserve \eqref{eq: endpoint boutroux}, verifying that for all $s\in\mathfrak{G}_1^-$, we may construct a genus $1$ $h$-function.

				For $s\in \mathfrak{G}_1^-$ we have $\Omega(s) = \gamma_{c,0}\cup\gamma_{m,0}\cup\gamma_{c,1}\cup\gamma_{m,1}$, and we define
				\begin{equation}\label{eq: genus 1 h function}
					h(z;s) = \int_1^z h'(u;s)\,du,
				\end{equation}
				where the path of integration is taken in $\C\setminus\Omega(s)$ and $h'$ is given in \eqref{eq: h prime genus 1}. We now have the following Lemma, which shows that the so-constructed $h$ function is the correct one needed for genus $1$ asymptotics.
				\begin{lemma}
					Let $s \in \mathfrak{G}_1^-$. Then, $h(z;s)$ defined in \eqref{eq: genus 1 h function} solves the Riemann-Hilbert problem \eqref{eq: RHP for h} and satisfies the inequalities \eqref{eq: both inequalities}. 
				\end{lemma}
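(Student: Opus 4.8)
The plan is to replay the genus~$0$ lemma one level up, now leaning on the material just established: the quadratic differential $\varpi_s=-h'(z;s)^2\,dz^2$ (with $h'$ as in \eqref{eq: h prime genus 1}) has trajectories $\gamma_{m,0}(s)$, $\gamma_{m,1}(s)$ joining $-1$ to $\lambda_0(s)$ and $\lambda_1(s)$ to $1$, the complementary arc $\gamma_{c,1}(s)$ on which $\Re h<0$ exists, and the Boutroux system \eqref{eq: endpoint boutroux} is solvable for every $s\in\mathfrak{G}_1^-$. First I would check that $h$ is well defined and analytic in $\C\setminus\Omega(s)$: since $\Omega(s)=\gamma_{c,0}\cup\gamma_{m,0}\cup\gamma_{c,1}\cup\gamma_{m,1}$ is a single arc running from $-\infty$ to $1$, its complement is simply connected, while $h'(z;s)$ is analytic off the two branch cuts $\gamma_{m,0}\cup\gamma_{m,1}\subset\Omega(s)$; hence the integral in \eqref{eq: genus 1 h function} is path independent in $\C\setminus\Omega(s)$.

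Next I would verify the jump relations and the normalization in \eqref{eq: RHP for h}. On each main arc $h'_+=-h'_-$, so $h_++h_-$ is locally constant; starting the integration at $1\in\gamma_{m,1}$ gives $h_++h_-\equiv 0$ on $\gamma_{m,1}$ (so $\omega_1=0$), while on $\gamma_{m,0}$ one gets $h_++h_-\equiv 2c_0$ for a constant $c_0$. Because $\gamma_{m,j}$ are trajectories of $\varpi_s$, $\Re h$ is constant along each of them; it equals $0$ on $\gamma_{m,1}$, and the Boutroux condition $\Re\oint_B h'\,dz=0$ — which is exactly the statement that $\Re\int_{\lambda_1}^{\lambda_0}h'\,dz=0$ along a path near $\gamma_{c,1}$ — propagates the value $\Re h=0$ across $\gamma_{c,1}$ to $\gamma_{m,0}$, forcing $c_0\in i\R$, i.e. $c_0=2\pi i\omega_0$ with $\omega_0\in\R$. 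Across the complementary arcs $h'$ is single valued, so $h_+-h_-$ is constant; on $\gamma_{c,0}$ it equals $4\pi i$ by the residue at infinity coming from the $2/z$ term in \eqref{eq: h prime asymptotics genus 1} (so $\eta_0=1$), and on $\gamma_{c,1}$ it equals a period of $h'\,dz$, which is purely imaginary because along any main arc (a trajectory, where $d\Re h=\Re(h'\,dz)=0$) the form $h'\,dz$ is purely imaginary; hence $\eta_1\in\R$. Integrating \eqref{eq: h prime asymptotics genus 1} from $1$ gives $h(z;s)=-sz-\ell+2\log z+\mathcal{O}(1/z)$ with $\ell=\ell(s)$ constant; and since $h'\sim c(z\mp1)^{-1/2}$ near $\pm1$ and $h'\sim c(z-\lambda_j)^{1/2}$ near $\lambda_j$, integration gives $h$ a $(z\mp1)^{1/2}$, resp. a $(z-\lambda_j)^{3/2}$, leading term, and combining with $\Re h(\pm1)=\Re h(\lambda_0)=\Re h(\lambda_1)=0$ (each such point being an endpoint of a main arc) yields the two local real-part estimates. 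This exhausts \eqref{eq: RHP for h}, and by Remark~\ref{rem: Riemann surface} the associated surface $\mathfrak{R}$ is the genus~$1$ surface of $\xi^2=h'(z)^2$, with $\Pi$ the degree~$0$ polynomial forced by the asymptotics.

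For the inequalities \eqref{eq: both inequalities}, I would invoke that they hold at the base point $s=-it$ with $t>t_0$ by \cite{celsus2020supercritical}, and then propagate: join $s\in\mathfrak{G}_1^-$ to that base point by a path lying entirely in $\mathfrak{G}_1^-$; along it $\Re h(z;s)$ varies continuously with $s$ and, $\mathfrak{G}_1^-$ containing no breaking points, Lemma~\ref{lem: breaking point lemma} prevents either inequality from degenerating, so they persist at $s$. (The region $\mathfrak{G}_1^+$ is handled by the reflection $s\mapsto\bar s$, $z\mapsto\bar z$.)

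The only genuinely nontrivial point, and the one I would spend care on, is the simultaneous pinning $\Re h\equiv0$ on \emph{both} main arcs: each being a trajectory only makes $\Re h$ constant on it, and it is precisely the period (Boutroux) equation in \eqref{eq: endpoint boutroux} that equalizes the two constants and makes them vanish, and that also underlies the reality of $\omega_0$ and $\eta_1$. Everything else is a routine transcription of the genus~$0$ argument.
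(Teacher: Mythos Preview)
Your proposal is correct and follows essentially the same approach as the paper's proof: verifying the jump conditions and endpoint behavior directly from the form of $h'$, using the Boutroux condition in \eqref{eq: endpoint boutroux} to force the reality of the constants $\omega_0,\eta_1$, and then invoking the continuation argument via Lemma~\ref{lem: breaking point lemma} (with the base case $s\in i\R$, $\Im s<-t_0$, supplied by \cite{celsus2020supercritical}) to propagate the inequalities throughout $\mathfrak{G}_1^-$. You give considerably more detail than the paper does, particularly in explaining how the Boutroux condition equalizes $\Re h$ across the two main arcs, but the underlying strategy is identical.
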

				\begin{proof}
					Again, it is immediate that $h$ is analytic in $\C\setminus\Omega(s)$ and has the appropriate endpoint behavior near all endpoints in $\Lambda$. Moreover, from the first condition of \eqref{eq: endpoint boutroux}, we ensure that $h$ has the correct asymptotics at infinity. The Boutroux condition ensures that we have a purely imaginary jump over $\gamma_{c,1}$ and the same residue calculation as in the genus $0$ case yields that $h_+(z) - h_-(z) = 4\pi i$ for $z\in\gamma_{c,0}$. Finally, as $\Re h(z)=0$ for $z \in\mathfrak{M}$, along with $h'_+(z)+h'_-(z)=0$ for $z\in\mathfrak{M}$ and the Boutroux condition, we have that $h_++h'_-$ is purely imaginary on the main arcs $\gamma_{m,0}$ and $\gamma_{m,1}$.
					
					As before, the inequalities \eqref{eq: both inequalities} were established in \cite{celsus2020supercritical} directly for $s \in i\mathbb{R}$ with $\Im s < -t_0$, so we may again use Lemma~\ref{lem: breaking point lemma} to show that the inequalities continue to hold for all $s \in \mathfrak{G}_1^{-}$. 
				\end{proof}
			
				The case $s \in \mathfrak{G}_1^+$ may be easily obtained via reflection. To see this, note that if $s\in\mathfrak{G}_1^+$, then $-s \in \mathfrak{G}_1^-$. Take $\lambda_0(s) = -\lambda_0(-s)$ and $\lambda_1(s) = -\lambda_1(-s)$, so that $h'(z;s) = - h'(-z;-s)$, and we may use the results for $-s\in\mathfrak{G}_1^-$ to construct the appropriate genus $1$ $h$-function. 
				
	\subsection{Proof of Theorem~\ref{thm: global phase portrait}}\label{sec: construction of breaking curves}
		We recall that the aim of Theorem~\ref{thm: global phase portrait} is to verify that Figure~\ref{fig: breaking curve} is the accurate picture of the set of breaking curves in the parameter space. 
		
		As the genus of $\mathfrak{R}(s)$ is either $0$ or $1$, we have that the genus must be $0$ along a breaking curve. That is, $\Omega(s) = \gamma_{c,0} \cup \gamma_{m,0}$. We have seen in \eqref{eq: genus 0 h function explicit} that the regular genus $0$ $h$-function is given by:
		\begin{equation}\label{eq: h function for bc}
			h(z;s)=2\log\left(z+\left(z^2-1\right)^{1/2}\right) - s\left(z^2-1\right)^{1/2}. 
		\end{equation}
		\begin{remark}
			Note that there is one other genus zero $h$ function which occurs when $s\in \mathbb{R}$ and $\left|s\right|>2$. Here, we have that
			\begin{equation*}
			h'(z) = \sqrt{\frac{z-\lambda_1(s)}{z-1}}, \qquad \text{ or } \qquad h'(z) = \sqrt{\frac{z-\lambda_2(s)}{z+1}},
			\end{equation*}
			with a cut taken on the real line connecting $\lambda_1$ and $1$ or $\lambda_2$ and $-1$, depending on the situation. However, neither of these $h$-functions admit saddle points, so they do not need to be considered when looking for breaking points. 
		\end{remark}
		It is clear by looking at \eqref{eq: h function for bc} that the only saddle point is at $z_0 = 2/s$. As this is a simple zero of $h'$, we see that the only critical breaking points occur when the saddle point coincides with the branchpoints in $\Lambda(s)$. That is, the only critical breaking points are $s=\pm 2$. To study the structure of breaking curves, we will need the following calculation. 
		\begin{prop}\label{prop: partial h with respect to s}
			If $s_b$ is a regular breaking point, then
			\begin{equation}
				\frac{d}{d s} h\left(\frac{2}{s_b}, s_b\right)\not = 0. 
			\end{equation}
		\end{prop}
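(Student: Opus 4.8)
The plan is to turn the total $s$-derivative into a single partial derivative, using that $z_0=2/s$ is a critical point of $h(\,\cdot\,;s)$. Along a breaking curve the genus of $\mathfrak{R}(s)$ equals $0$, so the relevant $h$-function is the explicit one recorded in \eqref{eq: h function for bc}, namely $h(z;s)=2\log\!\left(z+(z^2-1)^{1/2}\right)-s(z^2-1)^{1/2}$. Since $s_b$ is a \emph{regular} breaking point, the saddle $z_0=2/s_b$ does not coincide with a branchpoint, so $z_0\neq\pm1$ and we may fix a branch of $(z^2-1)^{1/2}$ that is holomorphic near $z_0$; with that choice $h(z;s)$ is jointly holomorphic in $(z,s)$ near $(z_0,s_b)$. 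First I would apply the chain rule,
\begin{equation*}
	\frac{d}{ds}h\!\left(\frac{2}{s},s\right)=\partial_z h\!\left(\frac{2}{s};s\right)\cdot\left(-\frac{2}{s^2}\right)+\partial_s h\!\left(\frac{2}{s};s\right).
\end{equation*}
Because $\partial_z h(z;s)=h'(z;s)$ and $z=2/s$ is exactly the (simple) zero of $h'(\,\cdot\,;s)$ — this is precisely the saddle-point condition in \eqref{eq: def of breaking point} — the first term vanishes identically in $s$.

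It then remains to evaluate $\partial_s h$. Differentiating \eqref{eq: h function for bc} with $z$ held fixed gives $\partial_s h(z;s)=-(z^2-1)^{1/2}$, hence
\begin{equation*}
	\frac{d}{ds}h\!\left(\frac{2}{s},s\right)=-\left(\frac{4}{s^2}-1\right)^{1/2},
\end{equation*}
and evaluation at $s=s_b$ yields $-\,(4-s_b^2)^{1/2}/s_b$. This is nonzero: by definition a regular breaking point is not critical, so the saddle $z_0=2/s_b$ is not a branchpoint of $\Lambda(s_b)$, i.e. $s_b\neq\pm2$; moreover $s_b$ is finite (and $s_b\neq0$, since at $s=0$ there is no saddle point on $\Omega(s)$). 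Therefore $4-s_b^2\neq0$ and $\frac{d}{ds}h(2/s_b,s_b)\neq0$, as claimed.

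I do not expect a serious obstacle here; the only point demanding care is the branch/sheet bookkeeping — one must be sure that \eqref{eq: h function for bc} really is the correct $h$-function along the breaking curve (justified because the genus there is $0$) so that $\partial_s h$ is computed consistently on a single sheet, and that the chosen branch of $(z^2-1)^{1/2}$ is the one analytic at $z_0$, which uses regularity of $s_b$ once more. It is also worth noting for context that the companion genus-$0$ $h$-function occurring for $s\in\R$, $|s|>2$, carries no saddle point, so this proposition addresses exactly the breaking points along $\mathfrak{b}_+\cup\mathfrak{b}_-$. Finally, since the total derivative coincides with $\partial_s h(z_0;s_b)\neq0$, the hypothesis of Lemma~\ref{lem: def of breaking curves} is met, which is how this computation is fed into the proof of Theorem~\ref{thm: global phase portrait}.
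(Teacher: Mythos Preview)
Your proof is correct and follows essentially the same route as the paper: both compute $\frac{d}{ds}h(2/s,s)=-\left(\frac{4}{s^2}-1\right)^{1/2}$ and observe this vanishes only at the critical breaking points $s=\pm2$. Your version is more explicit about the chain-rule justification (noting that the $\partial_z h$ contribution drops out because $z=2/s$ is the saddle), while the paper simply substitutes $z=2/s$ into \eqref{eq: h function for bc} and differentiates the resulting one-variable function.
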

		\begin{proof}
			We write 
			\begin{equation}
				h\left(\frac{2}{s}, s\right) =2\log\left(\frac{2}{s}+\left(\frac{4}{s^2}-1\right)^{1/2}\right) - s\left(\frac{4}{s^2}-1\right)^{1/2}, 
			\end{equation}
			so that
			\begin{equation}
				h'\left(\frac{2}{s}, s\right)= -\left(-1+\frac{4}{s^2}\right)^{1/2}.
			\end{equation}
			Note that this vanishes only for $s = \pm 2$, which are critical breaking points, so that the proposition above is true for all regular breaking points. 
		\end{proof}
		By Lemma~\ref{lem: def of breaking curves}, the above proposition immediately implies the following, just as in \cite[Corollary~6.2]{bertola2016asymptotics}. 
		\begin{corollary}\label{prop: COR 6.2}
			Breaking curves are smooth, simple curves consisting of regular breaking points (except possibly the endpoints). They do not intersect each other except perhaps at critical breaking points $s=\pm 2$ or at infinity. They can originate and end only at critical breaking points and at infinity.
		\end{corollary}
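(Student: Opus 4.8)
The plan is to build everything on the explicit genus-$0$ formula \eqref{eq: h function for bc}, Proposition~\ref{prop: partial h with respect to s}, and Lemma~\ref{lem: def of breaking curves}. Since, as established above, the genus of $\mathfrak{R}(s)$ is necessarily $0$ along a breaking curve and the only saddle of $h$ is then $z_{0}(s)=2/s$, a parameter $s$ with $2/s\in\Omega(s)$ is a breaking point exactly when $\operatorname{Re}\Phi(s)=0$, where
\[
\Phi(s):=h\!\left(\tfrac{2}{s};s\right)=2\log\!\left(\tfrac{2}{s}+\left(\tfrac{4}{s^{2}}-1\right)^{1/2}\right)-s\left(\tfrac{4}{s^{2}}-1\right)^{1/2}
\]
is the saddle–value function. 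On any simply connected subdomain of $\mathbb{C}\setminus\{0,\pm2\}$ avoiding the cut of $(4-s^{2})^{1/2}$, the function $\Phi$ admits a single-valued holomorphic branch, and Proposition~\ref{prop: partial h with respect to s} gives $\Phi'(s)=-\left(\tfrac{4}{s^{2}}-1\right)^{1/2}$, which is nonzero at every regular breaking point (it vanishes only at the critical breaking points $s=\pm2$).

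The first step is the local picture. At a regular breaking point $s_{b}$ one has $\Phi'(s_{b})\neq0$, so $\Phi$ is a biholomorphism of a neighbourhood of $s_{b}$ onto a neighbourhood of $\Phi(s_{b})\in i\mathbb{R}$; consequently $\Phi^{-1}(i\mathbb{R})$ is, near $s_{b}$, a single real-analytic \emph{simple} arc. Because the side condition $2/s\in\Omega(s)$ is open and is preserved along this arc (with $\Omega(s)$ deformed continuously, using that $\gamma_{m,0}(s)$ is a trajectory of $\varpi_{s}$ as in Lemma~\ref{lem: QD in genus 0}), this arc is precisely the locus of breaking points near $s_{b}$. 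This already shows that breaking curves are smooth and simple in a neighbourhood of each of their regular points, and — since two distinct smooth arcs crossing at $s_{b}$ would make $\Phi$ fail to be injective there — that no branch of a breaking curve can meet another, or itself, at a regular breaking point. Combined with the fact, established above, that the only critical breaking points are $s=\pm2$, we conclude that breaking curves are simple and intersect one another only at $s=\pm2$ or at $\infty$.

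The second step concerns endpoints. A \emph{maximal} breaking curve is obtained by continuing the arc supplied by Lemma~\ref{lem: def of breaking curves} as far as the implicit function theorem allows; the continuation can be obstructed only at a point of the closure of the set of breaking points that is not a regular breaking point, which in the finite plane means a critical breaking point $s=\pm2$, or else the arc must escape to $s=\infty$. Here one also has to observe that along such an arc the saddle $2/s$ cannot silently leave $\Omega(s)$: were it to do so, the topology of the critical graph of $\varpi_{s}$ would have to change, forcing $s$ through a breaking point (this is the mechanism behind Lemma~\ref{lem: breaking point lemma} and the definition of $\mathfrak{G}_{0}$), so the set of breaking points is closed and the continuation is not obstructed for that reason. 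Putting the two steps together yields all assertions of the corollary, exactly parallel to \cite[Corollary~6.2]{bertola2016asymptotics}.

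I expect the main obstacle to be the careful handling of the multivaluedness of $\Phi$ — the logarithm and the square root $\left(\tfrac{4}{s^{2}}-1\right)^{1/2}$ each impose branch choices — together with the side condition $2/s\in\Omega(s)$: one must verify that the level set $\{\operatorname{Re}\Phi=0\}$ really parametrizes genuine breaking points on the region of interest and that no branch of a breaking curve terminates "in mid-plane" at a regular point. Both are soft, local arguments once the conformality of $\Phi$ at regular breaking points is secured, but they require attention to get right.
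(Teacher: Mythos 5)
Your proposal is correct and takes essentially the same route as the paper, which simply invokes the combination of Proposition~\ref{prop: partial h with respect to s} (non-vanishing of $\tfrac{d}{ds}h(2/s_b;s_b)$ at regular breaking points) with the implicit-function-theorem mechanism of Lemma~\ref{lem: def of breaking curves}, deferring the bookkeeping to \cite[Corollary~6.2]{bertola2016asymptotics}. Your argument is the same in substance, with the cosmetic difference that you eliminate the saddle variable by substituting $z_0=2/s$ and work directly with the single-variable function $\Phi(s)=h(2/s;s)$, using its local conformality in place of the Jacobian computation in the joint $(z,s)$-system.
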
 
		Now, we can indeed verify the global phase portrait depicted in Figure~\ref{fig: breaking curve} is the correct picture, proving Theorem~\ref{thm: global phase portrait}.
	
		To find the breaking curves, we recall that the only saddle point occurs at 
		\begin{equation}
			z_0(s) = \frac{2}{s},
		\end{equation}
		so that the breaking curves are part of the zero level set
		\begin{equation}\label{eq: breaking curve linear weight}
			\Re\left(2\log\left(\frac{2}{s}+\left(\frac{4}{s^2}-1\right)^{1/2}\right) - s\left(\frac{4}{s^2}-1\right)^{1/2}\right) = 0. 
		\end{equation}
		Recall also, that the only critical breaking points are $s=\pm 2$, at which the saddle point collides with the hard edge at $\pm 1$, respectively. As $h(2/s, s)=\mathcal{O}\left(\left(s-2\right)^{3/2}\right)$ as $s\to\pm 2$, we note that 3 breaking curves emanate from each of $\pm 2$. 
		
		Now, if $s\in \mathbb{R}$ and $\left|s\right| > 2$, then
		\begin{equation*}
			-s\left(\frac{4}{s^2}-1\right)^{1/2} \in i \mathbb{R},
		\end{equation*}
		where we have taken the branch cut to be the interval $[-1,1]$. Furthermore, recall that the map $z \to z+\left(z^2-1\right)^{1/2}$ sends the interval $(-1,1)$ to the unit circle. As such, we also have that 
		\begin{equation*}
			2\log\left(\frac{2}{s}+\left(\frac{4}{s^2}-1\right)^{1/2}\right) \in i\mathbb{R}
		\end{equation*}
		when $s \in \mathbb{R}$ and $\left|s\right|>2$. Therefore, the rays $(2, \infty)$ and $(-\infty,-2)$ are both breaking curves. Finally, note that 
		\begin{equation}
			h\left(\frac{2}{s},s\right) = -is+i\pi+ \mathcal{O}\left(\frac{1}{s}\right), \qquad s\to\infty,
		\end{equation}
		so that the two rays emanating from $\pm 2$ towards infinity along the real axis are the only two portions of the breaking curve which intersect at infinity.
		
		According to Corollary~\ref{prop: COR 6.2}, the remaining breaking curves either emanate from $\pm 2$ or form closed loops in the $s$-plane consisting of only regular breaking points. As $h(2/s;s)$ has non-zero real part for $s\in(-2,2)$, we conclude that the remaining breaking curves do not intersect the real axis. Next, note that $\Re h(2/s;s)$ is harmonic for $s$ off the real axis, so that off the real axis, there are no closed loops along which $\Re h(2/s,s)=0$. Therefore, the remaining breaking curves begin and end at $\pm 2$. Finally, as 
		\begin{equation}
			h\left(\frac{2}{\overline{s}}, \overline{s}\right) = \overline{h\left(\frac{2}{s},s\right)}, 
		\end{equation}
		we see that the breaking curves which connect $-2$ and $2$ are symmetric about the real axis. 
	
	\subsection{Proof of Theorem~\ref{thm: recurrence coeffs genus 0}}\label{sec: asymptotics I}
		Having successfully verified the global phase portrait is as depicted in Figure~\ref{fig: breaking curve}, with $\mathfrak{G}_0$ corresponding to the genus $0$ region and $\mathfrak{G}_1^\pm$ corresponding to the genus $1$ regions, we may now use the techniques illustrated in Section~\ref{sec: unwinding transormations} to obtain asymptotics of the recurrence coefficients for $s \in \C\setminus\mathfrak{B}$. 
		
		For $s \in \mathfrak{G}_0$, we are in the genus $0$ region and as such we will use the global  parametrix defined in Subsection~\ref{sub: Global parametrix genus 0}. We recall that the global parametrix given in \eqref{eq: global parametrix eq genus 0} satisfies as $z \to \infty$
		\begin{equation}
			\label{eq: M1 and M2}
			M(z) = I + \frac{M^{(1)}}{z} + \frac{M^{(2)}}{z^2} + \mathcal{O}\left(\frac{1}{z^3}\right), \qquad M^{(1)} = \begin{pmatrix}
				0 & \frac{i}{2} \\
				-\frac{i}{2} & 0
			\end{pmatrix}, \qquad M^{(2)} = \begin{pmatrix}
				\frac{1}{8} & 0\\
				0& \frac{1}{8} 
			\end{pmatrix}.
		\end{equation}
		
		Recall from \eqref{eq: recurrence coeffs in terms of T} that $\alpha_n, \beta_n$ may be written in terms of the matrices $T^{(1)}, T^{(2)}$ appearing in the asymptotic expansion of $T(z)$ as $z \to \infty$.
		In Section~\ref{sec: Small Norm} we stated that $R$ has an asymptotic expansion of the form
		\begin{equation}
			R(z) = I +\sum_{k=1}^\infty \frac{R_k(z)}{n^k}, \qquad n\to\infty, 
		\end{equation}
		which is valid uniformly in the variable $z$ near infinity, and each $R_k(z)$ for $k\geq 1$ satisfies
		\begin{equation}
			R_k(z) = \frac{R_k^{(1)}}{z} + \frac{R_k^{(2)}}{z^2} +\mathcal{O}\left(\frac{1}{z^3}\right), \qquad z\to\infty. 
		\end{equation}
		Recalling that $T(z)=R(z)M(z)$ outside of the lens, we may write
		\begin{subequations}
			\label{eq: T in terms of M, R}
			\begin{equation}
			T^{(1)} = M^{(1)} + \frac{R_1^{(1)}}{n} + \frac{R_2^{(1)}}{n^2} + \mathcal{O}\left(\frac{1}{n^3}\right), \qquad n\to\infty,
			\end{equation}
			and 
			\begin{equation}
			T^{(2)} = M^{(2)} + \frac{R_1^{(1)}M^{(1)} + R_1^{(2)}}{n} + \frac{R_2^{(1)}M^{(1)} + R_2^{(2)}}{n^2} + \mathcal{O}\left(\frac{1}{n^3}\right), \qquad n\to\infty,
			\end{equation}
		\end{subequations}
		and as such we turn our attention to determining $R_1$ and $R_2$.
		
		We recall the discussion in Section~\ref{sec: Small Norm}, where we wrote $j_R(z) = I + \Delta(z)$, where $\Delta$ admits an asymptotic expansion in inverse powers of $n$ as
		\begin{equation}
		\Delta(z) \sim \sum_{k=1}^\infty \frac{\Delta_k(z)}{n^k}, \qquad n\to\infty. 
		\end{equation}
		As $\Delta(z)$ decays exponentially quickly for $z \in\Sigma_R\setminus\cup_{\lambda\in\Lambda}\partial D_\lambda$, we have that
		\begin{equation}
		\Delta_k(z) = 0, \qquad z\in\Sigma_R\setminus\bigcup_{\lambda\in\Lambda}\partial 	D_\lambda.
		\end{equation}
		The behavior of $\Delta_k(z)$ for $z\in \partial D_\lambda$ can be determined in terms of the appropriate local parametrix used at the particular $\lambda \in \Lambda$. 
		
		We give an explicit formula for $\Delta_k(z)$ for $z\in\partial D_1$ following \cite[Section~8]{arnojacobi}. We compute that the Bessel parametrix defined in \eqref{eq: Bessel parametrix} satisfies
		\begin{equation}\label{eq: Bessel parametrix expansion}
		B\left(\zeta\right) = \frac{1}{\sqrt{2}}(2\pi)^{-\sigma_3/2} \zeta^{-\sigma_3/4}\begin{pmatrix}
		1 & i \\
		i & 1
		\end{pmatrix}\left(I + \sum_{k=1}^\infty \frac{B_k}{\zeta^{k/2}}\right)e^{2\zeta^{1/2}\sigma_3}
		\end{equation}
		uniformly as $\zeta \to\infty$, where the matrices $B_k$ are defined as
		\begin{equation}
			B_k := \frac{(-1)^{k-1}\prod_{j=1}^{k-1}(2j-1)^2}{4^{2k-1}(k-1)!}\begin{pmatrix}
			\frac{(-1)^k}{k} \left(\frac{k}{2}-\frac{1}{4}\right) & -i \left(k-\frac{1}{2}\right) \\
			(-1)^k i \left(k-\frac{1}{2}\right) &\frac{1}{k}\left(\frac{k}{2}-\frac{1}{4}\right)
			\end{pmatrix}
		\end{equation}
		As $\Delta(z) = P^{(1)}(z)M^{-1}(z)-I$ for $z\in\partial D_1$, we may use\eqref{eq: matching condition hard edge}-\eqref{eq: Bessel conformal map} to see that
		\begin{align}
			\Delta(z) &= P^{(1)}(z)M^{-1}(z) -I=M(z)\left[\sum_{k=1}^\infty \frac{4^k B_k}{n^k h(z)^k}\right]M^{-1}(z), \qquad n\to\infty,
		\end{align}
		so that we have by direct inspection, 
		\begin{equation}\label{eq: Delta k 1}
			\Delta_k(z)= \frac{(-1)^{k-1}\prod_{j=1}^{k-1}(2j-1)^2}{4^{k-1}(k-1)!h(z)^k} M(z)\begin{pmatrix}
			\frac{(-1)^k}{k} \left(\frac{k}{2}-\frac{1}{4}\right) & -i \left(k-\frac{1}{2}\right) \\
			(-1)^k i \left(k-\frac{1}{2}\right) &\frac{1}{k}\left(\frac{k}{2}-\frac{1}{4}\right)
			\end{pmatrix}M^{-1}(z),
		\end{equation}
		for $z\in\partial D_1$. Defining $\tilde{h}(z) = h(z) - 2\pi i$, we are able to similarly compute that
		\begin{equation} \label{eq: Delta k -1}
			\Delta_k(z)= \frac{(-1)^{k-1}\prod_{j=1}^{k-1}(2j-1)^2}{4^{k-1}(k-1)!\tilde{h}(z)^k} M(z)\begin{pmatrix}
			\frac{(-1)^k}{k} \left(\frac{k}{2}-\frac{1}{4}\right) & i \left(k-\frac{1}{2}\right) \\
			(-1)^{k+1} i \left(k-\frac{1}{2}\right) &\frac{1}{k}\left(\frac{k}{2}-\frac{1}{4}\right)
			\end{pmatrix}M^{-1}(z),
		\end{equation}
		when $z \in \partial D_{-1}$. It was also shown in \cite[Section~8]{arnojacobi} that we may write that
		\begin{align}\label{eq: A1 and B1 def}
			\Delta_1(z) = 
			\begin{cases}
			\displaystyle
			\frac{A^{(1)}}{z-1} + \mathcal{O}\left(1\right), \qquad & z\to 1, \\[2mm]
			\displaystyle
			\frac{B^{(1)}}{z+1}  + \mathcal{O}\left(1\right), \qquad & z\to -1,
			\end{cases}
		\end{align}
		for some constant matrices $A^{(1)}$ and $B^{(1)}$. Using the behavior of $h$ defined in \eqref{eq: genus 0 h function explicit} and $\varphi$ near $\pm 1$, we find that
		\begin{equation}\label{eq: A1 and B1}
			A^{(1)} = \frac{1}{8(s-2)}\begin{pmatrix}
			-1 & i \\
			i & 1
			\end{pmatrix}, \qquad B^{(1)} = \frac{1}{8(s+2)}\begin{pmatrix}
			-1 &- i \\
			-i & 1
			\end{pmatrix}.
		\end{equation}
		We recall from Section~\ref{sec: Small Norm} that the $\Delta_k$ may be used to solve for the $R_k$ via the following Riemann-Hilbert problem. 
		\begin{subequations}
			\label{eq: R_k RHP genus 0}
			\begin{alignat}{2}
			&R_k(z) \text{ is analytic for } z \in \C\setminus\left(\partial D_1 \cup \partial D_{-1}\right) \qquad &&\\
			& R_{k,+}(z) = R_{k,-}(z) + \sum_{j=1}^{k-1}R_{k-j,-}\Delta_j(z), \qquad &&z\in \partial D_1 \cup \partial D_{-1}\\
			& R_k(z) = \mathcal{O}\left(\frac{1}{z}\right), \qquad &&z \to\infty.
			\end{alignat}
		\end{subequations}
		Having determined the $\Delta_k(z)$ for $z \in \partial D_{\pm 1}$, we may solve for the $R_k$ directly. By inspection, we see that
		\begin{align}\label{eq: R_1 explicit genus 0}
		R_1(z) =\begin{cases}
		\displaystyle
		\frac{A^{(1)}}{z-1}+ \frac{B^{(1)}}{z+1}, \qquad & z\in \C\setminus\left(D_1 \cup D_{-1}\right), \\[2mm]
		\displaystyle
		\frac{A^{(1)}}{z-1}+ \frac{B^{(1)}}{z+1} - \Delta_1(z), \qquad &z\in D_{1}\cup D_{-1}, 
		\end{cases}
		\end{align}
		solves the Riemann-Hilbert problem \eqref{eq: R_k RHP genus 0} for $R_1$. 
		
		To determine $R_2$, we again follow \cite{arnojacobi} where it was shown 
		\begin{align}
			R_1(z)\Delta_1(z)+\Delta_2(z) = \begin{cases}
			\displaystyle
			\frac{A^{(2)}}{z-1} + \mathcal{O}\left(1\right), \qquad & z\to 1, \\[2mm]
			\displaystyle
			\frac{B^{(2)}}{z+1}  + \mathcal{O}\left(1\right), \qquad & z\to -1,
			\end{cases}
		\end{align}
		for some constant matrices $A^{(2)}$ and $B^{(2)}$. As we now have explicit formula for $R_1$, $\Delta_1$, and $\Delta_2$, we may use the properties of $h$ and $\varphi$ to determine that 		
		\begin{subequations}
			\begin{equation}
				A^{(2)} = 	\frac{1}{16(s-2)^2(s+2)} \begin{pmatrix}
				\frac{s-2}{4} & i(2s+5) \\
				-i(2s+5) & \frac{s-2}{4} 
				\end{pmatrix} 
			\end{equation}
			and
			\begin{equation}
				B^{(2)} = \frac{1}{16(s-2)(s+2)^2}\begin{pmatrix}
				-\frac{s+2}{4} &  i(2s-5)\\
				-i(2s-5) & -\frac{s+2}{4}
				\end{pmatrix}.
			\end{equation}
		\end{subequations}
		Having determined the $A^{(2)}$ and $B^{(2)}$, we may again solve the Riemann-Hilbert problem for $R_2$ by inspection as 
		\begin{align}\label{eq: R_2 explicit genus 0}
			R_2(z) =\begin{cases}
			\displaystyle
				\frac{A^{(2)}}{z-1}+ \frac{B^{(2)}}{z+1}, \qquad & z\in \C\setminus\left(D_1 \cup D_{-1}\right), \\[2mm]
			\displaystyle
				\frac{A^{(2)}}{z-1}+ \frac{B^{(2)}}{z+1} - R_1(z)\Delta_1(z)-\Delta_2(z), \qquad &z\in D_{1}\cup D_{-1}.
			\end{cases}
		\end{align}
		
		Now, we may expand the $R_k$ at infinity to determine the appropriate terms in \eqref{eq: T in terms of M, R}. As $R_k(z) = A^{(k)}/(z-1)+B^{(k)}/(z+1)$ for $k =1, 2$ and $z\in \C\setminus\left(D_1\cup D_{-1}\right)$, we have that
		\begin{equation}
			R_k(z) = \frac{A^{(k)}+B^{(k)}}{z} + \frac{A^{(k)}-B^{(k)}}{z^2} + \mathcal{O}\left(\frac{1}{z^3}\right), \qquad z\to\infty. 
		\end{equation}
		Using the explicit formula for the $A^{(k)}$ and $B^{(k)}$, we determine that
		\begin{subequations}
			\label{eq: Rij eqn genus 0}
			\begin{equation}
				R_1^{(1)} = \frac{1}{4\left(4-s^2\right)}\begin{pmatrix}
				s & -2i \\
				-2i& -s
				\end{pmatrix}, \qquad
				R_1^{(2)} = \frac{1}{4\left(4-s^2\right)}\begin{pmatrix}
				2 & -is \\
				-is& -2
				\end{pmatrix}
			\end{equation}
			\begin{equation}
				R_2^{(1)} = \frac{i(s^2+5)}{4\left(s^2-4\right)^2}\begin{pmatrix}
				0& 1\\
				-1 &0
				\end{pmatrix}, \qquad
				R_2^{(2)} = \frac{1}{32\left(s^2-4\right)^2}\begin{pmatrix}
				s^2-4 & 36is\\
				-36is & s^2-4
				\end{pmatrix}
			\end{equation}
		\end{subequations}
		Finally, using \eqref{eq: M1 and M2} and \eqref{eq: Rij eqn genus 0} in \eqref{eq: recurrence coeffs in terms of T} and \eqref{eq: T in terms of M, R}, we see that as $n \to \infty$
			\begin{equation}
				\alpha_n(s) = \frac{2s}{(s^2-4)^2}\frac{1}{n^2} + \mathcal{O}\left(\frac{1}{n^3}\right), \qquad \beta_n(s) = \frac{1}{4} + \frac{s^2+4}{4(s^2-4)^2}\frac{1}{n^2}  + \mathcal{O}\left(\frac{1}{n^4}\right) 
			\end{equation}
		completing the proof of Theorem~\ref{thm: recurrence coeffs genus 0}.

	\subsection{Proof of Theorem~\ref{thm: recurrence coeffs genus 1}}\label{sec: asymptotics II}
		For $s\in \mathfrak{G}_1^\pm$, the $h$-function is of genus $1$, and we must use the global parametrix constructed in Section~\ref{sub: Genus 1 Global Parametrix}. Throughout this proof, we recall that we are working with the assumption that $n\in\N(s,\epsilon)$, so that the global parametrix exists by Lemma~\ref{lem: model solution genus 1}. 
		We see with a similar calculation as the one that leads to \eqref{eq: T in terms of M, R} that
		$T^{(k)} = M^{(k)} + \mathcal{O}\left(\frac{1}{n}\right)$ as $n\to\infty$ for $k = 1, 2$, so we have that, using \eqref{eq: recurrence coeffs in terms of T}
		\begin{equation}\label{eq: recurrence in terms of M}
		\alpha_n = \frac{M^{(2)}_{12}}{M^{(1)}_{12}} - M^{(1)}_{22}+\mathcal{O}\left(\frac{1}{n}\right), \qquad 
		\beta_n = M^{(1)}_{12}M^{(1)}_{21}+\mathcal{O}\left(\frac{1}{n}\right), \quad \text{ as } \quad n \to \infty.
		\end{equation}
		\begin{remark}
			In order to compute higher order terms in the expansion of the recurrence coefficients in the genus $1$ regime, one would again need to write the jump matrix for $R$ as a perturbation of the identity. This would involve writing the jump matrix on $\partial D_\lambda$ in terms of the appropriate local parametrix used at $\lambda$. 
			One could again carry out the process detailed in Section~\ref{sec: asymptotics I} to obtain higher order terms in the genus $1$ regime, but we just concern ourselves with the leading term. 
		\end{remark}
		By Lemma~\ref{lem: model solution genus 1}, as $n\in \N(s, \epsilon)$, the global parametrix is defined as 
		\begin{equation}
			M(z) = e^{n\tilde{g}(\infty)\sigma_3} \mathcal{L}^{-1}(\infty) \mathcal{L}(z) e^{-n\tilde{g}(z)\sigma_3},
		\end{equation}
		where we recall from \eqref{eq: tilde g eqn} and \eqref{eq: L cal defn} that
		\begin{equation}
			\mathcal{L}(z) := \frac{1}{2}\begin{pmatrix}
			\left(\phi(z)+\phi(z)^{-1}\right)\mathcal{M}_1(z,d) & i\left(\phi(z)-\phi(z)^{-1}\right) \mathcal{M}_2(z,d) \\
			-i\left(\phi(z)-\phi(z)^{-1}\right)\mathcal{M}_1(z,-d) & \left(\phi(z)+\phi(z)^{-1}\right) \mathcal{M}_2(z,-d) \\
			\end{pmatrix}
		\end{equation}
		and
		\begin{equation}
				\tilde{g}(z) = \varXi(z)\left[\int_{\gamma_{c,1}} \frac{ \eta_1\, d\zeta}{(\zeta-z)\varXi(\zeta)}-\int_{\gamma_{m,0}}\frac{\Delta_0\, d\zeta}{(\zeta-z)\varXi_+(\zeta)}\right].
		\end{equation}
		Above, $\varXi(z)$ is given by \eqref{varXi} and $\phi$ is defined in \eqref{eq: beta eqn} as
		\begin{equation}
			\phi(z) = \left(\frac{\left(z+1\right)\left(z-\lambda_1\right)}{\left(z-\lambda_0\right)\left(z-1\right)}\right)^{1/4}
			\end{equation}
		with branch cuts on $\gamma_{m,0}$ and $\gamma_{m,1}$ and the branch of the root chosen so that $\phi(\infty)=1$ and the constant $\Delta_0$ was chosen to satisfy
		\begin{equation}
			\int_{\gamma_{c,1}} \frac{ \eta_1\, d\zeta}{\varXi(\zeta)}- \int_{\gamma_{m,0}}\frac{\Delta_0\, d\zeta}{\varXi_+(\zeta)} = 0. 
		\end{equation}
		We see that
		\begin{equation}
		 	\tilde{g}(z) = \tilde{g}(\infty) + \frac{\tilde{g}_1}{z} + \frac{\tilde{g}_2}{z^2}	+ \mathcal{O}\left(\frac{1}{z^3}\right), \qquad z\to\infty, 
		\end{equation}
		where
		\begin{subequations}
			\begin{equation}
				\tilde{g}(\infty) = \delta_1, \qquad \tilde{g}_1= \delta_2 -\frac{\delta_1\left(\lambda_0+\lambda_1\right)}{2}, \qquad \tilde{g}_2 = \delta_3 -\frac{\delta_2\left(\lambda_0+\lambda_1\right)}{2}-\frac{\delta_1\left(4+(\lambda_0-\lambda_1)^2\right)}{8},
			\end{equation}
			and
			\begin{equation}
				\delta_k :=\int_{\gamma_{m,0}}\frac{\zeta^k\Delta_0\, d\zeta}{\varXi_+(\zeta)}- \int_{\gamma_{c,1}} \frac{\zeta^k \eta_1\, d\zeta}{\varXi(\zeta)}.
			\end{equation}
		\end{subequations}
		Therefore, 
		\begin{equation}
			e^{-n\tilde{g}(z)\sigma_3} = \left[I - \frac{n\tilde{g}_1\sigma_3}{z}+\frac{n^2\tilde{g}_1^2I-2n\tilde{g}_2\sigma_3}{2z^2}+\mathcal{O}\left(\frac{1}{z^3}\right)\right]e^{-n\tilde{g}(\infty)\sigma_3}, \qquad z\to\infty. 
		\end{equation}
		Next we turn to the expansion of the matrix $\mathcal{L}$. We have
		\begin{equation}
			\mathcal{L}(z) = \mathcal{L}(\infty)+ \frac{\mathcal{L}_1}{z}+\frac{\mathcal{L}_2}{z^2}+\mathcal{O}\left(\frac{1}{z^3}\right), \qquad z\to\infty.
		\end{equation}
		To calculate $\mathcal{L}_1$ and $\mathcal{L}_2$, we first see that by \eqref{eq: beta eqn} that 
		\begin{equation}
			\phi(z) = 1+ \frac{\phi_1}{z} + \frac{\phi_2}{z^2} + \mathcal{O}\left(\frac{1}{z^3}\right), \qquad z\to\infty, 
		\end{equation}
		where
			\begin{equation}	\label{eq: b1 and b2}
				\phi_1 = \frac{2+\lambda_0-\lambda_1}{4}, \qquad \phi_2 = \frac{4+4\lambda_0+5\lambda_0^2-4\lambda_1-2\lambda_0\lambda_1-3\lambda_1^2}{32}.
			\end{equation}
		This then gives us that
		\begin{subequations}
			\begin{equation}
				\phi(z)+\phi(z)^{-1} = 2 +\frac{\phi_1^2}{z^2} + \mathcal{O}\left(\frac{1}{z^3}\right), \qquad z\to\infty, 
			\end{equation}
			and
			\begin{equation}
				\phi(z)-\phi(z)^{-1}= \frac{2\phi_1}{z} +\frac{2\phi_2-\phi_1^2}{z^2} + \mathcal{O}\left(\frac{1}{z^3}\right), \qquad z\to\infty, 
			\end{equation}
		\end{subequations}
		which implies
		\begin{subequations}
			\begin{equation}
				\mathcal{L}_1= \begin{pmatrix}
					\frac{d}{dz}\mathcal{M}_1\left(\frac{1}{z},d\right)\Big|_{z=0} & i\phi_1 \mathcal{M}_2(\infty, d)\\
					-i\phi_1 \mathcal{M}_1(\infty,- d) & \frac{d}{dz}\mathcal{M}_2\left(\frac{1}{z},-d\right)\Big|_{z=0}
				\end{pmatrix}
			\end{equation}
			and
			\begin{equation}
				\mathcal{L}_2=\begin{pmatrix}
					\frac{1}{2}\mathcal{M}_1\left(\infty,d\right)\phi_1^2+\frac{d^2}{dz^2}\mathcal{M}_1\left(\frac{1}{z},d\right)\Big|_{z=0} & \frac{\phi_1^2-2\phi_2}{2i}\mathcal{M}_2\left(\infty,d\right)+i\phi_1\frac{d}{dz}\mathcal{M}_2\left(\frac{1}{z},d\right)\Big|_{z=0} \\
					\frac{2\phi_2-\phi_1^2}{2i}\mathcal{M}_1\left(\infty,-d\right)-i\phi_1\frac{d}{dz}\mathcal{M}_1\left(\frac{1}{z},-d\right)\Big|_{z=0} & \frac{1}{2}\mathcal{M}_2\left(\infty,-d\right)\phi_1^2+\frac{d^2}{dz^2}\mathcal{M}_2\left(\frac{1}{z},-d\right)\Big|_{z=0}
				\end{pmatrix}
			\end{equation}
		\end{subequations}
		Putting this all together yields
		\begin{subequations}
			\begin{equation}
				M_1 = e^{n\tilde{g}(\infty)\sigma_3}\left[\mathcal{L}^{-1}(\infty)\mathcal{L}_1-n\tilde{g}_1\sigma_3\right]e^{-n\tilde{g}(\infty)\sigma_3}
			\end{equation}
			and
			\begin{equation}
				M_2 = e^{n\tilde{g}(\infty)\sigma_3}\left[\frac{n^2\tilde{g}_1^2\sigma_3^2-2n\tilde{g}_2\sigma_3}{2} -n\tilde{g}_1 \mathcal{L}^{-1}(\infty)\mathcal{L}_1\sigma_3+ \mathcal{L}^{-1}(\infty)\mathcal{L}_2\right]e^{-n\tilde{g}(\infty)\sigma_3}
			\end{equation}
		\end{subequations}
		Using this in \eqref{eq: recurrence in terms of M}, we find that
		\begin{equation}
			\beta_n = \frac{\mathcal{M}_1(\infty, -d)\mathcal{M}_2(\infty, d)}{\mathcal{M}_1(\infty, d)\mathcal{M}_2(\infty, -d)} \phi_1^2 + \mathcal{O}\left(\frac{1}{n}\right), \qquad n\to\infty,
		\end{equation}
		and
		\begin{equation}
			\alpha_n = \frac{\phi_1}{2}-\frac{\phi_2}{\phi_1} + \frac{d}{dz}\left[\log\mathcal{M}_2(1/z,d)-\log\mathcal{M}_2(1/z,-d)\right]\Big|_{z=0}+\mathcal{O}\left(\frac{1}{n}\right).
		\end{equation}
		Using \eqref{eq: b1 and b2}, we arrive at
		\begin{subequations}
			\begin{equation}
				\alpha_n(s) = \frac{\lambda_1^2(s)-\lambda_0^2(s)}{4+2\lambda_0(s)-2\lambda_1(s)} + \frac{d}{dz}\left[\log\mathcal{M}_2(1/z,d)-\log\mathcal{M}_2(1/z,-d)\right]\Big|_{z=0}+\mathcal{O}\left(\frac{1}{n}\right)
			\end{equation}
			and
			\begin{equation}
				\beta_n(s) = \frac{(2+\lambda_0(s)-\lambda_1(s))^2}{16}\frac{\mathcal{M}_1(\infty, -d)\mathcal{M}_2(\infty, d)}{\mathcal{M}_1(\infty, d)\mathcal{M}_2(\infty, -d)}  + \mathcal{O}\left(\frac{1}{n}\right),
			\end{equation}
		\end{subequations}
		as $n\to\infty$, completing the proof of Theorem~\ref{thm: recurrence coeffs genus 1}.
		
\section{Double Scaling Limit near Regular Breaking Points}\label{sec: DS Reg}
Having determined the behavior of the recurrence coefficients as $n\to\infty$ with $s\in \mathfrak{G}_0\cup\mathfrak{G}_1^\pm$, we turn our attention to the behavior of these coefficients for critical values of $s_* \in\mathfrak{B}$ where $s_*\not\in\R$. Below, the double scaling limit describes the asymptotics of the recurrence coefficients as both $n\to\infty$ and $s\to s_*$ simultaneously at an appropriate scaling rate.

\subsection{Definition of the Double Scaling Limit}
In the remainder of this section, we will assume that $s$ approaches $s_*$ within the region $\mathfrak{G}_0$. In particular, we fix $s_*\in\mathcal{B}\setminus\left((-\infty,-2]\cup[2,\infty)\right)$ and take
\begin{equation}\label{eq: def of double scaling}
s = s_* + \frac{L_1}{n}, \qquad L_1\in\C,
\end{equation}
where the constant $L_1$ is chosen so that $s\in\mathfrak{G}_0$ for all $n$ large enough. Furthermore, we impose that $\Im s_* < 0$, so that $\Im \frac{2}{s_*}>0$; this requirement is for ease of exposition, and the case where $\Im s_* > 0$ can be handled similarly. As $s\to s_*$ within $\mathfrak{G}_0$, we have that $\Omega(s)=\gamma_{c,0}\cup\gamma_{m,0}(s)$. Furthermore, there exists a genus $0$ $h$-function which satisfies \eqref{eq: RHP for h} with $L = 0$.
As $s_*$ is a regular breaking point, we now have that $\Re(h(2/s_*;s_*))=0$, by definition, and a more detailed local analysis will be needed in the vicinity of this point. 

As the first transformation is the same as the first transformation in Section~\ref{sec: steepest descent}, we briefly restate it below. We recall that $Y$ defined in \eqref{eq: Y eqn} solves the Riemann-Hilbert problem \eqref{eq: Y RHP}. By setting
\begin{equation}\label{eq: T def ds}
T(z) := e^{-nl \sigma_3/2} Y(z) e^{-\frac{n}{2}\left[h(z)+f(z)\right]\sigma_3},
\end{equation}
we then have that $T$ defined above solves the Riemann-Hilbert problem \eqref{eq: T RHP}. 

\subsection{Opening of the Lenses}
In order to address some of the more technical issues which arise when attempting to open lenses, we turn again to the theory of quadratic differentials. Recall that $\gamma_{m,0}(s)$ is defined to be the trajectory of the quadratic differential 
\begin{equation}
\varpi_s = -\frac{(2-sz)^2}{z^2-1}\, dz^2
\end{equation}
which connects $-1$ and $1$, whose existence is assured due to Lemma~\ref{lem: QD in genus 0}. Moreover, we also have that four trajectories $\varpi_s$ emanate from $z=2/s$ at equal angles of $\pi/2$, as described in Section~\ref{sec: Quadratic Differentials} above. Finally, an application of Teichm\"uller's Lemma (c.f. \cite[Theorem~14.1]{strebel1984quadratic}) shows that the trajectories define two infinite sectors and one finite sector whose boundary is formed by a closed trajectory from $z=2/s$ which encircles both $\pm 1$. Moreover, at the critical value $s_*$, we have that two trajectories go to infinity from $z=2/s_*$, and the other two connect $z=2/s_*$ with $\pm 1$. Another application of Teichm\"uller's Lemma shows that the two infinite trajectories tend to infinity in opposite directions. The depictions of these critical graphs are given in Figure~\ref{fig: critical graphs}; for more details on the precise structure of the the critical graph we refer the reader to \cite[Section~3.2]{deano2014large}.

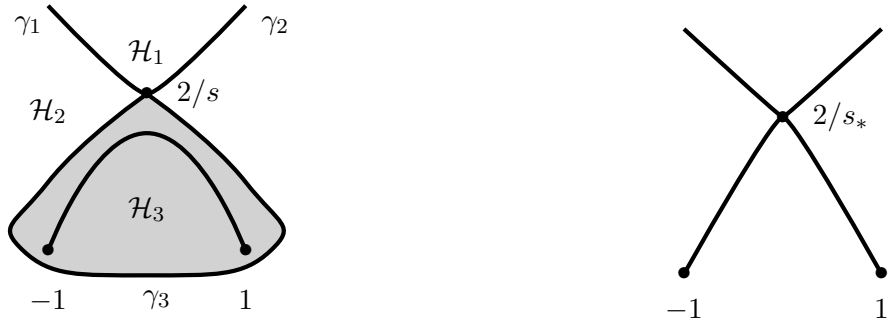
\begin{figure}[h!]
	\centering
	\begin{subfigure}{.45\textwidth}
		\centering
		\begin{tikzpicture}[scale=1.3]
		\draw [ultra thick] plot [smooth, tension =0.3] coordinates {(-1,2.5) (0,1.6) (1,2.5) };
		\draw [ultra thick, fill=gray!35] plot [smooth, tension =1] coordinates{(0,1.6)(-1,0.7)(-1.25, 0) (0,-0.25)(1.25,0)(1,0.7) (0,1.6)};
		\draw [ultra thick] plot [smooth, tension=1] coordinates {(-1,0) (0,1.2) (1,0)};
		\foreach \Point in {(-1,0), (1,0), (0,1.6)}{
			\node at \Point {\textbullet};
		}
		\node [below] at (-1, -0.3) {$-1$};
		\node [below] at (1, -0.3) {$1$};
		\node [right] at (0.2,1.6) {$2/s$};
		\node [above] at (0,1.8) {$\mathcal{H}_1$};
		\node [above] at (-1,1.2) {$\mathcal{H}_2$};
		\node [above] at (0,0.2) {$\mathcal{H}_3$};
		\node [above] at (-1.2,2.1) {$\gamma_1$};
		\node [above] at (1.3,2.1) {$\gamma_2$};
		\node [below] at (0.1,-0.3) {$\gamma_3$};
		\end{tikzpicture}
		\caption{The critical graph of $\varpi_s$ when $s=-it \in \mathfrak{G}_0$ with $t>0$. The figure depicts the situation when $s$ is close to $s_*$. The shaded region is $\mathcal{H}_3$.}
		\label{fig:sub1}
	\end{subfigure}%
	\qquad
	\begin{subfigure}{.45\textwidth}
		\centering
		\begin{tikzpicture}[scale=1.3]
		\draw [ultra thick] plot [smooth, tension =0.05] coordinates {(-1,2.5) (0,1.6) (1,2.5) };
		\draw [ultra thick] plot [smooth, tension =0.2] coordinates {(-1,0) (0,1.6) (1,0) };
		\foreach \Point in {(-1,0), (1,0), (0,1.6)}{
			\node at \Point {\textbullet};
		}
		\node [below] at (-1, -0.15) {$-1$};
		\node [below] at (1, -0.15) {$1$};
		\node [right] at (0.2,1.6) {$2/s_*$};
		\end{tikzpicture}
		\caption{The critical graph of $\varpi_s$ when $s=s_*$ where $s_*\in\mathfrak{B}~\cap~ i\R_-$.}
		\label{fig:sub2}
	\end{subfigure}
	\caption{The critical graphs of $\varpi_s$ for $s$ close to $s_*$ and for $s=s_*$}
	\label{fig: critical graphs}
\end{figure}
Recall that the key to the opening of lenses is that the jump matrices decay exponentially quickly to the identity along the lips of the lens. In the sections above this immediately followed from the inequality \eqref{eq: main arc inequality} which stated that sign of the real part of $h$ was greater than zero. However, at the critical value of $s_*$, this will no longer be true above the critical point $2/s_*$, and a more detailed local analysis will be needed. We label the trajectories emanating from $z=2/s$ as $\gamma_i$, $i = 1,2, 3$, and the regions bounded by these trajectories as $\mathcal{H}_j$, $j=1, 2, 3$, as in Figure~\ref{fig: critical graphs}.

To understand the sign of the real part of $h$, consider the function
\begin{equation}
\Upsilon(z;s) = \int_{2/s}^{z} \frac{2-s u}{\left(u^2-1\right)^{1/2}}\,du,
\end{equation}
with the branch cut taken on $\gamma_{m,0}(s)$ and branch chosen so that $\Upsilon(z;s)=-sz+\mathcal{O}\left(1\right)$ as $z\to\infty$. In terms of the $h$-function, we may write
\begin{equation}\label{eq: h in terms of upsilon}
h(z;s) = h(2/s;s) + \Upsilon(z;s).
\end{equation}
We may now state the following lemma.
\begin{lemma}
	Fix $s\in\mathfrak{G}_0$ so that $\Im s < 0$. Then, 
	\begin{equation}\tag{{i}}\label{eq: lemma i}
	\Re h\left(\frac{2}{s}; s\right) > 0, 
	\end{equation}
	\begin{equation}\tag{{ii}} \label{eq: lemma ii}
	\Re h(z;s) > 0, \qquad z \in \mathcal{H}_2 \cup \mathcal{H}_3. 
	\end{equation}
\end{lemma}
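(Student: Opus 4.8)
The plan is to prove \eqref{eq: lemma i} by a connectedness argument in the $s$-plane and then to derive \eqref{eq: lemma ii} from the minimum principle for the harmonic function $\Re h(\,\cdot\,;s)$ on the complementary domains of the critical graph of $\varpi_s$.

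For \eqref{eq: lemma i}, set $\Phi(s):=\Re h(2/s;s)$. First I would observe, using the explicit genus~$0$ formula \eqref{eq: h function for bc}, that $h(2/s;s)$ is a single-valued analytic function of $s$ on the open lower half-plane $\{\Im s<0\}$: there $s\mapsto 2/s$ lands in the open upper half-plane and hence avoids $[-1,1]$, which allows one to fix analytic branches of $(4/s^2-1)^{1/2}$ and of the logarithm in \eqref{eq: h function for bc}, while $s=\pm2$ are not in $\{\Im s<0\}$. So $\Phi$ is harmonic on the bounded domain $D:=\mathfrak{G}_0\cap\{\Im s<0\}$, whose boundary is $\partial D=\mathfrak{b}_-\cup[-2,2]$ by Theorem~\ref{thm: global phase portrait} and Figure~\ref{fig: breaking curve}. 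On $\mathfrak{b}_-$ one has $\Phi\equiv0$ by \eqref{eq: breaking curve linear weight}, and on $(-2,2)$ the chain rule together with the fact that $z=2/s$ is a critical point of $z\mapsto h(z;s)$ leaves $\Phi'(s)=-\big((2/s)^2-1\big)^{1/2}$, which with the branch induced by $h$ is real and of constant sign on each of $(-2,0)$ and $(0,2)$; hence $\Phi$ is real, strictly monotone on either side of $s=0$, tending to $0$ as $s\to\pm2$ and to $+\infty$ as $s\to 0$, so in particular $\Phi\ge 0$ on $[-2,2]$. Thus $\Phi$ is harmonic on $D$, bounded below, with boundary values $\ge 0$ everywhere (and $\to+\infty$ only at $s=0$); the minimum principle — applied on $D\cap\{|s|>\varepsilon\}$ and then letting $\varepsilon\to0$ — gives $\Phi\ge0$ on $D$, and, $\Phi$ being non-constant, the strong minimum principle upgrades this to $\Phi>0$ on $D$. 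This is \eqref{eq: lemma i}; write $c_0:=\Phi(s)>0$ for the fixed $s$ of the statement.

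For \eqref{eq: lemma ii}, I would rely on three facts about $\Re h(\,\cdot\,;s)$ in the genus~$0$ regime: it is continuous on $\C$ and harmonic on $\C\setminus\gamma_{m,0}(s)$ (the jumps of $h$ across $\Omega(s)$ are purely imaginary and $\pm1$ are removable); $\Re h\equiv 0$ on $\gamma_{m,0}(s)$, with $\Re h>0$ on either side of it nearby by \eqref{eq: main arc inequality}; and $\Re h\equiv\Re h(2/s;s)=c_0$ on the trajectories $\gamma_1,\gamma_2,\gamma_3$ of $\varpi_s$ emanating from $z=2/s$, since by \eqref{eq: h in terms of upsilon} $\Re h$ is constant along trajectories. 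The region $\mathcal{H}_3$, the bounded component of the complement of the critical graph, has boundary $\gamma_3\cup\gamma_{m,0}(s)\cup\{\pm1\}$, on which $\Re h$ is $c_0>0$ on $\gamma_3$ and $0$ elsewhere; the minimum principle gives $\Re h\ge0$ on $\mathcal{H}_3$, and since $\Re h$ is non-constant there, the strong minimum principle gives $\Re h>0$ throughout $\mathcal{H}_3$.

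For the unbounded region $\mathcal{H}_2$ the additional ingredient is the behaviour of $\Re h$ at infinity: by \eqref{eq: genus 0 h at infinity}, $\Re h(z;s)=\Re(-sz)+2\log|z|+\mathcal{O}(1)$ as $z\to\infty$, so $\Re h\to+\infty$ along the open half-plane of directions in which $\Re(-sz)\to+\infty$ and $\Re h\to-\infty$ in the complementary half-plane. Because $\Re h\equiv c_0$ stays bounded on $\gamma_1$ and $\gamma_2$, those two critical trajectories must run to infinity asymptotically along the two separating rays, and — as already used in constructing the critical graph, via Teichm\"uller's Lemma — in opposite directions; hence $\gamma_1\cup\gamma_2\cup\gamma_3$ cuts $\C\setminus\overline{\mathcal{H}_3}$ into the component $\mathcal{H}_1$ containing all the $-\infty$ directions and the component $\mathcal{H}_2$ containing all the $+\infty$ directions (this being the labelling of Figure~\ref{fig: critical graphs}; cf.\ \cite[Section~3.2]{deano2014large}). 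Consequently $\liminf_{z\to\infty,\,z\in\mathcal{H}_2}\Re h(z;s)\ge c_0$: sequences leaving near $\gamma_1$ or $\gamma_2$ have $\Re h\to c_0$ (by continuity of $\Re h$ up to those trajectories, with $h'$ bounded near infinity), while all other sequences have $\Re h\to+\infty$. Applying the minimum principle on the bounded domains $\mathcal{H}_2\cap\{|z|<R\}$ and letting $R\to\infty$ gives $\Re h\ge c_0$ on $\mathcal{H}_2$, which the strong minimum principle upgrades to $\Re h>c_0>0$. The main obstacle I anticipate is exactly this last point — reading off from the structure of the critical graph of $\varpi_s$ (as in \cite[Section~3.2]{deano2014large}) that $\mathcal{H}_2$ is the unbounded complementary component lying in the half-plane of directions where $\Re(-sz)\to+\infty$, so that no sequence tending to infinity inside $\mathcal{H}_2$ drives $\Re h$ below $c_0$ — together with the accompanying minimum-principle argument on the exhausting subdomains; the finite parts of the two claims are routine once the boundary and asymptotic data are in place.
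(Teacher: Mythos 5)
Your route is genuinely different from the paper's. The paper proves both inequalities by invoking the structure theory of the quadratic differential $\varpi_s$: it cites that $\mathcal{H}_1,\mathcal{H}_2$ are half-plane domains conformally mapped by $\Upsilon$ to a half-plane (so the sign of $\Re\Upsilon$ is constant on each, determined by one sample point), and that $\mathcal{H}_3$ is a ring domain, from which $0>\Re\Upsilon(z)>\Re\Upsilon(1)=-\Re h(2/s;s)$ on $\mathcal{H}_3$ gives \eqref{eq: lemma i} and \eqref{eq: lemma ii} at once. Your proof of \eqref{eq: lemma i} by a minimum principle for the harmonic function $\Phi(s)=\Re h(2/s;s)$ on $D=\mathfrak{G}_0\cap\{\Im s<0\}$, with boundary data $\Phi\equiv 0$ on $\mathfrak{b}_-$ (the defining equation \eqref{eq: breaking curve linear weight}) and $\Phi\ge 0$ on $(-2,2)$ (from the sign of the $s$-derivative, cf.\ Proposition~\ref{prop: partial h with respect to s}), is a clean alternative that trades the ring-domain modulus argument for a phase-portrait argument; it is attractive because it sidesteps the classification of domain types entirely. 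Your treatment of $\mathcal{H}_3$ via the minimum principle, using $\Re h=c_0>0$ on $\gamma_3$ and $\Re h=0$ on $\gamma_{m,0}\cup\{\pm1\}$, is fine.

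The gap is exactly where you anticipate it: the control of $\Re h$ at infinity in $\mathcal{H}_2$. The dichotomy you assert --- that every sequence $z_n\to\infty$ in $\mathcal{H}_2$ either drives $\Re h\to c_0$ (if it ``leaves near $\gamma_1$ or $\gamma_2$'') or drives $\Re h\to+\infty$ --- is not established, and continuity of $\Re h$ up to $\gamma_1,\gamma_2$ together with boundedness of $h'$ only gives that $\Re h$ is close to $c_0$ at bounded distance from the trajectories, not that it stays $\ge c_0$. In fact, knowing which side of $\gamma_1,\gamma_2$ has $\Re h>c_0$ is precisely the content of the statement being proved, so the liminf claim as written is circular. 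To repair this one either (a) shows, via the asymptotics $\Re h(Re^{i\phi};s)=-R|s|\cos(\phi+\arg s)+2\log R+\mathcal{O}(1)$, that for $R$ large the level set $\{\Re h=c_0\}\cap\{|z|=R\}$ consists of exactly two points (the intersections with $\gamma_1,\gamma_2$), that $\phi\mapsto\Re h(Re^{i\phi};s)$ is monotone between them, and that $\mathcal{H}_2\cap\{|z|=R\}$ is the arc on which $\Re h>c_0$; or (b) replaces the whole exhaustion argument by the structure theorem the paper uses, namely that $\mathcal{H}_2$ is a half-plane domain for $\varpi_s$, so $\Upsilon$ maps it biholomorphically onto a half-plane with boundary $\{\Re\Upsilon=0\}$, and the sign of $\Re\Upsilon$ on $\mathcal{H}_2$ is then read off from a single point such as $z=-it$ with $t$ large when $s=-it_0$. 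Either way, some additional input beyond the minimum principle on exhausting bounded subdomains is needed.
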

\begin{proof}
	By the basic theory (c.f \cite[Appendix~B]{martinez2016critical}, \cite[Chapter~3]{jenkins2013univalent}) the domains $\mathcal{H}_1$ and $\mathcal{H}_2$ are half plane domains which are conformally mapped by $\Upsilon$ to either the left or right half planes. As $\Im s < 0$, there exists some $t_0>0$ so that $z=-i t \in \mathcal{H}_2$ for all $t >t_0$. Recalling that 
	\begin{equation*}
	\Upsilon(z;s) = -s z + \mathcal{O}(1), \qquad z\to\infty, 
	\end{equation*}
	we may use that $\Im s < 0$ to conclude that $\Re \Upsilon(z;s)> 0$ for $z = -i t$, where $t>t_0$. Therefore, we must have that $\Upsilon$ conformally maps $\mathcal{H}_2$ to the right half plane and as such
	\begin{equation}\label{eq: upsilon sign in h2}
	\Re \Upsilon(z;s) > 0, \qquad z\in \mathcal{H}_2. 
	\end{equation}
	Similarly, as $\Upsilon$ is analytic around $z=2/s$ and has a double zero at $z=2/s$, we can conclude that $\Re \Upsilon(z;s)<0$ for $z$ in $\mathcal{H}_1\cup\mathcal{H}_3$ in close proximity to $z=2/s$. As $\mathcal{H}_1$ is a half plane domain, we immediately have that 
	\begin{equation}
	\Re \Upsilon(z;s) < 0, \qquad z\in \mathcal{H}_1. 
	\end{equation}
	Again following the theory laid out in \cite[Appendix~B]{martinez2016critical}, it follows that $\mathcal{H}_3$ is a ring domain. Therefore there exists some $c>0$ so that the function $z\mapsto \exp\left(c\Upsilon(z;s)\right)$ maps $\mathcal{H}_1$ conformally to an annulus
	\begin{equation}
	R = \left\{w\in\C: r_1 < \left|w\right| < 1\right\}.
	\end{equation}
	In particular we have that 
	\begin{equation}\label{eq: upsilon in h3}
	0 > \Re\Upsilon(z;s) > \Re \Upsilon(1,s), \qquad z\in\mathcal{H}_3.
	\end{equation}
	As $\Upsilon(1;s) = - h(2/s;s)$, we have proven \eqref{eq: lemma i}. Furthermore, \eqref{eq: lemma ii} now follows directly from \eqref{eq: h in terms of upsilon}, \eqref{eq: upsilon sign in h2}, and \eqref{eq: upsilon in h3}. 
\end{proof}
We now open lenses as depicted in Figure~\ref{fig: lens opening double scaling}.
\begin{figure}[h]
	\centering
	\centering
	\begin{tikzpicture}[scale=1.6]
	\draw [thick, dashed] plot [smooth, tension =0.3] coordinates {(-1,2.5) (0,1.6) (1,2.5) };
	\draw [thick, dashed] plot [smooth, tension =1] coordinates{(0,1.6)(-1,0.7)(-1.25, 0) (0,-0.25)(1.25,0)(1,0.7) (0,1.6)};
	\draw [ultra thick, postaction={mid arrow=black}] plot [smooth, tension = 1.5] coordinates {(-1,0)(0,1.6)(1,0)};
	\draw [ultra thick, postaction={mid arrow=black}] plot [smooth, tension = 1] coordinates {(-1,0)(0,0.2)(1,0)};
	\draw [ultra thick, postaction={mid arrow=black}] plot [smooth, tension=1] coordinates {(-1,0) (0,1.2) (1,0)};
	\foreach \Point in {(-1,0), (1,0), (0,1.6)}{
		\node at \Point {\textbullet};
	}
	\node [below] at (-1, -0.3) {$-1$};
	\node [below] at (1, -0.3) {$1$};
	\node [above] at (0.,1.8) {$2/s$};
	\node [below] at (0,0.25) {$\gamma_{m,0}^-$};
	\node [right] at (0.4,1.6) {$\gamma_{m,0}^+$};
	\node [above] at (0,0.65) {$\gamma_{m,0}$};
	\end{tikzpicture}
	\caption{Opening of lenses in the double scaling regime near a regular breaking point. The trajectories of $\varpi_s$ are indicated by dashed lines.}
	\label{fig: lens opening double scaling}
\end{figure}
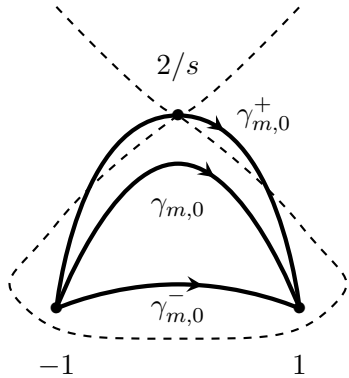
Note that the upper lip of the lens, $\gamma_{m,0}^+$ passes through $z=2/s$ and both $\gamma_{m,0}^\pm$ remain entirely within $\mathcal{H}_2\cup\mathcal{H}_3$. As before, we define $\mathcal{L}_0^\pm$ to be the region bounded between the arcs $\gamma_{m,0}$ and $\gamma_{m,0}^\pm$, respectively, and set $\hat{\Sigma} := \Sigma \cup \gamma_{m,0}^+ \cup \gamma_{m,0}^-$. We can now define the third transformation of the steepest descent process as 
\begin{equation}\label{eq: S to T}
S(z):= \begin{cases}
T(z)\begin{pmatrix}
1 & 0 \\
\mp e^{-nh(z)} & 1
\end{pmatrix}, \qquad &z \in \mathcal{L}_0^\pm, \\
T(z), \qquad &\text{otherwise}.
\end{cases}
\end{equation}
We then consider the model Riemann-Hilbert problem formed by disregarding the jumps on $\gamma_{m,0}^\pm$. In particular, we seek $M$ such that
\begin{subequations}
	\label{eq: M RHP genus 0 DS}
	\begin{alignat}{2}
	&M(z) \text{ is analytic for } z\in \C\setminus \gamma_{m,0}(s), \qquad && \\
	&M_+(z) = M_-(z) \begin{pmatrix}
	0 & 1 \\
	-1 & 0
	\end{pmatrix},  \qquad &&z \in \gamma_{m,0}, \label{eq: genus 0 model problem jump DS}\\
	&M(z) = I + \mathcal{O}\left(\frac{1}{z}\right), \qquad && z \to \infty.
	\end{alignat}
\end{subequations}

The solution to this Riemann-Hilbert problem was provided in Section~\ref{sub: Global parametrix genus 0}, see \eqref{eq: global parametrix eq genus 0}.

Note that the jump on $\gamma_{m,0}^+(s)$ is no longer exponentially decaying to the identity as $s\to s_*$ in a neighborhood of $z=2/s$. Moreover, the matrix $M$ is not bounded near the endpoints $z=\pm 1$. Therefore, we define $D_c:=D_\delta(2/s)$,  $D_{-1}:=D_\delta(-1)$, and $D_{1}:=D_\delta(1)$ to be discs of radius $\delta$ centered at $z=2/s, -1,$ and $1$, respectively. We take $\delta$ small enough so that $D_c \cap\gamma_{m,0}^-=\emptyset$. Note that for $s$ near $s_*$, the trajectory $\gamma_{m,0}(s)$ is close to $2/s_*$, so that for $n$ large enough we must have that $D_c \cap \gamma_{m,0}(s)\not=\emptyset$. In each $D_k$, $k \in \{c, -1, 1\}$, we seek a local parametrix $P^{(k)}$ such that
\begin{subequations}
	\label{eq: P RHP DS}
	\begin{alignat}{2}
	&P^{(k)}(z) \text{ is analytic for } z\in D_\lambda\setminus \hat{\Sigma}, \qquad && \\
	&P^{(k)}_+(z) = P^{(k)}_-(z) j_S(z),  \qquad &&z \in D_k\cap \hat{\Sigma} \\
	&P^{(\lambda)}(z)= M(z)\left(I + o(1)\right), \qquad && n \to \infty, \quad z\in \partial D_k
	\end{alignat}
\end{subequations}
As shown in Section~\ref{sec: Local Parametrices}, $P^{(1)}$ and $P^{(-1)}$ are given by
\begin{subequations}
	\begin{equation}
	\begin{aligned}
	P^{(1)}(z) &= E_n^{(1)}(z) B\left(f_{n,B}(z)\right)e^{-\frac{n}{2}h(z)\sigma_3}\\
	P^{(-1)}(z) &= E_n^{(-1)}(z) \tilde{B}\left(\tilde{f}_{n,B}(z)\right)e^{-\frac{n}{2}h(z)}
	\end{aligned}
	\end{equation}
\end{subequations}
where $\tilde{h}(z)=h(z)-2\pi i$, $B$ is the Bessel parametrix defined in \eqref{eq: Bessel parametrix}, and $\tilde{B}(z)=\sigma_3B(z)\sigma_3$. Above, 
\begin{subequations}
	\begin{equation}
	f_{n,B}(z)= \frac{h(z)^2}{16}, \qquad \tilde{f}_{n,B}(z) = \frac{\tilde{h}(z)^2}{16},
	\end{equation}
	\begin{equation}
	E_n^{(1)}(z) = M(z) L_n^{(1)}(z)^{-1}, 
	\qquad			
	L_n^{(1)}(z) := \frac{1}{\sqrt{2}} \left(2\pi n\right)^{-\sigma_3/2} f_B(z)^{-\sigma_3/4}
	\begin{pmatrix}
	1 & i \\
	i & 1
	\end{pmatrix},
	\end{equation}
	and
	\begin{equation}
	E_n^{(-1)}(z) = M(z) L_n^{(-1)}(z)^{-1}, 
	\qquad			
	L_n^{(-1)}(z) := \frac{1}{\sqrt{2}} \left(2\pi n\right)^{-\sigma_3/2} \tilde{f}_B(z)^{-\sigma_3/4}
	\begin{pmatrix}
	-1 & i \\
	i & -1
	\end{pmatrix}.
	\end{equation}
\end{subequations}
We will now move on to the construction of the local parametrix $P^{(c)}$ within $D_c$.

\subsection{Parametrix around the Critical Point}
We consider a disc $D_c$ around $z=2/s$ of small radius $\delta$. We partition $D_c$ into $D_c^+$ and $D_c^-$ as in Figure~\ref{fig: setups}, so that $D_c^+$ is the region within $D_c$ that lies to the left of $\gamma_{m,0}$ and $D_c^-$ is the region which lies to the right. We define the following function in $D_c^+$:
\begin{equation}
\tilde{h}_c(z;s) = \int_{2/s_*}^z \frac{2-s u}{\left(u^2-1\right)^{1/2}}\, du, \qquad z\in D_c^+,
\end{equation}
where the path of integration does not cross $\gamma_{m,0}(s)$. Note that  $\tilde{h}_c(z;s)$ is analytic within $D_c^+$. Next, denote by $h_c$ the analytic continuation of $\tilde{h}_c$ into $D_c^-$.

\begin{figure}[h]
	\centering
	\begin{tikzpicture}[scale=3.5]
	\draw[ultra thick, postaction={mid arrow=black}] (0,0) arc (0:-360:0.6 and 0.6);
	\foreach \Point in { (-.6,-0), (-.6,-.15)}{
		\node at \Point {\textbullet};
	}
	\draw [ultra thick, fill = gray!35 ] (0,0) arc (0:-140:0.6 and 0.6) to[bend left =20] (-0.07,-0.28) ;
	\draw [ultra thick, postaction = {mid arrow=black}] (-1.06,-.385) to[bend left=20] (-0.07, -0.28);
	\draw [ultra thick, postaction = {mid arrow=black}] plot [smooth, tension =1] coordinates{(-1.175,-0.2) (-.6,0) (0,0)};
	\node [above] at (-0.6,0.05) {$2/s$};
	\node [right] at (-.575,-.12) {$2/s_*$};
	\node [above] at (-0.7,0.3) {$D_c^+$};
	\node [below] at (-0.5,-0.3) {$D_c^-$};
	\node [right] at (-1.3,-.45) {$\gamma_{m,0}$};
	\node [right] at (0,0) {$\gamma_{m,0}^+$};
	\end{tikzpicture}
	\caption{Definitions of the regions $D_c^\pm$ within $D_c$. The region $D_c^-$ is shaded in the figure. }
	\label{fig: setups}
\end{figure}
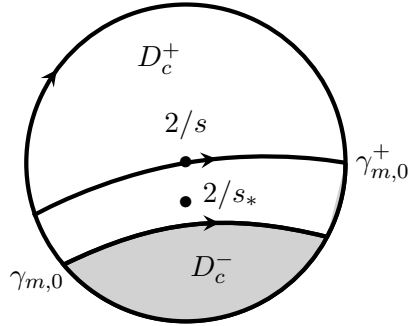

In terms of the $h$ function, we may write
\begin{align}
h_c(z;s) = \begin{cases}
h(z;s)-h\left(\frac{2}{s_*};s\right), \qquad &z\in D_c^+,\\
-h(z;s)-h\left(\frac{2}{s_*};s\right), \qquad & z\in D_c^-.
\end{cases}
\end{align}
We now have the following lemma, following the lines laid out in \cite[Proposition~4.5]{bertola2016asymptotics}.
\begin{lemma}\label{lem: conformal map}
	There exists a jointly analytic function $\zeta(z;s)$ which is univalent in a fixed neighborhood of $z=2/s_*$, with $s$ in a neighborhood of $s_*$, and an analytic function $K(s)$ near $s=s_*$ so that
	\begin{equation}\label{eq: quadratic}
	h_c(z;s)=\frac{1}{2}\zeta^2(z;s)+K(s)\zeta(z;s), 
	\end{equation}
	where $K(2/s_*)=0$ and
	\begin{equation}
		\zeta\left(\frac{2}{s_*},s\right)\equiv 0
	\end{equation}
	for $s$ in a neighborhood of $s_*$. 
\end{lemma}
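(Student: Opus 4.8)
The plan is to extract from $h_c(z;s)$ its local structure near the saddle $z=2/s_*$ and recognize it as a perturbed quadratic, then invoke an analytic implicit-function-type argument to produce the conformal map $\zeta$. Concretely, recall $h_c'(z;s)=\pm h'(z;s)=\pm(2-sz)/(z^2-1)^{1/2}$ (with sign depending on the sheet $D_c^\pm$), and that at $s=s_*$ the point $z=2/s_*$ is a \emph{double} zero of $h'(z;s_*)$, hence a simple zero of $h_c'(z;s_*)$ that forces $h_c(z;s_*)$ to vanish to second order there. For $s$ near $s_*$ but $\ne s_*$, the saddle $2/s$ is still a simple zero of $h'$, but it is displaced from the reference point $2/s_*$, so $h_c(z;s)$ — which is normalized to vanish at $2/s_*$, not at $2/s$ — is a function with a simple zero at $2/s_*$ and a critical point at $2/s$ nearby. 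That is exactly the shape $\tfrac12\zeta^2+K\zeta$ in the variable $\zeta$, whose critical point sits at $\zeta=-K$ and whose zero set includes $\zeta=0$.

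First I would write $h_c(z;s)=(z-\tfrac{2}{s_*})^2\,\psi(z;s)$ where $\psi$ is jointly analytic in $(z,s)$ near $(2/s_*,s_*)$ — this is legitimate because $h_c(2/s_*;s)\equiv 0$ and $h_c'(2/s_*;s_*)=0$, so after factoring $(z-2/s_*)$ once we still have a zero at $s=s_*$ in the remaining factor, and Taylor expanding in $s-s_*$ lets us factor a second time with an analytic remainder; one then checks $\psi(2/s_*;s_*)=\tfrac12 h_c''(2/s_*;s_*)\ne 0$, which holds because $2/s_*$ is only a simple zero of $h_c'(z;s_*)$ (equivalently, $s_*$ is a \emph{regular} breaking point, so the saddle is a genuine double zero of $h'$ and no worse). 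Then I would seek $\zeta$ of the form $\zeta(z;s)=(z-\tfrac{2}{s_*})\,\phi(z;s)$ with $\phi$ analytic and $\phi(2/s_*;s_*)\ne0$, so that $\zeta(2/s_*;s)\equiv0$ is automatic, and $K(s)$ to be determined. Substituting into \eqref{eq: quadratic} and dividing by $(z-2/s_*)$, one obtains the functional equation
\begin{equation}
(z-\tfrac{2}{s_*})\,\psi(z;s)=\tfrac12(z-\tfrac{2}{s_*})\,\phi(z;s)^2+K(s)\,\phi(z;s),
\end{equation}
which should be solved for $(\phi,K)$ by the analytic implicit function theorem: treat $G(z,s,\phi,K):=\tfrac12(z-\tfrac{2}{s_*})\phi^2+K\phi-(z-\tfrac{2}{s_*})\psi(z;s)$ as a family of equations, impose the normalization that $\phi$ be nonvanishing and determined by matching the value and derivative in $z$ at the saddle, and solve. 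Setting $z=2/s_*$ gives $K(s)=0$ forced, but that over-determines things, so instead the correct bookkeeping is to evaluate the relation and its $z$-derivative at $z=2/s$ (the actual critical point), which pins down $K(s)$ in terms of $h_c$ evaluated at $2/s$ and yields $K(2/s_*)=0$ since at $s=s_*$ the critical point \emph{is} $2/s_*$.

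The cleanest way to package this is to follow \cite[Proposition~4.5]{bertola2016asymptotics}: one knows a priori that $\tfrac12\zeta^2+K\zeta$ is the normal form of an analytic function with one simple zero and one nearby simple critical point, and the map to normal form depends analytically on parameters as long as the zero and critical point stay simple and the zero is a regular (non-critical) point — which is precisely guaranteed here by $s_*$ being a regular breaking point (so $h''(2/s_*;s_*)\ne0$ via Proposition~\ref{prop: partial h with respect to s} and the surrounding discussion) and by the endpoint estimate $\Re h(z;s)=\mathcal{O}((z\mp1)^{1/2})$ keeping $D_c$ away from $\pm1$. The main obstacle — and the only place real care is needed — is verifying the \emph{joint} analyticity and univalence uniformly for $s$ in a full neighborhood of $s_*$, rather than just for fixed $s$: one must ensure the implicit function theorem applies with $s$ as a parameter, i.e.\ that the relevant Jacobian (essentially $h_c''(2/s_*;s_*)\ne0$) is nonzero at the base point, and that the neighborhood of univalence can be chosen independent of $s$. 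This follows by a standard compactness/continuity argument once the Jacobian non-degeneracy is in hand, and it is exactly the point where regularity of the breaking point (as opposed to the critical breaking points $s=\pm2$, which are treated separately in Section~\ref{sec: DS Crit} via Painlevé~II) is indispensable.
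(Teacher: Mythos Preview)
Your overall strategy --- recognize $h_c$ as an analytic function with a simple zero at $2/s_*$ and a nearby critical point at $2/s$, hence reducible to the normal form $\tfrac12\zeta^2+K\zeta$ --- is sound, but the execution has a concrete error. The factorization $h_c(z;s)=(z-2/s_*)^2\psi(z;s)$ with $\psi$ jointly analytic fails for $s\ne s_*$: since $h_c'(2/s_*;s)=2(1-s/s_*)\big/(4/s_*^2-1)^{1/2}\ne0$ when $s\ne s_*$, the function $h_c(\cdot;s)$ has only a \emph{simple} zero at $2/s_*$, so a second factor of $(z-2/s_*)$ cannot be extracted analytically in $z$. Your displayed functional equation is therefore built on a false premise, and you detect the symptom yourself when setting $z=2/s_*$ ``over-determines things.'' The subsequent pivot to evaluating at the true critical point $2/s$ is the right instinct but is not carried through, and deferring to the Bertola--Tovbis reference is not itself a proof.

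The paper's argument is a direct construction rather than an implicit-function scheme. It first fixes $K(s)$ from the critical \emph{value}: completing the square, $\tfrac12\zeta^2+K\zeta=\tfrac12(\zeta+K)^2-\tfrac12K^2$, so one sets $-\tfrac12K^2(s):=h_c(2/s;s)=:h_{cr}(s)$ and verifies by explicit expansion that $h_{cr}$ vanishes to second order at $s_*$, making $K$ analytic with $K(s_*)=0$. Then $h_c(z;s)-h_{cr}(s)$ has a \emph{double} zero at $z=2/s$ (this is where the critical point actually sits), so its square root is single-valued and analytic there, and one simply defines $\zeta:=\sqrt{2(h_c-h_{cr})}-K$. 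The normalization $\zeta(2/s_*;s)\equiv0$ follows from $h_c(2/s_*;s)\equiv0$ together with the branch choice. Besides being shorter, this explicit route delivers the leading coefficient $k_1$ of $K$ (equation \eqref{eq: k_1 eqn}), which is used downstream to compute $\delta_n$ in Theorem~\ref{thm: rec coeff double scaling regular}.
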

\begin{proof}
	Define $h_{cr}(s):=h_c(2/s;s)$. Then, we have that 
	\begin{equation}
	h_{cr}(s) = \frac{2}{s_*^3\left(\frac{4}{s_*^2}-1\right)^{1/2}}(s-s_*)^2 \left[1+\mathcal{O}\left(s-s_*\right)\right]. 
	\end{equation}
	Therefore, we may write
	\begin{equation}
	h_{cr}(s) = -\frac{1}{2}K^2(s)
	\end{equation}
	where $K(s)$ is analytic near $s=s_*$ and satisfies
	\begin{equation}
		K(s) = k_1(s-s_*) + \mathcal{O}\left(s-s_*\right)^2, 
	\end{equation}
	where
	\begin{equation}\label{eq: k_1 eqn}
		k_1 = \frac{2i}{s_*^{3/2}}\left(\frac{4}{s_*^2}-1\right)^{-1/4}. 
	\end{equation}
	Moreover, we can calculate that
	\begin{equation}\label{eq: f_c behavior}
	h_c(z;s)-h_{cr}(s) = -\frac{s}{2}\left(\frac{4}{s^2}-1\right)^{-1/2}\left(z-\frac{2}{s}\right)^{2}\left[1+\mathcal{O}\left(z-\frac{2}{s}\right)\right].
	\end{equation}
	Next define
	\begin{equation}\label{eq: zeta def}
	\frac{\zeta(z;s)}{\sqrt{2}}:= \sqrt{h_c(z;s)+\frac{K^2(s)}{2}}-\frac{K(s)}{\sqrt{2}}
	\end{equation}
	We immediately have that $\zeta$ satisfies \eqref{eq: quadratic}, is conformal map in a neighborhood of $z=2/s$ and satisfies $\zeta(2/s_*,s)\equiv 0$.
\end{proof}
We now specify that the size of the disc $D_c$ is chosen to be small enough so that $\zeta(z;s)+K(s)$ is conformal for $n$ large enough (or equivalently, when $s$ is close to $s_*$), which is possible via the lemma above. Moreover, we also impose that the arc $\gamma_{m,0}^+$ is mapped to the real line via $\zeta(z;s)+K(s)$ within $D_c$. 

From the proof of Lemma~\ref{lem: conformal map}, we see that
\begin{equation}
	K(s) = \frac{2i}{s_*^{3/2}}\left(\frac{4}{s_*^2}-1\right)^{-1/4}(s-s_*) + \mathcal{O}\left(s-s_*\right)^2, 
\end{equation}
Therefore, we note that the double scaling limit \eqref{eq: def of double scaling} can be equivalently stated by taking $n\to\infty$ and $s\to s_*$ so that
\begin{equation}\label{eq: alternate double scaling}
	\lim\limits_{n\to\infty, \, s\to s_*} nK(s) =\frac{2iL_1}{s_*^{3/2}}\left(\frac{4}{s_*^2}-1\right)^{-1/4} = L_1 k_1, 
\end{equation}
where $k_1$ is given in \eqref{eq: k_1 eqn}. We may obtain the local parametrix about $z=2/s$ by solving the following Riemann-Hilbert problem: 
\begin{subequations}\label{eq: Pc RHP}
	\begin{alignat}{2}
	&P^{(c)}(z) \text{ is analytic for } z\in D_{c}\setminus \hat{\Sigma}, \qquad &&\label{eq: Pc RHP1} \\
	&P^{(c)}_+(z) = P^{(c)}_-(z) j_S(z),  \qquad &&z \in D_{c}\cap \hat{\Sigma}, \label{eq: Pc RHP2}\\
	&P^{(c)}(z)= \left(I + o\left(1\right)\right)M(z), \qquad && n \to \infty, \quad z\in \partial D_{c},\label{eq: matching condition Pc}
	\end{alignat}
\end{subequations}
We recall that the jumps in \eqref{eq: Pc RHP2} are given by
\begin{align}
P^{(c)}_+(z) = P^{(c)}_-(z) \begin{cases}
\begin{pmatrix}
1 & 0 \\
e^{-nh(z;s)} & 1
\end{pmatrix}, \qquad & z\in D_c\cap\gamma_{m,0}^+(s),\\
\begin{pmatrix}
0 & 1 \\
-1 & 0
\end{pmatrix}, \qquad & z\in D_c\cap\gamma_{m,0}(s)
\end{cases}
\end{align}
We solve for $P^{(c)}$ by first defining $U^{(c)}$ so that
\begin{equation}
P^{(c)}(z) = U^{(c)}(z) e^{-\frac{n}{2}h(z)\sigma_3}. 
\end{equation}
Then, $U^{(c)}$ is also analytic for $z\in D_{c}\setminus \hat{\Sigma}$ and satisfies the following jump conditions within $D_c$:
\begin{align}
U^{(c)}_+(z) = U^{(c)}_-(z) \begin{cases}
\begin{pmatrix}
1 & 0 \\
1 & 1
\end{pmatrix}, \qquad & z\in D_c\cap\gamma_{m,0}^+(s),\\
\begin{pmatrix}
0 & 1 \\
-1 & 0
\end{pmatrix}, \qquad & z\in D_c\cap\gamma_{m,0}(s)
\end{cases}
\end{align}
We may solve for $U^{(c)}$ using the error function parametrix presented in \cite[Section~7.5]{bleher2017topological}. We introduce 
\begin{equation}
C(\zeta) := \begin{pmatrix}
e^{\zeta^2} & 0 \\
b(\zeta) & e^{-\zeta^2}
\end{pmatrix},
\end{equation}
where 
\begin{align}
b(\zeta) := \frac{1}{2}e^{-\zeta^2} \begin{cases}
\textrm{erfc}\left(-i\sqrt{2}\zeta\right), \qquad & \Im \zeta >0, \\
-\textrm{erfc}\left(i\sqrt{2}\zeta\right), \qquad & \Im \zeta <0.
\end{cases}
\end{align}
Then, $C(\zeta)$ is analytic for $\zeta\in\C\setminus\R$ and satisfies
\begin{equation}
C_+(\zeta) = C_-(\zeta)\begin{pmatrix}
1 & 0 \\
1 & 1
\end{pmatrix}, \qquad \zeta\in\R
\end{equation}
and moreover possess the following asymptotic expansion, uniform in the upper and lower half planes:
\begin{equation}
C(\zeta) = \left(I + \sum_{k=0}^\infty\begin{pmatrix}
0 & 0 \\
b_k & 0
\end{pmatrix} \zeta^{-2k-1}\right) e^{\zeta^2\sigma_3}, \qquad \zeta\to\infty, 
\end{equation}
where
\begin{equation}
b_k = \frac{i}{\sqrt{2\pi}}\frac{\Gamma\left(k+\frac{1}{2}\right)}{2^{k+1}\Gamma\left(\frac{1}{2}\right)}.
\end{equation}
Next define, 
\begin{equation}
f_{n,C}(z;s) = \left(\frac{n}{2}\right)^{1/2} f_C(z;s), \qquad f_C(z;s)=\frac{1}{\sqrt{2}}\left(\zeta(z;s)+K(s)\right), 
\end{equation}
where $\zeta$ and $K$ are as defined via Lemma~\ref{lem: conformal map}. Using the proof of Lemma~\ref{lem: conformal map}, we see that $f_{C}(z;s)$ conformally maps a neighborhood of $z=2/s$ to a neighborhood of $z=0$. If we define
\begin{align}
J(z) = \begin{cases}
I, \qquad & z\in D_c^+, \\
\begin{pmatrix}
0 & -1 \\
1 & 0
\end{pmatrix}, \qquad &z\in D_c^-, 
\end{cases}
\end{align}
we see that
\begin{equation}
P^{(c)}(z) = E_n^{(c)}(z) C\left(f_{n,C}(z)\right) J(z) e^{-\frac{n}{2}h(z)\sigma_3},
\end{equation}
where $E_n^{(c)}$ is any matrix which is analytic throughout $D_c$, solves \eqref{eq: Pc RHP1} and \eqref{eq: Pc RHP2}. We now choose $E_n^{(c)}$ so that $P^{(c)}$ satisfies \eqref{eq: matching condition Pc}. As $n\to\infty$ for $z\in D_c^+$, we have
\begin{equation}
P^{(c)}(z) = E^{(c)}_n(z)\left(I +\sum_{k=0}^\infty \begin{pmatrix}
0 & 0 \\
b_k & 0
\end{pmatrix}\left(\frac{2}{n}\right)^{k+1/2}\left(f_C(z;s)\right)^{-2k-1}\right)e^{\frac{n}{2}\left[f_C^2(z;s)-h(z;s)\right]\sigma_3}
\end{equation}

Similarly, we have that as $n\to\infty$ for $z\in D_c^-$, 
\begin{equation}
P^{(c)}(z) = E^{(c)}_n(z)\left(I +\sum_{k=0}^\infty \begin{pmatrix}
0 & 0 \\
b_k & 0
\end{pmatrix}\left(\frac{2}{n}\right)^{k+1/2}\left(f_C(z;s)\right)^{-2k-1}\right)e^{\frac{n}{2}\left[f_C^2(z;s)+h(z;s)\right]\sigma_3}J(z)
\end{equation}
Therefore, if we set
\begin{equation}
E_n^{(c)}(z) = M(z) J^{-1}(z)e^{-\frac{n}{2}\left[K^2(s)/2- h(2/s_*;s)\right]\sigma_3 } \qquad z\in D_c,
\end{equation} 
we see that $P_n^{(c)}(z)$ satisfies the matching condition \eqref{eq: matching condition Pc}. It is easy enough to see that $E_n^{(c)}$ is analytic within $D_c$ as both $M$ and $J$ have the same jumps over $\gamma_{m,0}$ and are bounded within $D_c$. Moreover, we see that
\begin{equation}\label{eq: expansion PC fake}
P^{(c)}(z) =  \left(I+ n^{-1/2}\sum_{k=0}^\infty \frac{P_{k,n}(z;s)}{n^k} \right)M(z), \qquad n\to\infty, 
\end{equation}
where
\begin{align}\label{eq: Pk defn}
	P_{k,n}(z;s) = \frac{2^{k+1/2}}{f_C(z;s)^{2k+1}}e^{\frac{n}{2}\left(K^2(s)-2h(2/s_*;s)\right)}\begin{cases}
	\begin{pmatrix}
	0 & 0 \\
	b_k & 0
	\end{pmatrix}, \qquad &z \in D_c^+,\\
	\begin{pmatrix}
	0 & -b_k \\
	0 & 0
	\end{pmatrix}, \qquad &z \in D_c^-.
	\end{cases}
\end{align}

Now, as $s\to s_*$, 
\begin{subequations}
\begin{align}
	K^2(s)-2h(2/s_*;s)&=-2 h(2/s_*;s_*) + 2\left(\frac{4}{s_*^2}-1\right)^{1/2}(s-s_*) + k_1^2(s-s_*)^2+\mathcal{O}(s-s_*)^3 \\
	&=-2 h(2/s_*;s_*) + 2L_1\left(\frac{4}{s_*^2}-1\right)^{1/2}\frac{1}{n} + \frac{L_1^2k_1^2}{n^2}+\mathcal{O}\left(\frac{1}{n^3}\right) .
\end{align}
\end{subequations}
Moreover, as $s_*$ is a regular breaking point, we have that $h(2/s_*;s_*)=i \kappa$, where $\kappa\in\R$. Then, as $n\to\infty$ (and as such $s\to s_*$), 
\begin{equation}
	e^{\frac{n}{2}\left(K^2(s)-2h(2/s_*;s)\right)} = e^{-in\kappa}\exp\left(L_1\left(\frac{4}{s_*^2}-1\right)^{1/2}\right)\left(1+\frac{L^2k_1^2}{2n}+\mathcal{O}\left(\frac{1}{n}\right)\right).
\end{equation}
We then have that
\begin{equation}\label{eq: expansion PC}
	P^{(c)}(z) =\left(I+ n^{-1/2}\sum_{k=0}^\infty \frac{P_{k}(z;s)}{n^k} \right) M(z) , \qquad n\to\infty, 
\end{equation}
where $P_0$ is given by
\begin{align}\label{eq: P0 def}
	P_0(z;s) = \frac{\sqrt{2}\delta_n(L_1)}{f_C(z;s)}\begin{cases}
	\begin{pmatrix}
	0 & 0 \\
	\frac{i}{2\sqrt{2\pi}} & 0
	\end{pmatrix}, \qquad &z \in D_c^+,\\
	\begin{pmatrix}
	0 & -\frac{i}{2\sqrt{2\pi}}  \\
	0 & 0
	\end{pmatrix}, \qquad &z \in D_c^-.
	\end{cases}
\end{align}
where for ease of notation we have defined
\begin{equation}\label{eq: delta n}
	\delta_n(L_1) := e^{-in\kappa}\exp\left(L_1\left(\frac{4}{s_*^2}-1\right)^{1/2}\right).
\end{equation}
Note above that $\left|e^{-in\kappa}\right|=1$ as
\begin{equation}\label{kappa}
	\kappa = \Im h(2/s_*;s_*).
\end{equation}

\subsection{Proof of Theorem~\ref{thm: rec coeff double scaling regular}}
The final transformation is 
\begin{align}\label{eq: final transformation crit}
R(z)= S(z) \begin{cases}
M(z)^{-1}, \qquad & z\in \C\setminus\overline{\left(D_{-1}\cup D_1 \cup  D_c\right)} \\
P^{(-1)}(z)^{-1}, \qquad & z\in D_{-1} \\
P^{(1)}(z)^{-1}, \qquad & z\in D_{1} \\
P^{(c)}(z)^{-1}, \qquad & z\in D_c 
\end{cases}
\end{align}
We write the jump matrix $j_R(z) = I + \Delta(z)$, where
\begin{equation}\label{eq: Delta expansion critical}
\Delta(z) = \sum_{k=1}^\infty \frac{\Delta_{k/2}(z)}{n^{k/2}}.
\end{equation}
As before, we have that $\Delta_k(z)=0$ for $z\in\Sigma_R\setminus\left(\partial D_{-1} \cup\partial D_1 \cup\partial D_c \right)$, as the jump matrix decays exponentially quickly to the identity off of the boundaries of the discs $D_{-1}$, $D_1$, and $D_c$. From \eqref{eq: Delta k 1}, \eqref{eq: Delta k -1}, and \eqref{eq: expansion PC}, we have for $k\in \N$ that
\begin{subequations}
	\begin{align}\label{eq: Delta k int}
	\Delta_k(z) = \begin{cases}
	\displaystyle
	\frac{(-1)^{k-1}\prod_{j=1}^{k-1}(2j-1)^2}{4^{k-1}(k-1)!\tilde{h}(z)^k} M(z)\begin{pmatrix}
	\frac{(-1)^k}{k} \left(\frac{k}{2}-\frac{1}{4}\right) & i \left(k-\frac{1}{2}\right) \\
	(-1)^{k+1} i \left(k-\frac{1}{2}\right) &\frac{1}{k}\left(\frac{k}{2}-\frac{1}{4}\right)
	\end{pmatrix}M^{-1}(z), \quad &z\in D_{-1}\\
	\displaystyle
	\frac{(-1)^{k-1}\prod_{j=1}^{k-1}(2j-1)^2}{4^{k-1}(k-1)!h(z)^k} M(z)\begin{pmatrix}
	\frac{(-1)^k}{k} \left(\frac{k}{2}-\frac{1}{4}\right) & -i \left(k-\frac{1}{2}\right) \\
	(-1)^k i \left(k-\frac{1}{2}\right) &\frac{1}{k}\left(\frac{k}{2}-\frac{1}{4}\right)
	\end{pmatrix}M^{-1}(z), \quad &z\in D_{1} \\
	0, \quad & z\in D_c, 
	\end{cases}
	\end{align}
	and
	\begin{align}\label{eq: Delta k half int}
	\Delta_{k+\frac{1}{2}}(z) = \begin{cases}
	0  \qquad &z\in D_1\cup D_{-1} \\
	M(z)P_k(z;s)M^{-1}(z), \qquad &z\in D_c, 
	\end{cases}
	\end{align}
\end{subequations}
where we have used \eqref{eq: expansion PC}. As $\Delta(z)$ possesses the expansion \eqref{eq: Delta expansion critical}, we may again use the arguments presented in \cite[Section~7]{deift1999strong} and \cite[Section~8]{arnojacobi} to conclude that $R$ has an asymptotic expansion in inverse powers of $n^{1/2}$ of the form
\begin{equation}\label{eq: R expansion c}
R(z) = \sum_{k=0}^{\infty} \frac{R_{k/2}(z)}{n^{k/2}}, \qquad n\to\infty, 
\end{equation}
where each $R_{k/2}$ solves the following Riemann-Hilbert problem:
\begin{subequations}
	\label{eq: R_k RHP c}
	\begin{alignat}{2}
	&R_{k/2}(z) \text{ is analytic for } z \in \C\setminus  \left(\partial D_{-1} \cup \partial D_{-1}\cup \partial D_c\right) \qquad &&\\
	& R_{k/2,+}(z) = R_{k/2,-}(z) + \sum_{j=1}^{k-1}R_{(k-j)/2,-}\Delta_{j/2}(z), \qquad &&z\in \partial D_{-1} \cup \partial D_{-1}\cup \partial D_c\\
	& R_{k/2}(z) = \frac{R_{k/2}^{(1)}}{z}+\frac{R_{k/2}^{(2)}}{z^2}+\mathcal{O}\left(\frac{1}{z}\right), \qquad &&z \to\infty.
	\end{alignat}
\end{subequations}
Above, we have $R_0(z)\equiv I$. Following \cite{arnojacobi}, we have the following lemma.
\newline
\newline

\begin{lemma}\label{lem: behavior of Delta}
	$\,$
	\begin{enumerate}[(i)]
		\item The restriction of $\Delta_1$ to $\partial D_{-1}$ has a meromorphic continuation to a neighborhood of $D_{-1}$. This continuation is analytic, except at $-1$, where $\Delta_1$ has a pole of order $1$. 
		\item The restriction of $\Delta_1$ to $\partial D_{1}$ has a meromorphic continuation to a neighborhood of $D_{1}$. This continuation is analytic, except at $1$, where $\Delta_1$ has a pole of order at most $1$. 
		\item The restriction of $\Delta_{1/2}$ to $\partial D_{c}$ has a meromorphic continuation to a neighborhood of $D_{c}$. This continuation is analytic, except at $2/s$, where $\Delta_{1/2}$ has a pole of order at most $1$. 
	\end{enumerate}
\end{lemma}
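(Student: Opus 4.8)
The plan is to treat the three statements by a single mechanism. For each disc the explicit formulas \eqref{eq: Delta k 1}, \eqref{eq: Delta k -1}, \eqref{eq: Delta k int}, \eqref{eq: Delta k half int} already exhibit the relevant jump term on the boundary circle as a conjugate $M(z)\mathcal N(z)M(z)^{-1}$, where $\mathcal N$ is a fixed constant matrix divided by a scalar function with a branch point or zero at the centre of the disc ($h$ for $D_1$, $\tilde h=h-2\pi i$ for $D_{-1}$, and $f_C$ for $D_c$); only $\Delta_1$ is relevant on $\partial D_{\pm1}$ and only $\Delta_{1/2}$ on $\partial D_c$, since the remaining local parametrices contribute the complementary powers of $n$. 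For each disc I would (a) regard this closed form as a function on the disc minus its centre and minus the arcs of $\hat\Sigma$ crossing it, (b) verify that those apparent internal jumps are absent or exponentially small, and (c) read the order of the remaining isolated singularity off the known local behaviour of $M$ and of the scalar.

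For (i) and (ii) I would follow \cite[Section~8]{arnojacobi} essentially verbatim, now with the genus-$0$ $h$-function \eqref{eq: genus 0 h function explicit}. On $\partial D_1$, \eqref{eq: Delta k 1} with $k=1$ gives $\Delta_1(z)=h(z)^{-1}M(z)N_1M(z)^{-1}$ with $N_1=4B_1$; the only arc of $\hat\Sigma$ in $D_1$ across which $M$ and $h$ jump is $\gamma_{m,0}$, where $M_+=M_-J_0$ with $J_0=\left(\begin{smallmatrix}0&1\\-1&0\end{smallmatrix}\right)$ and $h_+=-h_-$, and a one-line computation gives $J_0N_1J_0^{-1}=-N_1$, so $\Delta_{1,+}=\Delta_{1,-}$ and $\Delta_1$ continues analytically to $D_1\setminus\{1\}$. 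Since $\|M(z)\|,\|M(z)^{-1}\|=\mathcal{O}(|z-1|^{-1/4})$ from \eqref{eq: global parametrix eq genus 0} and $h(z)=(2-s)\sqrt{2(z-1)}\,(1+o(1))$ from \eqref{eq: genus 0 h function explicit}, one gets $\Delta_1(z)=\mathcal{O}(|z-1|^{-1})$, hence at most a simple pole, with residue $A^{(1)}$ of \eqref{eq: A1 and B1}. Part (i) is the same on $\partial D_{-1}$, using $\tilde h$ in place of $h$ (so that $\tilde h(z)=\tilde c\,\sqrt{2(z+1)}\,(1+o(1))$ with $\tilde c\neq0$ changes sign across $\gamma_{m,0}$ near $-1$) and the analogous constant matrix $N_{-1}=\left(\begin{smallmatrix}-1/4&i/2\\i/2&1/4\end{smallmatrix}\right)$ from \eqref{eq: Delta k -1}, which again anticommutes with $J_0$; here the residue $B^{(1)}=\tfrac1{8(s+2)}\left(\begin{smallmatrix}-1&-i\\-i&1\end{smallmatrix}\right)$ is manifestly nonzero, so the pole has order exactly one.

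For (iii), the genuinely new point, \eqref{eq: expansion PC} together with \eqref{eq: Delta k half int} gives $\Delta_{1/2}(z)=M(z)P_0(z;s)M(z)^{-1}$ on $\partial D_c$, where by \eqref{eq: P0 def} $P_0=\tfrac{\sqrt2\,\delta_n}{f_C(z;s)}N_c^{\pm}$ with $N_c^{+}=\left(\begin{smallmatrix}0&0\\i/(2\sqrt{2\pi})&0\end{smallmatrix}\right)$ on $D_c^{+}$ and $N_c^{-}=\left(\begin{smallmatrix}0&-i/(2\sqrt{2\pi})\\0&0\end{smallmatrix}\right)$ on $D_c^{-}$. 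I would then examine the two internal curves crossing $D_c$. Across $\gamma_{m,0}^{+}$ the jump of $\Delta$ is $\mathcal{O}(e^{-cn})$ because $\Re h>0$ along it, including at $z=2/s$ by \eqref{eq: lemma i}, so it drops out of every $\Delta_{k/2}$ (and in any case $M$ and $P_0$ are each analytic across $\gamma_{m,0}^{+}$, since $P_0$ is given by one formula on each of $D_c^{\pm}$). Across $\gamma_{m,0}$ the map $f_C=\tfrac1{\sqrt2}(\zeta+K)$ is analytic on all of $D_c$ by Lemma~\ref{lem: conformal map}, $M_+=M_-J_0$, and $J_0N_c^{+}J_0^{-1}=N_c^{-}$, so $MP_0M^{-1}$ is continuous there. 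Hence $\Delta_{1/2}$ continues analytically to $D_c\setminus\{2/s\}$. At $z=2/s$ the matrix $M$ is analytic and invertible — $2/s$ stays bounded away from $\pm1$ and, for $s$ near $s_*$, off $\gamma_{m,0}(s)$ — while the quadratic relation \eqref{eq: quadratic} forces $\zeta(2/s;s)=-K(s)$, so $f_C$ vanishes simply at $2/s$; therefore $\Delta_{1/2}$ has a simple pole there, with residue $\sqrt2\,\delta_n$ times an $M(2/s)$-conjugate of a nonzero nilpotent matrix.

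The hard part will be the cancellation step for $D_c$: I need to be sure that the change of variable $\zeta(\cdot;s)$ from Lemma~\ref{lem: conformal map}, and hence $f_C$, is genuinely single-valued and conformal on all of $D_c$ (this is exactly why $h_c$ was constructed as the analytic continuation of $\tilde h_c$ from $D_c^{+}$ into $D_c^{-}$ rather than defined piecewise), and that the two nilpotent blocks in \eqref{eq: P0 def} are precisely the $J_0$-conjugates of one another — a structural feature of the error-function parametrix $C(\zeta)$. Once these algebraic facts are in place, the remainder is the routine local analysis of $M$, $h$ and $f_C$ already available from \eqref{eq: global parametrix eq genus 0}, \eqref{eq: genus 0 h function explicit} and the proof of Lemma~\ref{lem: conformal map}.
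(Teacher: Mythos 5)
Your proof is correct and follows essentially the same route as the paper: for part~(iii) the paper also shows $\Delta_{1/2}$ is analytic on each $D_c^\pm$, verifies $\Delta_{1/2,+}=\Delta_{1/2,-}$ across $\gamma_{m,0}$ from \eqref{eq: P0 def} and \eqref{eq: genus 0 model problem jump DS}, and reads the pole order off the simple zero of $f_C$ at $2/s$; for parts~(i) and (ii) the paper simply cites \cite[Lemma~8.2]{arnojacobi}, whose mechanism (the $J_0$-conjugation and the sign change of $h$, resp.\ $\tilde h$, across $\gamma_{m,0}$) you spell out explicitly and correctly. One non-fatal imprecision: appealing to \eqref{eq: lemma i} to argue the jump on $\gamma_{m,0}^+$ inside $D_c$ is uniformly $\mathcal{O}(e^{-cn})$ does not hold in the double-scaling regime, since $\Re h(2/s;s)\to 0$ as $s\to s_*$; but your parenthetical remark — that the explicit formula $M P_0 M^{-1}$ has no discontinuity across $\gamma_{m,0}^+$ because $P_0$ is defined by a single formula on each of $D_c^\pm$ — is the correct and sufficient reason, and is exactly what the paper uses.
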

\begin{proof}
	$(i)$ and $(ii)$ are given in \cite[Lemma~8.2]{arnojacobi}, so we prove $(iii)$. As both $M$ and $P_k(z;s)$ are analytic within $D_c^\pm$, we have that $\Delta_{1/2}(z)$ is analytic in both $D_c^\pm$. Furthermore, it is straightforward to check using \eqref{eq: P0 def} and \eqref{eq: genus 0 model problem jump DS} that
	\begin{equation}
	\Delta_{1/2,+}(z)= \Delta_{1/2,-}(z), \qquad z\in \gamma_{m,0}, 
	\end{equation}
	so that $\Delta_{1/2}(z)$ is analytic in $D_c\setminus\{2/s\}$. As $f_C(z;s) = \mathcal{O}\left(z-2/s\right)$ as $z\to 2/s$, we have by \eqref{eq: Pk defn} that the isolated singularity is pole of order $1$.  
\end{proof}
By \eqref{eq: S to T} and \eqref{eq: final transformation crit} we have that $T(z)= R(z)M(z)$ for $z$ outside of the lens. Using \eqref{eq: R expansion c}, we then have that
\begin{subequations}
	\label{eq: T expansion}
	\begin{equation}
	T^{(1)} = M^{(1)} + \frac{R_{1/2}^{(1)}}{n^{1/2}}+\frac{R_1^{(1)}}{n}+\mathcal{O}\left(\frac{1}{n^{3/2}}\right), \qquad n\to\infty, 
	\end{equation}
	\begin{equation}
	T^{(2)}= M^{(2)}+\frac{R_{1/2}^{(1)}M^{(1)}+R_{1/2}^{(2)}}{n^{1/2}}+\frac{R_1^{(1)}M^{(1)}+R_1^{(2)}}{n}+\mathcal{O}\left(\frac{1}{n^{3/2}}\right), \qquad n\to\infty, 
	\end{equation}
\end{subequations}
where $M^{(1)}$ and $M^{(2)}$ were calculated in \eqref{eq: M1 and M2} as
\begin{equation}
M^{(1)} = \begin{pmatrix}
0 & \frac{i}{2} \\
-\frac{i}{2} & 0
\end{pmatrix}, \qquad 
M^{(2)} = \begin{pmatrix}
\frac{1}{8} & 0\\
0& \frac{1}{8} 
\end{pmatrix}.
\end{equation}
We first solve for $R_{1/2}(z)$. Using Lemma~\ref{lem: behavior of Delta}, we may write
\begin{equation}
\Delta_{1/2}(z) = \frac{C^{(1/2)}}{z-2/s}, \qquad z\to 2/s, 
\end{equation}
for some constant matrix $C^{(1/2)}$. Using the explicit expression \eqref{eq: Delta k half int} for $\Delta_{1/2}$, we can calculate $C^{(1/2)}$ as
\begin{equation}\label{eq: c 1/2}
C^{(1/2)} = \frac{\delta_n(L_1)}{2s\sqrt{\pi}}\begin{pmatrix}
	1 & -\frac{s\left(\frac{4}{s^2}-1\right)^{1/2}-2}{is} \\
	\frac{s\left(\frac{4}{s^2}-1\right)^{1/2}+2}{is} & -1
\end{pmatrix}
\end{equation}
where we have used \eqref{eq: zeta def} to calculate that
\begin{equation}
f_C(z;s) = -\frac{s}{2}\left(\frac{4}{s^2}-1\right)^{-1/2}\left(z-\frac{2}{s}\right)+\mathcal{O}\left(z-\frac{2}{s}\right)^2
\end{equation}
Then
\begin{align}
R_{1/2}(z) := \begin{cases}
\displaystyle\frac{C^{(1/2)}}{z-2/s}, \qquad &z\in\C\setminus D_c, \\[2mm]
\displaystyle\frac{C^{(1/2)}}{z-2/s} - \Delta_{1/2}(z), \qquad &z\in D_c,
\end{cases}
\end{align}
solves \eqref{eq: R_k RHP c} with $k=1$. Next, as shown in \eqref{eq: A1 and B1 def} and \eqref{eq: A1 and B1}, 
\begin{align}
\Delta_1(z) = \begin{cases}
\displaystyle\frac{A^{(1)}}{z-1} + \mathcal{O}\left(1\right), \qquad & z\to 1, \\[2mm]
\displaystyle\frac{B^{(1)}}{z+1}  + \mathcal{O}\left(1\right), \qquad & z\to -1,
\end{cases}
\end{align}
where
\begin{equation}
A^{(1)} = \frac{1}{8(s-2)}\begin{pmatrix}
-1 & i \\
i & 1
\end{pmatrix}, \qquad B^{(1)} = \frac{1}{8(s+2)}\begin{pmatrix}
-1 &- i \\
-i & 1
\end{pmatrix}.
\end{equation}
We can then compute that
\begin{align}
R_{1/2}(z)\Delta_{1/2}(z)+ \Delta_1(z) = \begin{cases}
\displaystyle\frac{A^{(1)}}{z-1} + \mathcal{O}\left(1\right), \qquad & z\to 1, \\[2mm]
\displaystyle\frac{B^{(1)}}{z+1}  + \mathcal{O}\left(1\right), \qquad & z\to -1, \\[2mm]
\displaystyle\frac{C^{(1)}}{z-2/s} + \mathcal{O}\left(1\right), \qquad & z\to 2/s,
\end{cases}
\end{align}
where
\begin{equation}
C^{(1)} = -\frac{\delta_n^2(L_1)}{4\pi s^2\left(\frac{4}{s^2}-1\right)^{1/2}}\begin{pmatrix}
1 & -\frac{s\left(\frac{4}{s^2}-1\right)^{1/2}-2}{is} \\
\frac{s\left(\frac{4}{s^2}-1\right)^{1/2}+2}{is} & -1
\end{pmatrix}
\end{equation}
Then, 
\begin{align}
R_1(z) = \begin{cases}
\displaystyle
\frac{A^{(1)}}{z-1} +\frac{B^{(1)}}{z+1}+ \frac{C^{(1)}}{z-2/s}, \qquad &z\in\C\setminus\left(D_{-1}\cup D_1\cup D_c\right),\\[2mm]
\displaystyle
\frac{A^{(1)}}{z-1} +\frac{B^{(1)}}{z+1}+ \frac{C^{(1)}}{z-2/s} - R_{1/2}(z)\Delta_{1/2}(z)-\Delta_1(z), \qquad & z\in D_{-1}\cup D_1\cup D_c,
\end{cases}
\end{align}
solves the Riemann-Hilbert problem \eqref{eq: R_k RHP c} with $k=2$. As we now have explicit expressions for $R_{1/2}$ and $R_1$, we may expand at infinity to get
\begin{subequations}
	\begin{equation}
	R_{1/2}^{(1)} = C^{(1/2)}, \qquad 
	R_{1/2}^{(2)} = \frac{2}{s} C^{(1/2)}
	\end{equation}
	\begin{equation}
	R_{1}^{(1)} = A^{(1)}+B^{(1)}+C^{(1)}, \qquad
	R_{1}^{(2)} = A^{(1)}-B^{(1)}+\frac{2}{s} C^{(1)}
	\end{equation}
\end{subequations}

Using \eqref{eq: recurrence coeffs in terms of T} and \eqref{eq: T expansion}, we may now calculate that
\begin{subequations}
	\begin{equation}
	\alpha_n(s)=\frac{\delta_n\left(s^2+2s\left(\frac{4}{s^2}-1\right)^{1/2}-4\right)}{\sqrt{\pi}s^3}\frac{1}{n^{1/2}}+\frac{2\delta_n^2\left(s^2+4s\left(\frac{4}{s^2}-1\right)^{1/2}-8\right)}{\pi s^5}\frac{1}{n}+\mathcal{O}\left(\frac{1}{n^{3/2}}\right)
	\end{equation}
	and
	\begin{equation}
	\beta_n(s) = \frac{1}{4} + \frac{\delta_n}{2\sqrt{\pi}s}\left(\frac{4}{s^2}-1\right)^{1/2}\frac{1}{n^{1/2}}-\frac{\delta_n^2}{2\pi s^2}\frac{1}{n} + \mathcal{O}\left(\frac{1}{n^{3/2}}\right),
	\end{equation}
\end{subequations}
as $n\to\infty$, where we recall that 
\begin{equation}
	\delta_n =\delta_n(L_1)= e^{-in\kappa}\exp\left(L_1\left(\frac{4}{s_*^2}-1\right)^{1/2}\right).
\end{equation}

\section{Double Scaling Limit near a Critical Breaking Point}\label{sec: DS Crit}
We now take $s$ in a double scaling regime near the critical point $s=2$ as
\begin{equation}\label{eq: ds critical}
s = 2 + \frac{L_2}{n^{2/3}}, 
\end{equation}
where $L_2<0$. Note that as $L_2<0$, we have that $s\in\mathfrak{G}_0$ for large enough $n$.

\subsection{Outline of Steepest Descent}
Although we are now considering the case where $s$ depends on $n$ via the double scaling limit \eqref{eq: ds critical}, the first two transformations of steepest descent remain unchanged to the previous analysis, and as such, we summarize the steps briefly and refer the reader to Section~\ref{sec: steepest descent} for full details.

As $s \in \mathfrak{G}_0$ for $n$ large enough, we have immediately that there is a genus $0$ $h$-function satisfying \eqref{eq: RHP for h}, with $L=0$, and \eqref{eq: both inequalities}. 
Finally, we remark that as we are in the genus $0$ regime, we have an explicit formula for the $h$ function, given in \eqref{eq: genus 0 h function explicit} as
\begin{equation}
h(z;s) = 2 \log\left(z+\left(z^2-1\right)^{1/2}\right)-s\left(z^2-1\right)^{1/2}.
\end{equation}

We recall that $Y$ defined in \eqref{eq: Y eqn} solves the Riemann-Hilbert problem \eqref{eq: Y RHP}. By making the transformations $Y \mapsto T \mapsto S$ as described in Section \ref{sec: steepest descent} we arrive at a matrix $S$ that satisfies
%
\begin{subequations}
	\label{eq: S RHP ds}
	\begin{alignat}{2}
	&S(z) \text{ is analytic for } z\in \C\setminus \hat{\Sigma}, \qquad && \\
	&S_+(z) = S_-(z) j_S(z), \qquad && z \in \hat{\Sigma}, \\
	&S(z) = I + \mathcal{O}\left(\frac{1}{z}\right), \qquad && z \to \infty, 
	\end{alignat}
\end{subequations}
where
\begin{align}
j_S(z) = \begin{cases}
\begin{pmatrix}
1 & 0 \\
e^{-nh(z)} & 1
\end{pmatrix}, \qquad z\in \gamma_{m,0}^\pm, \\
\begin{pmatrix}
0 & 1\\
-1 & 0
\end{pmatrix}, \qquad z\in\gamma_{m,0}.
\end{cases}
\end{align}

To complete the process of nonlinear steepest descent, we must find suitable global and local parametrices, $M(z)$ and $P^{(\pm 1)}(z)$. We have seen in Section~\ref{sub: Global parametrix genus 0} that $M(z)$ is given by \eqref{eq: global parametrix eq genus 0}. 
Moreover, we have that the local parametrix $P^{(-1)}(z)$ is given by \eqref{eq: p-1 Bessel}.

The main difference between the case of regular points and the critical breaking point at $s=2$ comes in the analysis about $z=1$. Note that the map 
\begin{equation}
f_{n,B}(z;s) = \frac{h(z;s)^2}{16}
\end{equation}
defined in \eqref{eq: Bessel conformal map} is no longer conformal when $s=2$. Indeed, 
\begin{equation}
f_{n,B}(z;s) = \frac{(s-2)^2}{8} (z-1) + \frac{(s-2)(3s+2)}{48}(z-1)^2+\mathcal{O}\left(\left(z-1\right)^3\right), \qquad z\to 1, 
\end{equation}
so that $f_{n,B}(z,2) = \mathcal{O}\left(\left(z-1\right)^3\right)$ as $z\to 1$. Therefore, a different analysis will be needed in $D_1$ in the double scaling limit \eqref{eq: ds critical}.

\subsection{Local parametrix at $z=1$.}
We consider a disc, $D_1$, around $z=1$ of fixed radius $\delta>0$. The local parametrix about $z=1$ solves the following Riemann-Hilbert problem
\begin{subequations}\label{eq: P RHP ds critical}
	\begin{alignat}{2}
	&P^{(1)}(z) \text{ is analytic for } z\in D_{1}\setminus \hat{\Sigma}, \qquad && \\
	&P^{(1)}_+(z) = P^{(1)}_-(z) j_S(z),  \qquad &&z \in D_{1}\cap \hat{\Sigma}, \\
	&P^{(1)}(z)= \left(I + o(1)\right)M(z), \qquad && n \to \infty, \quad z\in \partial D_{1},\label{eq: matching condition ds critical}
	\end{alignat}
\end{subequations}
We will solve for $P^{(1)}$ by setting $P^{(1)}(z) = U^{(1)}(z) e^{-\frac{n}{2}h(z)\sigma_3},$ where $U^{(1)}$ has the following jumps over $\hat{\Sigma}$ within $D_1$, 
\begin{align}
U^{(1)}_+(z) = U^{(1)}_-(z)\begin{cases}
\begin{pmatrix}
1 & 0 \\
1 & 1
\end{pmatrix}, \qquad &z\in D_1\cap\left(\gamma_{m,0}^+\cup\gamma_{m,0}^-\right),\\
\begin{pmatrix}
0 & 1\\
-1 & 0
\end{pmatrix}, \qquad &z\in D_1\cap\gamma_{m,0}
\end{cases}
\end{align}

We will solve this local problem using a parametrix related to the Painlev\'e II and Painlev\'e XXXIV differential equations.

\subsubsection{The Painlev\'e XXXIV Parametrix}	
Let $q=q(w)$ be a solution of the Painlev\'e II equation 
\begin{equation}
q''=wq+2q^3-\alpha, \qquad \alpha\in\mathbb{C}.
\end{equation}
We define the following function $D=D(w)$, which is closely related to the Hamiltonian function for Painlev\'e II:
\begin{equation}\label{DD}
D=(q')^2-q^4-wq^2+2\alpha q.
\end{equation}
Next, we consider the following Riemann--Hilbert problem, which appears in \cite{IKO_critical,IKO_asymptotics,WuXuZhao,xu2011painleve,yattselev2016Angelesco}. This problem appears in works related to orthogonal polynomials on the real line and Hermitian random matrix ensembles with a Fisher--Hartwig singularity or with critical behavior at the edge of the spectrum.	

Let $\Gamma=\Gamma_1 \cup \Gamma_2 \cup \Gamma_3 \cup \Gamma_4$, where $\Gamma_1 =\left\{\arg \zeta =-\frac{2\pi}{3}\right\}$, $\Gamma_2 =\left\{\arg \zeta =0\right\}$, $\Gamma_3 =\left\{\arg \zeta =\frac{2\pi}{3}\right\}$, and $\Gamma_4 =\left\{\arg \zeta =\pi\right\}$, with orientation as in Figure~\ref{fig:RHPsi}, and define the sectors $\Omega_j$ as in Figure~\ref{fig:RHPsi}.

\begin{figure}[h]
	\centering
	\begin{tikzpicture}[scale=1.5]
	\draw[ultra thick,postaction={mid arrow=black}](-1.15,2) to (0,0);
	\draw[ultra thick,postaction={mid arrow=black}](-2.5,0) to (0,0);
	\draw[ultra thick,postaction={mid arrow=black}](-1.15,-2) to (0,0);
	\draw[ultra thick,postaction={mid arrow=black}](0,0) to (2.5,0);
	\draw [thick,domain=0:120] plot ({0.3*cos(\x)}, {0.3*sin(\x)});
	\node at (1,1) {$\Omega_3$};
	\node at (-1.5,1) {$\Omega_4$};
	\node at (-1.5,-1) {$\Omega_1$};
	\node at (1,-1) {$\Omega_2$};
	
	\node at (-0.55,1.5) {$\Gamma_3$};
	\node at (-1.5,0.2) {$\Gamma_4$};
	\node at (-0.55,-1.5) {$\Gamma_1$};
	\node at (2,0.2) {$\Gamma_2$};
	
	\node at (0.45,0.25) {$\frac{2\pi}{3}$};
	
	\draw [dashed] (-2,0) to (2,0);
	\draw [dashed] (0,-2) to (0,2);
	
	\end{tikzpicture}
	\caption{Contour for the RH problem for $\Psi_\alpha(\zeta;w)$. }
	\label{fig:RHPsi}
\end{figure}
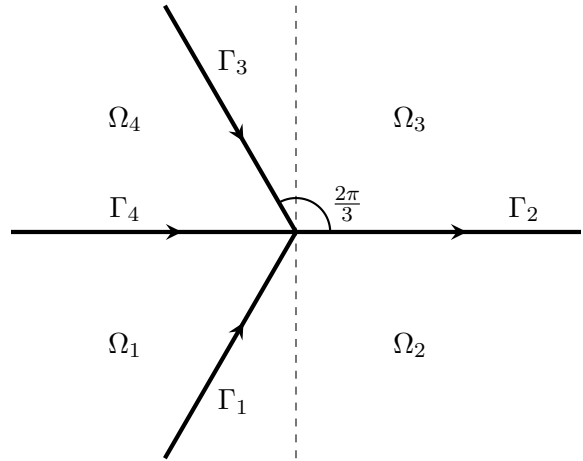
Consider the following Riemann-Hilbert problem for $\Psi(\zeta, w)$ posed on $\Gamma$. 
\begin{subequations}\label{eq: Psi RHP}
	\begin{alignat}{2}
	&\Psi(\zeta, w) \text{ is analytic for } \zeta\in\C\setminus\left(\Gamma_1\cup\Gamma_3\cup\Gamma_4\right), \qquad && \\
	&\Psi_{+}(\zeta, w) = \Psi_{-}(\zeta, w)  \begin{cases}
	\begin{pmatrix}
	1 & 0\\
	1 & 1
	\end{pmatrix}, \qquad & \zeta \in \Gamma_1\cup\Gamma_3,\\
	\begin{pmatrix}
	1 & a_2\\
	0 & 1
	\end{pmatrix}, \qquad & \zeta \in \Gamma_2,\\
	\begin{pmatrix}
	0 & 1\\
	-1 & 0
	\end{pmatrix}, \qquad &\zeta \in \Gamma_4,
	\end{cases} \\
	&\Psi(\zeta, w)= \left(1+\frac{\Psi_{1}(w)}{\zeta}+\mathcal{O}\left(\frac{1}{\zeta^2}\right)\right) \zeta^{-\sigma_3/4} \left(\frac{I + i\sigma_1}{\sqrt{2}}\right)e^{-\left(\frac{4}{3}\zeta^{3/2}-w\zeta^{1/2}\right)\sigma_3}, \qquad \zeta \to\infty,\label{eq: Psi infinity}\\
	&\Psi(\zeta,w) = \begin{cases}
	\mathcal{O}\begin{pmatrix}
	1 & \log \zeta \\
	1 & \log \zeta
	\end{pmatrix}, \qquad &\zeta\in \Omega_2\cup\Omega_3, \\
	\mathcal{O}\begin{pmatrix}
	\log \zeta & \log \zeta\\
	\log \zeta & \log \zeta
	\end{pmatrix}, \qquad & \zeta\in\Omega_1\cup\Omega_4,
	\end{cases}\label{eq: behavior at 0 Psi}
	\end{alignat}
\end{subequations}
where
\begin{equation}
\sigma_1=\begin{pmatrix}
0 & 1\\
1 & 0
\end{pmatrix}.
\end{equation}

In \cite[Section~2]{xu2011painleve}, it is shown{\footnote{Our $\Psi$ function corresponds to $\Psi_0$ in their notation.}}, via a vanishing lemma (Lemma 1), that this Riemann--Hilbert problem has a unique solution for all real values of $w$ if $a_2\in\mathbb{C}\setminus(-\infty,0)$. In the present case, we are taking $a_2=0$ (therefore, no jump on $\Sigma_2$), so the result applies. This existence result also follows from \cite[Proposition 2.3]{IKO_critical}, identifying $\Psi(\zeta,w)$ with the function $\Psi^{(spec)}(\zeta,s)$ in their notation. 


In order to calculate the entries of the matrix $\Psi_{1}(w)$ in \eqref{eq: Psi infinity}, that will be needed later to obtain the asymptotics of the recurrence coefficients, we use the fact that this Riemann--Hilbert problem originates from a folding procedure of the Flaschka--Newell one for Painlev\'e II. Applying formulas (25) and (37) in \cite{xu2011painleve}, we have
\begin{equation}\label{PsiPhi}
\Psi(\zeta,w)
=
\begin{pmatrix}
1 & 0 \\
\displaystyle -\frac{D+q}{2i} & 1
\end{pmatrix}
\zeta^{-\frac{\sigma_3}{4}}
\frac{1}{\sqrt{2}}
\begin{pmatrix}
1 & i \\
i & 1
\end{pmatrix}
\Phi(i\zeta^{\frac{1}{2}},w),	
\end{equation}
where $\Phi(\lambda,w)$ solves a Riemann--Hilbert problem corresponding to Painlev\'e II, see \cite[Section 2]{xu2011painleve} and also \cite[Theorem 5.1 and (5.0.51)]{FIKN2007}. Here $q=q(w)$ solves Painlev\'e II and $D=D(w)$ is given by \eqref{DD}. Furthermore, we observe that the solution $\Psi(\zeta,w)$ that we study corresponds to the Stokes multipliers $b_1=0$ and $b_2=b_4=1$, in the notation used in \cite[\S 1.3]{IKO_asymptotics}, and therefore $a_2=0$ and $a_1=a_3=-i$ in terms of the Stokes multipliers for Painlev\'e II, see \cite[(A.10)]{IKO_asymptotics}. This is in fact the generalized Hastings--McLeod solution to Painlev\'e II, with parameter $\alpha=1/2$, which is characterized by the following asymptotic behavior:
\begin{equation}\label{asympHM}
\begin{aligned}
q_{\rm HM}(x)&=\sqrt{-\frac{x}{2}}+\mathcal{O}(x^{-1}), \qquad x\to-\infty,\\
q_{\rm HM}(x)&=\frac{\alpha}{x}+\mathcal{O}(x^{-4})
=
\frac{1}{2x}+\mathcal{O}(x^{-4}), \qquad x\to+\infty.
\end{aligned}
\end{equation}
Further properties of the Painlev\'e functions associated to $\Psi(\zeta,w)$ are proved in \cite[Lemma 3.5]{IKO_critical}.

As $\lambda\to\infty$, we have the expansion 
\begin{equation}\label{asympPsiPII}
\Phi(\lambda,w)
=
\left(I+\frac{m_1(w)}{\lambda}+\frac{m_2(w)}{\lambda^2}+\mathcal{O}(\lambda^{-3})\right)
e^{-i\left(\frac{4}{3}\lambda^3+w\lambda\right)\sigma_3},
\end{equation}
where the entries of the matrices $m_1(w)$ and $m_2(w)$ are given explicitly in formula (21) in \cite{xu2011painleve}, see also \cite[(5.0.7)]{FIKN2007}, again in terms of $u$, $u'$ and $D$ (we omit the dependence on $w$ for brevity):
\begin{equation}\label{m1m2}
m_1(w)
=
\frac{1}{2}
\begin{pmatrix}
-iD & q\\
q & iD
\end{pmatrix}, \qquad
m_2(w)
=
\frac{1}{8}
\begin{pmatrix}
q^2-D^2 & 2i(qD+q')\\
-2i(qD+q') & q^2-D^2
\end{pmatrix}.
\end{equation}

Combining \eqref{PsiPhi}, \eqref{asympPsiPII} and \eqref{m1m2}, we arrive at the following formulas for the entries of the matrix $\Psi_1(w)$ in \eqref{eq: Psi infinity}:
\begin{equation}\label{entries_Psi1}
\Psi_{1,11}=\frac{D^2-q^2}{8}-\frac{qD+q'}{4},\qquad \Psi_{1,22}=-\frac{D^2-q^2}{8}+\frac{qD+q'}{4}, \qquad \Psi_{1,12}=\frac{i}{2}(D-q).
\end{equation}			



\subsubsection{Construction of the Local Parametrix}
We now continue to build the local parametrix in the disc $D_1$. First, we have the following lemma, following the ideas laid out in 
\cite[Proposition~4.5]{bertola2018painleve}, see also \cite[\S 9.5.1]{yattselev2016Angelesco} and \cite[Lemma 7.6]{ady2020JacobiAngelesco}.
\begin{lemma}\label{lem: conformal map crit}
	There exists a function $\zeta(z;s)$ which is conformal in a fixed neighborhood of $z=1$, with $s$ close to $2$, and an analytic function $A(s)$, such that
	\begin{equation}
	-\frac{h(z)}{2} = \frac{4}{3} \zeta(z;s)^{3/2} - A(s)\zeta(z;s)^{1/2}, 
	\end{equation}
	and
	\begin{equation}
	\zeta(1,s)\equiv 0, \qquad A(2) = 0. 
	\end{equation}
\end{lemma}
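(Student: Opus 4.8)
## Proof Proposal for Lemma 5.2 (Conformal map at the critical point $z=1$)

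The plan is to mimic the structure of the proof of Lemma~\ref{lem: conformal map}, but adapted to the fact that near $z=1$ the function $h$ vanishes like a square root rather than being analytic, so the natural local coordinate is built from $h(z)^2$ rather than $h(z)$ directly. First I would record the local behavior of $h$ near $z=1$ and of the ``critical value'' of $h$ at the soft-edge-meets-hard-edge point as $s\to 2$. Recall from \eqref{eq: genus 0 h function explicit} that $h(z;s)=2\log(z+(z^2-1)^{1/2})-s(z^2-1)^{1/2}$, so that $h(z;s)^2$ is analytic in $z$ near $z=1$ (the branch point cancels), vanishing to first order at $z=1$ when $s\neq 2$ and to third order at $z=1$ when $s=2$, as already computed in the excerpt:
\begin{equation*}
\frac{h(z;s)^2}{16} = \frac{(s-2)^2}{8}(z-1)+\frac{(s-2)(3s+2)}{48}(z-1)^2+\mathcal{O}\left((z-1)^3\right),\qquad z\to 1.
\end{equation*}
The structural identity we want, $-h(z)/2=\frac43\zeta^{3/2}-A(s)\zeta^{1/2}=\zeta^{1/2}\left(\frac43\zeta-A(s)\right)$, squares to $\frac{h(z)^2}{4}=\zeta\left(\frac43\zeta-A(s)\right)^2$. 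So the real content is: there is a conformal $\zeta(z;s)$ near $z=1$ and an analytic $A(s)$ with $A(2)=0$ such that
\begin{equation*}
\frac{h(z;s)^2}{4}=\zeta(z;s)\left(\frac{4}{3}\zeta(z;s)-A(s)\right)^2 .
\end{equation*}

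The second step is to pin down $A(s)$ intrinsically. The right-hand side, as a function of $\zeta$, has a double zero at $\zeta=\frac{3A(s)}{4}$ and a simple zero at $\zeta=0$; correspondingly $h(z;s)^2/4$ has a simple zero at $z=1$ and must have a double zero at some point $z=z_*(s)$, which is exactly the saddle point $z_0(s)=2/s$ of $h$ (a zero of $h'$, hence a double zero of $h^2$ unless it collides with a branch point). So I would set $A(s)$ by evaluating at the saddle: $\frac{h(2/s;s)^2}{4}=\frac{3}{4}A(s)\cdot\left(\frac43\cdot\frac{3A(s)}{4}-A(s)\right)^2\cdot\frac{1}{?}$ — more cleanly, plugging $\zeta=\frac{3A}{4}$ into $\zeta\left(\frac43\zeta-A\right)^2$ gives $0$, which is wrong; rather the double zero of the cubic $\zeta(\frac43\zeta-A)^2$ in $\zeta$ is at $\zeta=\frac{3A}{4}$ and there the cubic vanishes, so I instead match the \emph{critical value}: the cubic $P(\zeta)=\zeta(\frac43\zeta-A)^2$ has a local extremum value $P\big(\zeta_{\mathrm{crit}}\big)$ at a point where $P'=0$, namely $\zeta=\frac{A}{4}$, giving $P(A/4)=\frac{A}{4}\cdot(-\frac{2A}{3})^2=\frac{A^3}{9}$. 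Since $\zeta(z;s)$ is conformal, the critical value of $z\mapsto h(z;s)^2/4$ at its double-zero-nearby extremum equals $A(s)^3/9$. Concretely this forces $A(s)^3 = \tfrac{9}{4}\,h\!\left(\tfrac{2}{s_*};s\right)^2$ evaluated appropriately — I would instead just define $A(s)$ directly from the expansion of $h(2/s;s)$ near $s=2$ (where $h(2/s;s)=\mathcal{O}((s-2)^{3/2})$, as noted after \eqref{eq: breaking curve linear weight}), so that $A(s)^3$ is analytic in $s$, vanishing to order $3$ at $s=2$, hence $A(s)$ is analytic near $s=2$ with $A(2)=0$ and a simple zero. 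This is the Painlevé-II/soft-edge analogue of the $K(s)$ in Lemma~\ref{lem: conformal map}, and the computation is routine once the matching point is identified.

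The third step is to produce $\zeta(z;s)$ itself as a conformal map. Having chosen $A(s)$ so that the cubic $P(\zeta)=\zeta(\frac43\zeta-A(s))^2$ and the analytic function $w(z):=h(z;s)^2/4$ have matching critical structure (both have a simple zero and a nearby double-zero extremum with the same extremal value), one inverts: set $\zeta(z;s)=P^{-1}(w(z))$ choosing the branch of $P^{-1}$ that sends $z=1$ to $\zeta=0$. Because $w(z)$ is conformal onto a neighborhood of $0$ (its derivative at $z=1$ is $\frac{(s-2)^2}{8}\neq 0$ for $s\neq 2$, and a scaling/Hadamard-type argument handles $s=2$ simultaneously — this is where I would follow \cite[Proposition~4.5]{bertola2018painleve}), and $P$ restricted to a suitable sheet is conformal away from its critical point, the composition $\zeta$ is conformal in a fixed $z$-neighborhood of $1$, jointly analytic in $(z,s)$ for $s$ near $2$, with $\zeta(1;s)\equiv 0$. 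Taking square roots consistently on $\gamma_{m,0}$ (branch cut along the main arc, as for $f_B^{1/4}$ in Section~\ref{sec: Local Parametrices}) recovers the stated relation $-h(z)/2=\frac43\zeta^{3/2}-A(s)\zeta^{1/2}$ rather than just its square.

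The main obstacle is the uniformity of conformality as $s\to 2$: for $s\neq 2$ the map $w(z)=h(z;s)^2/4$ is a genuine conformal change of coordinate, but $w'(1)=\frac{(s-2)^2}{8}\to 0$, and simultaneously the double zero at $z=2/s$ merges with $z=1$. One must show that after the nonlinear rescaling by $P$ (which also degenerates, its two zeros colliding as $A(s)\to 0$), the \emph{composite} $\zeta=P^{-1}\circ w$ has a non-vanishing, uniformly bounded derivative at $z=1$ on a $\delta$-neighborhood independent of $s$ near $2$. The clean way to do this is to work with the identity $\frac{h^2}{4}=\zeta\bigl(\frac43\zeta-A\bigr)^2$ directly and apply the analytic implicit function theorem / a Hadamard-lemma factorization to the analytic-in-$(z,s)$ function $\frac{h(z;s)^2}{4(z-1)}$, exactly as in \cite[Proposition~4.5]{bertola2018painleve}; this bypasses the degeneration by never dividing by quantities that vanish at $s=2$. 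I expect no genuinely new difficulty beyond carefully transcribing that argument into the present notation and checking the $A(2)=0$, $\zeta(1;s)\equiv 0$ normalizations.
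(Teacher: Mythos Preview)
Your approach is correct in outline but differs from the paper's in an instructive way. You square the desired identity to obtain the cubic relation $h(z;s)^2/4=\zeta\bigl(\tfrac{4}{3}\zeta-A(s)\bigr)^2=:P(\zeta)$, then propose to recover $\zeta$ by inverting $P$ on the branch with $P^{-1}(0)=0$ and finally taking a square root to get $u=\zeta^{1/2}$. The paper instead works directly with the \emph{unsquared} relation, treating $\tfrac{4}{3}u^{3}-A(s)u=-h(z;s)/2$ as a depressed cubic in $u$ and writing down its solution via Cardano: setting $\xi=-3h+\bigl(-4A^{3}+9h^{2}\bigr)^{1/2}$ and $u=2^{-2/3}A\,\xi^{-1/3}+2^{-4/3}\xi^{1/3}$. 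This explicit formula lets one compute the Taylor expansion of $u(z;s)$ about $z=1$ termwise, see that the half-integer powers of $z-1$ organize so that $\zeta=u^{2}$ is analytic with $\zeta'(1;s)=\tfrac{(s-2)^{2}}{2A(s)^{2}}$, and then check directly from $A(s)=-(s-2)+\mathcal{O}((s-2)^{2})$ that this derivative remains $\tfrac12+\mathcal{O}(s-2)$ as $s\to2$.

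Your determination of $A(s)$ via matching the nonzero critical value $P(A/4)=A^{3}/9$ with $h_{cr}(s)^{2}/4$ is equivalent to the paper's definition $h_{cr}(s)=\tfrac{2}{3}A(s)^{3/2}$, so the two $A$'s agree. What your route costs is exactly the uniformity step you flag: inverting $P$ near $0$ gives $\zeta\approx w/A(s)^{2}$, which blows up as $s\to2$, and one must argue that this is compensated by $w'(1)=(s-2)^{2}/8\to0$. You propose to handle this by an implicit-function/Hadamard argument \`a la \cite{bertola2018painleve}; that works, but the Cardano route sidesteps the issue by never forming the degenerating inverse --- the explicit $u_{1}+u_{2}$ stays bounded, and the cancellation between half-integer powers is visible by direct expansion. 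If you wanted to follow your squared approach all the way through, the cleanest fix is to factor $h(z;s)^{2}/\bigl(4(z-1)\bigr)$, which is analytic and nonvanishing at $z=1$ uniformly in $s$ near $2$, and run the implicit function theorem on the ratio rather than on $h^{2}$ itself.
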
 
\begin{proof}
	As $h$ has a critical point at $z = \frac{2}{s}$, we write
	\begin{equation}\label{eq: h cubic}
	h_{cr}(s) = h\left(\frac{2}{s},s\right) = 	2\log\left(\frac{2}{s}+\left(\frac{4}{s^2}-1\right)^{1/2}\right)-s\left(\frac{4}{s^2}-1\right)^{1/2}.
	\end{equation}
	Near $s=2$, we see that $h_{cr}(s) = \mathcal{O}\left((s-2)^{3/2}\right)$ and $h_{cr}(s) <0$ for $s<2$, so that
	\begin{equation}\label{eq: A def}
	h_{cr}(s) = \frac{2}{3} A^{3/2}(s), 
	\end{equation}
	for some $A(s)$ analytic in a neighborhood of $s=2$ satisfying $A(s) =\mathcal{O}(s-2)$ as $s\to 2$ and $A(s) >0$ for $s <2$. More precisely,
	\begin{equation}\label{eq: a1 eqn}
	A(s) = -(s-2) +\mathcal{O}\left((s-2)^2\right), \qquad s\to 2.
	\end{equation}	
	Next, define 
	\begin{equation}\label{eq:def_xi}
	\xi(z;s) = -3h(z;s) + \left(-4A^3(s)+9h^2(z;s)\right)^{1/2}
	\end{equation}
	where the square root has a branch cut for $z\in [2/s,\infty)$ and maps $\mathbb{R}^-$ into $i\mathbb{R}^-$. Since $h_+(x) = -h_-(x)$ for $x \in (-1, 1)$, it follows that 
	\begin{equation}
		\label{eq:xi jumps}
	(\xi_+\xi_-)(x) = \left \{ \begin{array}{lr}
		-4A^3(s), & x <1,\\
		4A^3(s), & x >2/s.
	\end{array} \right.
	\end{equation} Set
	\begin{equation}
	u(z;s) = u_1(z;s) + u_2(z;s),
	\end{equation}
	where
	\begin{equation}\label{eq:u1u2}
	u_1(z;s) = \frac{A(s)}{2^{2/3}\xi^{1/3}(z;s)}, \qquad u_2(z;s) = \frac{\xi^{1/3}(z;s)}{2^{4/3}}.
	\end{equation}
	In this last equation, we choose the branch of the cubic root that maps $\mathbb{R}^-$ into $\mathbb{R}^-$ and $i\mathbb{R}^-$ into $i\mathbb{R}^+$, with a cut on the positive real axis.  
	Then, $u$ solves
	\begin{equation}\label{eq: u cubic}
	\frac{4}{3}u^3(z;s) - A(s) u(z;s)=-\frac{h(z;s)}{2}.
	\end{equation}
	Using \eqref{eq:xi jumps}--\eqref{eq:u1u2} we can check that $u(z;s)$ is analytic in a neighborhood of $z=1$ off of $z<1$ and $u_+(x;s) = -u_-(x;s)$ for $x <1$.
	\begin{align}
	\xi(z;s) &= 2 (-A(s))^{3/2}+ 3\sqrt{2}(s-2)(z-1)^{\frac{1}{2}} + \frac{9(s-2)^2}{2(-A(s))^{3/2}}(z-1) \notag \\
	&\qquad +\frac{2+3s}{2\sqrt{2}}(z-1)^{\frac{3}{2}} + \mathcal{O}\left((z-1)^2\right).
	\end{align}
	From this, we then have that
	\begin{align}
	u_1(z) &= -\frac{(-A(s))^{1/2}}{\sqrt{2}}-\frac{s-2}{2\sqrt{2}A(s)} (z-1)^{\frac{1}{2}} -\frac{(s-2)^2}{8(-A(s))^{5/2}}(z-1)\notag\\
	&-\frac{A^3(s)(3s+2)+8(s-2)^3}{24\sqrt{2}A^4(s)}(z-1)^{\frac{3}{2}}\notag \\
	&+ \frac{(s-2)\left(35(s-2)^3+4A^3(s)(3s+2)\right)}{192 (-A(s))^{\frac{11}{2}}}(z-1)^2 
	+\mathcal{O}\left((z-1)^\frac{5}{2}\right)
	\end{align}
	and
	\begin{align}
	u_2(z) &= \frac{(-A(s))^{1/2}}{\sqrt{2}}-\frac{s-2}{2\sqrt{2}A(s)} (z-1)^{\frac{1}{2}} +\frac{(s-2)^2}{8(-A(s))^{5/2}}(z-1)\notag\\
	&-\frac{A^3(s)(3s+2)+8(s-2)^3}{24\sqrt{2}A^4(s)}(z-1)^{\frac{3}{2}}\notag \\
	&- \frac{(s-2)\left(35(s-2)^3+4A^3(s)(3s+2)\right)}{192 (-A(s))^{\frac{11}{2}}}(z-1)^2
	+\mathcal{O}\left((z-1)^\frac{5}{2}\right).
	\end{align}
	Combining these two, we have that
	\begin{align}
	\label{eq:u asymptotics z=1}
	u(z;s) &= -\frac{(s-2)}{\sqrt{2} A(s)}\left(z-1\right)^{1/2}-\frac{A^3(s)(3s+2)+8(s-2)^3}{12\sqrt{2}A^4(s)}(z-1)^{3/2}+\mathcal{O}\left((z-1)^{\frac{5}{2}}\right).
	\end{align}
	Combining \eqref{eq:u asymptotics z=1} and the jump relation $u_+(x;s) = -u_-(x;s)$ for $x <1$ yields the representation $u(z) =  g(z)(z - 1)^{\frac{1}{2}}$, where $g(z)$ is analytic in a small neighborhood of $z=1$. Making the change of variables $u^2 \mapsto \zeta$, we have that
	\begin{equation}\label{eq: zeta series}
	\zeta(z;s) = 				\frac{(s-2)^2}{2A^2(s)}(z-1)+\mathcal{O}\left((z-1)^2\right), 
	\end{equation}
	so that $\zeta$ is a conformal map in a neighborhood of $z=1$ when $s$ is in a neighborhood of $2$. Note that when $s=2$, we have that
	\begin{equation}
	\zeta(z, 2) = \frac{1}{2}(z-1) + \mathcal{O}\left((z-1)^2\right), 
	\end{equation}
	where we have used \eqref{eq: a1 eqn}, so that $\zeta$ is still conformal when $s=2$. Finally, it is immediate from \eqref{eq: u cubic} that $\zeta$ solves \eqref{eq: h cubic}, which completes the proof. 
\end{proof}
Using \eqref{eq: a1 eqn} and \eqref{eq: zeta series}, we may compute
\begin{equation}
\zeta(z,s) = \zeta_1(s) (z-1) + \mathcal{O}\left((z-1)^2\right), \qquad z\to 1, 
\end{equation}
where
\begin{equation}\label{eq: zeta 1 eqn}
\zeta_1(s) =  \frac{1}{2} + \mathcal{O}(s-2), \qquad s\to 2.
\end{equation}
As $s\in\R$, and for $x <1$, we can write $u_+(x) = -u_-(x) = 2^{-4/3}(\xi_+^{1/3} - \xi_-^{1/3})(x)$, where the last quantity is purely imaginary; to see this, we note that \eqref{eq:def_xi} and \eqref{eq:xi jumps} imply that $\xi_{\pm}\in i\mathbb{R}^-$, and by the choice of the cubic root in \eqref{eq:u1u2}, we have that 
 $\left(\xi^{1/3}\right)_{\pm}\in i\mathbb{R}^+$, therefore $\gamma_{m,0}$ is mapped to the ray $\Gamma_4$ by the conformal map $\zeta$. Moreover, we now choose the lips of the lens, $\gamma_{m,0}^\pm$, within the disc so that they are mapped by $\zeta$ to the rays $\Gamma_3$ and $\Gamma_1$, respectively.

Next, we set 
\begin{equation}\label{eq: En eqn ds}
E_n^{(1)}(z) = M(z)\left(\frac{I+i\sigma_1}{\sqrt{2}} \right)^{-1} \left(n^{2/3} \zeta(z;s)\right)^{\sigma_3/4}, 
\end{equation}
where the branch cut for $\zeta^{1/4}$ is taken on $\gamma_{m,0}(s)$. As
\begin{equation}
M_+(z) = M_-(z)\begin{pmatrix}
0 & 1\\
-1 & 0
\end{pmatrix}, \qquad \zeta_+^{1/4}(z,s) = i\zeta_-^{1/4}(z,s) , \qquad z\in\gamma_{m,0}(s), 
\end{equation}
we see that $E_n^{(1)}(z)$ has no jumps within $D_1$. By \eqref{eq: global parametrix eq genus 0} each entry of $M(z)$ is $\mathcal{O}\left((z-1)^{1/4}\right)$ as $z\to 1$, so the singularity of $E_n^{(1)}$ at $z=1$ is removable. Therefore, we see that $E_n^{(1)}(z)$ is analytic in $D_1$. We may then conclude that 
\begin{equation}\label{eq: P1 def}
P^{(1)}(z) = E_n^{(1)}(z) \Psi\left(n^{2/3}\zeta(z;s), n^{2/3} A(s)\right) e^{-\frac{n}{2}h(z)\sigma_3}
\end{equation}
solves \eqref{eq: P RHP ds critical}. Indeed, as $\zeta(z;s)$ maps $\gamma_{m,0}$, $\gamma_{m,0}^+$, and $\gamma_{m,0}^-$ to $\Gamma_4$, $\Gamma_3$, and $\Gamma_1$, respectively, we see that $P^{(1)}$ is analytic in $D_1\setminus\hat{\Sigma}$. Next, using Lemma~\ref{lem: conformal map crit} and \eqref{eq: Psi infinity}, we see that $P^{(1)}$ satisfies \eqref{eq: matching condition ds critical}. Finally, we note that as $P^{(1)}$ and $S$ have the same jumps within $D_1$, the combination $S(z) P^{(1)}(z)^{-1}$ is analytic on $D_1\setminus\{1\}$. Also note that the behavior of $S$ and $P^{(1)}$ are the same as $z\to 1$, so that the singularity is removable. 

\subsection{Proof of Theorem~\ref{thm: rec coeff double scaling critical}}
The final transformation is 
\begin{align}\label{eq: final transformation crit 2}
R(z)= S(z) \begin{cases}
M(z)^{-1}, \qquad & z\in \C\setminus\overline{\left(D_{-1}\cup D_1 \right)} \\
P^{(-1)}(z)^{-1}, \qquad & z\in D_{-1} \\
P^{(1)}(z)^{-1}, \qquad & z\in D_{1} \\
\end{cases}
\end{align}
As before, we want to write the jump matrix as $I + \Delta(z)$, where $\Delta(z)$ has an expansion in inverse powers of $n^\alpha$, for some $\alpha$ to be determined. We recall \eqref{eq: Delta k -1}, where we showed that 
\begin{equation}
\Delta(z) = \sum_{k=1}^\infty \frac{\Delta_k(z)}{n^k}, \qquad n\to\infty, \quad z\in D_{-1}, 
\end{equation} 
where
\begin{equation} \label{eq: Delta k -1 ds}
\Delta_k(z)= \frac{(-1)^{k-1}\prod_{j=1}^{k-1}(2j-1)^2}{4^{k-1}(k-1)!\tilde{h}(z)^k} M(z)\begin{pmatrix}
\frac{(-1)^k}{k} \left(\frac{k}{2}-\frac{1}{4}\right) & i \left(k-\frac{1}{2}\right) \\
(-1)^{k+1} i \left(k-\frac{1}{2}\right) &\frac{1}{k}\left(\frac{k}{2}-\frac{1}{4}\right)
\end{pmatrix}M^{-1}(z),
\end{equation}
and $\tilde{h}(z) = h(z)-2\pi i$. 

To compute the jumps over $\partial D_1$, we first recall that 
\begin{equation}
\Psi(\zeta, w) = \left(1+\frac{\Psi_1(w)}{\zeta}+\mathcal{O}\left(\frac{1}{\zeta^2}\right)\right) \zeta^{-\sigma_3/4} \left(\frac{I+i\sigma_1}{\sqrt{2}}\right)e^{-\left(\frac{4}{3}\zeta^{3/2}-w\zeta^{2/3}\right)\sigma_3}, \qquad \zeta \to\infty.
\end{equation}
We may then use \eqref{eq: Psi infinity}, \eqref{eq: En eqn ds}, and \eqref{eq: P1 def} to see that
\begin{equation}
P^{(1)}(z)M^{-1}(z) = M(z) \left(I + \frac{\tilde{\Psi}_{1/3}(z,s)}{n^{1/3}} + \frac{\tilde{\Psi}_{2/3}(z,s)}{n^{2/3}} + \mathcal{O}\left(\frac{1}{n}\right)\right) M^{-1}(z), \qquad n\to\infty, 
\end{equation}
where
\begin{subequations}\label{eq: Psi 1 Psi 2}
	\begin{equation}
	\tilde{\Psi}_{1/3}(z,s) = \frac{\Psi_{1,12}(w)}{2\zeta^{1/2}(z,s)}\begin{pmatrix}
	i & 1 \\
	1 & -i
	\end{pmatrix},
	\end{equation}
	and
	\begin{equation}
	\tilde{\Psi}_{2/3}(z,s) = \frac{1}{2\zeta(z,s)}\begin{pmatrix}
	\Psi_{1,11}(w)+\Psi_{1,22}(w) & i \left(\Psi_{1,11}(w)-\Psi_{1,22}(w)\right) \\
	-i \left(\Psi_{1,11}(w)-\Psi_{1,22}(w)\right) & \Psi_{1,11}(w)+\Psi_{1,22}(w)
	\end{pmatrix},
	\label{eq: Psi 23}\end{equation}
\end{subequations}
where $\Psi_{1, ij}$ refers to the $(i,j)$ entry of the matrix $\Psi_1$. Moreover, above we have defined
\begin{equation}\label{eq: w}
w = w(s) = n^{2/3}A(s),
\end{equation}
where $A$ is the analytic function given in Lemma~\ref{lem: conformal map}. By the double scaling limit \eqref{eq: ds critical} and \eqref{eq: a1 eqn}, we also have that
\begin{equation}\label{eq: w L2}
w = -L_2 + \mathcal{O}\left(\frac{1}{n^{2/3}}\right), 
\qquad n\to\infty.
\end{equation}

It is now straightforward to see that $\Delta$ can be written in inverse powers of $n^{1/3}$ as
\begin{equation}
\Delta(z) = \sum_{k=1}^\infty \frac{\Delta_{k/3}(z)}{n^{1/3}}, \qquad n\to\infty, z\in\Sigma_R, 
\end{equation}
where $\Delta_{k/3}(z) \equiv 0$ for $z\in\Sigma_R\setminus\left(\partial D_1 \cup \partial D_{-1}\right)$, 
\begin{align*}
\Delta_{k/3}(z) = \begin{cases}
0, \qquad & \frac{k}{3}\not\in \N, \\
\displaystyle
\frac{(-1)^{k-1}\prod_{j=1}^{k-1}(2j-1)^2}{4^{k-1}(k-1)!\tilde{h}(z)^k} M(z)\begin{pmatrix}
\frac{(-1)^k}{k} \left(\frac{k}{2}-\frac{1}{4}\right) & i \left(k-\frac{1}{2}\right) \\
(-1)^{k+1} i \left(k-\frac{1}{2}\right) &\frac{1}{k}\left(\frac{k}{2}-\frac{1}{4}\right)
\end{pmatrix}M^{-1}(z), \qquad & \frac{k}{3}\in\N
\end{cases}
\end{align*}
for $z\in \partial D_1$, and 
\begin{equation}
\Delta_{k/3}(z) = M(z)\tilde{\Psi}_{k/3}(z,s)M^{-1}(z), \qquad z\in\partial D_1,
\end{equation}
where the $\tilde{\Psi}_{k/3}$ can be computed using the expansion of $\Psi$ in \eqref{eq: Psi infinity} along with the definitions of the conformal maps and analytic prefactor given in Lemma~\ref{lem: conformal map} and \eqref{eq: En eqn ds}, respectively. We recall that both $\tilde{\Psi}_{1/3}$ and $\tilde{\Psi}_{2/3}$ are given in \eqref{eq: Psi 1 Psi 2}. 

Now, we may again use the arguments presented in \cite[Section~7]{deift1999strong} and \cite[Section~8]{arnojacobi} to conclude that $R$ has an asymptotic expansion in inverse powers of $n^{1/3}$ of the form
\begin{equation}\label{eq: R expansion c 2}
R(z) = \sum_{k=0}^{\infty} \frac{R_{k/3}(z)}{n^{k/3}}, \qquad n\to\infty, 
\end{equation}
where each $R_{k/3}$ solves the following Riemann-Hilbert problem:
\begin{subequations}
	\label{eq: R_k RHP c 2}
	\begin{alignat}{2}
	&R_{k/3}(z) \text{ is analytic for } z \in \C\setminus  \left(\partial D_{-1} \cup \partial D_{1}\right) \qquad &&\\
	& R_{k/3,+}(z) = R_{k/3,-}(z) + \sum_{j=1}^{k-1}R_{(k-j)/3,-}\Delta_{j/3}(z), \qquad &&z\in \partial D_{-1} \cup \partial D_{1}\\
	& R_{k/3}(z) = \frac{R_{k/3}^{(1)}}{z}+\frac{R_{k/3}^{(2)}}{z^2}+\mathcal{O}\left(\frac{1}{z^3}\right), \qquad &&z \to\infty.
	\end{alignat}
\end{subequations}
By \eqref{eq: final transformation crit 2}, we have that $T(z)= S(z) = R(z)M(z)$ for $z$ outside of the lens. Using \eqref{eq: R expansion c 2}, we then have that
\begin{subequations}
	\label{eq: T expansion 2}
	\begin{equation}
	T^{(1)} = M^{(1)} + \frac{R_{1/3}^{(1)}}{n^{1/3}}+\frac{R_{2/3}^{(1)}}{n^{2/3}}+\mathcal{O}\left(\frac{1}{n}\right), \qquad n\to\infty, 
	\end{equation}
	\begin{equation}
	T^{(2)}= M^{(2)}+\frac{R_{1/3}^{(1)}M^{(1)}+R_{1/3}^{(2)}}{n^{1/3}}+\frac{R_{2/3}^{(1)}M^{(1)}+R_{2/3}^{(2)}}{n^{2/3}}+\mathcal{O}\left(\frac{1}{n}\right), \qquad n\to\infty, 
	\end{equation}
\end{subequations}
where $M^{(1)}$ and $M^{(2)}$ were calculated in \eqref{eq: M1 and M2}.
We therefore turn our attention to computing the first few terms of the expansions of both $R_{1/3}$ and $R_{2/3}$. Before doing so, we first present the following lemma.
\begin{lemma}
	The restrictions of $\Delta_{1/3}$ and $\Delta_{2/3}$ to $\partial D_1$ have meromorphic continuations to a neighborhood of $D_1$. These continuations are analytic, except at $1$, where they have poles of order $1$.
\end{lemma}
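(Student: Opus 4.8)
The plan is to compute the restrictions of $\Delta_{1/3}$ and $\Delta_{2/3}$ to $\partial D_1$ explicitly and to read the continuation off the resulting formulas. Since the entries $\Psi_{1,ij}(w)$ in \eqref{entries_Psi1} do not depend on $z$, for each fixed $n$ (equivalently fixed $s$) the displays in \eqref{eq: Psi 1 Psi 2}, together with $\Delta_{k/3}=M\tilde{\Psi}_{k/3}M^{-1}$ and $\Psi_{1,22}=-\Psi_{1,11}$ from \eqref{entries_Psi1}, give
\begin{equation*}
\Delta_{1/3}(z)=\frac{\Psi_{1,12}(w)}{2\,\zeta^{1/2}(z;s)}\,M(z)\,N\,M(z)^{-1},\qquad
\Delta_{2/3}(z)=\frac{i\,\Psi_{1,11}(w)}{\zeta(z;s)}\,M(z)\begin{pmatrix}0&1\\-1&0\end{pmatrix}M(z)^{-1},
\end{equation*}
with $N=\begin{pmatrix}i&1\\1&-i\end{pmatrix}$. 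Thus everything reduces to conjugation by the genus-$0$ parametrix $M$ of \eqref{eq: global parametrix eq genus 0} and to the conformality of $\zeta(\cdot;s)$ near $z=1$ (Lemma~\ref{lem: conformal map crit}). Writing $M(z)=\tfrac1{\sqrt2}(z^2-1)^{-1/4}B(z)$ with $B(z)=\begin{pmatrix}\varphi^{1/2}&i\varphi^{-1/2}\\-i\varphi^{-1/2}&\varphi^{1/2}\end{pmatrix}$, the scalar prefactor cancels, so $M(z)XM(z)^{-1}=B(z)XB(z)^{-1}$, while $\det B(z)=\varphi-\varphi^{-1}=2(z^2-1)^{1/2}$. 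A short computation using $\varphi+\varphi^{-1}=2z$ shows that $B(z)$ commutes with $\begin{pmatrix}0&1\\-1&0\end{pmatrix}$ and that $B(z)N=(\varphi^{1/2}+\varphi^{-1/2})N$, whence $M(z)\begin{pmatrix}0&1\\-1&0\end{pmatrix}M(z)^{-1}=\begin{pmatrix}0&1\\-1&0\end{pmatrix}$ and $M(z)NM(z)^{-1}=\dfrac{(\varphi^{1/2}+\varphi^{-1/2})^2}{2(z^2-1)^{1/2}}N=\Big(\dfrac{z+1}{z-1}\Big)^{1/2}N$. Substituting back,
\begin{equation*}
\Delta_{2/3}(z)=\frac{i\,\Psi_{1,11}(w)}{\zeta(z;s)}\begin{pmatrix}0&1\\-1&0\end{pmatrix},\qquad
\Delta_{1/3}(z)=\frac{\Psi_{1,12}(w)}{2}\,\frac{(z+1)^{1/2}}{(z-1)^{1/2}\,\zeta^{1/2}(z;s)}\,N .
\end{equation*}

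The conclusions then follow at once. By Lemma~\ref{lem: conformal map crit} and \eqref{eq: zeta series}--\eqref{eq: zeta 1 eqn}, $\zeta(\cdot;s)$ is analytic and conformal in a fixed neighbourhood of $z=1$, with a simple zero there and $\zeta^{1/2}(z;s)=\zeta_1(s)^{1/2}(z-1)^{1/2}\big(1+\mathcal{O}(z-1)\big)$, $\zeta_1(s)\ne0$. For $\Delta_{2/3}$ this gives immediately, after possibly shrinking $\delta$, a meromorphic continuation of its restriction to $\partial D_1$ to a neighbourhood of $\overline{D_1}$, analytic except for a simple pole at $z=1$. For $\Delta_{1/3}$ one first notes that $M$ and $\zeta^{1/2}$ are analytic across $\gamma_{m,0}^{\pm}$, so $\Delta_{1/3}$ continues analytically off $\gamma_{m,0}$; and across $\gamma_{m,0}\cap D_1$ the sign change $\zeta^{1/2}_+=-\zeta^{1/2}_-$ (from $\zeta^{1/4}_+=i\zeta^{1/4}_-$) is cancelled by the one produced through $M_+=M_-\begin{pmatrix}0&1\\-1&0\end{pmatrix}$ together with the identity $\begin{pmatrix}0&1\\-1&0\end{pmatrix}N\begin{pmatrix}0&-1\\1&0\end{pmatrix}=-N$, so that $\Delta_{1/3,+}=\Delta_{1/3,-}$ there. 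Since the two $(z-1)^{1/2}$ factors then combine to $(z-1)$ while $(z+1)^{1/2}$ is analytic and nonvanishing near $z=1$, $\Delta_{1/3}$ extends meromorphically to a neighbourhood of $\overline{D_1}$ with a single pole of order one at $z=1$ -- exactly one unless $\Psi_{1,12}(w)=\tfrac i2\big(D(w)-q(w)\big)$ vanishes, in which case $\Delta_{1/3}\equiv0$, and similarly for $\Delta_{2/3}$ and $\Psi_{1,11}(w)$.

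The only genuinely delicate point -- mild though it is -- is the cancellation at $z=1$: a priori $M(z)$ is unbounded like $(z-1)^{-1/4}$ and $\zeta^{1/2}(z;s)$ vanishes like $(z-1)^{1/2}$, so one might fear a pole of order $3/2$ or a residual branch point. Both are removed by the single identity $M(z)NM(z)^{-1}=\big((z+1)/(z-1)\big)^{1/2}N$ (together with its analogue with $\begin{pmatrix}0&1\\-1&0\end{pmatrix}$ in place of $N$), which rests on the algebraic fact that both $B(z)N$ and $NB(z)^{-1}$ are scalar multiples of $N$; once this is in place the lemma is a one-line consequence. I would also record that, because $\zeta_1(s)=\tfrac12+\mathcal{O}(s-2)$ by \eqref{eq: zeta 1 eqn}, the neighbourhood on which these continuations are valid may be chosen independently of $n$ along the double scaling regime \eqref{eq: ds critical} -- the uniformity needed when solving the Riemann--Hilbert problems \eqref{eq: R_k RHP c 2}.
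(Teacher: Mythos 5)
Your proof is correct and follows the same route as the paper's: $\Delta_{1/3}$ extends analytically across $\gamma_{m,0}$ because the sign changes of $M$ and $\zeta^{1/2}$ compensate, and the pole at $z=1$ is simple because the $(z-1)^{-1/2}$ growth of the conjugated constant matrix combines with the simple zero of $\zeta$. Your closed-form identity $M(z)\begin{pmatrix}i&1\\1&-i\end{pmatrix}M(z)^{-1}=\bigl((z+1)/(z-1)\bigr)^{1/2}\begin{pmatrix}i&1\\1&-i\end{pmatrix}$ and the use of $\Psi_{1,22}=-\Psi_{1,11}$ to reduce $\tilde{\Psi}_{2/3}$ to a multiple of $\begin{pmatrix}0&1\\-1&0\end{pmatrix}$ are welcome sharpenings of the paper's leading-order expansion, but the structure of the argument is identical.
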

\begin{proof}
	We first consider $\Delta_{1/3}$, defined as
	\begin{equation}
	\Delta_{1/3}(z) = M(z) \tilde{\Psi}_{1/3}(z,s) M^{-1}(z), 
	\end{equation}
	where
	\begin{equation*}
	\tilde{\Psi}_{1/3}(z,s) = \frac{\Psi_{1,12}(w)}{2\zeta^{1/2}(z,s)}\begin{pmatrix}
	i & 1 \\
	1 & -i
	\end{pmatrix},
	\end{equation*}
	where the branch cut of $\zeta^{1/2}$ is taken to be $\gamma_{m,0}(s)$. Next, as 
	\begin{equation}
	M_+(z) = M_-(z)\begin{pmatrix}
	0 & 1\\
	-1 & 0
	\end{pmatrix}, \qquad \zeta_+^{1/2}(z,s)=-\zeta_-^{1/2}(z,s), \qquad z\in\gamma_{m,0}(s), 
	\end{equation}
	we see that $\Delta_{1/3,+}(z) = \Delta_{1/3,-}(z)$ for $z\in\gamma_{m,0}$ so that $\Delta_{1/3}$ is analytic in $D_1\setminus\{1\}$. As 
	\begin{equation}
	M(z) \begin{pmatrix}
	i & 1 \\
	1 & -i
	\end{pmatrix}M^{-1}(z) = \sqrt{2}\begin{pmatrix}
	i & 1 \\
	1 & -i
	\end{pmatrix}\frac{1}{\left(z-1\right)^{1/2}}+\mathcal{O}\left((z-1)^{1/2}\right), \qquad z\to 1, 
	\end{equation}
	and $\zeta(z,s) =\zeta_1(s)(z-1)+\mathcal{O}\left(z-1\right)^2$, where $\zeta_1(s)\not=0$ as $\zeta$ is a conformal mapping from $1$ to $0$, we see that the isolated singularity at $z=1$ is a simple pole. 
	
	In the case, of $\Delta_{2/3}$, we note that
	\begin{equation}
	M(z)\tilde{\Psi}_{2/3}(z,s)M^{-1}(z) = \tilde{\Psi}_{2/3}(z,s), 
	\end{equation}
	so that the lemma follows immediately from \eqref{eq: Psi 23}.
\end{proof}
In light of the lemma above, we may write that
\begin{equation}
\Delta_{1/3}(z) = \frac{C^{(1/3)}}{z-1}, \qquad z\to 1.
\end{equation}
Using that $\zeta(z, s) = \zeta_1(s)(z-1)+ \mathcal{O}\left(z-1\right)^2$ as $z\to 1$, we compute that
\begin{equation}
C^{(1/3)} = \frac{\Psi_{1,12}(w)}{\sqrt{2}\zeta_1^{1/2}(s)}\begin{pmatrix}
i & 1 \\
1 & -i
\end{pmatrix}.
\end{equation}
By direct inspection, we see that
\begin{align}
R_{1/3}(z) = \begin{cases}
\displaystyle\frac{C^{(1/3)}}{z-1}, \qquad & z\in\C\setminus D_1, \\[2mm]
\displaystyle\frac{C^{(1/3)}}{z-1} - \Delta_{1/3}(z), \qquad & z\in D_1,
\end{cases}
\end{align}
solves the Riemann-Hilbert problem \eqref{eq: R_k RHP c 2} when $k=1$, so that 
\begin{equation}\label{eq: R13 expansion}
R_{1/3}^{(1)} = R_{1/3}^{(2)} =C^{(1/3)}.
\end{equation}

We analogously solve for the terms in the expansion of $R_{2/3}$ by writing 
\begin{equation}
R_{1/3}(z)\Delta_{1/3}(z) + \Delta_{2/3}(z) = \frac{C^{(2/3)}}{z-1}, 
\end{equation}
where we may compute that
\begin{equation}
C^{(2/3)} = \frac{1}{2 \zeta_1(s)} \begin{pmatrix}
\Psi_{1,11}(w)+\Psi_{1,22}(w)  & i \left(\Psi_{1,11}(w)-\Psi_{1,22}(w)\right)\\
-i \left(\Psi_{1,11}(w)-\Psi_{1,22}(w)\right)& \Psi_{1,11}(w)+\Psi_{1,22}(w) 
\end{pmatrix}.
\end{equation}
Then, 
\begin{align}
R_{2/3}(z) = \begin{cases}
\displaystyle\frac{C^{(2/3)}}{z-1}, \qquad & z\in\C\setminus D_1, \\[2mm]
\displaystyle\frac{C^{(2/3)}}{z-1} - R_{1/3}(z)\Delta_{1/3}(z)-\Delta_{2/3}(z), \qquad & z\in D_1,
\end{cases}
\end{align}
solves \eqref{eq: R_k RHP c 2}, and we may compute that the terms in the large $z$ expansion of $R_{2/3}$ are given by
\begin{equation}\label{eq: R23 expansion}
R^{(1)}_{2/3} = R^{(2)}_{2/3}= C^{(2/3)}. 
\end{equation}

Now, combining the previous equations (in particular \eqref{eq: recurrence coeffs in terms of T}, \eqref{eq: T expansion 2}, 
\eqref{eq: R13 expansion}, and \eqref{eq: R23 expansion}), we have
\begin{subequations}
	\begin{equation}
	\alpha_n(s) = \frac{\Psi_{1,11}(w)-\Psi_{1,22}(w)+\Psi_{1,12}^2(w)}{\zeta_1(s)} \frac{1}{n^{2/3}} + \mathcal{O}\left(\frac{1}{n}\right), \qquad n\to\infty, 
	\end{equation}
	and
	\begin{equation}
	\beta_n(s) = \frac{1}{4} + \frac{\Psi_{1,11}(w)-\Psi_{1,22}(w)+\Psi_{1,12}^2(w)}{2\zeta_1(s)} \frac{1}{n^{2/3}}+ \mathcal{O}\left(\frac{1}{n^{4/3}}\right), \qquad n\to\infty.
	\end{equation}
\end{subequations}
Next, using \eqref{eq: zeta 1 eqn} and the double scaling limit \eqref{eq: ds critical}, along with the formula for $w$ in \eqref{eq: w}, we have that
\begin{subequations}
	\begin{equation}
	\alpha_n(s) = 2\left(\Psi_{1,11}(w)-\Psi_{1,22}(w)+\Psi_{1,12}^2(w)\right)\frac{1}{n^{2/3}} + \mathcal{O}\left(\frac{1}{n}\right), \qquad n\to\infty, 
	\end{equation}
	and
	\begin{equation}
	\beta_n(s) = \frac{1}{4} + \left(\Psi_{1,11}(w)-\Psi_{1,22}(w)+\Psi_{1,12}^2(w)\right) \frac{1}{n^{2/3}}+ \mathcal{O}\left(\frac{1}{n}\right), \qquad n\to\infty.
	\end{equation}
\end{subequations}
Using \eqref{entries_Psi1}, we can simplify the previous combination of entries of $\Psi_1(w)$:
\[
\Psi_{1,11}(w)-\Psi_{1,22}(w)+\Psi_{1,12}^2(w)
=
-\frac{1}{2}(q^2(w)+q'(w)),
\]
so that by using \eqref{eq: w L2} we have that
\begin{subequations}
	\begin{equation}
	\alpha_n(s)= -\left(q^2(-L_2)+q'(-L_2)\right)\frac{1}{n^{2/3}} + \mathcal{O}\left(\frac{1}{n}\right)
	\end{equation}
	and
	\begin{equation}
	\beta_n(s) = \frac{1}{4} -\frac{q^2(-L_2)+q'(-L_2)}{2}\frac{1}{n^{2/3}} + \mathcal{O}\left(\frac{1}{n}\right)
	\end{equation}
\end{subequations}
as $n\to\infty$. Finally, the fact that the function $U(w)=q^2(w)+q'(w)$ is free of poles for $w\in\mathbb{R}$ follows from \cite[Lemma 3.5]{IKO_critical}, as well as from \cite[Lemma 1, Corollary 1]{xu2011painleve}; in this last reference, the theorem is a consequence of the vanishing lemma applied to the Painlev\'e XXXIV Riemann--Hilbert problem, and then translating the result to solutions of Painlev\'e II. This completes the proof of Theorem~\ref{thm: rec coeff double scaling critical}.

\bibliographystyle{plain}

\end{document}